\newcommand{\col}{\colon \negthinspace}
\newcommand{\A}{\mathfrak A}
\newcommand{\m}{{\mathfrak{m}}}
\newcommand{\Iq}{I^{[q]}}
\newcommand{\Jq}{J^{[q]}}
\newcommand{\inc}{\subseteq}
\newcommand{\al}{\alpha}
\newcommand{\la}{\lambda}
\newcommand{\ra}{\longrightarrow}
\newcommand{\HK}{Hilbert-Kunz\,\,}
\newcommand{\tors}[3][q]{\ensuremath{\tor_{#2}(#3,R/I^{[#1]})}}
\newcommand{\len}[3][q]{\ensuremath{\lambda(\tor_{#2}(#3,R/I^{[#1]}))}}
\newcommand{\ehk}{e_{HK}}
\newcommand{\unmixed}{formally unmixed\,\,}
\DeclareMathOperator{\vol}{Vol}
\DeclareMathOperator{\Max}{Max}
\DeclareMathOperator{\hg}{\operatorname{height}}
\DeclareMathOperator{\di}{\operatorname{dim}}
\DeclareMathOperator{\tor}{Tor}
\DeclareMathOperator{\Hom}{Hom}
\DeclareMathOperator{\coker}{coker}
\DeclareMathOperator{\rank}{rank}
\theoremstyle{plain}
\numberwithin{equation}{section}
\newtheorem{theorem}{Theorem}[section]
\newtheorem{prop}[theorem]{Proposition}
\newtheorem{discuss}[theorem]{Discussion}
\newtheorem{lemma}[theorem]{Lemma}
\newtheorem{cor}[theorem]{Corollary}
\newtheorem{conj}[theorem]{Conjecture}
\newtheorem{define}[theorem]{Definition}
\newtheorem{remark}[theorem]{Remark}
\newtheorem{example}[theorem]{Example}
\newtheorem{exercise}[theorem]{Exercise}
\numberwithin{equation}{section}
\begin{document}

\title{Hilbert-Kunz Multiplicity and the F-signature}

\author{Craig Huneke}
\address{Department of Mathematics,
University of Kansas,
Lawrence, KS 66045}
\email{huneke@math.ku.edu}
\urladdr{http://www.math.ku.edu/\textasciitilde huneke}

\date{\today}
\thanks{The author was partially supported by NSF grant
DMS-1063538. I thank them for 
their support.}
\keywords{Hilbert-Kunz multiplicity, characteristic $p$, F-signature, tight closure}
\subjclass[2010]{13-02, 13A35, 13C99, 13H15}

\bibliographystyle{alpha}

\begin{abstract}
This paper is a much expanded version of two talks given in Ann Arbor
in May of 2012 during the computational workshop on F-singularities.
We survey some of the theory and results concerning the Hilbert-Kunz
multiplicity and F-signature of positive characteristic local rings. 
\end{abstract}

\maketitle

\begin{center}
Dedicated to David Eisenbud, on the occasion of his 65th birthday.
\end{center}

\section{Introduction}
\medskip

Throughout this paper $(R,\m,k)$ will denote a Noetherian local
ring of prime characteristic $p$ with maximal ideal $\m$ and residue field $k$. 
We let $e$ be a varying non-negative integer, and let
$q = p^e$. By  $\Iq$ we
denote the ideal generated by $x^{q}$,
$x\in I$.  If $M$ is a finite $R$-module, $M/\Iq M$ has finite length.
We will use $\lambda(-)$ to denote
the length of an $R$-module. 
We assume knowledge of basic ideas in commutative algebra, including the
usual Hilbert-Samuel multiplicity, Cohen-Macaulay, regular, and Gorenstein
rings.

The basic question this paper studies is how
$\la(M/\Iq M)$ behaves as a function on $q$, and how understanding this behavior leads to better understanding
of the singularities of the ring $R$. In a seminal paper which appeared in 1969, \cite{Ku1}, Ernst Kunz introduced the study of
this function as a way to measure how close the ring $R$ is to being regular.  

The \it Frobenius \rm homomorphism is the map $F: R\ra R$ given by $F(r) = r^p$. We say that $R$ is \it F-finite \rm if
$R$ is a finitely generated module over itself via the Frobenius homomorphism. It is not difficult
to prove that if $(R, \m, k)$ is a complete local Noetherian ring of characteristic $p$, or an affine ring over
a field $k$ of characteristic $p$, then $R$ is F-finite if and only if $[k^{1/p}:k]$ is finite.
When $R$ is reduced we can identify the
Frobenius map with the inclusion of $R$ into $R^{1/p}$, the ring of  $p$th roots of elements of $R$.
If $M$ is an $R$-module, we will usually write $M^{1/q}$ to denote what is more commonly denoted $F_*^e(M)$, where
$q = p^e$, the module which is the same as $M$ as abelian groups, but whose $R$-module structure is
coming from restriction of scalars via $e$-iterates of the Frobenius map. This is an exact functor on the category
of $R$-modules. Notice that $F^e_*(R)$ can be naturally identified with $R^{1/q}$.

If  the residue field $k$ of $R$ is perfect then the lengths of the $R$-modules $R^{1/q}/IR^{1/q}$ and $R/I^{[q]}$
are the same. If $k$ is not perfect, but $R$ is F-finite,  then we can adjust by $[k^{1/q}:k]$. We define $\alpha(R): = \operatorname{log}_p([k^{1/p}:k])$,
so that we can write $[k^{1/q}:k] = q^{\al(R)}$. With this notation, $\la_R(R^{1/q}/IR^{1/q}) = \la_R(R/I^{[q]})q^{\al(R)}$. 

More broadly, the two numbers we will study, namely the Hilbert-Kunz multiplicity and the F-signature, are
characteristic $p$ invariants which give information about the singularities of $R$, and lead to many interesting
issues concerning how to use characteristic $p$ methods to study singularities. There are four basic facts about
characteristic $p$ which make things work. Those facts are first that $(r+s)^p = r^p+s^p$ for
elements in a ring of characteristic $p$ (i.e., the Frobenius is an endomorphism); second, that the map from
$R\ra R^{1/p}$ is essentially the same map as that of $R^{1/q}\ra R^{1/qp}$ when $R$ is reduced and $q = p^e$;
third that $\sum_i \frac{1}{p^i}$ converges (!); and lastly that the flatness of Frobenius characterizes regular rings.
Virtually everything we prove comes down to these interelated facts. 

Throughout this paper, whenever possible we have tried to give new (or at least not published) approaches to basic
material. This is not done for the sake of whimsy, but to provide extra methods which may be helpful. Thus, the
approach we take to proving the existence of the Hilbert-Kunz multplicity and the F-signature, while following the
general lines of the proofs of Paul Monsky \cite{Mo1} and Kevin Tucker \cite{Tu} respectively, uses a lemma of Sankar Dutta \cite{D} as a central
point, which is not present in the usual proofs. When we present the proof of the existence of a second coefficient, we
veer from the paper \cite{HMM} to present another proof, based on the growth of the length of certain Tor modules,
due to Moira McDermott and this author. In proving the
theorem relating tight closure to  the \HK multiplicity, we use a lemma of Ian Aberbach \cite{Ab1} as a crucial point
in the proof instead of presenting
the original proof in \cite{HH1}. We provide examples of \HK multiplicities throughout the paper, but often
do not give details of the calculation.

We describe the contents of this paper. In the second section we give some early results of Kunz on the relationship
between regular local rings and the \HK function. Kunz was ahead of his time in this regard, though characteristic $p$
methods in commutative algebra were being using to study various homological conjectures at around the same time.
In section three, we develop basic results and definitions
needed to give our main existence theorems. Our main technical tool we use is a lemma of Dutta \cite{D} which
gives information about the nature of prime filtrations of $R^{1/q}$. We prove that the \HK multiplicity
exists. 
Section four proves that for \unmixed rings, the \HK multiplicity is one if and only if $R$ is regular. 
Here \it formally unmixed \rm means that for all associated primes $Q$ of the completion of a local
ring $R$, $\dim \widehat{R}/Q = \dim R$.
Section five provides the relationship between tight closure and \HK multiplicity. In section six
we prove that the F-signature exists and do some examples. Section seven proves the existence of a second
coefficient in the \HK function for normal rings. The final section takes up lower bounds on the
\HK multiplicity, introducing the volume estimates due to Watanabe and Yoshida \cite{WY2}, \cite{WY4}, as well as the method of root adjunction
of Aberbach and Enescu \cite{AE3}, \cite{AE4} and recent improvements by Celikbas, Dao, Huneke, and Zhang \cite{CDHZ}. We close with some results
of Doug Hanes \cite{Ha}.

This survey does not present the considerable research dealing with the many remarkable and difficult caluculations of
\HK multiplicity. 
For example, for work on plane cubics, see Pardue's thesis,
\cite{BC} and \cite{Mo2}. For plane curves in general see \cite{Tr2}, and for general two-dimensional graded rings
either \cite{Br1} or \cite{Tr1}. For binomial hypersurfaces, see \cite{Co} or \cite{U}. 
For flag varieties see \cite{FT}. The \HK multiplicity of  Rees algebras was the theme of \cite{EtY}. Many other important examples or work
are in 
\cite{Br1}-\cite{Br3}, \cite{Co}, \cite{E}, \cite{EtY}, \cite{GM}, \cite{Mo1}-\cite{Mo7}, \cite{MS},  \cite{MT1}, \cite{MT2},
\cite{S}, \cite{Tu}, \cite{Tr1}-\cite{Tr3}, and
\cite{WY1}-\cite{WY4}. We borrow freely from these papers for some of the examples presented in this paper.
We do not cover many new developments and calculations of the F-signature, for example see
\cite{BST1}-\cite{BST2} and for toric rings see \cite{S} and more recently \cite{VK}. See \cite{EY} for further extensions of \HK multiplicity, and \cite{Vr} for
additional work. We also do not discuss the very interesting work being done on limiting value of
\HK multiplicities as $p$ goes to infinity, For example, see \cite{BLM}, \cite{GM}, and \cite{Tr3}. For an excellent survey of other numerical invariants of singularities defined
via Frobenius and their relationship to birational algebraic geometry and the theory of test ideals, see \cite{STu}. 

\medskip

\section{Early History}

\medskip

Ernst Kunz was a pioneer in this study, realizing that studying the colengths of Frobenius powers of $\m$-primary
ideals would be an interesting idea.

\begin{theorem}\label{kunzthm} (\cite[Theorem 2.1, Proposition 3.2, Theorem 3.3]{Ku1}) Let $(R, \m, k)$ be a Noetherian local ring of dimension $d$
and prime characteristic $p > 0$.
For every $e\geq 0$, and $q = p^e$, $\la(R/\m^{[q]}) \geq  q^d$. Moreover, equality holds for some $q$ if and only if $R$ is
regular, in which case equality holds for all $q$.  If $R$ is F -finite, then $R^{1/q}$ is a free module
for some $q > 1$ if and only if
R is regular.
\end{theorem}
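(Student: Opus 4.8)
\smallskip

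The plan is to route everything through the finite module $R^{1/q}=F_*^e(R)$, after reducing to the F-finite case, and then to invoke the characterization of regular rings by flatness of Frobenius (the last of the four ``basic facts'' in the introduction). First I would replace $R$ by its completion, which changes none of $\la(R/\m^{[q]})$, $\dim R$, or whether $R$ is regular; then, via a Cohen presentation $R\cong k[[x_1,\dots,x_n]]/I$ and a perfect field $\ell\supseteq k$, replace $R$ by $\ell[[x_1,\dots,x_n]]/I\ell[[x_1,\dots,x_n]]$ — a faithfully flat extension with field fiber $\ell$, hence preserving lengths of finite-length modules and $\dim R$, and along which regularity both ascends and descends. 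After this $k$ is perfect, so $R$ is F-finite, $\al(R)=0$, and $\la(R/\m^{[q']})=\la_R(R^{1/q'}/\m R^{1/q'})=\mu_R(R^{1/q'})$ for every $q'$; more generally $\la_R(R^{1/q}\otimes_R R/\m^{[q']})=\la(R/\m^{[qq']})$ for all $q,q'$.

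For the inequality $\la(R/\m^{[q]})\ge q^d$ I would reduce one more step, to a domain: replacing $R$ by $R/\p$ for a minimal prime $\p$ with $\dim R/\p=d$ only lowers $\la(R/\m^{[q]})$, via the surjection $R/\m^{[q]}\twoheadrightarrow (R/\p)/(\m(R/\p))^{[q]}$ (the case $d=0$ is trivial since $\m^{[q]}\subseteq\m\subsetneq R$). The crux is then $\rank_R(R^{1/q})=q^d$: with $L=\operatorname{Frac}(R)$ this rank equals $[L^{1/q}:L]=[L:L^q]$; a module-finite inclusion $k[[x_1,\dots,x_d]]\hookrightarrow R$ (Cohen) makes $L$ finite over $L_0=\operatorname{Frac}(k[[x_1,\dots,x_d]])$, and since $k[[x_1,\dots,x_d]]$ is free of rank $p^d$ over $k[[x_1^p,\dots,x_d^p]]$ we get $[L_0:L_0^p]=p^d$; a degree count using the Frobenius isomorphism $L\xrightarrow{\sim}L^p$ then gives $[L:L^p]=p^d$, hence $[L:L^q]=q^d$. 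As the minimal number of generators of a module is at least its rank (tensor a minimal presentation $R^{\mu_R(R^{1/q})}\twoheadrightarrow R^{1/q}$ with $L$ and count dimensions), $\la(R/\m^{[q]})=\la_R(R^{1/q}/\m R^{1/q})=\mu_R(R^{1/q})\ge q^d$.

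For the equality statement the easy direction is a computation: if $R$ is regular then $\m=(x_1,\dots,x_d)$ for a regular sequence, $\m^{[q]}=(x_1^q,\dots,x_d^q)$ is again a regular sequence, and $\la(R/\m^{[q]})=q^d\la(R/\m)=q^d$ for every $q$; likewise $R$ regular forces $F$, hence $F^e$, flat, so $R^{1/q}$ is flat and module-finite over the local ring $R$, hence free. For the converse, assume $\la(R/\m^{[q]})=q^d$ for some $q=p^e>1$; after the reductions of the first paragraph I must conclude $R$ is regular. I would first bootstrap the equality: since $R^{1/q}\otimes_R(-)$ is right exact, hence subadditive on lengths of finite-length modules, and $R^{1/q}\otimes_R(R/\m)$ has length $\la(R/\m^{[q]})=q^d$, we get $\la(R/\m^{[q^n]})=\la_R(R^{1/q}\otimes_R R/\m^{[q^{n-1}]})\le q^d\,\la(R/\m^{[q^{n-1}]})$, which together with the inequality already proved forces $\la(R/\m^{[q^n]})=q^{nd}$ for all $n\ge1$. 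Then, taking a minimal presentation $0\ra K\ra R^{q^d}\ra R^{1/q}\ra0$ with $K\subseteq\m R^{q^d}$ and tensoring with $R/\m^{[q^{n-1}]}$, the exact sequence $\operatorname{Tor}_1^R(R^{1/q},R/\m^{[q^{n-1}]})\ra K/\m^{[q^{n-1}]}K\ra (R/\m^{[q^{n-1}]})^{q^d}\ra R^{1/q}/\m^{[q^{n-1}]}R^{1/q}\ra 0$ has last term of length $\la(R/\m^{[q^n]})=q^{nd}=q^d\,\la(R/\m^{[q^{n-1}]})$, so the image of $K$ in $(R/\m^{[q^{n-1}]})^{q^d}$ vanishes, i.e. $K\subseteq\m^{[q^{n-1}]}R^{q^d}$; as this holds for all $n$, Krull's intersection theorem gives $K=0$, whence $R^{1/q}\cong R^{q^d}$ is free, $F^e$ is flat, and $R$ is regular — and then $\la(R/\m^{[q']})=(q')^d$ for every $q'$ by the easy direction. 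The last assertion of the theorem is now immediate: for F-finite $R$, $R^{1/q}$ is free for some $q>1$ $\iff F^e$ is flat $\iff R$ is regular.

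The hard part will be the converse of the equality statement: propagating a single equality $\la(R/\m^{[q]})=q^d$ to all $\m^{[q^n]}$, and the $\operatorname{Tor}$/Krull-intersection argument upgrading $\mu_R(R^{1/q})=q^d$ to freeness of $R^{1/q}$. The step from freeness of $R^{1/q}$ to regularity of $R$ I would simply cite, since the introduction treats the Frobenius-flatness characterization of regular rings as known; were it not available, this would be the genuine obstacle. Verifying that the preliminary reductions really preserve lengths, dimension, regularity, and F-finiteness is routine but should be done with some care.
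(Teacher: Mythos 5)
Your reductions, the rank computation giving $\operatorname{rank}_R(R^{1/q}) = q^d$, and the bound $\la(R/\m^{[q]}) = \mu_R(R^{1/q}) \geq q^d$ are correct and essentially parallel to the paper's. Your bootstrap to $\la(R/\m^{[q^n]}) = q^{nd}$ via subadditivity of $R^{1/q}\otimes_R(-)$ on lengths is a clean way to make explicit what the paper dismisses as ``a simple induction.'' The argument that $R^{1/q}$ is then free — tensoring a minimal presentation $0\to K\to R^{q^d}\to R^{1/q}\to 0$ with $R/\m^{[q^{n-1}]}$, comparing lengths to force $K\subseteq \m^{[q^{n-1}]}R^{q^d}$, and applying Krull's intersection theorem — is correct, and is arguably slicker than the paper's route, which first has to show $R$ is a domain (by noting that $\la(R/\m^{[q^n]}) = \la(R/(\m^{[q^n]}+Q))$ forces any minimal prime $Q$ of top dimension into $\bigcap_n\m^{[q^n]}=0$) and then uses the rank count to get freeness.

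However, the final step is a genuine gap, and you have identified it yourself. The implication ``$R^{1/q}$ free for some $q>1$ $\Rightarrow$ $R$ regular'' is precisely the nontrivial direction of the theorem you are asked to prove; it is Kunz's theorem, and in this paper it is not a citable black box — it is proven. The reference in the introduction to ``flatness of Frobenius characterizes regular rings'' is a preview of what Theorem~\ref{kunzthm} establishes, not an independent citation you may lean on. Citing it makes your argument circular. The content you are missing is exactly what the paper supplies after establishing freeness: first, that $R$ is Cohen--Macaulay — because freeness of $R^{1/q^n}$ of rank $q^{dn}$ gives $\la(R/J^{[q^n]}) = q^{dn}\la(R/J)$ for a parameter ideal $J$, and Lech's formula $\lim \la(R/J^{[q^n]})/q^{dn} = e(J)$ then forces $e(J) = \la(R/J)$; and second, that $\operatorname{pd}_R k < \infty$ — by fixing $n$ with $\m^{[q^n]}\subseteq J$, observing that the free resolution of $R/\m^{[q^n]}$ obtained by raising the matrix entries of a minimal resolution of $k$ to the $q^n$-th power becomes a complex with zero differentials after $\otimes_R R/J$, and comparing the homology at spot $d+1$ (which vanishes since $J$ is generated by a regular sequence) with the rank of the $(d+1)$-st free module. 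Without filling this in, your proof does not establish regularity from the numerical equality.
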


\begin{proof} We may complete $R$, and assume the residue field is algebraically closed to prove
the first statement. We may also go
modulo a minimal prime of $R$ to assume that $R$ is a complete local domain; this change will only potentially
decrease $\la(R/\m^{[q]})$. We claim that $R^{1/q}$ has rank $q^{d}$ as
an $R$-module in this case.
Choose a coefficient field $k$ and a minimal
reduction $x_1,...,x_d$ of the maximal ideal. Let $A$ be the complete subring $k[[x_1,...,x_d]]$ which
is isomorphic with a formal power series. 
Note that $A^{1/q}\cong k[[x_1^{1/q},...,x_d^{1/q}]]$, which
is a free $A$-module of rank $q^{d}$, whose basis is given by 
by arbitrary monomials of the form $x_1^{a_1/q}\cdots x_d^{a_d/q}$ where $0\leq a_i\leq q-1$..
Since the rank of $R$ over $A$ and
the rank of $R^{1/q}$ over $A^{1/q}$ are the same, it follows that the rank of $R^{1/q}$ over $R$ is
exactly $q^{d}$. (We note that if $R$ is an F-finite complete domain but the residue field is not perfect, then essentially the same proof shows that the
rank of $R^{1/q}$ is exactly $q^{(d+\al(R))}$.) Since $R^{1/q}$ is a finite $R$-module, $\mu_R(R^{1/q})\geq q^{d}$, with equality if and only if
$R^{1/q}$ is a free $R$-module. However, $\mu_R(R^{1/q}) = \la_R(R^{1/q}/\m R^{1/q}) = \la_R(R/\m^{[q]})$,
which implies that $\la_R(R/\m^{[q]})\geq q^d$. Notice that equality occurs in this case if and only if $R^{1/q}$ is a free
$R$-module.

If $R$ is regular, then since $\m$ is generated by a regular sequence, it easily follows that $\la(R/\m^{[q]}) =  q^d$.
The second statement also easily is seen when $R$ is regular and $R$ is F-finite; one can complete and
use the Cohen Structure theorem to do the complete case, and then descend using standard facts. It is the converse
of both statements that is the most interesting part of the theorem.

Suppose that equality holds for some $q$, i.e., $\la(R/\m^{[q]}) =  q^d$. We can complete the ring and extend the residue field to be
algebraically closed without changing this equality, so without loss of generality, $R$ is $F$-finite and $\al(R) = 0$.
Note that $\la(R/\m^{[q^n]}) = q^{nd}$ for all $n\geq 1$, by a simple induction. 

We claim that $R$ is a domain; for if $Q$ is a minimal prime of $R$ of maximal dimension, then we have that
$q^{nd} = \la(R/\m^{[q^n]})\geq \la(R/\m^{[q^n]}+Q)\geq q^{nd}$. Hence we have equality throughout. But then
$\la(R/\m^{[q^n]}) = \la(R/\m^{[q^n]}+Q)$ forces $\la((\m^{[q^n]}+Q)/\m^{[q^n]}) = 0$, so that
$Q\inc \cap_n \m^{[q^n]} = 0$. From the first part of this theorem, we then obtain that for all $n\geq 1$,
$R^{1/q^n}$ is a free $R$-module.

We next claim that $R$ is Cohen-Macaulay. Let
$x_1,...,x_d$ be a system of parameters generating an ideal $J$. Then $\la(R/J^{[q^n]}) = \la(R^{1/q^n}/JR^{1/q^n}) = \la(R/J)q^{dn}$,
since  $R^{1/q^n}$ is a free $R$-module of rank $q^{dn}$. 
By a formula of Lech \cite[Theorem 11.2.10]{SH}: $\varinjlim \lambda(R/J^{[q^n]})/q^{dn} = e(J)$,
the usual multiplicity of $J$. Hence the multiplicity of $J$ is the colength of $J$. Since
$J$ is generated by a system of parameters, it follows that $R$ is Cohen-Macaulay. (See \cite[Theorem 4.6.10]{BH}).

Now choose a system of parameters as above, and fix $n$ such that $\m^{[q^n]}\inc J$, where
$J$ is the ideal generated by the parameters. Suppose that the projective dimension of $k$
is infinite. We compute $\tor_{d+1}(R/J, R/\m^{[q^n]})$ in two ways. From the fact that
$J$ is generated by a regular sequence of length $d$, this Tor module is $0$. On the other hand, 
we can take the free resolution of $k$ and tensor with $R^{1/q^n}$ and obtain an $R^{1/q^n}$ minimal
free resolution of $R^{1/q^n}/\m R^{1/q^n}$. Identifying $R^{1/q^n}$ with $R$, we see that
a free resolution of $R/\m^{[q^n]}$ is obtained by applying the Frobenius to the maps in the
free resolution of $k$, which has the effect of raising all entries in matrices in the resolution
(after fixing bases of the free modules) to the $q^n$th powers. Now tensoring with $R/J$, we
see the homology at the $(d+1)$st stage is $0$ if and only if the projective dimension of
$k$ is at most $d$, since the maps become $0$ after tensoring with $R/J$. It follows that $R$ is
regular.

\end{proof}

\begin{exercise} {\rm If $(R,\m,k)$ is F-finite, and $Q$ is a prime ideal, prove that
$\al(R_Q) = \al(R)p^{\dim(R/Q)}$. (See \cite[Proposition 2.3]{Ku2}.)}\end{exercise}

\medskip
\begin{exercise}\label{rlr-hk}{\rm Let $(R,\m,k)$ be a regular local ring of dimension $d$ and
prime characteristic $p$, and let $I$ be an $\m$-primary ideal. Prove that
$\la(R/I^{[q]}) = q^d\la(R/I)$ so that in particular, $\ehk(I) = \la(R/I)$.}
\end{exercise}
\bigskip

\section{Basics}

We begin with some estimates on the growth of the Hilbert-Kunz function, and some
examples.

\begin{lemma}\label{basicineq}  Let $(R, \m, k)$ be a Noetherian local ring of dimension $d$
and prime characteristic $p > 0$. We let $e(I)$ denote the multiplicity of
the ideal $I$.
Let $I$ be an  
$\m$-primary ideal. Then  ($q = p^e$),
$$
e(I)/d!\leq \liminf \lambda(R/I^{[q]})/q^{d}\leq \limsup \lambda(R/I^{[q]})/q^{d}\leq e(I)
$$

\end{lemma}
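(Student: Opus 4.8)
The plan is to compare the Hilbert-Kunz function $\lambda(R/I^{[q]})$ with the ordinary Hilbert-Samuel function of the powers $I^n$, exploiting the elementary containments $I^{nq}\subseteq I^{[q]}\subseteq I^n$ that hold for suitable ranges of $n$. Specifically, since $I^{[q]}$ is generated by the $q$th powers of generators of $I$, one has $I^{[q]}\subseteq I^q$, and conversely if $I$ is generated by $\mu$ elements then $I^{\mu(q-1)+1}\subseteq I^{[q]}$ (each monomial of that degree in the generators has some generator appearing to a power $\geq q$). These two containments immediately give
\[
\lambda(R/I^q)\leq \lambda(R/I^{[q]})\leq \lambda\bigl(R/I^{\mu(q-1)+1}\bigr).
\]
First I would divide through by $q^d$ and take $\liminf$ and $\limsup$. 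On the left, $\lambda(R/I^q)/q^d = \bigl(\lambda(R/I^q)/(q^d/d!)\bigr)\cdot(1/d!)$, and since the Hilbert-Samuel polynomial of $I$ has leading term $e(I)n^d/d!$, we get $\lim_q \lambda(R/I^q)/q^d = e(I)/d!$, yielding the lower bound $e(I)/d!\leq \liminf \lambda(R/I^{[q]})/q^d$. On the right, $\lambda(R/I^{\mu(q-1)+1})$ is asymptotic to $e(I)(\mu(q-1)+1)^d/d!\sim e(I)\mu^d q^d/d!$, which only gives the crude bound $\limsup \leq e(I)\mu^d/d!$ — too weak.

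So the main obstacle is the upper bound $\limsup \lambda(R/I^{[q]})/q^d\leq e(I)$; the naive containment argument loses a factor of $\mu^d$. The fix is to not pass to a single ambient power but to use that $I^{[q]}$ sits between $I^{nq}$ and a reduction-type bound, or better, to reduce to the case where $I$ is generated by a system of parameters (a minimal reduction) after extending the residue field, since passing to a minimal reduction $J\subseteq I$ only increases $\lambda(R/J^{[q]})$ relative to $\lambda(R/I^{[q]})$ while preserving $e(J)=e(I)$, and additionally one can use that the $\limsup$ is unchanged under completion and residue field extension. Once $I$ is generated by a system of parameters $x_1,\dots,x_d$, the key point is that $J^{[q]} = (x_1^q,\dots,x_d^q)$ is generated by a system of parameters too, and one can compare $\lambda(R/J^{[q]})$ with $e(J^{[q]}) = q^d e(J)$ using the standard inequality $\lambda(R/\mathfrak{a})\leq e(\mathfrak{a})$ for parameter ideals — wait, that inequality goes the wrong way in general (it holds with equality iff Cohen-Macaulay). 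The correct tool here is the reverse: one always has $e(\mathfrak{a}) \le \lambda(R/\mathfrak a)$ is false in general too. The honest approach is that for a parameter ideal $\mathfrak{a}$ one has $\lambda(R/\mathfrak{a}^n)$ eventually a polynomial with leading coefficient $e(\mathfrak{a})/d!$ and $e(\mathfrak{a}^n) = n^d e(\mathfrak{a})$; combining with $\mathfrak{a}^{nd}\subseteq (\mathfrak{a}^{[n]})\subseteq \mathfrak{a}^n$-style bounds and Lech's lemma gives the clean comparison. I expect the cleanest route is: reduce to $I$ a parameter ideal $J$, observe $\lambda(R/J^{[q]})\le \lambda(R/J^q)\cdot(\text{something})$ is still lossy, so instead directly invoke that $\lambda(R/J^{[q]})/q^d\to e(J)$ whenever $R$ is Cohen-Macaulay (exact) and for the general case use an associativity/additivity reduction to the Cohen-Macaulay case via a prime filtration.

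In summary, the steps I would carry out are: (1) reduce the upper bound to the case $I = J$ a parameter ideal by passing to a minimal reduction (after harmless residue-field extension and completion), using that $\lambda(R/J^{[q]})\geq \lambda(R/I^{[q]})$ and $e(J) = e(I)$; (2) for the lower bound use $I^{[q]}\subseteq I^q$ directly and Hilbert-Samuel asymptotics; (3) for the upper bound on a parameter ideal, use additivity of multiplicity over a prime filtration of $R$ to reduce to $R$ a complete local domain, where $R^{1/q}$ (or an Artinian-reduction argument) lets one bound $\lambda(R/J^{[q]})$ by $q^d$ times a bounded contribution, matching $e(J)$ in the limit; or alternatively appeal to Lech's formula $\lim_n \lambda(R/J^{[p^n]})/p^{nd} = e(J)$ along a subsequence and a monotonicity/interpolation argument between consecutive powers of $p$. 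The genuinely delicate point — and the one meriting care — is controlling the $\mu^d$ loss in the upper bound, which forces the reduction to parameter ideals rather than a direct containment estimate.
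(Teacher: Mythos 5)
Your proposal, once you arrive at the summary, is exactly the paper's argument: extend the residue field to be infinite, pass to a minimal reduction $J\subseteq I$ so that $e(J)=e(I)$, use the containments $J^{[q]}\subseteq I^{[q]}\subseteq I^q$ to sandwich the lengths, get the lower bound from the Hilbert--Samuel asymptotics of $\lambda(R/I^q)$, and get the upper bound from Lech's formula $\lim_q \lambda(R/J^{[q]})/q^d = e(J)$ for the parameter ideal $J$. Two small corrections to the detours in the middle: the inequality $e(\mathfrak a)\le\lambda(R/\mathfrak a)$ \emph{does} hold for every parameter ideal $\mathfrak a$ in a Noetherian local ring (with equality iff $R$ is Cohen--Macaulay), contrary to your aside; and Lech's formula already holds for arbitrary Noetherian local rings without any Cohen--Macaulay or domain hypothesis, so the proposed prime-filtration reduction and the ``monotonicity/interpolation'' step are unnecessary.
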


\begin{proof} We can make an extension of $R$ to assume that the residue field
is infinite without changing any of the relevant lengths.
Let $J$ be a minimal reduction of $I$, so that $J$ is generated by a system of
parameters.
There are containments, $J^{[q]}\inc I^{[q]}\inc I^q$ which gives
inequalities on the lengths,
$$
\lambda(R/J^{[q]})\geq \lambda(R/I^{[q]})\geq \lambda(R/I^q).
$$
For large $q$, the right hand length is given by a polynomial in $q$
of degree $d$ with leading coefficient
$e(I)/d!$.  Dividing by $q^{d}$ gives one inequality. For the other, we use
a formula of Lech \cite[Theorem 11.2.10]{SH}: $\varinjlim \lambda(R/J^{[q]})/q^d = e(J)$. Since
$J$ is a reduction of $I$, $e(J) = e(I)$. 
\end{proof}

\begin{cor}\label{HKdimone}  Let $(R, \m, k)$ be a Noetherian local ring of dimension $1$
and prime characteristic $p > 0$ 
Let $I$ be an $\m$-primary ideal. Then $e(I) = \varinjlim \lambda(R/I^{[q]})/q^{d}$
\end{cor}

\begin{proof}  Set $d = 1$ in the above formula.
\end{proof}

\begin{example}\label{1dimex}{\rm Although the one-dimensional case may seem very transparent, as
the usual multiplicity equals the \HK multiplicity, the
actual Hilbert function is by no means obvious. Here is one example from \cite{Mo1}.
Let $k$ be a field of characteristic $p$ congruent to
$2$ or $3$ modulo $5$. Set $R = k[[X,Y]]/(X^5-Y^5)$.  $R$ is a one-dimensional
local ring with maximal ideal $\m = (x,y)$, and the multiplicity of $R$ is $5$.
The difference $|\lambda(R/\m^{[q]}) - 5q|$  
is bounded by a constant. But it is not a constant in general.
If we write the constant as $d_e$ where $q = p^e$, then when $e$ is even
$d_e = -4$ while when $e$ is odd, $d_e = -6$. For one-dimensional complete
local rings Monsky shows that
the `constant' term is a periodic function. See \cite{Mo1} for details. See also \cite{Kr} for
work in the graded case.}
\end{example}

Our goal of this section is to prove that $\varinjlim \lambda(R/I^{[q]})/q^{d}$
always exists. We call it the \it Hilbert-Kunz multiplicity\rm.
The history of how Monsky came to prove its existence is interesting. One might
think that he was inspired by the paper of Kunz, but in fact he did not know
about it when he proved the existence. The situation was additionally complicated by
the fact that Kunz had erroneously thought that the limit did not actually exist, and
proposed a counterexample in his paper. This author asked Monsky how he came to think about
it, and here is what he replied:

``Craig asked me how I was led into looking into Kunz's papers on the characterization of regular local rings in
characteristic $p$ (and defining and studying the
Hilbert-Kunz multiplicity as a result). But  that's not the order in which things occurred.

At Brandeis I was on the thesis committee of Al Cuoco, who was working in Iwasawa theory. He studied the growth of the
p-part of the ideal class group  as one moves up the levels in a tower of number fields, where the Galois group is a
product of $2$ copies of the $p$-adic integers.  I extended his results to a product of s copies; this involved the study of
modules over power series rings, with the base ring being the $p$-adics or $\mathbb Z/p\mathbb Z$. In particular I considered the following--
let $M$  be a finitely generated module over the power series ring in $s$ variables over $\mathbb Z/p\mathbb Z$, and $J$
be the ideal generated by
the $p^n$ th powers of the variables. How does the length of $M/JM$ grow with $n$? I got an asymptotic formula for this growth,
put it into a more general setting and wrote things up. In analogy with the Hilbert-Samuel terminology I intended to
speak of the Hilbert-Frobenius function and the
Hilbert-Frobenius multiplicity.

But when I showed my result to David Eisenbud he told me that it was wrong, and that Kunz had given examples in which
there wasn't an asymptotic formula. So I looked
into Kunz's papers, discovering that he had considered such questions before me.
So it was only proper to
call the function the Hilbert-Kunz function. And call the associated limiting value the Hilbert-Kunz multiplicity, even
though Kunz had thought that it needn't exist!"

To prove the existence of the \HK multiplicity, we will consider modules as well as rings.
We use a somewhat different treatment than the paper of Monsky \cite{Mo1}, organizing our approach through a lemma
proved by Dutta \cite{D}, which is not only interesting in its own right, but has the additional benefit
that we can directly apply it to show the existence of the F-signature as well.
However, in the end, all the approaches use that the map from $R$ to $R^{1/p}$ is essentially the same as $R^{1/q}$ to
$R^{1/qp}$, and that the sum of the reciprocals of the powers of $p$ converges.

\begin{lemma}\cite[see proof of Proposition, page 428]{D}\label{Frob-filter} Let $(R,\m,k)$ be a local Noetherian domain of dimension
$d$ and prime characteristic $p$. Assume that $R$ is F-finite. Then there exists a constant $C$ and a fixed
finite set of nonzero primes, $\{Q_1,...,Q_n\}$ such that for every $q = p^e$, the $R$-module
$R^{1/q}$ has a prime filtration having at most $Cq^d$ copies of $R/Q_i$ for $i\geq 1$,
and $q^{d+\al(R)}$ copies of $R$.
\end{lemma}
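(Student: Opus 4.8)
The plan is to build the prime filtration of $R^{1/q}$ by exploiting the fact that $R^{1/q}$ is a torsion-free $R$-module of rank $q^{d+\al(R)}$ (by the rank computation in the proof of Theorem~\ref{kunzthm}), so it sits inside a free module $R^{r}$ with $r = q^{d+\al(R)}$, and the obstruction to it being free is measured by a finite-length (or at least lower-dimensional) cokernel. More precisely, first I would fix a nonzero element $c \in R$ and choose, once and for all, an $R$-linear embedding $\phi_1 \colon R^{1/p} \hookrightarrow R^{a}$ with $a = p^{d+\al(R)}$ whose cokernel $C_1 = \coker(\phi_1)$ is a torsion $R$-module, hence supported on a proper closed subset of $\Spec R$ of dimension $\le d-1$; such a map exists because $R^{1/p}$ has rank $a$. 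The key point is then to iterate: applying the functor $(-)^{1/p^{e-1}}$ (equivalently tensoring up along Frobenius, which is exact when $R$ is reduced) to $\phi_1$ and splicing, one gets an embedding $R^{1/q} \hookrightarrow R^{a^{e}} = R^{r}$ whose cokernel has a filtration by the modules $C_1^{1/p^{i}}$ for $0 \le i \le e-1$, each appearing with multiplicity equal to a power of $a$.

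Next I would control the length/size of each $C_1^{1/p^i}$. Since $C_1$ is a finite $R$-module of dimension $\le d-1$, the module $C_1^{1/p^i}$ is, as an $R$-module, built from the Frobenius twists of $R/Q$ for $Q$ running over the (finitely many) primes in the support of $C_1$, and a rank count shows the number of copies of $R/Q$ appearing in a prime filtration of $C_1^{1/p^i}$ grows like $p^{i(\dim R/Q + \al(R_Q))} \le p^{i(d-1)}q^{\al(R)} \cdot(\text{const})$ — here one uses the local rank formula, i.e. the $\al(R_Q)$ computation referenced in the exercise after Theorem~\ref{kunzthm}. Summing the geometric series $\sum_{i=0}^{e-1} a^{e-1-i}\cdot p^{i(d-1)}(\cdots)$, and using $a = p^{d}p^{\al(R)}$, the total number of copies of each $R/Q_j$ in a prime filtration of $\coker(R^{1/q}\hookrightarrow R^{r})$ is bounded by $C q^{d}$ for a constant $C$ independent of $q$ — this is exactly where the convergence of $\sum p^{-i}$ (one of the "four basic facts") enters, packaged as the geometric series estimate.

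Finally, I would assemble the filtration of $R^{1/q}$ itself: combine the (refined, prime) filtration of the submodule image of $R^{1/q}$ — no, rather, run the filtration the other way: $0 \to R^{1/q} \to R^{r} \to \coker \to 0$ gives a prime filtration of $R^{1/q}$ obtained by taking the $r = q^{d+\al(R)}$ free quotients of $R^{r}$ and intersecting back, at the cost of introducing, for each successive quotient, extra torsion subquotients supported on the $Q_j$; a standard bookkeeping argument (comparing ranks at each step) shows the free quotients $R$ appear exactly $q^{d+\al(R)}$ times and the $R/Q_j$ appear at most $Cq^{d}$ times in total, with $\{Q_1,\dots,Q_n\}$ the finite set $\operatorname{Supp}(C_1)\setminus\{\m\}$ together with their specializations, which is still finite and independent of $q$.

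I expect the main obstacle to be the bookkeeping in the inductive step: making sure that when one pushes the embedding $\phi_1$ up the Frobenius tower and splices $e$ copies together, the cokernel genuinely admits a finite filtration whose graded pieces are the twists $C_1^{1/p^i}$ with the claimed multiplicities, rather than something messier — and then translating "finite filtration with graded pieces $C_1^{1/p^i}$" into "prime filtration with at most $Cq^d$ copies of the fixed primes $R/Q_j$", which requires knowing that prime filtrations of the various $C_1^{1/p^i}$ only ever involve the same finite set of primes (closed under the relevant specialization) and bounding their multiplicities uniformly via the rank/length estimates above. The convergence-of-geometric-series step is then essentially automatic once these structural claims are in place.
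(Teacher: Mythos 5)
There is a genuine gap in the final assembly step, and it traces to a choice you make at the very start: you embed $R^{1/p}$ \emph{into} a free module, $\phi_1\colon R^{1/p}\hookrightarrow R^a$, whereas the paper instead fixes a free module $F$ of maximal rank \emph{inside} $R^{1/p}$. These two choices are not symmetric, and the difference matters precisely when you try to turn your short exact sequence $0\to R^{1/q}\to R^{r}\to D_e\to 0$ into a prime filtration of $R^{1/q}$. Intersecting the standard flag $0\subset R\subset R^2\subset\cdots\subset R^{r}$ with $R^{1/q}$ gives a filtration of $R^{1/q}$ whose successive quotients are nonzero \emph{ideals} $J_1,\dots,J_r$ of $R$ (with the cyclic modules $R/J_i$ appearing as factors in a filtration of $D_e$). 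But to convert this into a \emph{prime} filtration you must prime-filter each ideal $J_i$, and the only way to do that is to pick $0\ne x_i\in J_i$, put $Rx_i\cong R$ at the bottom, and then filter the torsion quotient $J_i/Rx_i$. The support of $J_i/Rx_i$ lives inside $V(x_i)$ and is governed by $\operatorname{Ass}(R/x_i)$, \emph{not} by $\operatorname{Supp}(D_e)$ or $\operatorname{Supp}(C_1)$, so the primes appearing there have no reason to lie in your fixed finite set $\{Q_1,\dots,Q_n\}$, and their multiplicities are uncontrolled. The ``extra torsion subquotients supported on the $Q_j$'' you invoke in the last paragraph simply do not arise that way. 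This is exactly the issue the paper's construction avoids: putting the free module on the bottom ($F\subset R^{1/p}$) means the free part is filtered trivially by copies of $R$, and the entire remaining torsion $R^{1/p}/F$ is filtered by a \emph{fixed} list of prime cyclics $R/\A_i$; taking $q$-th roots of that fixed filtration and inducting on the dimension of each $R/\A_i$ keeps the set of primes stable.

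Two smaller points worth correcting. First, the uniform bound on the number of copies of $R/Q$ in a prime filtration of $C_1^{1/p^i}$ is not ``a rank count'': it is precisely the statement of the lemma applied to the lower-dimensional rings $R/Q$ for $Q\in\operatorname{Supp}(C_1)$, so it requires the same induction on dimension that the paper sets up explicitly (induct on $d$; base case $d=0$ trivial; the quotients $R/\A_i$ have dimension $<d$). You gesture at this but do not actually install the inductive hypothesis, so the estimate is unsupported as written. Second, $M\mapsto M^{1/p}$ is restriction of scalars along Frobenius (i.e.\ $F_*$), which is exact unconditionally; what requires regularity (by Kunz) is exactness of the \emph{extension} of scalars $F^*$. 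The parenthetical ``equivalently tensoring up along Frobenius, which is exact when $R$ is reduced'' conflates these. That said, the splicing-and-geometric-series skeleton you set up is sound in spirit and is essentially the same mechanism the paper uses; if you replace the free \emph{over}module $R^a\supset R^{1/p}$ with a free \emph{sub}module $F\subset R^{1/p}$ of maximal rank, fix a prime cyclic filtration of $R^{1/p}/F$, and make the induction on $d$ explicit, your argument lines up with the paper's.
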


\begin{proof} The proof we give, similar to Dutta's proof, was shown to me by  Karen Smith, and is essentially found in Appendix 2 of
\cite{Hu}, proof of Exercise 10.4.

Use induction on $d$; the $d = 0$ case is trivial.

Fix a maximal rank free submodule $F$ of $R^{1/p}$. We know that the rank of $F$ is  $p^{d+\al(R)}$.
Let $T$ be the cokernel of the inclusion $F \subset R^{1/p}$.
Fix a prime cyclic filtration of  $T$, and extend it by $F$ to  a filtration of $R^{1/p}$.
$$
0 \subset F = M_0 \subset M_1 \subset M_2 \subset \cdots \subset M_t = R^{1/p}.
$$
Because $F$ is maximal rank, 
the prime cyclic factors $M_{i+1}/M_i = R/\A_i$ all 
have dimension strictly less than the dimension
of $R$. Let $C_i$ be the constant which (by induction) works for $R/\A_i$,
let $C$ be twice the sum of all the $C_i$, and let $\Omega$ be the collection of
the (finite) sets of primes appearing in the filtrations of all the
$(R/\A_i)^{1/q}$, as well as the prime $(0)$. We claim that $\Omega$ and
$C$ satisfy the conclusion of the problem.

By induction on $q$, we prove that
$R^{1/q}$ has a prime filtration using primes from $\Omega$, with at most
$\frac{C}{2}(1+ 1/p + ... + 1/q)q^{d+\al(R)}$ copies of each one. Assume this is true for $q$.
Take $p^{e} =  q$ roots of all the modules above.
We have a prime cyclic  filtration (except at zeroth spot, where it is obvious
how to extend to one)
of $R^{1/q}$ modules
$$
0 \subset F^{1/q} = 
M_0^{1/q} \subset M_1^{1/q} \subset M_2^{1/q}
\subset \cdots\subset M_t^{1/q} = R^{1/qp}, 
$$
where each
factor has the form $(R/\A_i)^{1/q} = R^{1/q}/\A_i^{1/q}$.

To make this into a prime cyclic 
filtration of $R$ modules, we simply refine each inclusion
$M_i^{1/q} \subset M^{1/q}_{i+1}$ of $R$ modules by
a prime cyclic filtration. This amounts to filtering
$M^{1/q}_{i+1}/M_i^{1/q} = (R/\A_i)^{1/q}$
by $R/\A_i$ prime cyclic modules.
By induction on $d$, this can be done with only primes from $\Omega$, and
appearing with multiplicities at most
$\leq C_iq^{d-1+\al(R/\A_i)} =  C_iq^{d-1+\al(R)}$.
Thus the primes appearing in this prime cycle filtration of $R^{1/qp}/F^{1/q}$
all come from $\Omega$, and each one appears at most $(\sum_i C_i)q^{d-1+\al(R)}$ times. 

To refine the $R$ submodule
$F^{1/q}$ into a prime filtration we deal with each of the free summands $R^{1/q}$
separately. By induction there are only
primes from $\Omega$ appearing and the multiplicity of $R/Q_i$ in $F^{1/q}$
is no more than $(\text{rank} F)(\frac{C}{2})(1 + 1/p + ... +1/q)(q^{d+\al(R)})$.
The total number is then at most $(\frac{C}{2}(1 + 1/p + ... +1/q)((qp)^{d+\al(R)}) +
\frac{C}{2}q^{d-1+\al(R)}\leq \frac{C}{2}(1+...+ 1/(qp))(qp)^{d+\al(R)}\leq C(qp)^{d+\al(R)}.$ \end{proof}

\smallskip

\begin{lemma}\label{growth}  Let $(R, \m, k)$ be a Noetherian local ring of dimension $d$
and prime characteristic $p > 0$.
Let $M$ be a finitely generated $R$-module. There exists a constant $C > 0$ such that for all $e\geq 0$ and any
$\m$-primary ideal $I$ of R with $\m^{[q]}\inc I$, where $q = p^e$, we have that
              $$\la(R/I\otimes_RM)\leq Cq^{\dim M}.$$
              \end{lemma}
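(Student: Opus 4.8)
The plan is to reduce to the case where $R$ is a complete local domain and then exploit Lemma~\ref{Frob-filter}. First I would make harmless reductions: the quantity $\la(R/I\otimes_R M) = \la(M/IM)$ is unaffected by completing $R$ and $M$, and by making a faithfully flat base change that enlarges the residue field to be infinite (and, if one wishes, perfect, or at least arranges $R$ F-finite—note the hypothesis is really used through the Dutta lemma, which needs F-finiteness, so I should be careful to either assume F-finite from the start or observe that the length statement is insensitive to a residue-field extension that makes things F-finite). After completing, I would take a prime filtration of $M$ with factors $R/P_j$; since length is additive on short exact sequences and $\dim M = \max_j \dim R/P_j$, it suffices to prove the bound for a single module of the form $R/P$ with $P$ prime, $\dim R/P =: s \le \dim M$. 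Replacing $R$ by $R/P$ (which only shrinks lengths and replaces $I$ by its image, still containing $\m^{[q]}$), I am reduced to showing: for a complete local domain $S$ of dimension $s$ and any ideal $I \supseteq \m_S^{[q]}$, $\la_S(S/I) \le C q^s$.

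The key step is then the following. Since $\m_S^{[q]} \subseteq I$, we have $\la_S(S/I) \le \la_S(S/\m_S^{[q]})$, so it is enough to bound $\la_S(S/\m_S^{[q]})$ by a constant times $q^s$. If $S$ is F-finite this is immediate from Lemma~\ref{Frob-filter}: applying that lemma to $S$ gives a prime filtration of $S^{1/q}$ with at most $Cq^s$ copies of each $S/Q_i$ and $q^{s+\al(S)}$ copies of $S$, hence
$$
\la_S(S^{1/q}/\m_S S^{1/q}) \le \sum_i (Cq^s)\,\la_S(S/(Q_i + \m_S)) + q^{s+\al(S)}\cdot 1 \le C' q^{s},
$$
using that $\al(S) = 0$ when the residue field is perfect (or absorbing the $q^{\al(S)}$ factor into the constant in the F-finite non-perfect case). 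Finally $\la_S(S^{1/q}/\m_S S^{1/q}) = \la_S(S/\m_S^{[q]}) \cdot q^{\al(S)}$ (or simply equals $\la_S(S/\m_S^{[q]})$ when $k$ is perfect), so $\la_S(S/\m_S^{[q]}) \le C'' q^s$. Tracking the constants back up through the finitely many primes $P_j$ in the prime filtration of $M$ and the finite length of that filtration yields a single constant $C$ that works for $M$, with the exponent being $\max_j \dim R/P_j = \dim M$.

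The main obstacle I anticipate is not the length bookkeeping but making sure the reductions to the F-finite complete-domain case are legitimate—in particular, that the residue-field extension used to invoke Dutta's lemma does not silently change $\dim M$ or $\la(M/IM)$ (it does not: base change along $R \to R \otimes_k k'$ or a gonflement is faithfully flat with zero-dimensional closed fibre), and that the constant produced is genuinely uniform in $e$, which it is because the finite set of primes $\{Q_i\}$ and the constant in Lemma~\ref{Frob-filter} are fixed once and for all. One should also note that the hypothesis $\m^{[q]} \subseteq I$ is exactly what makes the argument work for \emph{arbitrary} such $I$ simultaneously, since it lets every $I$ be compared to the single ideal $\m^{[q]}$.
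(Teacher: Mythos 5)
Your proof is correct, but it takes a substantially heavier route than the paper's. The paper's argument is a two-line observation that avoids completion, prime filtrations, and any F-finiteness reduction entirely: set $t = \mu(\m)$; then $\m^{tq} \subseteq \m^{[q]} \subseteq I$, so $M/IM$ is a quotient of $M/\m^{tq}M$ and hence $\la(M/IM) \leq \la(M/\m^{tq}M)$. The latter is, for $q \gg 0$, the Hilbert--Samuel polynomial of $M$ with respect to $\m^t$ evaluated at $q$, a polynomial of degree $\dim M$, so any $C$ larger than its leading coefficient (suitably padded) works. Your route instead completes and base-changes to the F-finite case, filters $M$ by prime cyclics, reduces to a complete local domain $S$, and invokes Dutta's Lemma~\ref{Frob-filter} to bound $\la_S(S/\m_S^{[q]})$ by counting prime cyclic factors in the filtration of $S^{1/q}$. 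There is no circularity (Lemma~\ref{Frob-filter} does not rely on Lemma~\ref{growth}), and your reductions are legitimate, so the argument goes through; but the direct comparison with $\m^{tq}$ is cleaner and, notably, proves the lemma in exactly the stated generality (arbitrary Noetherian local ring, no F-finiteness).

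Two minor points of precision in your write-up. First, after tensoring a prime filtration with $R/I$ one loses exactness, so length is only \emph{sub}additive across the filtration; that is the inequality you need, but your phrasing ``length is additive on short exact sequences'' elides the tensoring step. Second, in your displayed inequality the factors $\la_S(S/(Q_i + \m_S))$ are all equal to $1$ automatically, since each $Q_i \subseteq \m_S$, so the sum is simply the number of prime cyclic factors; worth saying explicitly, since otherwise the reader might wonder whether these colengths could themselves grow with $q$ (they cannot, as the $Q_i$ are fixed once and for all by Lemma~\ref{Frob-filter}).
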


\begin{proof} Set $t = \mu(\m)$. Since $\m^{tq} \inc  \m^{[q]}$, we see that $R/\m^{tq}\otimes_R M$ surjects onto $R/I\otimes_R M$.
Therefore $\la(R/I\otimes_R M) \leq \la(R/(\m^{tq}) \otimes_R M)$.
The Hilbert polynomial of $M$ with respect to $\m^t$ has degree $\dim(M)$. If the leading coefficient
of this polynomial is $c$, it is clear that any $C >> c$ satisfies the desired bound.  \end{proof}

\begin{lemma}\label{torsiontor}
Let $(R,\m,k)$ be a  local ring of dimension $d$ and
prime  characteristic $p$.
If $T$ is a finitely generated torsion $R$-module 
then there exists a constant $D$ such that  for all $q = p^e$, and for all $I$ containing $\m^{[q]}$, 
$\lambda(\tor^{R}_1(R/I,T)) \leq Dq^{d-1}$.
\end{lemma}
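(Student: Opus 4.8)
The statement asserts a uniform $O(q^{d-1})$ bound on $\la(\tor^R_1(R/I,T))$ for torsion modules $T$, with $I$ ranging over ideals containing $\m^{[q]}$. The natural strategy is to reduce to the case of a cyclic torsion module $R/Q$ with $\di(R/Q) \le d-1$ via a filtration argument, and then handle that case directly. First I would choose a prime filtration $0 = T_0 \subset T_1 \subset \cdots \subset T_s = T$ with factors $T_{i+1}/T_i \cong R/P_i$. Since $T$ is torsion, every $P_i$ is a nonminimal prime, hence $\di(R/P_i) \le d-1$. The long exact sequence of $\tor(R/I,-)$ applied to $0 \to T_i \to T_{i+1} \to R/P_i \to 0$ gives
\[
\la(\tor^R_1(R/I,T_{i+1})) \le \la(\tor^R_1(R/I,T_i)) + \la(\tor^R_1(R/I,R/P_i)),
\]
so by induction on $s$ it suffices to prove the bound for each cyclic factor $R/P_i$ with a constant depending only on $P_i$ (and then take $D$ to be the sum of these).

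For the cyclic case, fix a prime $Q$ with $\di(R/Q) \le d-1$. I would compute $\tor^R_1(R/I,R/Q)$ from the exact sequence $0 \to Q \to R \to R/Q \to 0$, which yields
\[
\tor^R_1(R/I,R/Q) \cong (Q \cap I)/(IQ) \;\hookrightarrow\; Q/IQ.
\]
Now $Q/IQ$ is a quotient of $Q/\m^{[q]}Q = R/\m^{[q]} \otimes_R Q$ (using $\m^{[q]} \subseteq I$), and $Q$ is a finitely generated module with $\di(Q) = d$ — so a naive application of Lemma \ref{growth} only gives $O(q^d)$, which is too weak. The fix is to not bound all of $Q/IQ$ but only the submodule $(Q\cap I)/IQ$: its image in $R/IQ$ lies in the kernel of $R/IQ \to R/I$, and more to the point $\tor^R_1(R/I,R/Q)$ is annihilated by $Q$ (it is an $R/Q$-module, being a subquotient of $\tor$ built from the $R/Q$-resolution side), so I would instead realize it as a subquotient of a module of dimension $\le d-1$. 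Concretely, $\tor^R_1(R/I, R/Q)$ is a module over $R/Q$ killed by $I$, hence a module over $R/(I+Q)$, and it is a homomorphic image of $\tor^R_1(R/\m^{[q]},R/Q)$. That last module is $(\m^{[q]} \cap Q)/\m^{[q]}Q$, which is a subquotient of $R/\m^{[q]} \otimes_R Q$ — still dimension $d$ — so the real leverage must come from the $R/Q$-module structure: apply Lemma \ref{growth} with the ring $R/Q$, the ideal $I(R/Q)$, and the $R/Q$-module $\tor^R_1(R/I,R/Q)$ itself, giving $\la \le C q^{\di(R/Q)} \le Cq^{d-1}$, provided one first checks that $\m^{[q]}(R/Q) \subseteq I(R/Q)$ and that the minimal number of generators of $\tor^R_1(R/I,R/Q)$ over $R/Q$ is bounded independently of $q$.

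The main obstacle is exactly this last point: controlling the number of generators of $\tor^R_1(R/I,R/Q)$ uniformly in $q$ so that Lemma \ref{growth}, applied over $R/Q$, produces a constant independent of $q$. Here I would use that $\tor^R_1(R/I,R/Q)$ is a subquotient of $Q/IQ$, whose minimal number of generators over $R$ (a fortiori over $R/Q$) is at most $\mu_R(Q)$, a fixed number; combined with $\di(R/Q) \le d-1$ and Lemma \ref{growth} over $R/Q$ this gives the cyclic bound, and summing over the filtration produces the desired constant $D$. An alternative, perhaps cleaner route is to invoke Lemma \ref{basicineq} (or just the crude part of Lemma \ref{growth}) after noting $\tor^R_1(R/I,R/Q)$ is annihilated by $I + Q$ and is generated by $\le \mu_R(Q)$ elements, so it is a quotient of $(R/(I+Q))^{\mu_R(Q)}$, and $\la(R/(I+Q)) \le \la((R/Q)/\m^{[q]}(R/Q)) = O(q^{\di R/Q})$ by Kunz-type growth in the ring $R/Q$ of dimension $\le d-1$; this avoids any delicate Tor bookkeeping.
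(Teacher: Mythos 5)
Your reduction via a prime filtration of $T$ to the cyclic case $T = R/Q$ with $\dim(R/Q)\le d-1$ is sound: every prime in a prime filtration of $T$ contains $\operatorname{ann}(T)$, and $\operatorname{ann}(T)$ contains a nonzerodivisor since $T$ is torsion, so each such prime has $\dim(R/Q)\le d-1$. The gap is in the cyclic case itself. You correctly flag that the sticking point is bounding the number of generators of $\tor_1^R(R/I,R/Q)$ uniformly in $q$ and $I$, but the claimed bound $\mu\bigl(\tor_1^R(R/I,R/Q)\bigr)\le\mu_R(Q)$ is false. The module $\tor_1^R(R/I,R/Q)\cong (I\cap Q)/IQ$ is a \emph{submodule} of $Q/IQ$, not a quotient, and submodules can require more generators than the ambient module. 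Concretely, let $R=k[x,y]_{(x,y)}$, $Q=(x)$ (so $\mu_R(Q)=1$), and $I=(x,y)^q\supseteq\m^{[q]}$. Then $(I\cap Q)/IQ = x(x,y)^{q-1}/x(x,y)^q\cong(x,y)^{q-1}/(x,y)^q$, which needs $q$ generators. (The lemma's conclusion still holds here since the length is $q=O(q^{d-1})$ with $d=2$, but not for the reason you gave.) Both of your proposed finishes --- applying Lemma~\ref{growth} over $R/Q$ to the varying module $\tor_1^R(R/I,R/Q)$, and writing $\tor_1$ as a quotient of $(R/(I+Q))^{\mu_R(Q)}$ --- fail at this step, since Lemma~\ref{growth} only applies to a fixed module, with a constant depending on that module.

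The paper's proof sidesteps the generator issue entirely by exploiting the nonzerodivisor from the start: pick a nonzerodivisor $c$ killing $T$, set $A=R/(c)$ (of dimension $d-1$), take an $A$-presentation $0\to N\to A^r\to T\to 0$, and tensor with $R/I$. Then $\la(\tor_1^R(R/I,T))\le\la(N/IN)+r\,\la(\tor_1^R(R/I,A))$; the first term is $O(q^{d-1})$ by Lemma~\ref{growth} since $N\subseteq A^r$ has dimension $\le d-1$, and $\tor_1^R(R/I,A)\cong (I:c)/I$ has length $\la(R/(I,c))=\la(A\otimes_R R/I)=O(q^{d-1})$, again by Lemma~\ref{growth}. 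Your filtration-to-cyclic structure could still be salvaged along the same lines: for $c\in Q$ a nonzerodivisor, the exact sequence $0\to Q/(c)\to R/(c)\to R/Q\to 0$ gives $\tor_1^R(R/I,R/(c))\to\tor_1^R(R/I,R/Q)\to (Q/(c))\otimes_R R/I$, and both outer terms are $O(q^{d-1})$ as just explained. But as written, the proposal has a genuine gap.
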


\begin{proof}  Choose a nonzerodivisor $c\in R$ which annihilates $T$, and consider an $R/(c) = A$ presentation
of $T$:
$$...\ra A^s\ra A^r\ra T\ra 0.$$ 
Let $N$ be the kernel of the surjection of $A^r$ onto $T$. Tensoring with $R/I$, we obtain an exact
sequence,
$$\tor_1^R(A^r,R/I)\ra \tor_1^R(T,R/I)\ra N/IN\ra (A/I)^r\ra T/IT\ra 0.$$
Since $N$ is torsion, Lemma \ref{growth} implies that the length of
$N/IN$ is bounded above by $Eq^{d-1}$, for some fixed constant $E$ depending
only on $N$. Thus it suffices to bound the length of $\tor_1^R(A^r,R/I)$. Notice that $r$
does not depend upon $q$ or $I$. Hence it suffices to bound the length of $\tor_1^R(A,R/I)$.
From the exact sequence $0\ra R\overset{c}\ra R\ra A\ra 0$, we obtain after tensoring with
$R/I$ that $\tor_1^R(A,R/I)\cong (I:c)/I$. 
However, the length of $(I:c)/I$ is the same as the length of $R/(I,c)$, and by Lemma \ref{growth}, this
length is bounded by $Gq^{d-1}$ for some constant $G$ depending only on $A$. \end{proof}

\begin{exercise}\label{Tor1}{\rm Prove Lemma \ref{torsiontor} with the modification that $\lambda(\tor^{R}_1(R/I,T))
\leq Dq^{\dim(T)}$
(this is not so easy).}
\end{exercise}

These lemmas have the following crucial consequence, which is a key point in the paper of Tucker \cite[Corollary 3.5]{Tu}:

\begin{cor}\label{uniformity}  Let $(R, \m, k)$ be a Noetherian local domain of dimension $d$
and prime characteristic $p$. Assume that $R$ is F-finite. There exists a
constant $C$ such that for all $q = p^e$ and all $q'= p^{e'}$ and for
all ideals $I$ containing $\m^{[q]}$,
$$ |\la(R/I^{[q']}) - (q')^{d+\al(R)}\la(R/I)|\leq C(q')^{d+\al(R)}q^{d-1}.$$
\end{cor}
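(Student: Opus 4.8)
The plan is to use the Dutta-style prime filtration of $R^{1/q'}$ from Lemma~\ref{Frob-filter}, applied to the ring $R$ but tensored against $R/I$, and then to control the error terms using the two growth lemmas above. First I would fix $I$ containing $\m^{[q]}$. By Lemma~\ref{Frob-filter} there is a constant $C_0$ and a fixed finite set of nonzero primes $Q_1,\dots,Q_n$ such that for every $q' = p^{e'}$ the module $R^{1/q'}$ admits a prime filtration with at most $C_0(q')^d$ copies of each $R/Q_i$ and exactly $(q')^{d+\al(R)}$ copies of $R$. (Strictly, the rank count of the free part is $(q')^{d+\al(R)}$; the copies of $R/Q_i$ absorb the rest.) Tensoring this filtration with $R/I^{[q']}$ — or more conveniently noting $\la(R^{1/q'}/I R^{1/q'}) = (q')^{\al(R)}\la(R/I^{[q']})$ from the discussion in the introduction — I would estimate $\la(R/I^{[q']})$, or rather $(q')^{\al(R)}\la(R/I^{[q']}) = \la(R^{1/q'}/IR^{1/q'})$, additively along the filtration.

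Next, the key numerical step: each free copy of $R$ in the filtration contributes exactly $\la(R/I)$ to $\la(R^{1/q'}/IR^{1/q'})$, for a total of $(q')^{d+\al(R)}\la(R/I)$, which is the main term after dividing out the harmless $(q')^{\al(R)}$. Each copy of $R/Q_i$ contributes $\la(R/(I + Q_i)) = \la(R/I \otimes_R R/Q_i)$; since $Q_i \neq 0$ and $R$ is a domain, $R/Q_i$ has dimension $\le d-1$, so Lemma~\ref{growth} (applied with $M = R/Q_i$, using $\m^{[q]}\inc I$) bounds this by $C_i q^{d-1}$ with $C_i$ independent of $q$ and $I$. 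But passing from a filtration to the length of the tensor product is not exact — tensoring is only right exact — so I must also account for the $\tor_1$ terms that arise when splicing. This is where Lemma~\ref{torsiontor} enters: for each torsion factor $R/Q_i$ in the filtration, $\la(\tor_1^R(R/I, R/Q_i)) \le D_i q^{d-1}$, again uniformly in $q$ and $I$. Walking up the filtration one step at a time and using the standard long exact sequence of $\tor$, the cumulative error from both the $R/Q_i$ contributions and the associated $\tor_1$ corrections is at most $(\text{number of such factors}) \times (\text{max per-factor bound}) \le C_0 (q')^d \cdot (\max_i(C_i + D_i)) \cdot q^{d-1}$. Dividing the whole inequality by $(q')^{\al(R)}$ gives the claimed bound $|\la(R/I^{[q']}) - (q')^{d+\al(R)}\la(R/I)| \le C (q')^{d+\al(R)} q^{d-1}$ for a suitable $C$.

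The main obstacle I anticipate is the bookkeeping of the non-exactness of tensoring over the filtration: one cannot simply add up the lengths $\la(R/I \otimes_R (M_{j+1}/M_j))$ and equate the sum to $\la(R/I \otimes_R R^{1/q'})$, because the short exact sequences $0 \to M_j \to M_{j+1} \to M_{j+1}/M_j \to 0$ only yield right-exact sequences after $\otimes R/I$. The correct statement is that $\la(R/I \otimes_R R^{1/q'})$ differs from $\sum_j \la(R/I \otimes_R (M_{j+1}/M_j))$ by an amount controlled by $\sum_j \la(\tor_1^R(R/I, M_{j+1}/M_j))$, and one needs Lemma~\ref{torsiontor} precisely to handle the factors with $M_{j+1}/M_j = R/Q_i$ torsion, while the free factors $M_{j+1}/M_j = R$ contribute no $\tor_1$. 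Once this telescoping estimate is set up carefully — being attentive that every constant produced by Lemmas~\ref{growth} and~\ref{torsiontor} depends only on the fixed module, not on $q$ or $I$ — the result follows. A secondary point to get right is the exact power of $q'$ attached to $\al(R)$: one should work with $R^{1/q'}$ throughout and convert to $R/I^{[q']}$ only at the end via $\la_R(R^{1/q'}/IR^{1/q'}) = (q')^{\al(R)}\la_R(R/I^{[q']})$, so that the $(q')^{\al(R)}$ factor tracks cleanly through both the main term and the error term.
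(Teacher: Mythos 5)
Your proposal reproduces the paper's argument: split $R^{1/q'}$ via Lemma~\ref{Frob-filter} into a free part of rank $(q')^{d+\al(R)}$ and a torsion remainder $T$ filtered by copies of the fixed primes $R/Q_i$, tensor the resulting short exact sequence with $R/I$, and bound the error terms $\la(R/(Q_i,I))$ and $\la(\tor_1^R(R/Q_i,R/I))$ uniformly in $q$ and $I$ by Lemmas~\ref{growth} and~\ref{torsiontor}. Your instinct to track the $(q')^{\al(R)}$ factor carefully is sound --- and one should tensor with $R/I$ rather than $R/I^{[q']}$, as you in fact do; after dividing $\la(R^{1/q'}/IR^{1/q'})=(q')^{\al(R)}\la(R/I^{[q']})$ by $(q')^{\al(R)}$ the main term comes out as $(q')^{d}\la(R/I)$, which is the normalization actually used in Remark~\ref{exists} (where $\al(R)=0$), so the exponents line up once that identification is made.
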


\begin{proof} Fix the constant $C$ and the primes $\{Q_1,...,Q_n\}$ as in the
statement of Lemma \ref{Frob-filter}.
Then for all $q'$ there is an exact sequence,
$$0\ra R^{(q')^{d+\al(R)}}\ra R^{1/q'}\ra T\ra 0,$$
where $T$ has a prime filtration by at most $C(q')^{d+\al(R)}$ copies of each $R/Q_i$.
Tensoring with $R/I$, we see that the difference of lengths,
$|\la(R/I^{[q']}) - (q')^{d+\al(R)}\la(R/I)|$ is bounded by the sum of $\la(T/IT)$ +
$\la(\tor_1^R(T,R/I))$. This sum in turn is bounded by 
$$\sum_{i=1}^{n}C(q')^{d+\al(R)}(\la(R/(Q_i,I)) + \la(\tor_1^R(R/Q_i,R/I)).$$
To prove the Corollary it suffices to prove that there is a constant $D$, not depending on
$q$, $q'$, or $I$ such that $\la(R/(Q_i,I))\leq Dq^{d-1}$ for each $i$, and
$\la(\tor_1^R(R/Q_i,R/I))\leq Dq^{d-1}$.  The existence of such a constant $D$ follows from Lemmas \ref{growth} and \ref{torsiontor}
respectively. \end{proof}

\begin{remark}\label{exists}{\rm We can now easily prove that the Hilbert-Kunz multiplicity exists for the ring itself and arbitrary
$\m$-primary ideals $I$ in the case $R$ is an F-finite domain.
To do the general case, however, requires a little more work which one needs in any case to deal with
additivity properties of the Hilbert-Kunz multiplicity. However, it is worth seeing this easy case deduced 
from the corollary. We may assume that $k$ is algebraically closed. Set $c_q = \la(R/\Iq)/q^d$. Apply Corollary \ref{uniformity} with $I$ replaced by $\Iq$. Divide by $(q'q)^d$. We obtain that  for all $q,q'$,
$$|c_{qq'}-c_q|\leq \frac{C}{q}.$$ This inequality forces the set of $c_q$ to be a Cauchy sequence, and hence they converge.}
\end{remark}

\begin{lemma}\label{minagree}  Let $(R, \m, k)$ be a Noetherian local reduced  ring of dimension $d$
and prime characteristic $p > 0$.
Let $P_1,\ldots, P_m$ be those minimal primes of $R$ with $\dim(R/P_i) = d$. If $M$ and $N$
are finitely
generated $R$-modules such that $M_{P_i} \cong N_{P_i}$ for each $i$, 
then there exists a positive constant
$C$ such that for all $e\geq 0$  and for every  ideal $I$ of $R$ with $\m^{[q]} \inc I$, where $q = p^e$, we have
$|\la(R/I \otimes_R M) - \la(R/I \otimes_R N)| \leq  Cq^{d-1}$.
\end{lemma}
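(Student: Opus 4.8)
The plan is to reduce the comparison of $\la(R/I\otimes_R M)$ and $\la(R/I\otimes_R N)$ to a statement about torsion modules, where the bound $Cq^{d-1}$ is exactly what Lemmas \ref{growth} and \ref{torsiontor} provide. First I would observe that it suffices to treat the case where $N$ is a free-ish ``approximation'' of $M$ up to torsion and lower-dimensional error: more precisely, since being localized-isomorphic at each $P_i$ is an equivalence relation, by symmetry and the triangle inequality I can compare both $M$ and $N$ to a single well-chosen module, or even compare $M$ directly to $N$ by choosing a map between them. Concretely, because $M_{P_i}\cong N_{P_i}$ for all the top-dimensional minimal primes, and $R$ is reduced so that $R_{P_i}$ is a field (a localization of a reduced ring at a minimal prime), I can pick a homomorphism $\varphi\colon M\to N$ (or after clearing denominators, build one) that becomes an isomorphism after localizing at each $P_i$. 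Then $K=\ker\varphi$ and $C'=\coker\varphi$ are finitely generated modules that vanish at each $P_i$, hence are supported in dimension $\le d-1$; in particular they are torsion (their annihilators are not contained in any top-dimensional minimal prime, and $R$ being reduced with $\operatorname{Spec}$ equal to the union of the $V(P_j)$, a nonzerodivisor exists... — more carefully, $\dim K,\dim C'\le d-1$).

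Next I would break $\varphi$ into a short exact sequence $0\to K\to M\to M/K\to 0$ and $0\to M/K\to N\to C'\to 0$, tensor each with $R/I$, and read off the resulting long exact sequences of $\tor$. Tensoring the first sequence with $R/I$ gives the exact sequence
$$
\tor_1^R(R/I,M/K)\to K/IK\to M/IM\to (M/K)\otimes_R(R/I)\to 0,
$$
so $|\la(M/IM)-\la((M/K)\otimes R/I)|$ is controlled by $\la(K/IK)+\la(\tor_1^R(R/I,M/K))$. By Lemma \ref{growth}, $\la(K/IK)\le C_1 q^{\dim K}\le C_1 q^{d-1}$. For the $\tor$ term, write $0\to K\to M\to M/K\to 0$ again and use that $\tor_1^R(R/I,M/K)$ surjects onto a submodule of $K/IK$ up to $\tor_1^R(R/I,M)$; handling $\tor_1^R(R/I,M)$ needs care, but I can instead apply Lemma \ref{torsiontor} after first noting that it is really $M/K\twoheadleftarrow M$ with torsion kernel, or more cleanly choose $\varphi$ so that one of the sequences is an injection and the other a surjection. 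The genuinely safe route: apply Lemma \ref{torsiontor} directly to the torsion modules $K$ and $C'$ via the sequences $0\to K\to M\to M/K\to 0$ and $0\to M/K\to N\to C'\to 0$, which sandwich $\la((M/K)\otimes R/I)$ between $\la(M/IM)$ and $\la(N/IN)$ with errors $\la(K/IK)$, $\la(C'/IC')$, $\la(\tor_1^R(R/I,C'))$, each $\le Dq^{d-1}$ by Lemmas \ref{growth} and \ref{torsiontor}. Summing the finitely many constants gives the claim.

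The main obstacle is the very first step: producing an honest $R$-linear map $\varphi\colon M\to N$ that is an isomorphism at every top-dimensional minimal prime $P_i$ simultaneously. Over the semilocalization $\prod R_{P_i}$ such an isomorphism exists by hypothesis, but pushing it down to a global map requires clearing denominators, which typically forces one to instead find $\varphi$ that becomes an isomorphism only after inverting a single element $s$ avoiding all $P_i$; then $\ker\varphi$ and $\coker\varphi$ are killed by a power of $s$, and since $s$ lies in no top-dimensional minimal prime, $\dim(R/sR)\le d-1$, so both are supported in dimension $\le d-1$ as needed. One must also be slightly careful that a torsion module in the sense needed by Lemma \ref{torsiontor} (annihilated by a nonzerodivisor) is available — but in a reduced ring, a module supported off every minimal prime of maximal dimension need not be annihilated by a nonzerodivisor if there are lower-dimensional minimal primes; the resolution is that Lemma \ref{growth} only needs $\dim\le d-1$, and for the $\tor_1$ contributions one can pass to $R/(\text{nilradical is }0)$ and localize, or invoke Exercise \ref{Tor1}'s sharper bound $\la(\tor_1^R(R/I,T))\le Dq^{\dim T}$, which is precisely tailored to avoid the nonzerodivisor hypothesis. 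I would use that sharper form to keep the argument clean.
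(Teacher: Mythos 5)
Your plan — build a map $\phi\colon M\to N$ that is an isomorphism after inverting all elements outside the top-dimensional minimal primes, then control $\ker\phi$ and $\coker\phi$ — starts in the same place as the paper, and your construction of $\phi$ (lift an isomorphism in $\Hom_{R_W}(M_W,N_W)\cong(\Hom_R(M,N))_W$, clear a single denominator) is exactly the paper's. Where you diverge is in the second half, and that divergence introduces a genuine gap.

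The paper never needs a $\tor_1$ term at all. The short exact sequence $M\to N\to\coker\phi\to 0$ tensored with $R/I$ stays right exact, giving $\la(N/IN)\le\la(M/IM)+\la\bigl(\coker\phi\otimes R/I\bigr)$, and the last term is $O(q^{d-1})$ by Lemma~\ref{growth} since $\coker\phi$ has dimension $<d$. The statement is symmetric in $M$ and $N$ (one can also produce $\psi\colon N\to M$ with $\psi_W$ an isomorphism, or simply swap the roles of $M$ and $N$), so the same argument gives $\la(M/IM)\le\la(N/IN)+O(q^{d-1})$, and combining the two one-sided estimates finishes the proof with only Lemma~\ref{growth} and no Tor terms whatsoever. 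Your proposal instead runs everything through the pair of sequences $0\to K\to M\to M/K\to 0$ and $0\to M/K\to N\to C'\to 0$ with a single $\phi$, which forces you to control $\la\bigl(\tor_1^R(R/I,C')\bigr)$.

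That is where the gap lies. You rightly observe that Lemma~\ref{torsiontor} requires $C'$ to be annihilated by a nonzerodivisor, and that this can fail for $\coker\phi$ when $R$ has minimal primes of dimension $<d$ (a quotient $R/Q$ with $Q$ a lower-dimensional minimal prime is supported off all the $P_i$ yet is killed by no nonzerodivisor). But your proposed remedy — ``invoke Exercise~\ref{Tor1}, which is precisely tailored to avoid the nonzerodivisor hypothesis'' — does not do what you claim. Exercise~\ref{Tor1} asks one to reprove Lemma~\ref{torsiontor}, with the \emph{same} hypothesis that $T$ is a torsion module, but with the sharper exponent $\dim T$ in place of $d-1$. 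It does not drop the torsion requirement, so it does not repair your argument. You could instead patch the Tor bound directly: for a lower-dimensional minimal prime $Q$, set $Q'=\bigcap_{Q_j\ne Q}Q_j$; the exact sequence $0\to R\to R/Q\oplus R/Q'\to R/(Q+Q')\to 0$ embeds $\tor_1^R(R/I,R/Q)$ in $\tor_1^R(R/I,R/(Q+Q'))$, and $R/(Q+Q')$ \emph{is} killed by a nonzerodivisor (its annihilator lies in no minimal prime) and has dimension $\le d-2$, so Lemma~\ref{torsiontor} applies to it. But the cleaner fix is simply to abandon the Tor route and use the symmetric cokernel argument as the paper does.
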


\begin{proof} Let $W = R\setminus (\cup_i P_i)$, so that  $R_W \cong  R_{P_1} \times \cdots \times R_{P_m}$,  and we have that
$M_W \cong  N_W$.
Since $(\Hom_R(M,N))_W \cong  \Hom_{R_W}(M_W, N_W)$, there is some $\phi \in  \Hom_R(M, N)$
such that $\phi_W$ is an isomorphism. Since $\coker(\phi)$ satisfies $\coker(\phi)_W = 0$ and thus has
dimension strictly smaller than $d$, we can find a positive constant $C$ such that for all $e\geq 0$ 
and for any ideal $I$ of $R$ which contains $\m^{[q]}$, we have that  $|\la(R/I \otimes_RR/\coker(\phi))| \leq  Cq^{d-1}$.
\end{proof} 

We use some well-known notation in the next few results.
Let $f,g: {\mathbb N}\rightarrow {\mathbb
R}$ be functions from the nonnegative integers to the real numbers.
Recall that  $f(n)= O(g(n))$ if there exists a positive
constant $C$ such that $|f(n)|\leq Cg(n)$ for all $n\gg 0$, and
we write $f(n)=o(g(n))$ if $\lim_{n \rightarrow \infty} f(n)/g(n)=0$.

\begin{prop}\label{HK-ses}  Let $(R, \m, k)$ be a Noetherian local ring of dimension $d$
and prime characteristic $p > 0$. 
$0\rightarrow N\rightarrow M\rightarrow K\rightarrow 0$ be a short exact
sequence of finitely generated $R$-modules. Then,
$$
\lambda(M/I^{[q]}M) = \lambda(N/I^{[q]}N) + \lambda(K/I^{[q]}K) + O(q^{d-1}).
$$
\end{prop}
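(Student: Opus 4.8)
The plan is to derive the additivity formula (up to $O(q^{d-1}))$ from the long exact sequence in Tor together with the estimates already in hand. Applying $-\otimes_R R/I^{[q]}$ to the short exact sequence $0\to N\to M\to K\to 0$ yields the exact sequence
\[
\tors[q]{1}{K}\ra N/I^{[q]}N\ra M/I^{[q]}M\ra K/I^{[q]}K\ra 0,
\]
so that $\la(M/I^{[q]}M)=\la(N/I^{[q]}N)+\la(K/I^{[q]}K)-\la(\text{image of }\tors[q]{1}{K}\text{ in }N/I^{[q]}N)$, and the error term is bounded in absolute value by $\len[q]{1}{K}$. Thus it suffices to show $\len[q]{1}{K}=O(q^{d-1})$; note $\m^{[q]}\inc I$ is exactly the hypothesis under which Lemma~\ref{growth} and Lemma~\ref{torsiontor} apply.

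First I would reduce to the case where $R$ is a domain, or at least reduced, since Lemma~\ref{torsiontor} is stated for a torsion module over a ring with a nonzerodivisor annihilating it, whereas $K$ here need not be torsion. The clean way is: pass to $\bar R = R/\sqrt{0}$ is not enough since $K$ is an $R$-module, not an $\bar R$-module; instead, filter $K$ by a prime filtration $0=K_0\subset K_1\subset\cdots\subset K_s=K$ with factors $R/P_j$. Additivity of $\tor_1$ on short exact sequences (again the long exact sequence) shows $\len[q]{1}{K}\le \sum_j \len[q]{1}{R/P_j}+O(q^{d-1})$, where the $O(q^{d-1})$ absorbs the contributions from the connecting maps into the lower lengths, which are controlled by Lemma~\ref{growth}. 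So it is enough to bound $\len[q]{1}{R/P}$ for each prime $P$ of $R$.

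For a single prime $P$: if $\dim(R/P)<d$, then $R/P$ is a finitely generated module of dimension $<d$, and one gets $\len[q]{1}{R/P}=O(q^{d-1})$ directly — e.g. by resolving $R/P$ one step, $R^a\to R^b\to R/P\to 0$ with kernel $Z$ of dimension $\le d-1$ or less, and applying Lemma~\ref{growth} to $\la(Z/I^{[q]}Z)$. The remaining case is $\dim(R/P)=d$, i.e. $P$ is a minimal prime of maximal dimension. Here I would use that $R/P$ is a torsion-free module over itself but the issue is torsion over $R$: choose $c\in R$ with $c\notin P$ but $c$ in every other minimal prime (possible since $P$ is minimal), so $c$ is a nonzerodivisor on $R/P$; then $\tor_1^R(R/P, R/I)$ need not vanish, but one can compare with the situation after killing $P$. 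A cleaner route: $R/P$ as an $R$-module has the same localization at $P$ as the free $(R/P)$-module of rank one, and one invokes Lemma~\ref{minagree} — but that lemma bounds $\la(R/I\otimes -)$, not $\tor_1$. So instead I would argue: take a short exact sequence $0\to R/P\to (R/P)^{1/p}\to T\to 0$? No — the direct argument is to resolve $R/P$ over $R$: since $P$ has a finite free presentation $R^a\xrightarrow{\phi} R^b\to R/P\to 0$, we get $\tor_1^R(R/P,R/I)=\ker(\bar\phi\colon (R/I)^a\to(R/I)^b)/\mathrm{im}(\ldots)$, embedded in $\tor$-terms controlled by the syzygy module $Z=\ker\phi$; and $\tor_1^R(R/P,R/I)$ is a subquotient of $Z\otimes R/I$ up to the image of $\tor_1^R(R^b,R/I)=0$, so $\len[q]{1}{R/P}\le \la(Z/I^{[q]}Z)$, and $\dim Z\le d$. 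This only gives $O(q^d)$, which is not good enough, so the honest fix is Exercise~\ref{Tor1}: that exercise asserts precisely $\len[q]{1}{T}=O(q^{\dim T})$ for torsion $T$, and with a little care the same method yields $\len[q]{1}{K}=O(q^{d-1})$ whenever $K$ has no associated prime of dimension $d$ — reducing the general $K$ to this case by splitting off the part supported at the maximal-dimensional minimal primes, where $\tor_1$ is handled by comparing $K$ with a torsion-free replacement as in Lemma~\ref{minagree}.

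The main obstacle is exactly this last point: bounding $\len[q]{1}{K}$ by $O(q^{d-1})$ rather than the naive $O(q^d)$. The naive resolution argument loses a power of $q$ because syzygy modules can have full dimension $d$. The resolution is to treat separately (i) the maximal-dimensional minimal primes of $R$, where one replaces $K$ by a module agreeing with it at those primes and for which $\tor_1$ against $R/I$ genuinely has a free resolution with lower-dimensional syzygies (using that the relevant localizations are Artinian), and (ii) everything else, which lives in dimension $\le d-1$ and is handled by Lemma~\ref{growth}. I expect this is essentially the content the author intends to invoke via Lemma~\ref{torsiontor} and Exercise~\ref{Tor1}, and I would structure the written proof to cite those and spend the bulk of the effort on the dimension bookkeeping for the connecting homomorphisms.
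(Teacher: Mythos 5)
Your approach — trying to bound $\lambda(\tors[q]{1}{K})$ directly via prime filtrations and resolutions — is genuinely different from the paper's and runs into exactly the obstacle you yourself identify: first syzygies over $R$ have full dimension $d$, so naive resolution arguments only give $O(q^d)$, not $O(q^{d-1})$. In fact your intermediate claim that resolving $R/P$ (for $\dim R/P < d$) gives a syzygy $Z$ of dimension $\leq d-1$ is false: $\ker(R^b \to R/P)$ is a rank-$b$ submodule of $R^b$ and has dimension $d$ whenever $R/P \neq 0$. And for $P$ a minimal prime of maximal dimension you correctly flag that neither Lemma~\ref{torsiontor} nor Lemma~\ref{minagree} applies off the shelf, but the sketched fix (``torsion-free replacement,'' ``Artinian localizations'') is not carried out, and this is precisely the hard part of the Tor route.

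The paper does something much more economical that sidesteps $\tor_1(K,-)$ entirely. First it reduces to $R$ reduced (by the same trick you would use elsewhere: pass to $R^{q'}$ where $(\text{nilrad}\,R)^{[q']}=0$, apply the reduced case, and note $O((qq')^{d-1}) = O(q^{d-1})$). Once $R$ is reduced, the crucial observation is that at each minimal prime $P$ of maximal dimension, $R_P$ is a \emph{field} (Artinian and reduced), so the short exact sequence $0 \to N_P \to M_P \to K_P \to 0$ splits. Hence $M_P \cong N_P \oplus K_P$ for every such $P$, and Lemma~\ref{minagree} applied to $M$ and $N\oplus K$ gives $|\lambda(M/I^{[q]}M) - \lambda((N\oplus K)/I^{[q]}(N\oplus K))| \leq Cq^{d-1}$, which is the statement. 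All the $\tor_1$ bookkeeping is already absorbed into Lemma~\ref{minagree}, where it is applied only to $\coker(\phi)$ — a \emph{torsion} module of dimension $< d$ (torsion because $W$ consists of nonzerodivisors when $R$ is reduced), so Lemma~\ref{torsiontor} applies cleanly. So: you needed the step that turns additivity into a comparison of two modules isomorphic at minimal primes, and the observation that ``reduced $\Rightarrow$ minimal-prime localizations are fields $\Rightarrow$ the SES splits there.'' Without that, the Tor-based route requires proving an estimate strictly stronger than any lemma currently in the paper.
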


\begin{proof} First suppose that $R$ is reduced. Then $M$ and $N\oplus K$
have isomorphic localizations at each minimal prime of $R$, and the claim
follows from Lemma \ref{minagree}.

If $R$ is not reduced, choose $q'$ such that (nilrad$(R))^{[q']}= 0$, and consider
the same exact sequence as a sequence of $R^{q'}$-modules. This ring is
reduced and applying the reduced case with the ideal $I^{[q']}\cap R^{q'}$ yields that
$$
\lambda(M/I^{[qq']}M) = \lambda(N/I^{[qq']}N) + \lambda(K/I^{[qq']}K) + O(q^{d-1}).
$$
Since $O(q^{d-1}) = O((qq')^{d-1})$, the Proposition is proved. 
\end{proof}

We are now able to prove the existence of the Hilbert-Kunz multiplicity:

\begin{theorem} \label{HKexists}  Let $(R, \m, k)$ be a Noetherian local ring of dimension $d$
and prime characteristic $p > 0$. 
 Let $M$ be a finitely generated
$R$-module, and let $I$ be an $\m$-primary ideal.  There is a real
constant $\al = e_{HK}(I,M)\geq 1$ such that $\lambda(M/I^{[q]}M) = \al q^d + O(q^{d-1})$.
If
$$
0\rightarrow N\rightarrow M\rightarrow K\rightarrow 0
$$
is a short exact
sequence of finitely generated $R$-modules, then
$$
e_{HK}(I,M) = e_{HK}(I,K) + e_{HK}(I,N).
$$
\end{theorem}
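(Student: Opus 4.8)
The plan is to reduce everything to the case of the ring itself ($M = R$) acting on a prime cyclic module, using the additivity up to lower-order terms (Proposition \ref{HK-ses}) together with the fact that Hilbert-Kunz multiplicity already exists for F-finite domains (Remark \ref{exists}). First I would reduce to the F-finite case: we may complete $R$, and then — since completing and enlarging the residue field to be perfect changes $\la(M/\Iq M)$ only by the factor $q^{\al(R)}$, which can be absorbed into the definition of $\al$ — we may assume $R$ is a complete local ring with perfect residue field, hence F-finite with $\al(R)=0$. (Here one must be a little careful to note that the completion $\widehat M$ satisfies $\la(\widehat M/\Iq\widehat M) = \la(M/\Iq M)$ and $\dim\widehat M = \dim M$, so no information is lost.)

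Next I would take a prime filtration of $M$, say $0 = M_0 \subset M_1 \subset \cdots \subset M_s = M$ with each $M_j/M_{j-1} \cong R/\p_j$ for primes $\p_j$. Iterating Proposition \ref{HK-ses} across the $s-1$ short exact sequences $0 \to M_{j-1} \to M_j \to R/\p_j \to 0$ gives
$$
\la(M/\Iq M) = \sum_{j=1}^{s} \la\bigl((R/\p_j)/\Iq(R/\p_j)\bigr) + O(q^{d-1}),
$$
so it suffices to establish the existence statement for each cyclic module $R/\p$. For $\p$ with $\dim(R/\p) < d$, Lemma \ref{growth} already bounds $\la((R/\p)/\Iq(R/\p)) = \la(R/(I+\p))$ by $C q^{\dim(R/\p)} = O(q^{d-1})$, contributing nothing to the leading term. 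For $\p$ with $\dim(R/\p) = d$, the ring $\bar R = R/\p$ is a complete local domain with perfect residue field, hence F-finite with $\al(\bar R) = 0$, and $I\bar R$ is $\bar R$-primary (or the unit ideal, a trivial case); by Remark \ref{exists}, $\la(\bar R/(I\bar R)^{[q]})/q^d$ converges to some $e_{HK}(I\bar R) \ge 1$. Collecting terms, $\la(M/\Iq M) = \al q^d + O(q^{d-1})$ with $\al$ equal to the sum of $e_{HK}(I R/\p_j)$ over those $\p_j$ of maximal dimension; this $\al \ge 1$ because at least one such $\p_j$ occurs (the support of $M$ has dimension $\dim M = d$, wait — more carefully: $\dim M = d$ forces at least one $\p_j$ with $\dim R/\p_j = d$, and each such term is $\ge 1$).

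Finally, the additivity statement for a short exact sequence $0 \to N \to M \to K \to 0$ is then immediate: by the just-proved existence, each of the three lengths is $e_{HK}(I,-)\,q^d + O(q^{d-1})$, and Proposition \ref{HK-ses} equates $\la(M/\Iq M)$ with $\la(N/\Iq N) + \la(K/\Iq K) + O(q^{d-1})$; comparing the $q^d$-coefficients gives $e_{HK}(I,M) = e_{HK}(I,N) + e_{HK}(I,K)$. The main obstacle I expect is the bookkeeping in the first reduction: one must verify carefully that passing to the completion and to a perfect-residue-field extension is harmless for the invariant (the $q^{\al(R)}$ factor must be tracked, or the residue field extended only after noting lengths scale predictably), and that a prime filtration of the completed module still has the property that maximal-dimensional prime quotients are complete F-finite domains — which is where the hypothesis that we have completed is essential, since it guarantees $R/\p$ is F-finite for the appeal to Remark \ref{exists}.
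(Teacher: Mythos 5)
Your proposal is essentially the paper's own proof: complete and enlarge the residue field to get an F-finite ring, take a prime filtration, reduce via Proposition~\ref{HK-ses} to cyclic modules $R/\p$, handle the lower-dimensional primes as $O(q^{d-1})$ error terms, and appeal to Remark~\ref{exists} for the top-dimensional complete domains; additivity then drops out of the existence statement plus Proposition~\ref{HK-ses}. One slip in the bookkeeping of your first reduction: you claim that completing and enlarging the residue field to be perfect ``changes $\la(M/\Iq M)$ only by the factor $q^{\al(R)}$.'' That is not what happens. A faithfully flat local extension $(R,\m) \to (R',\m')$ with $\m R' = \m'$ preserves lengths of finite-length modules exactly, so $\la(M/\Iq M)$ is unchanged; no factor appears and nothing needs to be ``absorbed.'' The factor $q^{\al(R)}$ arises in an entirely different comparison, namely between $\la_R(R^{1/q}/IR^{1/q})$ and $\la_R(R/I^{[q]})$ for an F-finite ring with imperfect residue field, and plays no role in the passage to the perfect-residue-field case. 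Since your parenthetical about $\widehat M$ gets the length-preservation right, this seems to be a momentary confusion rather than a gap in the argument, but as written it misstates the mechanism of the reduction.
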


\begin{proof} By making a faithfully flat extension there is no loss of generality
in assuming that $R$ is a complete local ring with algebraically closed
residue field. By taking a prime filtration of $M$ and using Proposition \ref{HK-ses} 
it suffices to do the case in which $M = R/P$ for some prime $P$ of $R$. Thus there
is no loss of generality in assuming that $R$ is an F-finite domain and $M = R$ in
proving the first assertion. The second assertion follows immediately from the
first assertion and Proposition \ref{HK-ses}.

To prove the existence, we are now in the case of Remark \ref{exists}, which finishes the
proof. \end{proof}

We often supress the $R$ in $e_{HK}(I,R)$ and just write $e_{HK}(I)$. When $I = \m$, we
set $e_{HK}(M) = e_{HK}(\m, M)$, and refer to this value as the \HK multiplicity of $M$.

\medskip

\begin{example}\label{HK-biz}{\rm Unlike the usual multiplicity, the Hilbert-Kunz multiplicity is typically
not an integer. The Hilbert-Kunz function can appear quite bizarre, at least to begin with.
For example, let $R = {\mathbb Z}/5{\mathbb
Z}[x_1,x_2,x_3,x_4]/(x_1^4+\cdots+x_4^4)$,
then with $I = (x_1,...,x_4)$, $\la(R/I^{[5^e]}) = \frac{168}{61}(5^{3e}) - \frac
{107}{61}(3^e)$ by
\cite{HaMo}. Note that $R$ is a 3-dimensional Gorenstein ring with isolated
singularity.}
\end{example}

Just as in the theory of usual multiplicity, it is now easy to prove some
basic remarks on the behavior of the \HK multiplicity. In particular,
the following additivity theorem is highly useful.

\begin{theorem}\label{associativity}
Let $(R,\m,k)$ be a local Noetherian ring of dimension $d$ and prime
characteristic $p$.
let $I$ be an $\m$-primary ideal,
and let $M$ be a finitely generated $R$-module.
Let $\Lambda$ be the set
of minimal prime ideals $P$ of $R$ such that dim$(R/P) = \dim(R)$.
Then
$$e_{HK}(I,M) = \sum_{P\in \Lambda}e_{HK}(I,R/P)\lambda(M_P).$$
\end{theorem}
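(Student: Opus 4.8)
The plan is to reduce the additivity formula for an arbitrary module $M$ to the already-established additivity of $e_{HK}$ over short exact sequences, combined with a filtration argument. First I would take a prime filtration of $M$,
$$0 = M_0 \subset M_1 \subset \cdots \subset M_t = M,$$
with each quotient $M_{j}/M_{j-1} \cong R/P_j$ for primes $P_j$ of $R$. By Theorem \ref{HKexists}, $e_{HK}(I,M) = \sum_{j=1}^{t} e_{HK}(I,R/P_j)$. So it suffices to show that for each prime $P_j$ appearing, the contribution $e_{HK}(I,R/P_j)$ matches the contribution of $R/P_j$ to the right-hand side, and then to sum these up.

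The two facts I would then invoke are: (1) if $P$ is a prime with $\dim(R/P) < d$, then $\lambda((R/P)/I^{[q]}(R/P)) = O(q^{d-1})$ by Lemma \ref{growth} applied to the module $R/P$ (whose dimension is $< d$), hence $e_{HK}(I,R/P) = 0$; such primes also localize to $0$ at every $P'\in\Lambda$ since $P \not\subseteq P'$ would have to fail, and in fact $(R/P)_{P'} = 0$ whenever $P \not\subseteq P'$, which holds for all minimal $P'$ of top dimension unless $P = P'$ — but if $P=P'$ then $\dim(R/P)=d$, contradiction. So primes of lower dimension contribute $0$ on both sides. (2) The primes $P_j$ with $\dim(R/P_j)=d$ must be minimal primes of $R$, i.e.\ $P_j \in \Lambda$. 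For these I need to count, for each fixed $P \in \Lambda$, the number $n_P$ of indices $j$ with $P_j = P$, and show $n_P = \lambda(M_P)$. This is the standard fact that in a prime filtration, the number of factors equal to $R/P$ for a minimal prime $P$ equals $\lambda_{R_P}(M_P)$: localize the filtration at $P$; each factor $(R/P_j)_P$ is either $0$ (if $P_j \neq P$, using minimality of $P$ so that $P_j \not\subseteq P$ whenever $P_j \neq P$ — here I use that $P$ is minimal) or $\kappa(P)$ (if $P_j = P$), so the localized filtration computes $\lambda(M_P) = n_P$.

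Putting it together:
$$e_{HK}(I,M) = \sum_{j=1}^t e_{HK}(I,R/P_j) = \sum_{P \in \Lambda} n_P\, e_{HK}(I,R/P) = \sum_{P\in\Lambda} \lambda(M_P)\, e_{HK}(I,R/P),$$
where the middle equality drops all lower-dimensional primes by fact (1) and groups the top-dimensional ones, and the last equality uses fact (2).

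The main obstacle — though it is really just a matter of care rather than depth — is handling the case where $R$ is not reduced or not a domain, so that a prime $P_j$ of dimension $d$ need not be minimal if $R$ has embedded primes; however, an embedded prime $P_j$ would satisfy $\dim(R/P_j) \le \dim(R/P)$ for some associated minimal prime $P \subsetneq P_j$, forcing $\dim(R/P_j) < d$ unless $P_j$ itself is minimal. The genuinely delicate point is the localization-counting argument for $n_P = \lambda(M_P)$: I must be sure that when I localize the prime filtration at a minimal prime $P \in \Lambda$, every factor $R/P_j$ with $P_j \neq P$ localizes to zero, which is exactly where minimality of $P$ is used ($P_j \subseteq P$ with $P$ minimal forces $P_j = P$). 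Everything else is bookkeeping with the $O(q^{d-1})$ error terms, which are absorbed uniformly using Proposition \ref{HK-ses} and the definition of $e_{HK}$.
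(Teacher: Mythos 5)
Your proof is correct and follows essentially the same route as the paper's: take a prime filtration, apply additivity of $e_{HK}$ from Theorem~\ref{HKexists}, discard factors $R/P_j$ with $\dim(R/P_j)<d$ (their Hilbert--Kunz multiplicity vanishes), and count the remaining factors by localizing at each top-dimensional minimal prime to get $n_P=\lambda(M_P)$. One small bookkeeping remark: Lemma~\ref{growth} as stated concerns ideals containing $\m^{[q]}$, so to deduce $e_{HK}(I,R/P)=0$ for a general $\m$-primary $I$ you should either pass to $\bar R = R/P$ and invoke the bound $\limsup\lambda(\bar R/\bar I^{[q]})/q^{\dim \bar R}\leq e(\bar I)$ from Lemma~\ref{basicineq}, or note that $\m^s\subseteq I$ yields $\m^{s\mu(\m^s)q}\subseteq I^{[q]}$ so the Hilbert-polynomial argument of Lemma~\ref{growth} still applies with an adjusted constant; either way the conclusion is immediate and your argument is sound.
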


\begin{proof}
By Theorem \ref{HK-ses}, \HK
multiplicity is additive on short exact sequences.
Fix a prime filtration of $M$,
say
$$0= M_0\subseteq M_1\subseteq M_2\subseteq \cdots\subseteq M_n = M$$
where $M_{i+1}/M_i\cong R/P_i$ ($P_i$ a prime) for all $0\leq i\leq n-1$.
As $e_{HK}(I,R/Q) = 0$ if dim$(R/Q) < \dim(R)$,
the additivity of multiplicity
applied to this filtration shows that $e_{HK}(I,M)$ is a sum of the
$e_{HK}(I,R/P)$ for $P\in \Lambda$,
counted as many times as $R/P$ appears as
some $M_{i+1}/M_i$.
We can count this by localizing at $P$.
In this case,
we have a filtration of $M_P$,
where all terms collapse except for those
in which $(M_{i+1}/M_i)_P\cong (R/P)_P$,
and the number of such copies is
exactly the length of $M_P$.  \end{proof}

\begin{cor}\label{corrank}
Let $(R,\m,k)$ be a local Noetherian domain of dimension $d$ and prime
characteristic $p$.
Let $I$ be an $\m$-primary ideal of $R$,
and $M$ a finitely generated $R$-module.
Then $e_{HK}(I,M) = e_{HK}(I,R)\,\rank_RM$.
\end{cor}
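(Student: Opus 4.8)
The plan is to read this off from the associativity formula of Theorem~\ref{associativity}. Since $R$ is a domain, its only minimal prime is $(0)$ and $\dim(R/(0)) = \dim R = d$, so the index set $\Lambda$ occurring in Theorem~\ref{associativity} is the singleton $\{(0)\}$. Specializing that formula to $M$ therefore gives
$$e_{HK}(I,M) = e_{HK}(I,R/(0))\,\lambda(M_{(0)}) = e_{HK}(I,R)\,\lambda(M_{(0)}),$$
and all that is left is to identify $\lambda(M_{(0)})$ with $\rank_R M$. But $M_{(0)}$ is a finitely generated module over the field $K := R_{(0)} = \operatorname{Frac}(R)$, hence a finite-dimensional $K$-vector space, and its length over $R_{(0)}$ is just $\dim_K M_{(0)}$, which is by definition $\rank_R M$. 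That finishes the proof.

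If one preferred not to quote Theorem~\ref{associativity} and argue from scratch, the plan would be: fix a prime filtration $0 = M_0 \subseteq M_1 \subseteq \cdots \subseteq M_n = M$ with $M_{i+1}/M_i \cong R/P_i$, apply additivity of $e_{HK}$ on short exact sequences (Theorem~\ref{HKexists}) to get $e_{HK}(I,M) = \sum_{i} e_{HK}(I,R/P_i)$, discard the terms with $P_i \neq (0)$ since $\hg P_i \geq 1$ forces $\dim R/P_i \leq d-1$ and hence $e_{HK}(I,R/P_i) = 0$, and finally count the indices with $P_i = (0)$ by localizing the filtration at $(0)$: the surviving factors are copies of $K$, so their number is $\dim_K M_{(0)} = \rank_R M$.

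There is really no obstacle here; the substance is entirely contained in Theorem~\ref{HKexists} and Theorem~\ref{associativity}. The only thing requiring a moment's care is the last bookkeeping step, namely that the number of copies of $R$ (equivalently of $R/(0)$) among the prime factors of a filtration of $M$ equals $\rank_R M$, and this is exactly the localization argument already carried out in the proof of Theorem~\ref{associativity}.
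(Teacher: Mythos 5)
Your argument is correct, but it takes a slightly different route than the paper. You derive the corollary from the full associativity formula of Theorem~\ref{associativity}, specializing the sum over minimal primes to the single prime $(0)$ of a domain and identifying $\lambda(M_{(0)})$ with $\rank_R M$. The paper instead applies Lemma~\ref{minagree} directly: it sets $r = \rank_R M$, notes that $M_{(0)} \cong K^r \cong (R^r)_{(0)}$, and concludes that $\lambda(M/I^{[q]}M)$ and $\lambda(R^r/I^{[q]}R^r) = r\,\lambda(R/I^{[q]})$ differ by $O(q^{d-1})$, so the normalized limits coincide. The paper's proof is a bit leaner since it compares $M$ to the explicit free module $R^r$ in one step, while yours goes through the more general machinery of prime filtrations packaged inside Theorem~\ref{associativity}; but since Theorem~\ref{associativity} precedes the corollary and is itself built on Lemma~\ref{minagree}, your invocation is legitimate and ultimately rests on the same foundations. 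Your second paragraph, which unwinds the prime-filtration argument by hand, is in effect a re-derivation of Theorem~\ref{associativity} in the domain case and is also correct.
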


\begin{proof} Recall that the rank of $M$ is by definition
the dimension of $M\otimes_RK$ over $K$,
where $K$ is the field of fractions of $R$.
We apply Lemma~\ref{minagree} with $W = R\setminus 0$:
if we set $r = \rank_RM$,
then $W^{-1}M\cong K^r\cong W^{-1}R^r$,
and the corollary follows.  \end{proof}

\begin{theorem}
\label{thmmultfiniteext}
Let $(R,\m,k)$ be a $d$-dimensional local Noetherian domain
of prime characteristic $p$, 
with field of fractions $K$, and
let $I$ be an $\m$-primary ideal.
Let $S$ be a module-finite extension domain of $R$
with field of fractions $L$.
Then
$$
e_{HK}(I,R) = \sum_{Q\in \Max(S), \dim S_Q = d}\frac{e_{HK}(IS_Q,S_Q)[S/Q:k]}{[L:K]}.
$$
\end{theorem}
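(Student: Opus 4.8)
The plan is to reduce everything to the additivity theorem (Theorem~\ref{associativity}) applied inside $S$, together with the rank formula (Corollary~\ref{corrank}), after passing to the completion. First I would like to reduce to the case where $S$ is local. The difficulty is that $S$ is semilocal but not local in general, so I cannot directly speak of its Hilbert-Kunz multiplicity; instead I would complete $R$ at $\m$, obtaining $\widehat{R}$, and base change $S$ to $\widehat{S} = S\otimes_R\widehat{R}$, which is a module-finite $\widehat{R}$-algebra that decomposes as a product $\prod_Q \widehat{S}_Q$ over the maximal ideals $Q$ of $S$ (equivalently, the maximal ideals of $S$ lying over $\m$). The key length-theoretic point is that colengths of Frobenius powers are unchanged under $\m$-adic completion, so $e_{HK}(I,R) = e_{HK}(I\widehat{R},\widehat{R})$, and similarly $\la(S/I^{[q]}S) = \sum_Q \la(\widehat{S}_Q/(I\widehat{S}_Q)^{[q]})$, with only those $Q$ for which $\dim \widehat{S}_Q = \dim S_Q = d$ contributing to the leading term (the others contribute $O(q^{d-1})$ by the dimension bound implicit in Lemma~\ref{growth}). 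So it suffices to prove $e_{HK}(I,R)[L:K] = \sum_{\dim S_Q = d} e_{HK}(IS_Q, S_Q)[S/Q:k]$, and I may assume $\widehat{R}$ is complete, even with algebraically closed residue field after a further harmless flat extension.

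Next I would compute $\la(S/I^{[q]}S)$ in two ways. On one hand, viewing $S$ as a finitely generated $R$-module and using Theorem~\ref{HKexists} (existence and additivity of $e_{HK}$ over $R$), we get $\la(S/I^{[q]}S) = e_{HK}(I,S_R)\,q^d + O(q^{d-1})$, where $S_R$ denotes $S$ regarded as an $R$-module; and since $R$ is a domain, Corollary~\ref{corrank} gives $e_{HK}(I,S_R) = e_{HK}(I,R)\,\rank_R S = e_{HK}(I,R)[L:K]$, because the generic rank of $S$ over $R$ is exactly $[L:K]$. On the other hand, completing and decomposing as above, $\la(S/I^{[q]}S) = \sum_Q \la(\widehat{S}_Q/(I\widehat{S}_Q)^{[q]}) = \sum_{\dim S_Q = d} e_{HK}(I\widehat{S}_Q, \widehat{S}_Q)\,q^d + O(q^{d-1})$, using the existence theorem applied to each local ring $\widehat{S}_Q$. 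Comparing leading coefficients already yields the formula except for the residue-field factor $[S/Q:k]$, which has not yet entered.

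The residue-field factor comes in because I have been conflating the Hilbert-Kunz multiplicity of $\widehat{S}_Q$ as computed over itself with its contribution measured by colength over $R$ — but in the decomposition $\widehat{S}_Q/(I\widehat{S}_Q)^{[q]}$ the length is taken over $\widehat{S}_Q$, whose residue field is $S/Q$, an extension of $k$ of degree $[S/Q:k]$ (finite since $S$ is module-finite over $R$). So $\la_R(\widehat{S}_Q/(I\widehat{S}_Q)^{[q]}) = [S/Q:k]\,\la_{\widehat{S}_Q}(\widehat{S}_Q/(I\widehat{S}_Q)^{[q]})$, and it is the $R$-length that appeared in the first computation via $\la(S/I^{[q]}S)$. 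Inserting this converts $\sum_Q \la_{\widehat{S}_Q}(\cdots)$ into $\sum_Q [S/Q:k]\,\la_{\widehat{S}_Q}(\cdots)$ when everything is measured over $R$, and matching the two expressions for the leading coefficient of $\la(S/I^{[q]}S)$ gives exactly
$$
e_{HK}(I,R)[L:K] = \sum_{Q\in\Max(S),\,\dim S_Q = d}[S/Q:k]\,e_{HK}(IS_Q,S_Q).
$$
Dividing by $[L:K]$ finishes the proof. The main obstacle I anticipate is the bookkeeping in the reduction to the complete local case: one must be careful that completion of $R$ does not change $e_{HK}(I,R)$, that $\widehat{S}$ really is the product of the $\widehat{S}_Q$ (which uses that $S$ is module-finite, hence $\m$-adically complete $\widehat{S}$ is a finite product of complete local rings), and that $e_{HK}(IS_Q, S_Q)$ is unaffected by the $\m$-adic versus $Q$-adic completion — this last point holds because $IS_Q$ is $Q$-primary and $S_Q$ is excellent (or one simply observes colengths of Frobenius powers are completion-invariant directly). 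Keeping track of which $Q$ have $\dim S_Q = d$, and why the rest contribute only lower-order terms, is the one place where the dimension hypotheses in Lemmas~\ref{growth} and \ref{torsiontor} must be invoked with care.
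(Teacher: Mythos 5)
Your argument is correct and follows essentially the same route as the paper: decompose $S/I^{[q]}S$ over the maximal ideals of $S$, use $e_{HK}(I,S) = e_{HK}(I,R)\,\rank_R S = e_{HK}(I,R)[L:K]$ (Corollary~\ref{corrank}, which is how the paper invokes Lemma~\ref{minagree}), and insert the factor $[S/Q:k]$ when converting $S_Q$-length to $R$-length, discarding the $Q$ with $\dim S_Q < d$ as lower-order. Two small points of criticism: the $\m$-adic completion is an unnecessary detour, since $S/I^{[q]}S$ is already Artinian and decomposes directly as $\prod_{Q\in\Max(S)} S_Q/I^{[q]}S_Q$ by the Chinese Remainder Theorem (this is exactly what the paper does, with no completion); and the passing remark that you may ``assume \dots algebraically closed residue field after a further harmless flat extension'' is \emph{not} harmless for this statement, since such an extension collapses every $[S/Q:k]$ to $1$ and can split the maximal ideals of $S$, so one cannot reduce to that case without separately tracking how both sides of the claimed identity transform. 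You never actually use that reduction in the body of the argument, so the proof stands as written, but the remark should be removed.
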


\begin{proof}
Since $W^{-1}S\cong W^{-1}R^{[L:K]}$,
we can apply Lemma~\ref{minagree} to conclude that $e_{HK}(I,S) = e_{HK}(I,R)[L:K]$.
On the other hand,
$$
e_{HK}(I,S) = \lim_{q \to \infty} {\lambda_R(S/I^{[q]}S)/q^d}.
$$
                                                           As every maximal ideal $Q$ of $S$ contains $\m S$,
the Chinese Remainder Theorem implies that
$S/I^{[q]}S\cong \prod_{Q\in \Max(S)} S_Q/I^{[q]}S_Q$.
In particular,
$\lambda_R(S/I^{[q]}S) = \sum_{Q\in \Max(S)} \lambda_R(S_Q/I^{[q]}S_Q)
= \sum_{Q\in \Max(S)} \lambda_{S_Q}(S_Q/I^{[q]}S_Q)[S/Q:k]$.
Therefore $e_{hk}(I,S)$ equals
$$
\lim_{q \to \infty}
\sum_{Q\in \Max(S)} {\lambda_{S_Q}(S_Q/I^{[q]}S_Q)[S/Q:k]/q^d}
=
\lim_{q \to \infty} \sum_{\dim S_Q = d}
{\lambda_{S_Q}(S_Q/I^{[q]}S_Q)[S/Q:k]/q^d}.
$$
Hence
$$
e_{HK}(I,R) = \sum_{Q\in \Max(S), \dim S_Q = d}\frac{e_{HK}(IS_Q,S_Q)[S/Q:k]}{[L:K]}.
$$
\end{proof}

\begin{example}\label{veronese}{\rm Consider the Veronese subring $R$ defined by 
\[ R = k[[X_1^{i_1}\cdots X_d^{i_d} \,|\,i_1,\ldots,i_d \ge 0, \sum i_j = r]].  \]
Applying Theorem \ref{thmmultfiniteext} to $R \hookrightarrow S = k[[x,y]]$, we get 
\begin{equation} e_{HK}(R) = \frac{1}{~r~} \binom{d+r-1}{r}.  \end{equation}
In particular, if $d =2$, $r =e(A)$, then $e_{HK}(R) = \frac{e(R)+1}{2}$.}
\end{example}

For other examples, consider the quotient singularities.

\begin{example}\label{quot-sing}{\rm  See \cite[Theorem 5.4]{WY1}. Let $S$ be a regular local ring
and suppose that $G$ is a finite group of automorphisms of $S$ with invariant ring $R$ with
maximal ideal $\m$. By Theorem \ref{thmmultfiniteext} and Exercise \ref{rlr-hk}, one sees
that $e_{HK}(R) = \frac{1}{|G|}\la(S/\m S)$. 

This formula is used, together with a lot more work, by Watanabe and Yoshida to  give the following formulas for
the \HK multiplicities of the famous double points below:  Let $(R,\m) = k[[x,y,z]]/(f)$ 
where $f$ is one of the following:
\medskip

\begin{tabular}{lllll}
\underline{type} & \underline{equation} & \underline{char $R$} & $e_{HK}(R)$ \\
($A_n$) & $f = xy+z^{n+1}$ & $p\geq 2$ & $2-1/(n+1)$ & ($n \geq 1$)\\
($D_n$) & $f = x^2+yz^2+y^{n-1}$ & $p \geq 3$ & $2-1/4(n-2)$ & ($n\geq4$) \\
($E_6$) & $f = x^2+y^3+z^4$ & $p\geq 5$ & $2-1/24$ \\
($E_7$) & $f = x^2+y^3+yz^3$ & $p \geq 5$ & $2-1/48$ \\
($E_8$) & $f = x^2+y^3+z^5$ & $p\geq 7$ & $2-1/120$
\end{tabular}
\smallskip

Each of these hypersurfaces is the invariant subring by a finite subgroup $G \inc SL(2, k)$ which acts on the
polynomial ring $k[x, y]$. We have that  $e_{HK}(R) = 2 - 1/|G|$; see \cite[Theorem
5.1]{WY1}.}
\end{example}

\begin{example}\label{cubics}{\rm Let $S = k[x,y,z]$ where $k$ is a field of characteristic at
least five. Let $h\in S$ be homogeneous of degree $3$. Set $R = S/(h)$, and let $\m = (x,y,z)R$.
If $h$ is smooth, then $e_{hk}(\m) = \frac{9}{4}$, while if $h$ is a nodal or cuspidal cubic, 
$e_{HK}(\m) = \frac{7}{3}$. This has been done various ways. Pardue in his thesis did the
nodal cubic; see also Buchweitz and Chen \cite{BC}, Brenner \cite{Br3}, Monsky \cite{Mo8}, 
and Trivedi \cite{Tr1}, and in characteristic $2$, \cite{Mo2}.}
 \end{example}

Here are a few more examples, which we leave as an exercise:

\begin{exercise}{\rm We consider quadric hypersurfaces in $\mathbb P^3$.
Let $k$ be a field of characteristic $p > 2$, and let
Let $R$ be one of the following rings$:$
\begin{equation}
\begin{cases}
k[[X,Y,Z,W]]/(X^2),    & \mbox{if}\;\; \rank(q) =1, \\
k[[X,Y,Z,W]]/(X^2-YZ), & \mbox{if}\;\; \rank(q) =2, \\
k[[X,Y,Z,W]]/(XY-ZW),  & \mbox{if}\;\; \rank(q) =3. \\
\end{cases}
\end{equation}
Prove that $\ehk(R) = 2$, $\frac{3}{~2~}$,
or $\frac{4}{~3~}$, respectively.}
\end{exercise}

For a long time it was thought that the \HK multiplicity would always be a rational number.
All the known examples were rational, e.g., for rings of finite Cohen-Macaulay type
(see \cite{Se}) or more generally $F$-finite type (\cite{SVB}, and \cite{Y2}), for many computed hypersurfaces, for binomial hypersurfaces
(\cite{Co}), for graded normal rings of dimension two (\cite{Br2},\cite{Tr1}), and others.
However, in recent years Monsky has given convincing evidence that this will not true, though
as of the writing of this paper, there is only overwhelming evidence, but not a proof. One example
given by Monsky is the following:

\begin{example}\label{irrational}{\rm 
Let $F$ be a finite field of characteristic 2 and $h =  x^3+y^3+xyz\in F[[x,y,z]]$. Then Monsky conjectures, with a huge amount of
evidence, that the \HK multiplicity of the hypersurface $uv+h = 0$ is $\frac{4}{3}+\frac{5}{14\sqrt{7}}$,
Even more, it appears that transcendental \HK multiplicities exist. We refer to \cite{Mo6} and \cite{Mo7} for details.}
\end{example}

\bigskip
\section{Hilbert-Kunz Multiplicity Equal to One}
\medskip
We begin this section with an easy, but crucial estimate on the size of
Hilbert-Kunz functions which was observed by Hanes \cite{Ha}.

\begin{lemma}\label{filter} Let $(R,\m,k)$ be a Noetherian local ring of
dimension $d$ and prime characteristic $p0$. Let $I \subseteq J$ be two ideals with $I$
$\m$-primary (we allow $J = R$). Then
$\lambda (R/I^{[q]}) \leq  \lambda (J/I) \cdot \lambda (R/\m^{[q]})+
\lambda(R/J^{[q]})$.  
\end{lemma}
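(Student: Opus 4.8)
The plan is to prove this by a filtration argument, passing through the intermediate ideal $J$ in stages. The key observation is that the module $R/I^{[q]}$ surjects onto $R/J^{[q]}$ with kernel $J^{[q]}/I^{[q]}$, so that $\lambda(R/I^{[q]}) = \lambda(R/J^{[q]}) + \lambda(J^{[q]}/I^{[q]})$. Thus everything reduces to bounding $\lambda(J^{[q]}/I^{[q]})$ by $\lambda(J/I)\cdot\lambda(R/\m^{[q]})$.

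First I would choose generators $b_1,\ldots,b_s$ of $J$ whose images give a composition series of $J/I$, so that $s = \lambda(J/I)$ (after possibly refining, we may take the successive quotients $(I+(b_1,\ldots,b_{j}))/(I+(b_1,\ldots,b_{j-1}))$ to be one-dimensional over $k$). Raising to $q$-th powers, $J^{[q]}$ is generated by $I^{[q]}$ together with $b_1^q,\ldots,b_s^q$. I would then filter $J^{[q]}/I^{[q]}$ by the submodules $(I^{[q]}+(b_1^q,\ldots,b_j^q))/I^{[q]}$ for $j = 0,1,\ldots,s$, so that $\lambda(J^{[q]}/I^{[q]})$ is the sum over $j$ of the lengths of the successive quotients, each of which is a cyclic module of the form $R/(I^{[q]}+(b_1^q,\ldots,b_{j-1}^q):b_j^q)$.

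The heart of the argument is then to show each such cyclic quotient has length at most $\lambda(R/\m^{[q]})$. For this I would argue that $(I^{[q]} : b_j^q) \supseteq \m^{[q]}$: indeed, since $\m b_j \subseteq I + (b_1,\ldots,b_{j-1})$ by the choice that the image of $b_j$ spans the one-dimensional quotient $(I+(b_1,\ldots,b_j))/(I+(b_1,\ldots,b_{j-1}))$ over $k = R/\m$, raising to $q$-th powers and using $(x+y)^q = x^q + y^q$ in characteristic $p$ gives $\m^{[q]}b_j^q \subseteq I^{[q]} + (b_1^q,\ldots,b_{j-1}^q)$. Hence the annihilator of the cyclic module contains $\m^{[q]}$, so its length is bounded by $\lambda(R/\m^{[q]})$. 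Summing over the $s = \lambda(J/I)$ steps gives $\lambda(J^{[q]}/I^{[q]}) \leq \lambda(J/I)\cdot\lambda(R/\m^{[q]})$, and adding $\lambda(R/J^{[q]})$ finishes the proof.

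The main obstacle, and the point requiring care, is the interplay between the composition series of $J/I$ and the Frobenius: one must set up the generators $b_j$ so that the linear relations $\m b_j \subseteq I + (b_1,\ldots,b_{j-1})$ hold exactly, and then be careful that applying Frobenius to these containments (which is what lets us replace $\m$ by $\m^{[q]}$ and $b_j$ by $b_j^q$) is legitimate — this is precisely where additivity of the Frobenius, $(x+y)^q = x^q+y^q$, is used. Everything else is bookkeeping with lengths of cyclic modules and telescoping a filtration.
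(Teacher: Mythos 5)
Your proof is correct and takes essentially the same approach as the paper: filter $J/I$ by a composition series with one-dimensional quotients, apply Frobenius to the filtration, and observe that each successive quotient of the $q$-th bracket filtration is cyclic with annihilator containing $\m^{[q]}$. The only minor slip is writing ``$(I^{[q]}:b_j^q)\supseteq\m^{[q]}$'' when you mean $\bigl(I^{[q]}+(b_1^q,\ldots,b_{j-1}^q)\bigr):b_j^q\supseteq\m^{[q]}$, but your surrounding reasoning clearly establishes and uses the correct containment.
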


\begin{proof} Set $s = \lambda (J/I)$. Take a filtration of $I
\subseteq J \subseteq R$
$$
I=J_0 \subsetneq J_1 \subsetneq J_2 \subsetneq \cdots  \subsetneq J_s
= J  \subseteq R
$$
so that $\lambda (J_i/J_{i-1}) =1$ i.e. $J_i/J_{i-1} \cong R/\m, \
\forall i=1,2,\dots,s.$ That is to say $J_i = (J_{i-1}, x_i)$ for some
$x_i \in J_i$ such that $J_{i-1}:x_i = \m.$

For every $q=p^e,$ there is a corresponding filtration of $I^{[q]}
\subseteq J^{[q]} \subseteq R$
$$
I^{[q]}=J_0^{[q]} \subseteq J_1^{[q]} \subseteq J_2^{[q]} \subseteq
\cdots  \subseteq J_s^{[q]} = J^{[q]}  \subseteq R,
$$
where $J_i^{[q]}/J_{i-1}^{[q]} \cong R/(J_{i-1}^{[q]}: x_i^q)$, which is
a homomorphic image of $R/m^{[q]},$ for every $i=1,2,\dots, s.$ So
$\lambda (J_i^{[q]}/J_{i-1}^{[q]}) \leq \lambda (R/\m^{[q]}).$ Therefore
$\lambda (R/I^{[q]}) \leq  \lambda (J/I) \cdot \lambda (R/\m^{[q]})+
\lambda(R/J^{[q]}).$ \end{proof}

\begin{cor}\label{est}  Let $(R,\m,k)$ be a Noetherian local ring of dimension $d$ and prime
characteristic $p$. Let $I$ be a $\m$-primary ideal of $R$. Then
$\lambda (R/I^{[q]}) \leq  \lambda (R/I) \cdot \lambda (R/\m^{[q]}).$
If $I\inc J$ then $e_{HK}(I,R)\leq \la(J/I)e_{HK}(R) + e_{HK}(J,R)$.
\end{cor}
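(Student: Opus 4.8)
The plan is to deduce both inequalities directly from Lemma~\ref{filter}, combined with the existence of the Hilbert-Kunz multiplicity established in Theorem~\ref{HKexists}. No new ingredients are needed; the corollary is essentially a repackaging of the lemma.

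First I would dispose of the inequality $\la(R/I^{[q]}) \leq \la(R/I)\cdot\la(R/\m^{[q]})$ by applying Lemma~\ref{filter} with the choice $J = R$. Since $R^{[q]} = R$, the error term $\la(R/J^{[q]})$ vanishes, while $\la(J/I) = \la(R/I)$, which is finite because $I$ is $\m$-primary. This gives the first assertion with nothing further to check. For the second assertion, I would apply Lemma~\ref{filter} to the pair $I \inc J$ as stated, obtaining $\la(R/I^{[q]}) \leq \la(J/I)\la(R/\m^{[q]}) + \la(R/J^{[q]})$ for every $q = p^e$. Note that because $\sqrt{I} = \m$ and $I \inc J$, the ideal $J$ is automatically $\m$-primary or else equal to $R$; in the latter degenerate case $J^{[q]} = R$ and $e_{HK}(J,R) = 0$, so the claimed inequality reduces to the first assertion.

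Assuming then that $J$ is $\m$-primary, I would divide the displayed inequality by $q^d$ and let $q \to \infty$. By Theorem~\ref{HKexists} the quotients $\la(R/I^{[q]})/q^d$, $\la(R/\m^{[q]})/q^d$, and $\la(R/J^{[q]})/q^d$ converge to $e_{HK}(I,R)$, $e_{HK}(R)$, and $e_{HK}(J,R)$ respectively, and since $\la(J/I)$ is a constant independent of $q$ the limit of the right-hand side is $\la(J/I)e_{HK}(R) + e_{HK}(J,R)$. This yields $e_{HK}(I,R) \leq \la(J/I)e_{HK}(R) + e_{HK}(J,R)$, as desired. I do not expect any genuine obstacle: the only thing requiring attention is the trivial bookkeeping that separates the case $J = R$ from the case where $J$ is $\m$-primary, after which everything follows by taking limits in Lemma~\ref{filter}.
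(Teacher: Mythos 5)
Your proof is correct and follows essentially the same route the paper takes: the first inequality is Lemma~\ref{filter} with $J=R$, and the second is obtained by dividing the lemma's conclusion by $q^d$ and passing to the limit. The extra remark about the degenerate case $J=R$ versus $J$ being $\m$-primary is harmless bookkeeping that the paper elides.
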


\begin{proof} To prove the first statement, we take $J = R$ and apply Lemma \ref{filter}.
For the second statement, the Corollary  follows from Lemma \ref{filter} by
dividing by $q^{d}$ and then taking the limits. \end{proof}

Our goal is to prove that regularity is characterized by the \HK multiplicity being one,
if the ring is \unmixed. This condition is necessary by the easy exercise below. Our
treatment is taken directly from \cite{HY}.

\begin{exercise} Let $R = k[[x,y,z]]/(xz,xy)$, where $k$ is a field of characteristic $p$.
Prove that $e_{HK}(R) = 1$.
\end{exercise}

\begin{theorem}\label{thminequality}  Let $(R,\m)$ be a Noetherian local ring of dimension $d$ and prime
characteristic $p$. Let $J$ be an ideal such that $\dim R/J =1$ and $\hg J = d-1$.
Assume that $x\in R$ is a  non-zerodivisor
in $R/J$, and set $I = (J,x)$. Assume that $R_P$ is regular for every minimal
prime $P$ above $J$. Then
$ e_{HK}\left(I, R\right) \geq \lambda (R/I).$
\end{theorem}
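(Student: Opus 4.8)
The plan is to prove the sharper assertion that $\lambda(R/I^{[q]})\ge q^{d}\lambda(R/I)$ for \emph{every} $q=p^{e}$; dividing by $q^{d}$ and letting $q\to\infty$ then gives $e_{HK}(I,R)\ge\lambda(R/I)$, since the limit defining $e_{HK}(I,R)$ exists by Theorem~\ref{HKexists}. The idea is to read $\lambda(R/I^{[q]})$ off the one-dimensional module $A_{q}:=R/J^{[q]}$. Since $I^{[q]}=J^{[q]}+(x^{q})$ we have $R/I^{[q]}=A_{q}/x^{q}A_{q}$, and as $\sqrt{J^{[q]}}=\sqrt{J}$ the ring $A_{q}$ is one-dimensional, while $A_{q}/x^{q}A_{q}$ has finite length, so $(x^{q})$ is a parameter ideal for $A_{q}$. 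Thus, by the standard inequality comparing the colength of a parameter ideal with its Hilbert--Samuel multiplicity (elementary in dimension one: for a parameter $a$ on a $1$-dimensional module $M$, $\lambda(M/a^{n}M)\le n\lambda(M/aM)$, so $e((a);M)\le\lambda(M/aM)$; cf.\ \cite{BH}),
\[
\lambda(R/I^{[q]}) \;=\; \lambda_{A_{q}}\!\big(A_{q}/x^{q}A_{q}\big) \;\ge\; e\big((x^{q});A_{q}\big).
\]

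Next I would evaluate $e\big((x^{q});A_{q}\big)$ by the associativity formula for multiplicities (cf.\ \cite{BH}). Let $\Lambda$ be the set of minimal primes $P$ of $J$ with $\dim R/P=1$; each such $P$ has $\hg P=d-1$, from $d-1=\hg J\le\hg P\le d-\dim R/P=d-1$, and $\Lambda$ is exactly the set of minimal primes of $A_{q}$ of maximal dimension $1$. Applying associativity over the local ring $A_{q}$ with the $\m$-primary ideal $(x^{q})$, and noting that each $R/P$ is a one-dimensional domain on which $x$ is a parameter (so $e\big((x^{q});R/P\big)=q\,e\big((x);R/P\big)$), gives
\[
e\big((x^{q});A_{q}\big)=\sum_{P\in\Lambda}\lambda_{R_{P}}\!\big(R_{P}/J_{P}^{[q]}\big)\, e\big((x^{q});R/P\big)= q\sum_{P\in\Lambda}\lambda_{R_{P}}\!\big(R_{P}/J_{P}^{[q]}\big)\, e\big((x);R/P\big).
\]
Here the hypothesis is used: for $P\in\Lambda$ the ring $R_{P}$ is regular of dimension $d-1$ and $J_{P}$ is $PR_{P}$-primary, so Exercise~\ref{rlr-hk} gives $\lambda_{R_{P}}(R_{P}/J_{P}^{[q]})=q^{d-1}\lambda_{R_{P}}(R_{P}/J_{P})$. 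Hence $e\big((x^{q});A_{q}\big)=q^{d}\sum_{P\in\Lambda}\lambda_{R_{P}}(R_{P}/J_{P})\, e\big((x);R/P\big)$.

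Finally I would identify the remaining sum with $\lambda(R/I)$. Associativity applied once more, over the one-dimensional local ring $R/J$ with the $\m$-primary ideal $(x)$, gives $e\big((x);R/J\big)=\sum_{P\in\Lambda}\lambda_{R_{P}}(R_{P}/J_{P})\, e\big((x);R/P\big)$; and since $x$ is a nonzerodivisor on $R/J$, the graded pieces satisfy $x^{j}(R/J)/x^{j+1}(R/J)\cong (R/J)/x(R/J)$, so $e\big((x);R/J\big)=\lambda\big((R/J)/x(R/J)\big)=\lambda(R/I)$. Combining the three displays yields $\lambda(R/I^{[q]})\ge e\big((x^{q});A_{q}\big)=q^{d}\lambda(R/I)$, which finishes the proof. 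The only step that is not bookkeeping is the first inequality: $R/J^{[q]}$ can acquire embedded primes, so $x^{q}$ need not be a nonzerodivisor on it and one cannot hope to compute $\lambda(R/I^{[q]})$ exactly this way — but such primes only add length, so the colength-versus-multiplicity bound is precisely what the argument wants, and everything else reduces to the associativity formula plus Exercise~\ref{rlr-hk} at the regular local rings $R_{P}$.
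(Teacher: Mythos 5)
Your argument is correct and follows the same route as the paper's: both pass from $\lambda(R/I^{[q]})$ to $e(x^{q};R/J^{[q]})$ via the colength-versus-multiplicity bound for parameter ideals in dimension one, unwind that multiplicity with the associativity formula over the minimal primes of $R/J$, use regularity of the $R_P$ to evaluate $\lambda_{R_P}(R_P/J_P^{[q]})=q^{d-1}\lambda_{R_P}(R_P/J_P)$, and reassemble via associativity to get $e(x;R/J)=\lambda(R/I)$. The only cosmetic difference is that you carry out the bookkeeping for a fixed $q$ to obtain the pointwise inequality $\lambda(R/I^{[q]})\ge q^{d}\lambda(R/I)$ and divide at the end, whereas the paper threads the limit through each equality.
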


\begin{proof} Using the properties of the usual multiplicity of
parameter ideals, the associativity formula for the usual multiplicity,
and we have
$$e_{HK}(I,R) = \lim_{q \to \infty}\frac 1{q^d} \cdot \lambda
(R/I^{[q]}) =
\lim_{q \to \infty}\frac 1{q^d} \cdot \lambda (R/(J^{[q]},x^q))$$
$$\geq \lim_{q \to \infty}\frac 1{q^d} \cdot e(x^q; R/J^{[q]}) =
\lim_{q \to \infty}\frac q{q^d} \cdot e(x; R/J^{[q]}) =
\lim_{q \to \infty}\frac 1{q^{d-1}} \cdot e(x; R/J^{[q]})$$
$$=\lim_{q \to \infty}\frac 1{q^{d-1}} \cdot \sum_{P \in \min(R/J)}
e(x;R/P) \cdot \lambda_{R_P}(R_P/J_P^{[q]})$$
$$=\lim_{q \to \infty}\frac 1{q^{d-1}} \cdot \sum_{P \in \min(R/J)}
e(x;R/P) \cdot q^{d-1} \cdot \lambda_{R_P}(R_P/J_P) $$
$$=\lim_{q \to \infty} \sum_{P \in \min(R/J)}
e(x;R/P) \cdot \lambda_{R_P}(R_P/J_P)$$
$$=\sum_{P \in \min(R/J)}e(x;R/P) \cdot \lambda_{R_P}(R_P/J_P)=
e(x;R/J)=\lambda (R/(J,x))
= \lambda (R/I).$$ 
\end{proof}

Observe that after we prove that $e_{HK}(R) = 1$ implies
the regularity of $R$, then regularity forces $e_{HK}(I) = \lambda(R/I)$ for
all $\m$-primary ideals $I$, by using the work above.

\medskip
A critical step in proving the main result of this section is in constructing an
$\m$-primary ideal $I\inc \m^{[p]}$ such that $e_{HK}(I)\geq \la(R/I)$.
This was proved by Watanabe and Yoshida \cite[Theorem 1.5]{WY1}
but in a different way than is done here.

\begin{theorem}\label{HK=1} Let $(R,\m,k)$ be a formally unmixed
Noetherian
local ring of dimension $d$ and prime characteristic $p$.
Then $e_{HK}(R)=1$ if and only if  $R$ is regular.
\end{theorem}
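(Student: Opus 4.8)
The plan is to prove the nontrivial direction: assuming $(R,\m,k)$ is formally unmixed with $e_{HK}(R) = 1$, deduce that $R$ is regular. Since $e_{HK}(R)$ is unchanged by completion and by passing to a faithfully flat extension with algebraically closed residue field, and since formal unmixedness is preserved under completion, I may assume at the outset that $R$ is complete, $F$-finite, $\al(R) = 0$, and $k$ is algebraically closed. I would also first handle a reduction: if $d = \dim R = 0$ the statement is immediate, so assume $d \geq 1$.

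The first main step is to show that under these hypotheses $R$ must be a domain, and in fact that $e_{HK}(R) = 1$ forces $R$ to be unmixed in the strong sense that $R$ has a unique minimal prime and is equidimensional. Here is where the additivity theorem, Theorem \ref{associativity}, is the key tool: writing $e_{HK}(R) = \sum_{P \in \Lambda} e_{HK}(R/P)\lambda(R_P)$ where $\Lambda$ is the set of minimal primes with $\dim R/P = d$, and knowing each $e_{HK}(R/P) \geq 1$, the equation $e_{HK}(R) = 1$ forces $\Lambda$ to be a single prime $P$ with $\lambda(R_P) = 1$ and $e_{HK}(R/P) = 1$. Formal unmixedness then says every associated prime of $R$ has dimension $d$, so in fact $P$ is the unique associated prime, $R_P$ is a field, and the nilradical of $R$ localizes to zero at $P$ — combined with unmixedness this should give that $R$ is reduced, hence a domain $R = R/P$.

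The second main step is the heart of the argument, and I expect it to be the main obstacle: producing an $\m$-primary ideal $I \subseteq \m^{[p]}$ with $e_{HK}(I) \geq \lambda(R/I)$, as flagged in the text just before the theorem. The idea is to apply Theorem \ref{thminequality}: choose a prime $P$ with $\dim R/P = 1$ and $\hg P = d-1$ such that $R_P$ is regular (possible since $R$ is a domain, so generic regularity holds on the punctured spectrum appropriately, or one chooses $P$ avoiding the singular locus), pick $J$ to be an appropriate power or Frobenius power related to $P$ so that $R_P$ being regular makes $R_P/J_P$ have the right colength, pick $x$ a nonzerodivisor mod $J$, and arrange $I = (J,x) \subseteq \m^{[p]}$ by taking $J$ and $x$ to lie in $\m^{[p]}$ (e.g. replacing generators by high Frobenius powers). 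Theorem \ref{thminequality} then yields $e_{HK}(I, R) \geq \lambda(R/I)$.

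Finally, I would combine this with Lemma \ref{filter} / Corollary \ref{est}. Applying Corollary \ref{est} with the pair $\m^{[p]} \subseteq I$ — wait, the inclusion goes the other way, so instead apply it to $I \subseteq \m^{[p]} \subseteq R$, or more directly use Lemma \ref{filter} with the ideals $I \subseteq \m^{[p]}$: this gives $\lambda(R/I^{[q]}) \leq \lambda(\m^{[p]}/I)\lambda(R/\m^{[q]}) + \lambda(R/(\m^{[p]})^{[q]})$. Dividing by $q^d$, letting $q \to \infty$, and using $(\m^{[p]})^{[q]} = \m^{[pq]}$ together with the scaling $\lambda(R/\m^{[pq]})/q^d \to p^d e_{HK}(R)$, one obtains
\[
e_{HK}(I) \leq \lambda(\m^{[p]}/I)\, e_{HK}(R) + p^d e_{HK}(R).
\]
Since $\lambda(\m^{[p]}/I) = \lambda(R/I) - \lambda(R/\m^{[p]})$ and (using regularity would give $\lambda(R/\m^{[p]}) = p^d$, but we do not yet know regularity) we instead use Theorem \ref{kunzthm} which gives $\lambda(R/\m^{[p]}) \geq p^d$, we get $\lambda(\m^{[p]}/I) \leq \lambda(R/I) - p^d$, hence
\[
\lambda(R/I) \leq e_{HK}(I) \leq \big(\lambda(R/I) - p^d\big) e_{HK}(R) + p^d e_{HK}(R) = \lambda(R/I) e_{HK}(R).
\]
This is consistent with $e_{HK}(R) = 1$ and gives equality $\lambda(R/\m^{[p]}) = p^d$ only if the inequalities are tight; tracking the slack in Theorem \ref{kunzthm}'s inequality forces $\lambda(R/\m^{[p]}) = p^d$, and then Theorem \ref{kunzthm} immediately yields that $R$ is regular. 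The delicate point throughout — the real obstacle — is arranging the ideal $I \subseteq \m^{[p]}$ so that simultaneously $e_{HK}(I) \geq \lambda(R/I)$ holds (via the regularity-in-codimension-one input to Theorem \ref{thminequality}) and the comparison with $\m^{[p]}$ is sharp enough to extract $\lambda(R/\m^{[p]}) = p^d$; this requires choosing $J$ and $x$ with care so that no length is wasted.
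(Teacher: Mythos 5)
Your overall strategy coincides with the paper's: reduce to a complete domain via additivity (Theorem~\ref{associativity} and formal unmixedness), build an $\m$-primary $I\subseteq\m^{[p]}$ with $e_{HK}(I)\geq\la(R/I)$ from Theorem~\ref{thminequality}, and then squeeze with Lemma~\ref{filter}/Corollary~\ref{est} to force $\la(R/\m^{[p]})=p^d$, whence Theorem~\ref{kunzthm} gives regularity. The domain reduction and the final length chase are correct and match the paper.

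The one genuine gap is in the step you yourself flag as ``the main obstacle.'' You propose to arrange $J\subseteq\m^{[p]}$ by taking ``an appropriate power or Frobenius power'' of the chosen height $d-1$ prime $P$ with $R_P$ regular, e.g.\ $J=P^{[q]}$ or $J=P^n$ for $n\gg0$. The containment $J\subseteq\m^{[p]}$ is then easy, but the hypothesis of Theorem~\ref{thminequality} that you glide past is that $x$ must be a \emph{nonzerodivisor} on $R/J$; this is used at the very end of that proof, where $e(x;R/J)=\la(R/(J,x))$. If $\m$ is an associated prime of $R/J$ then \emph{no} element of $\m^{[p]}$ can be a nonzerodivisor on $R/J$, and neither ordinary powers $P^n$ nor Frobenius powers $P^{[q]}$ are safe from picking up $\m$ as an embedded prime. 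The paper resolves exactly this by taking $J$ to be a \emph{symbolic} power $P^{(n)}$ --- so that $\operatorname{Ass}(R/J)=\{P\}$, hence any $x\in\m^{[p]}\setminus P$ works --- and invokes Chevalley's lemma (using $\bigcap_n P^{(n)}=0$ and completeness) to ensure $P^{(n)}\subseteq\m^{[p]}$ for $n\gg0$. Replacing your vague ``power or Frobenius power'' with this symbolic-power-plus-Chevalley step closes the gap and reproduces the paper's argument.
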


\begin{proof} We have already observed that if $R$ is regular, then the Hilbert-Kunz multiplicity is one.
We prove the converse.
Since the Hilbert-Kunz multiplicity
of $R$ is the same as that of its completion, we may assume $R$ is
complete.  The additivity formula for Hilbert-Kunz multiplicity Theorem \ref{associativity} shows
that
$e_{HK}(R) = \sum _P e_{HK}(R/P)\cdot \la(R_P)$
where the sum is over all minimal primes of maximal dimension.
Since $e_{HK}(R) = 1$, we deduce that $R$ can have only one minimal
prime $P$ and $R_P$ has to be field, i.e. $P_P=0.$ Hence $P=0$ since
$R\setminus P$ consists of non-zero divisors. Thus $R$ is a domain.

It suffices to prove that $\la(R/\m^{[p]})\leq p^d$ (where $d = \di(R)$) as then Theorem \ref{kunzthm}
gives that $R$ must be regular.

The singular locus of $R$ is closed and not equal to Spec$(R)$. It follows
that we can choose a prime $P$ such that dim$(R/P) = 1$ and $R_P$ is regular,
which we leave as an exercise for the reader.
Since the intersection of the symbolic powers of $P$ is zero and $R$ is
complete, Chevalley's lemma gives that some sufficiently large
symbolic power of $P$ lies inside $\m^{[p]}$. Call this symbolic power $J$.
Choose $x\in \m^{[p]}$ such that $x\notin P$. The ideal $I = (J,x)$ lies
in $\m^{[p]}$ and satisfies the hypothesis of Theorem  \ref{thminequality}. Hence
$$e_{HK}(I) \geq \la(R/I).$$

On the other hand we have $e_{HK}(I,R) \leq \lambda(\m^{[p]}/I) \cdot
e_{HK}(R) + e_{HK}(\m^{[p]},R) = \lambda(\m^{[p]}/I) +
e_{HK}(\m^{[p]},R) \leq \lambda(\m^{[p]}/I) + \lambda(R/\m^{[p]}),$ by
Lemma \ref{filter} and Corollary \ref{est}.

That is to say
 \begin{align} 
\lambda(\m^{[p]}/I) + \lambda(R/\m^{[p]}) =&\lambda (R/I) \leq e_{HK}(I,R) \\
\leq &\lambda(\m^{[p]}/I) + e_{HK}(\m^{[p]},R) \\
\leq &\lambda(\m^{[p]}/I) + \lambda(R/\m^{[p]}),
\end{align} 
which forces $\lambda(R/\m^{[p]}) = e_{HK}(\m^{[p]},R)$. However,
$$e_{HK}(\m^{[p]},R) = \varinjlim \frac {\la(R/\m^{[pq]})} {q^d} =
\varinjlim \frac {p^d\cdot\la(R/\m^{[pq]})} {(pq)^d} = p^d\cdot e_{HK}(R) = p^d.$$
Together the equalities  imply that $\lambda(R/\m^{[p]}) = p^d$, which
implies that $R$ is regular by Theorem \ref{kunzthm}. \end{proof}

The basic filtration lemmas, together with Kunz's theorem already give a better result, \it provided \rm
the ring is Cohen-Macaulay. In fact, this is one of the more subtle and difficult points, to prove
that \HK multiplicity near one should imply that the ring is Cohen-Macaulay. A crucial step is provided
by results of Goto and Nakamura, see \cite{GN}. They prove the following  
beautiful generalization of the result of Serre which proves that
the multiplicity of a parameter ideal is its colength if and only if the ring is Cohen-Macaulay.

\begin{theorem}\cite{GN}\label{goto-nakamura} Let $(R,\m,k)$ be an unmixed  Noetherian local ring of prime
characteristic $p$ which is the homomorphic image of a Cohen-Macaulay local ring. Let $J$ be an
ideal generated by a system of parameters. If $\la(R/J^*) = e(J)$, then $R$ is F-rational (and
therefore is Cohen-Macaulay).
\end{theorem}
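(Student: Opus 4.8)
The plan is to squeeze two conclusions out of the hypothesis $\lambda(R/J^{*})=e(J)$: that $R$ is Cohen-Macaulay, and that $J=J^{*}$. Once both are in hand, $R$ is F-rational by the standard fact that a Cohen-Macaulay local ring which is a homomorphic image of a Cohen-Macaulay ring is F-rational as soon as some parameter ideal equals its own tight closure---in such a ring the tight closure of parameter ideals is governed, via colon-capturing, by any single one of them. No reduction to the complete case is needed: being unmixed, $R$ is equidimensional, and being a homomorphic image of a Cohen-Macaulay ring it is universally catenary, so colon-capturing is available over $R$ itself.

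The core step I would carry out is a comparison through the limit closure. Write $J=(x_{1},\dots,x_{d})$ and set
$$J^{\mathrm{lim}}=\bigcup_{t\ge 0}\Bigl((x_{1}^{t+1},\dots,x_{d}^{t+1}):_{R}(x_{1}\cdots x_{d})^{t}\Bigr),$$
so that $R/J^{\mathrm{lim}}$ is precisely the image of the natural map $R/J\to H^{d}_{\m}(R)$. Colon-capturing, available since $R$ is equidimensional and a homomorphic image of a Cohen-Macaulay ring, gives $J^{\mathrm{lim}}\subseteq J^{*}$, whence $\lambda(R/J^{*})\le\lambda(R/J^{\mathrm{lim}})$. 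The input I would then need is the comparison $\lambda(R/J^{\mathrm{lim}})\le e(J)$, with equality exactly when $R$ is Cohen-Macaulay---equality says the map $R/J\to H^{d}_{\m}(R)$ is injective, which is Serre's characterization of Cohen-Macaulayness. Granting this, the hypothesis gives
$$e(J)=\lambda(R/J^{*})\le\lambda(R/J^{\mathrm{lim}})\le e(J),$$
so all three quantities coincide. From $\lambda(R/J^{\mathrm{lim}})=e(J)$ I conclude $R$ is Cohen-Macaulay; from $\lambda(R/J^{*})=\lambda(R/J^{\mathrm{lim}})$ together with $J^{\mathrm{lim}}\subseteq J^{*}$ I conclude $J^{*}=J^{\mathrm{lim}}$; and once $R$ is Cohen-Macaulay the sequence $x_{1},\dots,x_{d}$ is regular, so $R/J\hookrightarrow H^{d}_{\m}(R)$ and $J^{\mathrm{lim}}=J$. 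Hence $J=J^{*}$, and the first paragraph finishes the proof.

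The step I expect to be the main obstacle is the comparison $\lambda(R/J^{\mathrm{lim}})\le e(J)$---specifically the direction asserting that equality \emph{forces} Cohen-Macaulayness. This is where the numerical hypothesis gets converted into structure, and it needs genuine information about the module $H^{d}_{\m}(R)$ and about the limit closures of the ideals $(x_{1}^{t},\dots,x_{d}^{t})$, not a formal manipulation; the reverse direction, that a Cohen-Macaulay ring satisfies $\lambda(R/J^{\mathrm{lim}})=\lambda(R/J)=e(J)$, is just Serre's criterion. A secondary point needing care is the propagation invoked in the first paragraph, namely promoting the single equality $J=J^{*}$ to the tight-closedness of all parameter ideals; this rests on the colon-capturing behaviour of parameter tight closures in a Cohen-Macaulay homomorphic image of a Cohen-Macaulay ring.
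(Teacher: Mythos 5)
The paper does not actually prove Theorem \ref{goto-nakamura}; it records the statement as a result of Goto and Nakamura and refers to \cite{GN}. So there is no in-paper argument to compare against, and your proposal has to stand on its own.

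The architecture you sketch is plausible: squeeze $\lambda(R/J^{*})\le\lambda(R/J^{\mathrm{lim}})\le e(J)$ to force both Cohen--Macaulayness and $J=J^{*}$, then propagate to F-rationality. It is consistent with the Cohen--Macaulay case and with simple non-CM examples (two planes meeting at a point, say), where one checks $\lambda(R/J^{\mathrm{lim}})<e(J)$. But the proposal hinges entirely on the displayed comparison $\lambda(R/J^{\mathrm{lim}})\le e(J)$ \emph{with equality only if $R$ is Cohen--Macaulay}, and you neither prove it nor point to a citable source for it; you flag it yourself as the main obstacle. That step is not a routine fact --- it is essentially the theorem. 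Note in particular that the comparison one gets for free runs the wrong way: $R/J^{\mathrm{lim}}$ embeds in $\bigl(0:_{H^{d}_{\m}(R)}J\bigr)$, which is Matlis dual to $\omega_{R}/J\omega_{R}$, and associativity only yields $\lambda(\omega_{R}/J\omega_{R})\ge e(J)$. So nothing formal gives you the bound you need; it must be proved, and it is precisely there that the hypotheses ``unmixed'' and ``homomorphic image of Cohen--Macaulay'' do real work (Goto--Nakamura argue through the unmixed component of $J$ and structural results on parameter ideals, not through the limit closure directly). Two smaller points also deserve care: $J^{\mathrm{lim}}\subseteq J^{*}$ is not literally colon-capturing but a tight-closure form of the monomial lemma (it does hold under your hypotheses, but cite it as such), and the final step promoting a single tightly closed parameter ideal to F-rationality should be spelled out via the usual colon-capturing argument rather than asserted. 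As written, then, the proposal reduces the theorem to an unproved lemma that carries all of its difficulty.
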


The general philosophy is that the closer the \HK multiplicity
is to one, the better the singularities of the ring. The following proposition was proved by
Blickle and Enescu, using results of Goto and Nakamura and Watanabe and Yoshida to first obtain that
the ring is Cohen-Macaulay. We state the full result here, but only give the proof assuming Cohen-Macaulay. 

\begin{prop}\label{BE-est}\cite{BE} Let $(R,\m)$ be a Noetherian local ring of dimension $d$ and prime
characteristic $p$. If $R$ is not regular, then $e_{HK}(R) >  1 + \frac{1}{p^dd!}$.
\end{prop}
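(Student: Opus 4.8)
The plan is to prove the contrapositive, the work splitting into a hard reduction to the Cohen-Macaulay case and an elementary estimate there. So assume $R$ is not regular but $e_{HK}(R)\le 1+\tfrac1{p^dd!}$; we assume $d\ge 1$ (for $d=0$ the inequality need not be strict, so that case is excluded). Since neither $e_{HK}(R)$ nor regularity changes under completion, nor under a faithfully flat local extension enlarging the residue field, I would first arrange that $R$ is complete with algebraically closed residue field.

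The step carrying the real weight---and the reason the excerpt proves only the Cohen-Macaulay case outright---is the reduction to that case. Here I would invoke the theorems of Watanabe-Yoshida \cite{WY1} and Goto-Nakamura (Theorem~\ref{goto-nakamura}), which together show that a complete, formally unmixed local ring with $e_{HK}(R)\le 1+\tfrac1{p^dd!}$ must be F-rational, in particular Cohen-Macaulay; this is exactly the input that Blickle and Enescu use. (As in Theorem~\ref{HK=1}, a formal unmixedness hypothesis is in play here, implicit in the statement and matching that of Theorem~\ref{HK=1}; one first uses the additivity formula Theorem~\ref{associativity} to see that $e_{HK}(R)<2$ forces a unique minimal prime of maximal dimension.) So from now on $R$ is Cohen-Macaulay and not regular, and hence $e(R)\ge 2$: a minimal reduction $J_0$ of $\m$ has $\la(R/J_0)=e(R)$, and $\la(R/J_0)=1$ would force $J_0=\m$ and thus the regularity of $R$.

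Next I would split on the size of $e(R)$. If $e(R)\ge d!+1$, Lemma~\ref{basicineq} already finishes it: $e_{HK}(R)\ge e(R)/d!\ge 1+\tfrac1{d!}>1+\tfrac1{p^dd!}$. Assume then $2\le e(R)\le d!$, and fix a minimal reduction $J_0=(x_1,\dots,x_d)$ of $\m$ (available since the residue field is infinite). Consider the parameter ideal $I:=J_0^{[p]}=(x_1^p,\dots,x_d^p)$, which lies in $\m^{[p]}$. Since $R$ is Cohen-Macaulay, $x_1^p,\dots,x_d^p$ is a regular sequence, so $\la(R/I^{[q]})=\la\bigl(R/(x_1^{pq},\dots,x_d^{pq})\bigr)=(pq)^d\la(R/J_0)=(pq)^de(R)$ for every $q$, whence $e_{HK}(I,R)=p^de(R)=\la(R/I)$. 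Applying Corollary~\ref{est} to $I\subseteq\m^{[p]}$, together with the identity $e_{HK}(\m^{[p]},R)=p^de_{HK}(R)$ established inside the proof of Theorem~\ref{HK=1}, gives
$$ p^de(R)\;=\;e_{HK}(I,R)\;\le\;\la(\m^{[p]}/I)\,e_{HK}(R)+p^de_{HK}(R)\;=\;\bigl(\la(\m^{[p]}/I)+p^d\bigr)e_{HK}(R). $$
Since $\la(\m^{[p]}/I)=\la(R/I)-\la(R/\m^{[p]})=p^de(R)-\la(R/\m^{[p]})$, this rearranges to $e_{HK}(R)\ge \frac{p^de(R)}{p^d(e(R)+1)-\la(R/\m^{[p]})}$, where the denominator is at least $p^d>0$ because $\m^{[p]}\supseteq I$ forces $\la(R/\m^{[p]})\le\la(R/I)=p^de(R)$.

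To conclude I would feed in Kunz's theorem: as $R$ is not regular, Theorem~\ref{kunzthm} gives $\la(R/\m^{[p]})\ge p^d+1$, and since the bound just obtained is increasing in $\la(R/\m^{[p]})$,
$$ e_{HK}(R)\;\ge\;\frac{p^de(R)}{p^d(e(R)+1)-(p^d+1)}\;=\;\frac{p^de(R)}{p^de(R)-1}\;=\;1+\frac1{p^de(R)-1}\;\ge\;1+\frac1{p^dd!-1}\;>\;1+\frac1{p^dd!}, $$
the desired contradiction. I expect the one genuine obstacle to be the Cohen-Macaulay reduction in the second paragraph: it black-boxes substantial theorems of Watanabe-Yoshida and Goto-Nakamura on how small Hilbert-Kunz multiplicity constrains singularities, and it is also where a formal unmixedness hypothesis must enter, exactly as in Theorem~\ref{HK=1}.
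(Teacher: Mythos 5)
Your proposal is correct and follows essentially the same route as the paper: the same application of Corollary~\ref{est} with $I=K^{[p]}$ and $J=\m^{[p]}$, the same use of Kunz's bound $\la(R/\m^{[p]})\ge p^d+1$ to get $e_{HK}(R)\ge 1+\frac{1}{p^de-1}$, and the same case split on $e$ versus $d!$ (which the paper does at the end rather than the start). Your remarks on the implicit unmixedness hypothesis and the black-boxed Cohen--Macaulay reduction via Goto--Nakamura and Watanabe--Yoshida match exactly what the paper acknowledges it is assuming without proof.
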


\begin{proof} We give the proof assuming that $R$ is Cohen-Macaulay. Let $e$ be the multiplicity of $R$.
We may assume the residue field is infinite. Fix a minimal reduction $K$ of the
maximal ideal. We apply Corollary \ref{est} with $I = K^{[p]}$ and $J = \m^{[p]}$. This gives
that $ ep^d = e_{HK}(K^{[p]})\leq \la(\m^{[p]}/K^{[p]})e_{HK}(R) + e_{HK}(\m^{[p]}) = 
\la(\m^{[p]}/K^{[p]})e_{HK}(R) + p^de_{HK}(R).$
By Theorem \ref{kunzthm}, $\la(\m^{[p]}/K^{[p]}) = ep^d - \la(R/\m^{[p]}) \leq ep^d - (p^d+1)$
because $R$ is not regular. Putting these inequalities together and cancelling terms yields
that $ep^d\leq (ep^d-1)e_{HK}(R)$ or $1 + \frac{1}{ep^d-1}\leq e_{HK}(R).$
Since $e/d!\leq e_{HK}(R)$, if $e > d!$, then $1 + \frac{1}{d!} < e_{HK}(R)$, a stronger
statement than what we claim. Otherwise, $ep^d-1 < p^dd!$, and the proposition follows. \end{proof}

\smallskip

The reader should ask themselves where the assumption that $R$ is Cohen-Macaulay is used in the above
proof.

The methods in this section also give a proof of a result of Kunz concerning the behavior of
\HK multiplicity under specialization. It is still an open problem whether or not \HK multiplicity
is upper semi-continuous. See, however, the interesting papers of Shepherd-Barron \cite{SB} (but
be careful--Corollary 2 is not quite correct) and Enescu and Shimomoto \cite{ES}.
 \smallskip

\begin{prop}\label{semicont} \cite[Cor. 3.8]{Ku2}
Let $(R, \m,k)$ be a Noetherian local ring of dimension $d$ and prime
characteristic $p$, and let $P$ be a prime ideal of $R$ such that
$\hg(P) + \di(R/P) = \di(R)$. Then $e_{HK}(R_P)\leq e_{HK}(R).$  In
fact,  if $t = \di(R/P)$, then  $q^t \cdot ~\lambda_{R_P}((R/P^{[q]})_P)
\leq \lambda(R/\m^{[q]})$ for  every  $q=p^e.$
\end{prop}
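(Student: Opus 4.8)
The plan is to establish the per-$q$ inequality $q^{t}\,\la_{R_P}\!\big((R/P^{[q]})_{P}\big)\le\la(R/\m^{[q]})$ first, and then to read off $\ehk(R_{P})\le\ehk(R)$ by dividing it by $q^{d}$ and letting $q\to\infty$: since $P^{[q]}R_{P}=(PR_{P})^{[q]}$ and $d-t=\hg P=\di R_{P}$ (this last equality is where the hypothesis $\hg P+\di(R/P)=\di R$ enters, to match exponents), the left side tends to $\ehk(R_{P})$ (which exists by Theorem~\ref{HKexists} applied to $R_{P}$) while the right side tends to $\ehk(R)$.

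To prove the per-$q$ inequality, fix $q=p^{e}$; the case $t=0$ forces $P=\m$ and is trivial, so assume $t\ge 1$. I would pick $b_{1},\dots,b_{t}\in\m$ whose images in $R/P$ form a system of parameters, and for each integer $m\ge 1$ set $\mathfrak a_{m}=P+(b_{1}^{m},\dots,b_{t}^{m})$, which is $\m$-primary because $(b_{1},\dots,b_{t})$ is a parameter ideal of the $t$-dimensional domain $R/P$. Using $(I+J)^{[q]}=I^{[q]}+J^{[q]}$ (valid since $(a+b)^{q}=a^{q}+b^{q}$ when $q$ is a power of $p$) one has $\mathfrak a_{m}^{[q]}=P^{[q]}+(b_{1}^{mq},\dots,b_{t}^{mq})$, hence a natural isomorphism of $R$-modules
$$R/\mathfrak a_{m}^{[q]}\ \cong\ M_{q}/(b_{1}^{mq},\dots,b_{t}^{mq})M_{q},\qquad M_{q}:=R/P^{[q]}.$$
Because $\sqrt{\mathfrak a_{m}^{[q]}}=\m$ and $\di M_{q}=\di(R/P)=t$, the elements $b_{1}^{mq},\dots,b_{t}^{mq}$ form a system of parameters for $M_{q}$, so the colength on the right is at least $e\big((b_{1}^{mq},\dots,b_{t}^{mq});M_{q}\big)$. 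Since $\operatorname{Supp} M_{q}=V(P^{[q]})=V(P)$ has $P$ as its only minimal prime of dimension $t$, the associativity formula for Hilbert--Samuel multiplicity collapses to a single term, and combined with the scaling relation $e\big((x_{1}^{r},\dots,x_{t}^{r});-\big)=r^{t}\,e\big((x_{1},\dots,x_{t});-\big)$ it gives
$$e\big((b_{1}^{mq},\dots,b_{t}^{mq});M_{q}\big)=(mq)^{t}\,c\,n_{q},\qquad c:=e\big((b_{1},\dots,b_{t});R/P\big)\ge 1,\ \ n_{q}:=\la_{R_P}\!\big((R/P^{[q]})_{P}\big).$$

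Putting this together with Corollary~\ref{est} applied to the $\m$-primary ideal $\mathfrak a_{m}$, I obtain
$$(mq)^{t}\,c\,n_{q}\ \le\ \la(R/\mathfrak a_{m}^{[q]})\ \le\ \la(R/\mathfrak a_{m})\,\la(R/\m^{[q]}).$$
Dividing by $m^{t}c$ and letting $m\to\infty$ with $q$ fixed, Lech's limit formula for box powers (the relevant form of \cite[Theorem~11.2.10]{SH}, applied to the local ring $R/P$ and its parameter ideal $(b_{1},\dots,b_{t})$) shows $\la(R/\mathfrak a_{m})/m^{t}=\la\big((R/P)/(b_{1}^{m},\dots,b_{t}^{m})(R/P)\big)/m^{t}\to c$, so the right side tends to $\la(R/\m^{[q]})$ and the left side equals $q^{t}n_{q}$, giving exactly $q^{t}n_{q}\le\la(R/\m^{[q]})$.

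The point I expect to require the most care is this limiting manoeuvre. The factor $q^{t}$ has to be produced by raising a system of parameters of $R/P$ to its $q$-th power, but in order to turn the resulting ideal into an honest Frobenius power of an $\m$-primary ideal — the only route that lets Corollary~\ref{est} be applied — one is forced to introduce the auxiliary exponent $m$ and send it to infinity afterwards. The analytic input is then precisely that $\la\big((R/P)/(b_{1}^{m},\dots,b_{t}^{m})(R/P)\big)$ and $c\,m^{t}$ agree to leading order in $m$; because $R/P$ need not be Cohen--Macaulay this is a genuine limit (Lech's theorem), not an identity, and one must also check the associativity-formula bookkeeping carefully so that only the prime $P$ contributes to $e\big((b_{1}^{mq},\dots,b_{t}^{mq});M_{q}\big)$.
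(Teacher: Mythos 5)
Your proof is correct, but it takes a genuinely different route from the paper's. The paper establishes the per-$q$ inequality by induction on $t=\dim(R/P)$, reducing to the case $t=1$: there one chooses a single element $f\in\m\setminus P$, and since $R/P$ is then a one-dimensional local domain, hence Cohen-Macaulay, the multiplicity $e(f;R/P)$ equals the colength $\la(R/(P,f))$ \emph{exactly}; so the factor $q$ falls out of Corollary~\ref{est} in one line with no limiting process. You instead treat general $t$ directly with a full system of parameters $b_1,\dots,b_t$ for $R/P$, and since $R/P$ need not be Cohen-Macaulay for $t\ge 2$, the colength $\la(R/\mathfrak a_m)$ only matches $c\,m^t$ asymptotically -- which is precisely why you must introduce the auxiliary exponent $m$ and close the gap with Lech's limit theorem. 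Both arguments hinge on Corollary~\ref{est} together with the associativity formula, and your bookkeeping (only $P$ contributes, since $\operatorname{Supp}(R/P^{[q]})=V(P)$ and $R/P$ is a domain of dimension $t$) is correct. The paper's version is shorter, at the cost of setting up the induction and checking that the intermediate prime $Q$ still satisfies $\hg Q+\dim(R/Q)=d$; yours is more self-contained and makes explicit where the failure of Cohen-Macaulayness is absorbed. One small housekeeping remark: if you cite the box-power form of Lech's theorem, you should let $m$ range over powers of $p$, since $(b_1^m,\dots,b_t^m)$ is only literally a bracket power of $(b_1,\dots,b_t)$ for such $m$; otherwise invoke the general Lech limit $\la(A/(x_1^m,\dots,x_t^m))/m^t\to e((x_1,\dots,x_t);A)$, which suffices equally well.
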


\begin{proof} By induction, it is enough to prove the case where 
$\hg(P) =\di(R) - 1$. Notice it suffices to prove the second inequality.

Choose $f \in \m - P$. Then, using the properties of the usual
multiplicity of parameter ideals, the associativity formula for the
usual multiplicity, we have,  for all $q=p^e$,
\begin{align}
\lambda(R/(P, f)^{[q]}) &= \lambda(R/(P^{[q]}, f^q))\\
& \geq e(f^q; R/P^{[q]})\\
& =\lambda _{R_P}((R/P^{[q]})_P) \cdot e(f^q; R/P)\\
& =\lambda _{R_P}((R/P^{[q]})_P) \cdot q  \cdot \lambda(R/(f, P)).
\end{align}
By Corollary \ref{est}, we know  that
$\lambda(R/(f, P)) \cdot \lambda(R/\m^{[q]})\geq \lambda(R/(P, f)^{[q]}).$
Hence  $\lambda(R/\m^{[q]}) \geq q \cdot \lambda_{R_P}((R/P^{[q]})_P)$
for every $q=p^e.$  \end{proof}
\bigskip

\section{Hilbert-Kunz Multiplicity and Tight Closure}
\medskip

There is almost an exact parallel between the relationship of integral closure to the usual
Hilbert-Samuel multiplicity, and the relationship between tight closure to Hilbert-Kunz multiplicity.
Just as in the case of the Hilbert-Samuel multiplicity, this relationship is important both
theoretically and necessary to fully understand multiplicity. 
We use a key result of Aberbach \cite{Ab1} to make the proofs easier than the original
proof in \cite{HH1}.

Let $R^o$ denote the complement
of the union of all minimal primes of a ring $R$. The definition of tight
closure for ideals is:

\begin{define}\label{tc} Let $R$ be a Noetherian ring of prime characteristic $p$. Let $I$
be an ideal of $R$. An element $x\in R$ is said to be in the tight closure of $I$ if
there exists an element $c\in R^o$ such that for all
large $q= p^e$, $cx^q\in I^{[q]}$.
\end{define}

There is also a definition of the tight closure of submodules of finitely generated $R$-modules,
which we do not use in these notes. Of particular interest
are rings in which every ideal is tightly closed.

\begin{define} A Noetherian ring in which every ideal is tightly closed is called
\it weakly F-regular \rm.  A Noetherian ring $R$ such that $R_W$ is weakly F-regular for
every multiplicative system $W$ is called \it F-regular \rm.
\end{define}

We list a few of the main properties satisfied by tight closure.

\begin{prop}\label{tc-basic} Let $R$ be a Noetherian ring of prime characteristic
$p$ and let $I$ be an ideal.
\begin{enumerate}
\item $(I^*)^*= I^*$.  If
$I_1\subseteq I_2\subseteq R$, then $I_1^*\subseteq I_2^*$.

\item If $R$ is reduced or if $I$ has positive height, then $
x\in R$ is in $I^*$ if and only if there exists $c\in R^o$ such that $cx^q\in I^
{[q]}$ for all $q=p^e$.

\item  An element $x\in R$ is in $I^*$ iff the image of $x$ in $R/P$ is in the
tight closure of $(I+P)/P$ for every minimal prime $P$ of $R$.
\end{enumerate}

\end{prop}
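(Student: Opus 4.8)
First I would note that $I^*$ is an ideal: closure under multiplication by ring elements is clear, and closure under addition is immediate here because $q=p^e$, so $(x+y)^q=x^q+y^q$ by the Frobenius, and the product of the two ``test elements'' $c,c'\in R^o$ lies again in $R^o$ (a product of elements missing every minimal prime again misses every minimal prime). For item (1), monotonicity $I_1^*\subseteq I_2^*$ follows at once from $I_1^{[q]}\subseteq I_2^{[q]}$; the inclusion $I^*\subseteq(I^*)^*$ uses $c=1$; and for $(I^*)^*\subseteq I^*$ I would write $I^*=(y_1,\dots,y_n)$ (here $R$ is Noetherian), choose $c_i\in R^o$ with $c_iy_i^q\in I^{[q]}$ for $q\gg0$, set $c_0=c_1\cdots c_n\in R^o$, and observe that if $cx^q\in(I^*)^{[q]}=(y_1^q,\dots,y_n^q)$ for $q\gg0$ with $c\in R^o$, then $c_0cx^q\in I^{[q]}$ for $q\gg0$ with $c_0c\in R^o$.

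For item (2) one direction is trivial. For the converse, assume $cx^q\in I^{[q]}$ for all $q\ge q_0$ with $c\in R^o$; it suffices to find, for each of the finitely many $q_1=p^{e_1}<q_0$, an element of $R^o$ lying in the colon ideal $(I^{[q_1]}:x^{q_1})$, for then the product of these with $c$ is a single element of $R^o$ working for every $q\ge1$. So suppose toward a contradiction that $(I^{[q_1]}:x^{q_1})\subseteq P$ for some minimal prime $P$, i.e. $x^{q_1}\notin I^{[q_1]}R_P$. If $I$ has positive height this is absurd, since then $I^{[q_1]}R_P=R_P$. If instead $R$ is reduced, then $R_P$ is a field, so $I^{[q_1]}R_P$ must be $0$; hence $IR_P=0$, hence $I^{[q]}R_P=0$ for all $q$, and feeding this into $cx^q\in I^{[q]}$ for $q\ge q_0$ (with $c$ a unit in $R_P$) forces $x=0$ in $R_P$, so $x^{q_1}=0\in I^{[q_1]}R_P$ --- a contradiction.

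For item (3) the forward direction is routine: passing to $R/P$ one has $\big((I+P)/P\big)^{[q]}=(I^{[q]}+P)/P$ (again because $(i+u)^q=i^q+u^q$), the multiplier $c$ stays outside the minimal prime $P$, and $(R/P)^o=R/P\setminus\{0\}$. For the converse I would first reduce to the reduced case: with $N$ the nilradical and $q_1$ chosen so that $N^{[q_1]}=0$, a relation $cx^q\in I^{[q]}+N$ for $q\gg0$ (with $c\in R^o$) gives, after raising to the $q_1$-th power and using that the Frobenius is an endomorphism, $c^{q_1}x^{qq_1}\in I^{[qq_1]}$ for $q\gg0$, so $x\in I^*$. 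Thus it suffices to build such a $c$ from the hypotheses: for each minimal prime $P_j$ there is $c_j\notin P_j$ with $c_jx^q\in I^{[q]}+P_j$ for $q\gg0$. Using prime avoidance pick $d_j\in\big(\bigcap_{i\ne j}P_i\big)\setminus P_j$, so that $d_jP_j\subseteq N$, and set $c=\sum_j d_jc_j$; modulo any fixed $P_k$ only the $j=k$ term survives and $c\equiv d_kc_k\not\equiv0$, so $c\in R^o$, while for $q$ beyond all the finitely many thresholds $cx^q=\sum_j d_jc_jx^q\in I^{[q]}+N$.

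The substance lies in (2) and the converse of (3), and in both the obstacle is the same: one must force a certain ideal --- a colon ideal in (2), the ``glued'' multiplier in (3) --- to meet $R^o$, and that is exactly where the hypotheses enter: ``reduced or positive height'' in (2), via the fact that the localizations of a reduced ring at its minimal primes are fields, and the prime-avoidance elements $d_j$ annihilating $P_j$ modulo $N$ in (3). I expect the converse of (3) to be the most delicate step, since it additionally requires collapsing the nilradical first, which genuinely uses that the Frobenius is a ring endomorphism.
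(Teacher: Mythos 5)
Your proof is correct. For part (3) your argument is essentially the paper's: reduce modulo the nilradical by raising to the $q_1$-th power where $N^{[q_1]}=0$, then glue the per-component multipliers $c_j$ using elements $d_j\in\bigl(\bigcap_{i\ne j}P_i\bigr)\setminus P_j$ (the paper's $t_i$) to get a single $c=\sum d_jc_j\in R^o$ with $cx^q\in I^{[q]}+N$. A minor streamlining: the paper first lifts each $c_i'$ to an element of $R^o$ via prime avoidance, whereas you only need a lift $c_j\notin P_j$, since $d_jc_j\notin P_j$ already suffices for $c\in R^o$.

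For parts (1) and (2) the paper simply asserts that they "follow immediately from the definition,'' so your details are supplementary. Your argument for (2) is worth remarking on: reducing to showing that each colon $(I^{[q_1]}:x^{q_1})$ for the finitely many $q_1$ below the threshold meets $R^o$, then localizing at a minimal prime $P$ and splitting into the positive-height case ($I^{[q_1]}R_P=R_P$ trivially) and the reduced case ($R_P$ is a field, so either $IR_P=R_P$ or $IR_P=0$, and the latter contradicts $cx^q\in I^{[q]}$ with $c$ a unit in $R_P$). This cleanly isolates exactly where the hypothesis enters, and it is genuinely needed: for $R=k[x]/(x^2)$ and $I=(0)$, the element $x$ lies in $I^*$ but no $c\in R^o$ satisfies $cx\in I^{[1]}=0$. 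Your treatment of (2) thus supplies a real argument where the paper is silent.
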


 \begin{proof} Part (1) and (2) follow immediately from the definition.

 We prove (3).  One direction is clear: if $x\in I^*$, then this remains true
 modulo every minimal prime of $R$ since $c\in R^o$.
 Let $P_1,...,P_n$ be the minimal primes of $R$. If $c_i' \in R/P_i$ is
 nonzero we can always lift $c_i'$ to an element $c_i\in R^o$ by using the Prime
 Avoidance theorem. Suppose that $c_i'\in R/P_i$ is nonzero and such that $c_i'x_i^q\in I_i^{[q]}$
 for all large $q$, where $x_i$ (respectively $I_i$) respresent the images of
 $x$ (respectively $I$) in $R/P_i$.  Choose a lifting $c_i\in R^o$ of $c_i'$.
 Then  $c_ix^q\in I^{[q]} + P_i$ for every $i$.
 Choose elements $t_i$ in all the minimal primes except $P_i$. Set $c = \sum_ic_it_i$.
 It is easy to check that $c\in R^o$. Choose $q' \gg 0$ so that $N^{[q']} = 0$,
 where $N$ is the nilradical of $R$. Then $cx^q\in I^{[q]} + N$, and so
 $c^{q'}x^{qq'}\in I^{[qq']}$, which proves that $x\in I^*$.  \end{proof}

One direction of our main result of this section is quite easy from the definition:

\begin{prop} Let $(R,\m,k)$ be a Noetherian local ring of dimension $d$ and prime
characteristic $p$. Let $I$ be an $\m$-primary ideal, and suppose that $I\inc J\inc I^*$.
Then $e_{HK}(I) = e_{HK}(J)$.
\end{prop}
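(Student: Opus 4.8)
The plan is to prove the two inequalities $e_{HK}(I)\geq e_{HK}(J)$ and $e_{HK}(I)\leq e_{HK}(J)$ separately. The first is immediate: $I\inc J$ gives $\Iq\inc\Jq$, hence $\la(R/\Iq)\geq\la(R/\Jq)$ for every $q$, and passing to the limit gives $e_{HK}(I)\geq e_{HK}(J)$. All the work is in the reverse inequality, and for that it suffices to show that the two Hilbert--Kunz functions agree up to lower order, i.e. $\la(R/\Iq)-\la(R/\Jq)=\la(\Jq/\Iq)=O(q^{d-1})$; dividing by $q^d$ and letting $q\to\infty$ then forces equality. This mirrors the analogous statement for Hilbert--Samuel multiplicity and integral closure.

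To bound $\la(\Jq/\Iq)$ I would first replace the various tight-closure multipliers by a single one. Writing $J=I+(y_1,\dots,y_s)$ with each $y_i\in I^*$, pick $c_i\in R^o$ with $c_iy_i^q\in\Iq$ for all $q\gg0$ and set $c=c_1\cdots c_s\in R^o$; then $c\Jq\inc\Iq$ for all large $q$, since $\Jq=\Iq+(y_1^q,\dots,y_s^q)$. Thus the finite-length module $T_q:=\Jq/\Iq$ is annihilated by $c$, so it is a module over $R/(c)$, a ring of dimension at most $d-1$ because $c$ lies outside every minimal prime of $R$. Since $I$ is $\m$-primary I would also fix $q_0=p^{e_0}$ with $\m^{[q_0]}\inc I$; taking $q$-th powers gives $\m^{[q_0q]}\inc\Iq$, so $T_q$ is killed by $\m^{[q_0q]}$ as well. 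As $T_q$ is generated by the $s$ images of $y_1^q,\dots,y_s^q$, it is a homomorphic image of $\bigl(R/(c,\m^{[q_0q]})\bigr)^s$, and using $R/(c,\m^{[q_0q]})=R/\m^{[q_0q]}\otimes_R R/(c)$ we obtain $\la(\Jq/\Iq)\leq s\cdot\la\bigl(R/\m^{[q_0q]}\otimes_R R/(c)\bigr)$.

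The final step is to feed this into Lemma~\ref{growth}, applied to the module $M=R/(c)$ (which has $\di M\leq d-1$) with the $\m$-primary ideal $\m^{[q_0q]}$, noting that $q_0q=p^{e_0+e}$ is a power of $p$: this yields a constant $C$ independent of $q$ with $\la(R/\m^{[q_0q]}\otimes_R M)\leq C(q_0q)^{\di M}\leq Cq_0^{\,d-1}q^{d-1}$, so $\la(\Jq/\Iq)=O(q^{d-1})$ as desired. I expect the one point that needs care to be precisely this dimension count: the naive estimate regards $T_q$ as boundedly many copies of a quotient of $R/\m^{[q_0q]}$ and only gives $O(q^d)$, so it is essential to use that $c$ annihilates $T_q$ in order to pass to the lower-dimensional ring $R/(c)$ before invoking Lemma~\ref{growth}. (The case $d=0$ is trivial: there $c$ is a unit, so $\Iq=\Jq$ for $q\gg0$.)
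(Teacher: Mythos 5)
Your proof is correct and follows essentially the same argument as the paper's: both find a single $c\in R^o$ annihilating $\Jq/\Iq$ for large $q$, note the number of generators of $\Jq/\Iq$ is bounded independently of $q$, and then use that $\dim R/(c)\le d-1$ (via Lemma~\ref{growth}) to get $\la(\Jq/\Iq)=O(q^{d-1})$. The only cosmetic differences are that you pass to $\m^{[q_0q]}$ before invoking Lemma~\ref{growth} where the paper keeps $\Jq$, and you also spell out the (trivial) inequality $e_{HK}(I)\ge e_{HK}(J)$ separately.
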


\begin{proof} By assumption there is an element $c\in R^o$ such that $c$ annihilates the
modules $\Jq/\Iq$ for all large $q = p^e$. These modules have a bounded number of
generators, say $t$, given by the number of generators of $J$. In particular, $(R/(c,\Jq))^t$
maps onto $\Jq/\Iq$, so that the length is at most $t\cdot\la(R/(c,\Jq))$. However, the length
of $R/(c,\Jq)$ 
is at most $O(q^{d-1})$ since the dimension of $R/(c)$ is $d-1$. It follows that
$|\la(R/\Jq)-\la(R/\Iq)| = O(q^{d-1})$, and so $e_{HK}(I) = e_{HK}(J)$. \end{proof}

The main result of this section is the following:

\begin{theorem}\label{HK-tc} Let $(R,\m,k)$ be a Noetherian local ring of dimension $d$ and prime
characteristic $p$ which is formally unmixed.
Let $I\inc J$ be $\m$-primary ideals. Then 
$e_{HK}(I) = e_{HK}(J)$ if and only if $J\inc I^*$.
\end{theorem}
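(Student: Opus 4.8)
The plan is to prove the two implications separately, since they are of quite different character. The easy direction is ``$J \inc I^*$ implies $e_{HK}(I) = e_{HK}(J)$'': this is essentially the Proposition immediately preceding the theorem, applied with the intermediate ideal $J$ in place of the generic $J$ there. One subtlety is that in the definition of tight closure the element $c$ works only for \emph{large} $q$; but since there are only finitely many generators of $J$ and the argument bounds $\la(\Jq/\Iq)$ by $O(q^{d-1})$ for all large $q$, the finitely many small values of $q$ contribute nothing to the limit, and we conclude $e_{HK}(I) = e_{HK}(J)$. (Here one uses that $\grade(I) = d > 0$, so part (2) of Proposition~\ref{tc-basic} lets us use a single $c \in R^o$.) So the real content is the converse.

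For the converse, assume $e_{HK}(I) = e_{HK}(J)$; we must show $J \inc I^*$. First I would reduce to showing that a single element $x \in J$ lies in $I^*$, and then, completing and using that formal unmixedness is preserved, assume $R$ is a complete local ring. The key structural input is that the colength function behaves additively modulo minimal primes (Theorem~\ref{associativity}), and that formally unmixed means every associated prime of $R$ is minimal of dimension $d$. This is exactly the hypothesis needed so that the ``localize at the minimal primes'' reduction in part (3) of Proposition~\ref{tc-basic} is available: if I can show that for each minimal prime $P$ the image of $x$ in $R/P$ lies in the tight closure of $(I+P)/P$, then $x \in I^*$. Using Theorem~\ref{associativity}, the equality $e_{HK}(I) = e_{HK}(J)$ forces $e_{HK}(IR/P) = e_{HK}(JR/P)$ for each minimal $P$ of maximal dimension (the inequality $\la(R/\Iq) \ge \la(R/\Jq)$ holds termwise, and summing the $O(q^{d-1})$ comparisons, any strict deficit at one $P$ would make the limits differ). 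So without loss of generality $R$ is a complete local domain.

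Now in the domain case the strategy is to produce the tight closure witness $c$ directly. The hypothesis $e_{HK}(I) = e_{HK}(J)$ says $\la(\Jq/\Iq) = o(q^d)$. The mechanism (this is where the lemma of Aberbach~\cite{Ab1} advertised in the section introduction enters) is: in an unmixed local domain, if $x \in J$ but $x \notin I^*$, then one can find a fixed $c \in R^o$ and produce, for infinitely many $q$, an element of $\Jq$ whose image generates a submodule of $\Jq/\Iq$ of length bounded below by a fixed multiple of $q^d$ — contradicting $\la(\Jq/\Iq) = o(q^d)$. Concretely I expect the argument to go: localize and complete at a suitable prime of coheight one where one has good control (as in the proof of Theorem~\ref{HK=1}, choosing $P$ with $\dim R/P = 1$), compare $\la(\Jq/\Iq)$ with a Hilbert--Samuel multiplicity $e(x^q; R/\Jq)$-type quantity as in Theorem~\ref{thminequality}, and extract the positive lower bound $\gamma q^d$ from the failure of the colon condition $cx^q \in \Iq$. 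The main obstacle, and the step I would expect to occupy most of the real work, is precisely this last extraction — turning the \emph{asymptotic vanishing} $\la(\Jq/\Iq) = o(q^d)$ into the \emph{uniform} membership $cx^q \in \Iq$ with a single $c$; this is exactly the point where one cannot argue term by term and must invoke the uniform Artin--Rees / Aberbach-type input. Everything else (reduction to a domain, reduction to one element, the easy direction) is formal given the results already established in the excerpt.
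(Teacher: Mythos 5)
Your overall structure matches the paper's: the easy direction is the preceding proposition; the reduction to a complete local domain proceeds exactly as you describe, via the additivity formula (Theorem~\ref{associativity}) and Proposition~\ref{tc-basic}(3), with formal unmixedness ensuring every minimal prime contributes to the $q^d$ term; and you correctly identify the cited result of Aberbach~\cite{Ab1} as the engine of the domain case. Where your proposal departs from the paper --- and where it has a genuine gap --- is in the domain-case mechanism itself. The actual content of Aberbach's lemma is sharper and more compact than your reconstruction: if $x \notin I^*$ in a complete local domain, there is a \emph{fixed integer} $k$ (not a fixed multiplier $c \in R^o$) such that $\Iq:x^q \inc \m^{\lfloor q/k\rfloor}$ for all $q$. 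After replacing $J$ by $(I,x)$, one gets immediately that
$$\la(R/\Iq) - \la(R/\Jq) \;=\; \la\bigl(R/(\Iq:x^q)\bigr) \;\geq\; \la\bigl(R/\m^{\lfloor q/k\rfloor}\bigr),$$
and the right-hand side is bounded below by $\delta q^d$ for a positive constant $\delta$ depending only on $R$ and $k$, contradicting $e_{HK}(I)=e_{HK}(J)$. There is no localization at a coheight-one prime and no comparison with a Hilbert--Samuel multiplicity $e(x^q;R/\Jq)$ as in Theorem~\ref{thminequality}; the colon-ideal bound does all the work in one step. Your diagnosis that the crux is turning asymptotic vanishing into a uniform statement is exactly right --- that uniformity is precisely what Aberbach's lemma packages --- but without its precise form (uniform containment in a power of $\m$, not a uniform witness $c$), the argument cannot be closed, and the more elaborate coheight-one route you sketch does not substitute for it.
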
 

\begin{proof} One direction has already been done. To prove the other, we first observe that
for $\m$-primary ideals $K$, $e_{HK}(K) = e_{HK}(\widehat K)$ and $(\widehat{K})^* = \widehat{K^*}$.
We leave this latter equality as an exercise (see also \cite[Proposition 4.14]{HH1}).
Hence we may assume that $R$ is complete. Suppose that $e_{HK}(I) = e_{HK}(J)$. We need to prove
that $J\inc I^*$. If not, there exists a minimal prime $P$ of $R$ such that the image of $J$
in $R/P$ is not in the tight closure of the image of $I$ in $R/P$, by Proposition \ref{tc-basic}. By the additivity formula
for \HK multiplicity, Proposition \ref{associativity}, as well as our assumption that $R$
is formally unmixed, we must have that
$e_{HK}((I+P)/P) = e_{HK}((J+P)/P)$. Hence we may assume that $R$ is a complete local domain.

Suppose by way of contradiction that $J$ is not in $I^*$. We may assume that $J = (x,I)$ for
some $x\notin I^*$. We now use a result of Aberbach \cite{Ab1}: since $x\notin I^*$, 
there exists a fixed integer $k$ such that for all $q= p^e$, $\Iq:x^q\inc \m^{\lfloor q/k \rfloor}$. But now for all large enough $q$, $\la(R/\Iq) - \la(R/(\Iq, x^q)) =
\la(R/(\Iq:x^q))\geq \la(R/\m^{\rfloor q/k \lfloor})\geq \delta q^d$, where $\delta$ is any positive real
strictly less than $\frac{e}{d!k}$, where $e$ is the
multiplicity of $R$. This proves that $e_{HK}(I) \ne e_{HK}(J)$, a contradiction. \end{proof}

\medskip

With this tight closure characterization of the \HK multiplicity, we can give an important estimate on
it in the case the ring is Gorenstein, but not F-rational, meaning that systems of parameters are not
tightly closed. The is due to Blickle and Enescu \cite{BE}.  

\begin{prop}\label{nonFrat-est} Let $(R,\m,k)$ be a Noetherian local ring of dimension $d$ and prime
characteristic $p$ which is Gorenstein but not F-rational. Set $e$ equal to the multiplicity of $R$.
Then $e_{HK}(R)\geq 1 + \frac{1}{e-1}$.
\end{prop}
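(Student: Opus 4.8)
The plan is to use the tight closure characterization of Hilbert-Kunz multiplicity from Theorem \ref{HK-tc} together with the Gorenstein hypothesis, which lets us control tight closures of parameter ideals via socle elements. First I would reduce to a tractable situation: we may complete $R$ and assume the residue field is infinite, so that the maximal ideal has a minimal reduction $K=(x_1,\dots,x_d)$ generated by a system of parameters, with $e(K)=e(\m)=e$, the multiplicity of $R$. Since $R$ is Gorenstein, $R/K$ is a zero-dimensional Gorenstein ring, hence its socle is one-dimensional; pick $y\in R$ whose image generates $\soc(R/K)=(K:\m)/K$, and set $J=(K,y)$, so $\lambda(J/K)=1$ and $\lambda(R/J)=e(K)-1=e-1$.

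The crucial point is that $y\in K^*$: this is exactly where the failure of F-rationality enters. Because $R$ is Gorenstein but not F-rational, some (equivalently every, by standard colon-capturing arguments for parameter ideals in an unmixed Gorenstein ring) parameter ideal is not tightly closed, and in a Gorenstein ring the tight closure $K^*$ strictly contains $K$ forces the socle element $y$ of $R/K$ to lie in $K^*$ — indeed $K^*/K$ is a nonzero submodule of the injective hull-type module $\soc(R/K)$, so it must contain the whole socle. Thus $K\subsetneq J=(K,y)\subseteq K^*$. Now Theorem \ref{HK-tc} (applicable since $R$ is formally unmixed and $K$, being parameter-generated in a $d$-dimensional ring, is $\m$-primary) gives $e_{HK}(K)=e_{HK}(J)$. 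On the other hand, by Exercise \ref{rlr-hk}-type reasoning — more precisely by Lech's formula $e_{HK}(K)=\varinjlim\lambda(R/K^{[q]})/q^d = e(K)=e$ since $K$ is generated by a system of parameters in a Cohen-Macaulay ring — we have $e_{HK}(K)=e$, hence $e_{HK}(J)=e$ as well.

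To finish, I would bound $e_{HK}(J)$ from above using Corollary \ref{est}: $e_{HK}(K)\leq \lambda(J/K)e_{HK}(R)+e_{HK}(J)$, i.e. $e\leq 1\cdot e_{HK}(R)+e_{HK}(J)$. Wait — that goes the wrong way; instead I would apply Corollary \ref{est} in the form $e_{HK}(J,R)\leq \lambda(R/J)e_{HK}(R)$ (take $I=J$, the first statement of that corollary, dividing by $q^d$), wait, more carefully: the bound I want relates $e_{HK}(J)$ and $e_{HK}(R)$ via the colength $\lambda(R/J)=e-1$. Applying the first inequality of Corollary \ref{est} with the ideal $J$ in place of $I$ gives $e_{HK}(J)\leq \lambda(R/J)\,e_{HK}(R)=(e-1)e_{HK}(R)$. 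Combining with $e_{HK}(J)=e$ yields $e\leq (e-1)e_{HK}(R)$, that is, $e_{HK}(R)\geq \frac{e}{e-1}=1+\frac{1}{e-1}$, as desired.

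The main obstacle I expect is justifying rigorously that the socle element $y$ of $R/K$ lies in $K^*$ — i.e. upgrading "$R$ is not F-rational" (so some parameter ideal fails to be tightly closed) to "this particular minimal reduction $K$ has $y\in K^*$." This needs the fact that in a formally unmixed (hence here equidimensional and, being Gorenstein, Cohen-Macaulay) ring, whether a parameter ideal is tightly closed does not depend on the choice of system of parameters — a colon-capturing / persistence argument — together with the Gorenstein duality observation that any nonzero submodule of $R/K$ contained in the "interesting" part must meet the one-dimensional socle. Once that is in hand, the rest is a formal chain of (in)equalities assembled from Theorem \ref{HK-tc}, Lech's formula for parameter ideals, and Corollary \ref{est}.
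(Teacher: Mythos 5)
Your proof is correct and follows essentially the same strategy as the paper: take a minimal reduction $K$ of $\m$ (so $\lambda(R/K) = e$ since $R$ is Cohen--Macaulay), use that $R$ is not F-rational to find an $\m$-primary ideal $L$ with $K \subsetneq L \subseteq K^*$ and $\lambda(R/L) \le e-1$, and then combine $e = e_{HK}(K) = e_{HK}(L)$ from Theorem \ref{HK-tc} with the colength bound $e_{HK}(L) \le \lambda(R/L)\, e_{HK}(R)$ from Corollary \ref{est}. The one difference is a small unnecessary detour: you explicitly construct $L = (K,y)$ for a socle generator $y$ and invoke Gorenstein duality to show $y \in K^*$, whereas the paper simply takes $L = K^*$ itself and notes that $K^* \supsetneq K$ already forces $\lambda(R/K^*) \le e - 1$ --- your socle argument is correct but not needed for the bound.
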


\begin{proof} We may assume that the residue field is infinite. Choose a minimal reduction of the
maximal ideal and let $J$ be the ideal generated by that reduction. Observe that $\la(R/J) = e$.
Since $R$ is not F-rational, $J^*\ne J$. We use Lemma \ref{Frob-filter} to see that
$$e = e_{HK}(J) =  e_{HK}(J^*) = \la(R/J^*)e_{HK}(R) \leq (e-1)e_{HK}(R)$$
giving the result. \end{proof}

If $e > d!$, then since $e_{HK}(R) \geq e/d!$, we see that $e_{HK}(R) \geq 1 + \frac{1}{d!}$.
On the other hand, if $e\leq d!$, then $e-1< d!$, and Proposition \ref{nonFrat-est} shows that
in the Gorenstein but not F-rational case, we have the same estimate that $e_{HK}(R) \geq 1 + \frac{1}{d!}$.

\begin{remark} {\rm It is worth noting that the relationship between the Hilbert-Kunz multiplicity of
ideals and the tight closure was an important idea in the construction by Brenner
and Monsky \cite{BM} of a counterexample to the localization problem in tight closure theory.}
\end{remark}
\bigskip
\section{F-signature}
\bigskip

The work of Hochster and Roberts on the Cohen-Macaulayness of rings of invariants \cite{HoR}
focused attention on the splitting properties of the map from $R$ to $R^{1/p}$. If 
$R$ is F-finite, then this map splits as a homomorphism of $R$-modules if and only if
$R$ is F-pure, i.e. the Frobenius homomorphism is a pure map. Thus the idea of splitting
copies of $R$ out of $R^{1/p}$ clearly had something to say about the singularities of $R$.
This idea was further explored during the development of tight closure, with the concept of
strong F-regularity. In \cite{SVB}, Smith and Van den Bergh studied the asymptotic behavior of summands of $R^{1/q}$ for
rings of finite F-representation type which are strongly F-regular. Yao \cite{Y1} later removed the
assumption of strong F-regularity from their work. For free summands, in \cite{HL}, the idea of the F-signature was introduced as a way
to asymptotically key track of the number such summands of $R^{1/q}$ as $q$ varies. As it turns out, almost the exact same
ideas were introduced at the same time by Watanabe and Yoshida \cite{WY5} in their study of minimal Hilbert-Kunz multiplicity.
The F-signature provides delicate information about the singularities of $R$, as we shall see. One
immediate problem was to show that a limit exists in this asymptotic construction. 
   When $R$ is Gorenstein, this was done in \cite{HL}, and we reproduce that argument here
since it is not difficult and has the additional benefit of expressing the F-signature as a difference of the \HK multiplicities of two ideals.
The case when $R$ is not Gorenstein proved to be considerably harder. After
many partial results (see \cite{Ab2}, \cite{Y2}, for example) Kevin Tucker recently proved the limit always exists. We
give a modified version of  his proof here.

We first set up the basic ideas.
Let $(R,\m,k)$ be a $d$-dimensional reduced Noetherian local ring with prime
characteristic $p$  and residue field $k$. We assume that $R$ is F-finite. 
By $a_q$ we denote the largest rank of a free
$R$-module appearing in a direct sum decomposition of $R^{1/q}$, where as usual $q = p^e$. We write
$R^{1/q} \cong  R^{a_q} \oplus M_q$ as an $R$-module, where $M_q$ has no free direct summands. 
The number $a_q$ is called the
\it e-th Frobenius splitting number \rm of $R$.

\begin{define}\label{F-sig} The F-signature of $R$, denoted $s(R)$, is
$s(R) = \varinjlim \frac{a_q}{q^{d+\alpha(R)}}$, the limit taken as $q$ goes to infinity, provided the
limit exists.
\end{define}

We first prove that the limit exists in the Gorenstein case, partly due to the ease of the proof,
and partly due to the fact that it gives a precise value for the F-signature in terms of  Hilbert-Kunz multiplicities.
This theorem is found in \cite{HL}.

\begin{theorem}\label{Fsig-Gor} Let $(R,\m,k)$ be a Noetherian local reduced Gorenstein ring of
dimension $d$ and prime characteristic $p$. Then $\varinjlim \frac{a_q}{q^{d+\alpha(R)}}$ exists
and is equal to the difference between the Hilbert-Kunz multiplicity of the ideal
$I$ generated by a system of
parameters, and the Hilbert-Kunz multiplicity of the ideal $I:m$.
\end{theorem}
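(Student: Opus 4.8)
The plan is to connect the Frobenius splitting number $a_q$ to the colength of a specific ideal, using the Gorenstein hypothesis. Write $R^{1/q} \cong R^{a_q} \oplus M_q$ with $M_q$ having no free summand. Since $R$ is Gorenstein (hence so is $R^{1/q}$ after identification), the number of free summands that split off of $R^{1/q}$ can be detected by how $R^{1/q}$ interacts with the socle of $R/I$, where $I = (x_1,\dots,x_d)$ is generated by a system of parameters. The key point I would establish first is: for a finitely generated $R$-module $N$, the maximal rank of a free summand of $N$ equals $\dim_k \big(\operatorname{Soc}(N/IN) \cap (\text{image of } \Hom_R(N,R))\big)$, or more usefully in the Gorenstein case, equals $\la(N/IN) - \la((I :_N \m)/IN) $ applied in the right way — actually the cleanest route is the classical observation that $a_q = \la(R^{1/q}/IR^{1/q}) - \la\big((IR^{1/q} :_{R^{1/q}} \m)/IR^{1/q}\big)$ when $R$ is Gorenstein, because a surjection $R^{1/q} \to R$ corresponds under Matlis-type duality to a socle element of $R^{1/q}/IR^{1/q}$ not killed by the relevant map, and the Gorenstein property makes the socle of $R/I$ one-dimensional so these counts are exact rather than merely inequalities.

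Next I would translate these two lengths into Hilbert-Kunz data. First, $\la(R^{1/q}/IR^{1/q}) = \la(R/I^{[q]}) q^{\al(R)}$ by the length-adjustment formula recorded in the introduction. Second, $\la\big((IR^{1/q} :_{R^{1/q}} \m)/IR^{1/q}\big) = \la\big((I^{[q]} :_R \m)/I^{[q]}\big) q^{\al(R)}$ by the same adjustment, since $IR^{1/q} :_{R^{1/q}} \m = (I^{[q]} :_R \m)^{1/q}$ (taking $q$-th roots commutes with this colon computation because Frobenius is injective on the reduced ring $R$). Now $\la\big((I^{[q]} :_R \m)/I^{[q]}\big) = \la(R/I^{[q]}) - \la\big(R/(I^{[q]} :_R \m)\big)$, and $\la(R/(I^{[q]} :_R \m)) = \la\big((I:\m)^{[q]} \cdot\text{something}\big)$ — here I must be careful: the ideal $I : \m$ has the property that $(I:\m)^{[q]}$ relates to $I^{[q]} : \m$, and in fact for $\m$-primary ideals one checks $\la(R/(I^{[q]}:\m))$ grows like $\la(R/(I:\m))\,q^d$ by a direct argument or by noting $(I:\m)^{[q]} \subseteq I^{[q]}:\m^{[q]} \subseteq I^{[q]} : \m$ up to controlled error. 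Combining, $a_q / q^{d+\al(R)} = \la(R/I^{[q]})/q^d - \la\big((I^{[q]}:\m)/I^{[q]}\big)/q^d$, and passing to the limit using Theorem \ref{HKexists} gives $\varinjlim a_q/q^{d+\al(R)} = \ehk(I) - \ehk(I:\m)$.

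The main obstacle, I expect, is making the socle-counting identity $a_q = \la(R^{1/q}/IR^{1/q}) - \la\big((IR^{1/q}:\m)/IR^{1/q}\big)$ genuinely \emph{exact} rather than an inequality in one direction. The content is that in a Gorenstein ring, every nonzero map $R^{1/q} \to R$ that hits a unit corresponds bijectively (after quotienting by $I$) to a socle element of $R^{1/q}/IR^{1/q}$ mapping to a generator of the one-dimensional socle of $R/I$, and conversely any such socle element can be lifted to a splitting because $\Hom$ is exact enough. Establishing this requires the fact that $R^{1/q}$ is a maximal Cohen-Macaulay $R$-module (so $I$ is a regular sequence on it), Nakayama's lemma to see that a free summand of $R^{1/q}$ is a direct summand iff the composite $R^{1/q} \twoheadrightarrow R \hookrightarrow R^{1/q}$ can be arranged to be idempotent after modding out $I$, and the Gorenstein socle-dimension-one fact to convert "number of independent splittings" into a length difference. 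Once that identity is in hand, the rest is the bookkeeping with $\al(R)$ and the limit, all of which is routine given Theorem \ref{HKexists} and the introduction's conventions.
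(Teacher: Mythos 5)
The central identity you propose, $a_q = \la(R^{1/q}/IR^{1/q}) - \la\bigl((IR^{1/q} :_{R^{1/q}} \m)/IR^{1/q}\bigr)$, is not correct, and this is a genuine gap rather than a detail to be patched. Writing $R^{1/q} = R^{a_q}\oplus M_q$ with $M_q$ having no free summand, your formula subtracts the socle length of $R^{1/q}/IR^{1/q}$ from its total length, which equals $a_q$ only if $\la(M_q/IM_q) = \dim_k\soc(M_q/IM_q)$, i.e.\ only if $\m M_q\inc IM_q$. That is false in general; already $R = R^{1/1}$ gives $\la(R/I)-1 \ne a_1 = 1$ unless $\la(R/I)=2$. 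The correct identity is
$a_q = \la\bigl((I:\m)R^{1/q}\,/\,IR^{1/q}\bigr) = \la(R^{1/q}/IR^{1/q}) - \la\bigl(R^{1/q}/(I:\m)R^{1/q}\bigr)$,
using $(I:\m)R^{1/q}$ (the submodule generated by $I:\m$) rather than $IR^{1/q}:\m$ (the full socle preimage); these are genuinely different, with $(I:\m)R^{1/q}\inc IR^{1/q}:\m$ typically proper. With $(I:\m) = (I,\Delta)$ for a socle representative $\Delta$, the identity reduces to showing $\Delta M_q\inc IM_q$, and \emph{that} is the real content of the theorem: it requires the argument that for a maximal Cohen--Macaulay $M$ without free summand over Gorenstein $R$, the sequence $0\to R\to M^n\to N\to 0$ (sending $1$ to a generating tuple) is nonsplit, hence $N$ is not Cohen--Macaulay, hence $\tor_1^R(N,R/I)\ne 0$, and this nonzero submodule of $R/I$ must contain the one-dimensional socle $\overline\Delta$ — forcing $\Delta M\inc IM$. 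Your paragraph gesturing at ``every map $R^{1/q}\to R$ hitting a unit corresponds to a socle element'' still points at the wrong module and does not produce this lemma.

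There is also a secondary error in the translation step: under the identification of $R^{1/q}$ with $R$, the module $IR^{1/q}:_{R^{1/q}}\m$ corresponds to $(I^{[q]}:_R\m^{[q]})^{1/q}$, not $(I^{[q]}:_R\m)^{1/q}$, because the Frobenius twist replaces $\m$ by $\m^{[q]}$ when moving the colon across the identification. Relatedly, the containment you write, $I^{[q]}:\m^{[q]}\inc I^{[q]}:\m$, goes the wrong way (since $\m^{[q]}\inc\m$, colons reverse: $I^{[q]}:\m\inc I^{[q]}:\m^{[q]}$), so the ``controlled error'' argument you sketch does not close the gap. Once the correct identity $q^{\al(R)}\bigl(\la(R/I^{[q]}) - \la(R/(I:\m)^{[q]})\bigr) = a_q$ is in hand, dividing by $q^{d+\al(R)}$ and passing to the limit via Theorem \ref{HKexists} does give $s(R) = \ehk(I) - \ehk(I:\m)$ exactly as you intend; the bookkeeping part of your plan is fine, but the key lemma is missing and the identity it is meant to support is written incorrectly.
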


\begin{proof} Let $I
= (x_1,...,x_d)$ be generated by a system of
parameters.  We claim that the difference $\lambda(M/ IM) -
\lambda(M/(I:\m)M)$ is zero for all maximal Cohen-Macaulay modules $M$ without a free summand. We state this as a separate lemma.

\begin{lemma}\label{socle} Let $(R,\m)$ be a Gorenstein local ring and
let
$M$ be a maximal Cohen-Macaulay $R$-module without a free summand. Let
$I$ be an ideal generated by a  system of parameters for $R$, and let $\Delta \in R$ be a
representative for the socle of $R/I$.  Then $\Delta M \subseteq IM$.\end{lemma} 

\begin{proof} Choose generators $\{m_1, \ldots, m_n\}$ for $M$ and
define a
homomorphism $R \to M^n$ by $1 \mapsto (m_1, \ldots, m_n)$.  Let $N$ be
the cokernel, so that we have an exact sequence
$$ 0 \to R \to M^n \to N \to 0.$$
Since $M$ has no free summands, this exact sequence
is nonsplit.  This implies, since $R$ is Gorenstein, that $N$ is not
Cohen-Macaulay. 
When
we kill $I$, therefore, there is a nonzero $\tor$:
$$0 \to \tor_1^R(N,R/I) \to \overline R \to \overline M^n \to
\overline N \to 0.$$
Since the map $\overline R \to \overline M^n$ has a nonzero kernel, we must
have $\overline\Delta \mapsto 0$.  Since the elements $m_1, \ldots, m_n$
generate $M$, this says precisely that $\Delta M \subseteq
IM$. 
\end{proof}

Returning to the proof of Theorem \ref{Fsig-Gor}, we write $R^{1/q} = R^{a_q}\oplus M_q$,
where $M_q$ is a maximal Cohen-Macaulay module without free summands. Applying Lemma \ref{socle}, we then
see that 
$q^{\alpha(R)}(\lambda(R/I^{[q]}) - \lambda(R/(I,\Delta)^{[q]})) = a_{q}$
and therefore 
$$e_{HK}(I,R) - e_{HK}((I,\Delta),R) = s(R).$$ \end{proof}

\begin{remark} {\rm The proof above shows that the F-signature of a Gorenstein
local ring is $0$ if and only if for some (or equivalently for all) ideals
$I$ generated by a system of parameters, $e_{HK}(I) = e_{HK}(I:\m)$.
As we have seen, this equality holds if and only if $I$ and $I:\m$ have the same
tight closure, which is true if and only if $I$ is not tightly closed, since every 
ideal properly containing $I$ must contain $I:\m$. Thus the F-signature is positive
in this case if and only if $R$ is F-rational (and then is strongly F-regular, as $R$ is
Gorenstein.) Aberbach and Leuschke \cite{AL} proved in general that the F-signature is
positive if and only if $R$ is strongly F-regular.
In fact the ideas of the proof above extend to prove something a little less than
strong F-regularity, 
namely, that \cite[Theorem 11]{HL} if the lim sup of 
$a_q/q^d$ is positive, then $R$ must be weakly F-regular, and in particular is
Cohen-Macaulay and integrally closed. Thus, if $R$ is not weakly F-regular, $s(R)$
exists and is $0$. We prove this important fact next. For graded rings, it is known that
strong and weak F-regularity are equivalent \cite{LS}.}
\end{remark}

\begin{remark}{\rm Watanabe and Yoshida \cite{WY5} systematically studied minimal
possible difference between the Hilbert-Kunz multiplicity of two $\m$-primary ideals. They
go further, and introduced the notion of minimal relative
Hilbert-Kunz multiplicity $mHK(R)$. By their definition, $mHK(R) = \liminf \la_R(R/ann_R Az^q)$,
where $z$ is a generator of the socle of the injective hull $E_R(k).$ They prove that $mHK(R)\leq e_{HK}(I)-e_{HK}(I')$ for
$\m$-primary ideals $\subset I'$ with $\la_R(I'/I) = 1$. 
If $R$ is Gorenstein, they prove the minimal relative \HK multiplicity is in fact $e_{HK}(J) - e_{HK}(J : m)$ 
for any parameter ideal $J$ of $R$. 
As an example, we quote one of their
theorems:
Let $k$ be a field of characteristic $p > 0$, and let $R = k[x_1,\ldots,x_d]^G$ be
the invariant subring by a finite subgroup $G$ of $GL(d, k)$ with $(p, |G|) = 1$. Also, assume
that $G$ contains no pseudo-reflections. Then the minimal relative \HK multiplicity is $1/|G|$.}
\end{remark}

\begin{lemma}\label{positiveF-sig}  Assume that $(R, \m)$ is a 
reduced F-finite local ring
containing a field of prime characteristic $p$ and let $d = \dim
R$. We adopt the notation from the beginning of this section.
If $s(R) > 0$, then $R$ is weakly
$F$-regular.
\end{lemma}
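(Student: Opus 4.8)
The plan is to argue by contradiction. Since it is not yet known that the limit defining $s(R)$ exists, I read the hypothesis $s(R)>0$ as $\limsup_q a_q/q^{d+\alpha(R)}>0$, so it suffices to prove that if $R$ is \emph{not} weakly F-regular then $a_q = O(q^{d-1+\alpha(R)})$.

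So assume some ideal $I$ satisfies $I^*\neq I$, and fix $x\in I^*\setminus I$. As $R$ is reduced, Proposition \ref{tc-basic}(2) supplies $c\in R^o$ with $cx^q\in I^{[q]}$ for every $q=p^e$, and since $x\notin I$ the colon ideal $L:=(I:_R x)$ is proper, hence $L\inc\m$. The crucial first step will be: \emph{for every $R$-linear map $\phi\colon R^{1/q}\to R$ one has $\phi(c^{1/q}R^{1/q})\inc L$}. Indeed, for any $w\in R$ we have $cwx^q\in I^{[q]}$, so the element $(cw)^{1/q}x = (cwx^q)^{1/q}$ lies in the extension $IR^{1/q}$ of the ideal $I$ to the ring $R^{1/q}$ (which, under the identification $R^{1/q}=F^e_*R$, is just $F^e_*(I^{[q]})$); applying $\phi$ and using $x\in R$ gives $x\cdot\phi((cw)^{1/q})\in\phi(IR^{1/q})\inc I$, i.e. $\phi((cw)^{1/q})\in L$. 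Letting $w$ run over $R$, the elements $(cw)^{1/q}$ exhaust $c^{1/q}R^{1/q}$, which proves the claim.

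Now feed this into the decomposition $R^{1/q}\cong R^{a_q}\oplus M_q$. Taking $\phi$ to be each of the $a_q$ coordinate projections onto the free summands, the claim forces $c^{1/q}R^{1/q}\inc\m R^{a_q}\oplus M_q$, so $R^{1/q}/c^{1/q}R^{1/q}$ surjects onto $R^{1/q}/(\m R^{a_q}\oplus M_q)\cong(R/\m)^{a_q}$; hence $\mu_R(R^{1/q}/c^{1/q}R^{1/q})\geq a_q$. Since $R^{1/q}/c^{1/q}R^{1/q}\cong F^e_*(R/cR)$, tensoring with $k$ gives $\mu_R(R^{1/q}/c^{1/q}R^{1/q}) = \la_R\bigl(F^e_*(R/(c,\m^{[q]}))\bigr) = q^{\alpha(R)}\la_R(R/(c,\m^{[q]}))$, where the last equality uses exactness of $F^e_*$ and $\la_R(F^e_*N) = q^{\alpha(R)}\la_R(N)$ for finite-length $N$. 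Because $c\in R^o$ lies in no minimal prime of $R$, $\di(R/cR)\leq d-1$, so Lemma \ref{growth} (applied to the module $R/cR$ with the ideal $\m^{[q]}$) bounds $\la_R(R/(c,\m^{[q]})) = \la(R/\m^{[q]}\otimes_R R/cR)$ by $Cq^{d-1}$ for a constant $C$ independent of $q$. Combining, $a_q\leq Cq^{d-1+\alpha(R)}$ for all $q$, and dividing by $q^{d+\alpha(R)}$ contradicts $\limsup_q a_q/q^{d+\alpha(R)}>0$.

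The routine ingredients are the standard identities among $R^{1/q}=F^e_*R$, ideal extensions, and lengths --- in particular $c^{1/q}R^{1/q}=F^e_*(cR)$ as a submodule of $F^e_*R$, so that $R^{1/q}/c^{1/q}R^{1/q}=F^e_*(R/cR)$, and $\la_R(F^e_*N)=q^{\alpha(R)}\la_R(N)$ --- all of which follow from exactness of $F^e_*$ and the fact that in a reduced ring every element of $R^{1/q}$ is a unique $q$-th root. The one genuinely substantive point, and the step I expect to be the crux, is the first one: converting the single tight-closure relation $cx^q\in I^{[q]}$ into the \emph{uniform} statement that every homomorphism $R^{1/q}\to R$ carries $c^{1/q}R^{1/q}$ into $L$, thereby turning ``$x$ is almost in $I$'' into ``$R^{1/q}$ has few free summands''. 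Everything after that is bookkeeping with Lemma \ref{growth}.
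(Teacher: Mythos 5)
Your proof is correct, but it takes a genuinely different route from the paper's. The paper reduces (via test element theory, citing \cite{HH2} and \cite{HH1}) to an $\m$-primary ideal $I$ with $I \neq I^*$, picks a socle representative $\Delta \in (I:\m) \cap I^* \setminus I$, and uses the decomposition $R^{1/q} \cong R^{a_q} \oplus M_q$ to derive the inequality
$q^{\alpha(R)}\bigl(\lambda(R/I^{[q]}) - \lambda(R/(I,\Delta)^{[q]})\bigr) \geq a_q$,
whence $e_{HK}(I) - e_{HK}((I,\Delta)) \geq s(R)$; the left side vanishes because $(I,\Delta) \subseteq I^*$ (the easy direction of the tight closure characterization). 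You instead work directly with the tight closure witness $c \in R^o$: the observation that every $\phi \in \Hom_R(R^{1/q},R)$ carries $c^{1/q}R^{1/q}$ into $(I:x) \subseteq \m$ shows $c^{1/q}R^{1/q}$ lies in the non-free part $(R^{1/q})_{nf}$, giving the explicit bound $a_q \leq q^{\alpha(R)}\lambda(R/(c,\m^{[q]})) = O(q^{d-1+\alpha(R)})$ via Lemma \ref{growth}. Your approach is essentially the Aberbach--Leuschke argument \cite{AL}: it avoids the test-element reduction entirely (you never need $I$ to be $\m$-primary), bypasses Hilbert--Kunz multiplicities, and yields a quantitative decay rate for $a_q$ rather than just $s(R)=0$. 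The paper's route has the virtue of fitting neatly into the surrounding narrative about $e_{HK}$ and tight closure, but yours is self-contained and arguably more transparent. One small notational point: when you say ``the elements $(cw)^{1/q}$ exhaust $c^{1/q}R^{1/q}$ as $w$ runs over $R$,'' you are implicitly using that every element of $R^{1/q}$ is $t^{1/q}$ for a unique $t \in R$, which you do flag at the end; that identification (and $IR^{1/q} = (I^{[q]})^{1/q}$) is exactly right in the reduced F-finite setting.
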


\begin{proof}  Assume that $s(R) > 0$, but  $R$ is not weakly $F$-regular, that is, not
all
ideals of $R$ are tightly closed. By \cite[Theorem 6.1]{HH2}
$R$ has a test element, and then \cite[Proposition 6.1]{HH1}
shows that the tight closure of an arbitary ideal in $R$ is the intersection of
$\m$-primary tightly closed ideals. Since $R$ is not weakly $F$-regular, there
exists an $\m$-primary ideal $I$ with
$I
\neq I^*$. Choose an element $\Delta$ of $I : \m$ which is not in
$I^*$. 

$$q^{\alpha(R)}(\lambda(R/I^{[q]}) - \lambda(R/(I,\Delta)^{[q]})) 
        =\lambda(R^{1/q}/I R^{1/q}) - \lambda(R^{1/q}/(I,\Delta)R^{1/q})\geq a_q$$

Dividing by $q^{d+\alpha(R)}$ and taking the limit gives on the left-hand side
a difference of Hilbert-Kunz multiplicities,
$$e_{HK}(I) - e_{HK}((I,\Delta))\geq s(R).$$
But by Theorem~\ref{HK-tc}, this difference is zero, showing that $s(R) = 0$.
\end{proof}

The beautiful idea of Tucker's proof that the F-signature exists in general is to represent it as a
limit of certain normalized Hilbert-Kunz multiplicities, which are decreasing. To 
capture this, we first discuss some general facts about free summands of modules.

\begin{discuss} {\rm Let $(R,\m)$ be a Noetherian local reduced ring, and let $M$ be a
torsion-free $R$-module. We can always write $M = N\oplus F$, where $F$ is free and
$N$ has no free summands. We define a submodule $M_{nf}$ of $M$ to be $N+\m F$. On
the face of it, this submodule depends on the choice of $N$. However we can also
describe this submodule by the following:

$$ \{x\in M|\, \phi(x) \in \m\, \forall\, \phi \in  \Hom_R(M, R)\}.$$

To see that these are the same, simply note that clearly $M_{nf}$ is inside the
above submodule (note it is a submodule!), and conversely, if $x$ is in the submodule,
then $x\in M_{nf}$; otherwise we can write $x = n + y$, where $y$ is a minimal generator
of $F$, and where $n\in N$. The submodule $Ry$ of $M$ clearly splits off as a free summand,
so there is a $\phi: M\ra R$ such that $\phi(y) = 1$. Then $\phi(x) = 1 + \phi(n)\notin \m$,
a contradiction. Note that $M/M_{nf}$ is a vector space of dimension equal to the rank of $F$.} 
\end{discuss}
                                                                             
\begin{define} Let $(R,\m,k)$ be a reduced local Noetherian ring of prime characteristic $p$.
For $q = p^e$, we let $I_q: = (R^{1/q})_{nf}^{[q]}$, an ideal in $R$.
\end{define}

This ideal was considered in work of  Yongwei Yao \cite{Y1} as well as Florian Enescu and Ian Aberbach
\cite{AE2}. Observe that Tucker defines it as follows, which from the discussion above is
equivalent to our definition:
                                         
$$  I_q = \{r\in R |\, \phi(r^{1/q}) \in \m\, \forall\, \phi \in  \Hom_R(R^{1/q}, R)\}.$$

We group some basic remarks about these ideals in the following proposition:

\begin{prop}\label{ideal-prop}  Let $(R,\m,k)$ be a reduced local Noetherian ring of prime characteristic $p$.
Then $\m^{[q]}\inc I_q$ for all $q = p^e$. Furthermore $I_q^{[q']}\inc I_{qq'}$ for all $q = p^e$ and $q' = p^{e'}$.
If the residue field is perfect, $\la(R/I_q) = a_q$.
\end{prop}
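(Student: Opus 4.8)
The plan is to lean on two facts. First, the alternative description from the Discussion, $(R^{1/q})_{nf} = \{x \in R^{1/q} : \phi(x) \in \m \text{ for all } \phi \in \Hom_R(R^{1/q},R)\}$, which makes it transparent that $(R^{1/q})_{nf}$ is not merely an $R$-submodule but an \emph{ideal of the ring} $R^{1/q}$: if $x$ lies in it and $s \in R^{1/q}$, then $u \mapsto \phi(su)$ is again $R$-linear, whence $\phi(sx) \in \m$. Second, since $R$ is reduced and F-finite, the $e$-th Frobenius $\Phi \colon R^{1/q} \to R$, $u \mapsto u^q$, is a \emph{ring isomorphism} (injective because $R^{1/q}$ is reduced, surjective because $r = \Phi(r^{1/q})$), and by the very definition $I_q = \Phi\big((R^{1/q})_{nf}\big)$. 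Thus $I_q$ — and likewise $I_{qq'}$ — is a genuine ideal of $R$, namely the $\Phi$-image of an ideal of $R^{1/q}$.

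For $\m^{[q]} \inc I_q$: given $r = \sum_i c_i m_i^q \in \m^{[q]}$ with $c_i \in R$ and $m_i \in \m$, one has $r^{1/q} = \sum_i m_i c_i^{1/q} \in \m R^{1/q}$ (char $p$, plus uniqueness of $q$-th roots), so $\phi(r^{1/q}) \in \m$ for every $\phi \in \Hom_R(R^{1/q},R)$, i.e. $r \in I_q$. For $I_q^{[q']} \inc I_{qq'}$: the ideal $I_q^{[q']}$ is generated by the elements $a^{q'}$ with $a \in I_q$, and $I_{qq'}$ is an ideal, so it suffices to show $a^{q'} \in I_{qq'}$ for such $a$; but $(a^{q'})^{1/(qq')} = a^{1/q}$, and under $R^{1/q} \hookrightarrow R^{1/(qq')}$ every $\psi \in \Hom_R(R^{1/(qq')},R)$ restricts to a member of $\Hom_R(R^{1/q},R)$, so $\psi(a^{1/q}) \in \m$ because $a \in I_q$ — which is precisely the statement $a^{q'} \in I_{qq'}$.

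The substantive point is the length equality when $k$ is perfect. From $R^{1/q} \cong R^{a_q} \oplus M_q$ we get $(R^{1/q})_{nf} = \m R^{a_q} \oplus M_q$, so $R^{1/q}/(R^{1/q})_{nf} \cong (R/\m)^{a_q}$ and hence $\lambda_R\big(R^{1/q}/(R^{1/q})_{nf}\big) = a_q$. The ring isomorphism $\Phi$ induces a ring isomorphism $R^{1/q}/(R^{1/q})_{nf} \xrightarrow{\ \sim\ } R/I_q$; since a ring isomorphism matches maximal chains of ideals with maximal chains of ideals, and for an Artinian local ring the length of such a chain equals the length of the ring over itself, we conclude $\lambda_{R^{1/q}}\big(R^{1/q}/(R^{1/q})_{nf}\big) = \lambda_R(R/I_q)$. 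Finally $R \to R^{1/q}$ is a local homomorphism (the Frobenius is a homeomorphism on spectra), with residue field extension $k \hookrightarrow k^{1/q}$ of degree $[k^{1/q}:k]$, so $\lambda_R(L) = [k^{1/q}:k]\,\lambda_{R^{1/q}}(L)$ for every finite-length $R^{1/q}$-module $L$; applying this with $L = R^{1/q}/(R^{1/q})_{nf}$ and using $[k^{1/q}:k] = 1$ for perfect $k$ gives $\lambda_R(R/I_q) = a_q$.

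The main obstacle, and the one place where care is essential, is this last paragraph: $\Phi$ is only Frobenius-semilinear, so one must \emph{not} transport an $R$-module composition series of $R^{1/q}/(R^{1/q})_{nf}$ directly to $R/I_q$; instead one moves the ideal lattice across via the ring isomorphism and then corrects $\lambda_{R^{1/q}}$ to $\lambda_R$ by the residue-field degree $[k^{1/q}:k]$ — which is exactly the point at which perfectness of $k$ is used. Everything else is formal manipulation with the two descriptions of $(R^{1/q})_{nf}$.
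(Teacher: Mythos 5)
Your proposal is correct and follows essentially the same route as the paper: the two containments drop out of the two characterizations of $(R^{1/q})_{nf}$, and the length equality is transported across the $q$-th-power ring isomorphism $R^{1/q}\to R$. The only difference is expository — you spell out the length step that the paper compresses into one line, making explicit that $(R^{1/q})_{nf}$ is an ideal of $R^{1/q}$, that the isomorphism is Frobenius-semilinear so one must convert between $\lambda_R$ and $\lambda_{R^{1/q}}$ by the factor $[k^{1/q}:k]$, and that this factor equalling $1$ is precisely where perfectness of $k$ enters.
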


\begin{proof} Since $\m R^{1/q}\inc (R^{1/q})_{nf}$, it is immediate from the definition that $\m^{[q]}\inc I_q$.
To prove the second statement, let $r\in I_q$, so that $r^{1/q}\in (R^{1/q})_{nf}$. Then $(r^{q'})^{1/qq'} = r^{1/q}\in
R^{1/qq'}$ is clearly $I_{qq'}$ by the second description of these ideals, since if $\phi: R^{1/qq'}\ra R$ was such that
$\phi(r^{1/q}) \notin \m$, restricting $\phi$ to $R^{1/q}$ would give the contradiction that $r\notin I_q$. 
The last statement of the proposition follows since $\la(R/I_q) = \la(R^{1/q}/I_q^{1/q}R^{1/q}) = \la(R^{1/q}/(R^{1/q})_{nf}) = a_q$.
\end{proof}

We are ready to prove Tucker's theorem:

\begin{theorem}\label{Fsigexists} \cite[Theorem 4.9]{Tu} Let $(R,\m,k)$ be a Noetherian local ring of dimension $d$ and prime characteristic $p$. Assume
that $R$ is F-finite. Then $s(R) = \varinjlim \frac{a_q}{q^{d+\alpha(R)}}$ exists. 
\end{theorem}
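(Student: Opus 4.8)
The plan is to follow the strategy already foreshadowed in the text: express $s(R)$ as a limit of normalized Hilbert--Kunz multiplicities of the ideals $I_q$, and then show that this normalized sequence is (eventually) monotone and bounded, hence convergent. Concretely, set $c_q = \la(R/I_q)/q^{d+\al(R)}$. By Proposition~\ref{ideal-prop}, when $k$ is perfect we have $\la(R/I_q) = a_q$, so $c_q = a_q/q^{d+\al(R)}$; in general one adjusts by the factor $q^{\al(R)}$ coming from $[k^{1/q}:k]$, as in the introduction, so proving $\lim c_q$ exists is equivalent to proving $s(R)$ exists. The first reduction is therefore to pass to a faithfully flat extension making the residue field perfect (or at least $F$-finite with the length bookkeeping controlled by $\al(R)$), exactly as in the proof of Theorem~\ref{HKexists}.

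Next I would exploit the containment $I_q^{[q']} \inc I_{qq'}$ from Proposition~\ref{ideal-prop}. This gives a surjection $R/I_q^{[q']} \twoheadrightarrow R/I_{qq'}$, hence
$$\la(R/I_{qq'}) \leq \la(R/I_q^{[q']}).$$
Now I want to compare $\la(R/I_q^{[q']})$ with $\la(R/I_q)(q')^{d+\al(R)}$. Since $\m^{[q]} \inc I_q$, Corollary~\ref{uniformity} applies (after the reduction to an $F$-finite domain, via a prime filtration and Proposition~\ref{HK-ses} to handle the general case): there is a constant $C$, independent of $q$ and $q'$, with
$$\bigl|\la(R/I_q^{[q']}) - (q')^{d+\al(R)}\la(R/I_q)\bigr| \leq C(q')^{d+\al(R)} q^{d-1}.$$
Dividing through by $(qq')^{d+\al(R)}$ and combining with the surjection above yields
$$c_{qq'} \leq c_q + \frac{C}{q^{1+\al(R)}},$$
or something of this shape (the precise exponent in the error term is not important, only that it tends to $0$ as $q\to\infty$). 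This is the analogue of the Cauchy estimate in Remark~\ref{exists}: it shows that $\limsup_{q} c_q$ and $\liminf_q c_q$ agree, so the limit exists. One must also note $c_q \geq 0$ is bounded, which is immediate since $a_q \leq q^{d+\al(R)} = \rank_R R^{1/q}$.

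The main obstacle is establishing the inequality $\la(R/I_{qq'}) \leq \la(R/I_q^{[q']})$ together with a clean application of Corollary~\ref{uniformity} to the ideals $I_q$ simultaneously for all $q$ --- that is, getting one constant $C$ that works uniformly. The containment $\m^{[q]}\inc I_q$ is exactly the hypothesis needed in Corollary~\ref{uniformity}, so the uniformity is already built in; the real care is in the reduction steps, since Corollary~\ref{uniformity} is stated for an $F$-finite \emph{domain} and $R$ is only assumed reduced. Handling this requires either decomposing via the minimal primes (using that $R$ reduced $F$-finite is a finite product-like situation after localizing at minimal primes, and that free summands of $R^{1/q}$ interact well with this decomposition) or invoking Lemma~\ref{minagree} and Proposition~\ref{HK-ses} to reduce length comparisons to the domain case. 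Once the monotone-up-to-small-error estimate is in hand, convergence is formal.
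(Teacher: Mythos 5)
Your core strategy --- pass through the ideals $I_q$, use Corollary~\ref{uniformity} with $I = I_q$ (the containment $\m^{[q]}\inc I_q$ from Proposition~\ref{ideal-prop} being exactly what makes that corollary applicable uniformly), and then exploit $I_q^{[q']}\inc I_{qq'}$ --- is the same as the paper's. Your convergence argument is a one-sided estimate $c_{qq'} \leq c_q + C/q^{1+\al(R)}$ feeding into a $\limsup/\liminf$ comparison, whereas the paper first lets $q'\to\infty$ to get $|e_{HK}(I_q) - a_q| \leq Cq^{d-1}$ and then observes that $I_q^{[p]}\inc I_{qp}$ makes $e_{HK}(I_q)/q^d$ a \emph{decreasing} sequence; both are valid, and yours is perhaps slightly more compressed while the paper's makes the monotonicity transparent.

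The genuine gap is in the reduction to the domain case, and you have not resolved it. You correctly notice that Corollary~\ref{uniformity} requires $R$ to be an $F$-finite \emph{domain}, but the two escape routes you propose don't clearly work. Free rank does \emph{not} decompose well along minimal primes: if $R$ has several minimal primes, the free rank $a_q$ of $R^{1/q}$ over $R$ is governed by a $\min$ over components rather than a sum, so the problem does not reduce to the domain summands the way lengths do. Lemma~\ref{minagree} and Proposition~\ref{HK-ses} control $\la(R/I\otimes M)$ up to $O(q^{d-1})$ and additivity of $e_{HK}$, but they do not by themselves translate a statement about free ranks of $R^{1/q}$ into one about the components $R/P_i$. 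The paper cuts through all of this with Lemma~\ref{positiveF-sig} (equivalently, \cite[Theorem 11]{HL}): if $R$ is not weakly $F$-regular, then $\limsup a_q/q^{d+\al(R)} = 0$, so $s(R)$ exists and equals $0$; and if $R$ is weakly $F$-regular, then $R$ is automatically a normal Cohen--Macaulay domain, at which point Corollary~\ref{uniformity} applies directly. This dichotomy is the missing ingredient in your argument, and without it the passage from the reduced case to the domain case is not justified.
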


\begin{proof} We can complete $R$ and extend the residue field to assume that $R$ is complete with perfect residue field.
By Lemma \ref{positiveF-sig} if $R$ is not weakly F-regular, then $s(R) = 0$. Hence we may assume that $R$ is weakly F-regular, and is in
particular a Cohen-Macaulay domain.
We use Corollary \ref{uniformity}.  We have that there is a constant $C$ such that for all $q,q'$,
$$ |\la(R/I_q^{[q']}) - (q')^{d}\la(R/I_q)|\leq C(q')^{d}q^{d-1}.$$
Dividing by $(q')^d$ we obtain that
$$ |\la(R/I_q^{[q']})/(q')^d - \la(R/I_q)|\leq Cq^{d-1}.$$
Taking the limit as $q'$ goes to infinity, we see that
$$|e_{HK}(I_q)-a_q|\leq Cq^{d-1}.$$ Dividing by $q^d$ shows that the F-signature exists if and only if the limit
of $e_{HK}(I_q)/q^d$ exists. 
This follows by 
noting that $I_q^{[p]}\inc  I_{qp}$ for all $q$, so that
$e_{HK}(I_{qp})\leq e_{HK}(I_q^{[p]}) = p^de_{HK}(I_q)$, so that
dividing through by $qp$ shows that the sequence $\{e_{HK}(I_q)/q^d\}$ is decreasing,
and thus has a limit, necessarily equal to $s(R)$. \end{proof}

\begin{example}\label{quot-sing-sig} {\rm We return to Example \ref{quot-sing}, where the Hilbert-Kunz
multiplicity of simple quotient singularities were given. Let $(R,\m)$ be a two-dimensional complete
Cohen-Macaulay ring. Assume that $R$ is F-finite and is Gorenstein and
F-rational. Then $R$ is a double point and is isomorphic to $k[[x,y,z]]/(f)$, where $f$
is one of the following:
\medskip

\begin{tabular}{lllll}
\underline{type} & \underline{equation} & \underline{char $R$} & $s(R)$ \\
($A_n$) & $f = xy+z^{n+1}$ & $p\geq 2$ & $1/(n+1)$ & ($n \geq 1$)\\
($D_n$) & $f = x^2+yz^2+y^{n-1}$ & $p \geq 3$ & $1/4(n-2)$ & ($n\geq4$) \\
($E_6$) & $f = x^2+y^3+z^4$ & $p\geq 5$ & $1/24$ \\
($E_7$) & $f = x^2+y^3+yz^3$ & $p \geq 5$ & $1/48$ \\
($E_8$) & $f = x^2+y^3+z^5$ & $p\geq 7$ & $1/120$
\end{tabular}

\medskip
As in Example \ref{quot-sing}, in each of these examples a minimal reduction $J$ of the maximal ideal $\m$
has the property that $\m/J$ is a vector space of dimension $1$. Hence
$e_{HK}(J) - e_{HK}(R) = s(R)$ by Theorem \ref{Fsig-Gor}.
Since $J$ is generated by a regular sequence and is a reduction of
$\m$, $e_{HK}(J) = e(J) = e(\m) = 2$. On the other hand, Example \ref{quot-sing}
gives the Hilbert-Kunz multiplicity for each of these examples, and in each case 
it is $2-1/|G|$, where each ring is the invariant ring of a finite group $G$
acting on a power series ring, giving
our statement. 
Notice that 
the F-signature is exactly $1/|G|$. The same reasoning applies to Example \ref{irrational} to show that
if the \HK multiplicity is irrational in this example, as expected, then so is the F-signature in the same
example.} 
\end{example}

\bigskip

\section{A Second Coefficient}
\medskip

In this section we take up a more careful study of the Hilbert-Kunz function, showing that a
second coefficient exists in great generality. This was proved in \cite{HMM}, and further improved
in \cite{HoY}. The approach we give in this paper is a bit different than those
appearing elsewhere, following an alternate proof developed by Moira McDermott and myself, but not
previously published. The proof in \cite{HMM} relies on the theory of divisors associated to modules.
The approach here rests on growth of Tor modules. In some ways this method is less transparent than
that in \cite{HMM}, but this author believes it has considerable value nonetheless. We are aiming to prove:

\begin{theorem}\label{2coeff}
Let $(R,\m,k)$ be an excellent, local, normal ring of characteristic $p$
with a perfect residue field and $\dim R=d$. Let $I$ be an $\m$-primary ideal.
Then $\la(M/\Iq M)=\alpha q^{d}+\beta q^{d-1} +O(q^{d-2})$ for some $\alpha$ and
$\beta$ in $\mathbb R$.
\end{theorem}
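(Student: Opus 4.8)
The plan is to reduce, via the additivity properties already established, to the case $M=R$, and then to analyze the difference between $\la(R/\Iq)$ and its expected leading term using the filtration of $R^{1/q}$ provided by Lemma~\ref{Frob-filter}. Since $R$ is normal, it is in particular a domain, so by Corollary~\ref{corrank} we have $e_{HK}(I,M)=e_{HK}(I,R)\rank_R M$; more importantly, taking a prime filtration of $M$ and applying the desired statement to each $R/P$, combined with the already-known $O(q^{d-1})$ error control of Proposition~\ref{HK-ses}, would let us pass from modules to the ring provided we track the second coefficient through those reductions. So the heart of the matter is to prove that $\la(R/\Iq)=\alpha q^d+\beta q^{d-1}+O(q^{d-2})$ for the normal domain $R$ itself.

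For that, the strategy is to iterate the Frobenius filtration trick. Write an exact sequence $0\to R^{q^{d}}\to R^{1/q}\to T_q\to 0$ (using $\al(R)=0$ since $k$ is perfect), where $T_q$ has a prime filtration with at most $Cq^d$ copies of each $R/Q_i$, the $Q_i$ being a fixed finite set of height-one primes (normality forces the codimension-one components to dominate the error). Tensoring with $R/I$ gives
\begin{equation*}
\la(R/\Iq)-q^d\la(R/I)=\la(T_q/IT_q)-\la(\tor_1^R(T_q,R/I)).
\end{equation*}
The right-hand side is $O(q^{d-1})$ by Lemmas~\ref{growth} and \ref{torsiontor}, but we need its $q^{d-1}$-coefficient precisely. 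The idea is that, because $R$ is normal, the ``new'' prime cyclic factors appearing in the filtration of $R^{1/q}$ are, up to lower-order corrections, governed by the divisor class of $R^{1/q}$ as a reflexive module — this is exactly the content of \cite{HMM}, but here we want to extract it from Tor-growth. So I would instead apply the self-improving step: use Corollary~\ref{uniformity} with $I$ replaced by $\Iq$ to get $|c_{qq'}-c_q|\le C/q$ where $c_q=\la(R/\Iq)/q^d$, which already re-proves existence of $\ehk$; then refine this by subtracting off $\ehk\cdot q^d$ and studying $g(q):=\la(R/\Iq)-\ehk q^d$. One shows $g(q)/q^{d-1}$ is itself ``almost multiplicative'' with error $O(1/q)$ coming from $\tor_1$ and $\tor_2$ terms controlled by Exercise~\ref{Tor1} and a second application of Lemma~\ref{torsiontor} to the modules $R/Q_i$ (whose dimension is $d-1$), making $g(q)/q^{d-1}$ a Cauchy sequence.

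Concretely, the key step is: for each height-one prime $Q_i$, $\la(R/(Q_i,\Iq))=e(I,R/Q_i)\,q^{d-1}/(d-1)!\cdot(\text{something})$ — no, rather, one needs that $\la((R/Q_i)/I^{[q]}(R/Q_i))$ has its own \HK multiplicity, which it does by Theorem~\ref{HKexists} applied in dimension $d-1$, with error $O(q^{d-2})$ by induction on dimension. Feeding this back, $\la(T_q/IT_q)=q^{d-1}\sum_i(\text{multiplicity of }R/Q_i\text{ in }T_q \text{ normalized})\cdot e_{HK}(I,R/Q_i)+O(q^{d-2})$, and the crucial point is that the normalized multiplicities $(\#\text{copies of }R/Q_i\text{ in }T_q)/q^{d}$ converge as $q\to\infty$ — this is where normality (Serre's condition $S_2$, purity of the branch locus) enters to pin down the codimension-one data, and where the argument genuinely needs the divisor-theoretic input even if repackaged. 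Thus I would run an induction on $d$: the base case $d=1$ is Corollary~\ref{HKdimone} together with Example~\ref{1dimex}'s observation that the correction is bounded (indeed eventually periodic, hence $O(1)=O(q^{d-1})$ with $d=1$... here we'd need $O(q^{-1})$, so the base case actually needs Monsky's one-dimensional result or a direct argument that the constant term stabilizes).

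The hard part will be establishing convergence of those normalized filtration multiplicities $a_{i,q}/q^d$ where $a_{i,q}$ counts copies of $R/Q_i$ in a prime filtration of $R^{1/q}$: a priori the filtration is highly non-canonical, so one must show the \emph{count} is canonical up to $o(q^d)$, which is precisely the statement that $R^{1/q}$, as a reflexive $R$-module, has a well-defined "divisor" whose growth is controlled — this is the technical core of \cite{HMM} and cannot be fully bypassed. The Tor-growth approach reframes it as: $\la(\tor_1^R(R^{1/q},R/I))$ and $\la(\tor_2^R(R^{1/q},R/I))$ both have second-order asymptotics, proved by the same Dutta-filtration bootstrap applied one dimension down, and these Tor lengths are filtration-independent by definition. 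So the real plan is a double induction (on homological degree and on dimension), using Exercise~\ref{Tor1} as the engine, with the normality hypothesis guaranteeing that no codimension-$\geq 2$ contributions corrupt the $q^{d-1}$ coefficient; I expect the bookkeeping of the error terms through the inductive Frobenius filtration to be the main obstacle, together with securing the $d=1$ base case at the sharper $O(q^{d-2})=O(q^{-1})$ precision, which forces an appeal to the eventual stability of Monsky's periodic constant term.
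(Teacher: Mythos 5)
Your proposal correctly identifies the reductions (pass from $M$ to $R$ via additivity, use Cauchy sequences on $\la(R/\Iq)-\ehk q^d$, exploit that Tor lengths are filtration-independent), and it is right that normality must pin down the codimension-one data. But there is a genuine gap in the middle, and one of your two stated worries is a false lead.

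The missing idea is the paper's Lemma~\ref{torfreetor1}: for a normal $R$ and \emph{torsion-free} $M$, $\la(\tor_1^R(M,R/\Iq))=O(q^{d-2})$, a full order better than the $O(q^{d-1})$ one would naively expect. The proof replaces $M$ by its reflexive hull $M^{**}$ (the error is a torsion module of dimension $\leq d-2$, controlled by Lemmas~\ref{torsiontor1} and~\ref{torsiontor2}), reducing to depth $\geq 2$, and then picks a length-two regular sequence $x,y$ that kills all $\tor_{\geq 1}(M,-)$ via a factoring/homotopy argument. This lemma is where normality actually bites, and it is the engine that makes the rest go. In particular, it lets one treat $R^{1/p}$ directly: combining $0 \to Q^{1/p}\to R^{1/p}\to (R/Q)^{1/p}\to 0$ with the Frobenius short exact sequence $0 \to (R/Q)^{p^{d-1}}\to (R/Q)^{1/p}\to T'\to 0$ (where $\dim T'\leq d-2$) yields $|p^{d-1}\len{1}{R/Q}-\len[pq]{1}{R/Q}|=O(q^{d-2})$, i.e.\ a Cauchy sequence for $\len{1}{R/Q}/q^{d-1}$. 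At no point does one need the normalized multiplicities of $R/Q_i$ in a prime filtration of $R^{1/q}$ to converge; your assessment that this ``cannot be fully bypassed'' is exactly what the Tor-growth route bypasses. (It is also why no additivity-up-to-$O(q^{d-2})$ needs to be pushed through a choice-dependent filtration: Corollary~\ref{additive} gives additivity of $\len{1}{-}$ on torsion modules directly from the long exact sequence.)

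Your concern about the $d=1$ base case is also spurious once you stay in the normal setting: a one-dimensional normal local domain is a DVR, hence regular, so $\la(R/\Iq)=q\cdot\la(R/I)$ on the nose and the $O(q^{-1})$ statement is trivial. Monsky's periodicity phenomenon lives in the non-normal one-dimensional world and never enters. More importantly, the paper's argument runs no induction on dimension at all --- the Tor estimates are proved in dimension $d$ outright --- so the imagined base-case difficulty does not arise. Without the torsion-free Tor lemma and the correct organization around $R/Q$ rather than around prime filtrations of $R^{1/q}$, your double induction will not close; with them, no induction on $d$ is needed.
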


In \cite{HoY}  the condition that $R$ be normal is weakened to just assuming that $R$ satisfies
Serre's condition $R_1$.

One could hope that this theorem  could be generalized to prove that there
exists
a constant $\gamma$ such that
$\la(M/\Iq M)=\alpha q^{d}+\beta q^{d-1} + \gamma q^{d-2} +O(q^{d-3})$ whenever
$R$ is non-singular in codimension two.  However, this cannot be true.
For instance, see Example \ref{HK-biz}.

We first discuss the growth of Tor modules, expanding on what we did in earlier
sections.

\begin{lemma}\label{torsiontor1}
Let $(R,\m,k)$ be a  local ring
of characteristic $p$.
If $T$ is a finitely generated torsion $R$-module with $\dim T = \ell$,
then
$\len{1}{T}\leq O(q^{\ell})$.
\end{lemma}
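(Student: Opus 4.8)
The plan is to mimic the argument for Lemma~\ref{torsiontor}, but to feed in the stronger bound of Exercise~\ref{Tor1} (i.e.\ $\la(\tor_1^R(R/I,N))\le O(q^{\dim N})$ for torsion $N$) wherever the earlier proof used only the crude $O(q^{d-1})$. More precisely, I would first reduce to bounding $\len{1}{R/(c)}$ for a single nonzerodivisor $c$ killing $T$: choose $c\in R$ a nonzerodivisor with $cT=0$, set $A=R/(c)$, and take a finite $A$-presentation $A^s\ra A^r\ra T\ra 0$ with kernel $N$ (an $A$-module, hence torsion over $R$, with $\dim N\le \dim A = d-1$, and in fact $\dim N\le \dim T=\ell$ need not hold in general — see the obstacle below). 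Tensoring with $R/I$ gives the four-term exact sequence
$$\tor_1^R(A^r,R/I)\ra \tor_1^R(T,R/I)\ra N/IN\ra (A/I)^r\ra T/IT\ra 0,$$
so $\len{1}{T}\le \la(N/IN) + r\cdot\len{1}{A}$.

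For the term $r\cdot\len{1}{A}$: from $0\ra R\overset{c}\ra R\ra A\ra 0$ we get $\tor_1^R(A,R/I)\cong (I:c)/I$, which has the same length as $R/(I,c)$, and since $\dim R/(c)=d-1$, Lemma~\ref{growth} gives $\len{1}{A}\le O(q^{d-1})$. If $\ell = d-1$ this already suffices. The real issue is the term $\la(N/IN)$ and the case $\ell < d-1$: a priori $N$ could have dimension as large as $d-1$ even when $T$ has small dimension, so Lemma~\ref{growth} only bounds $\la(N/IN)$ by $O(q^{d-1})$, which is too weak. To fix this I would choose the presentation of $T$ more carefully: resolve $T$ over $A=R/(c)$ by \emph{minimal} free modules, or better, argue that one may take $N$ inside $A^r$ to be a submodule with $\dim N = \dim T = \ell$. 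This holds because the first syzygy of a module of dimension $\ell$ over the $(d-1)$-dimensional ring $A$ need not have small dimension in general — so instead I would induct.

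Here is the cleaner approach I would actually carry out: induct on $\ell = \dim T$. The base case $\ell = 0$ (or $\ell<0$, $T=0$) is handled by Lemma~\ref{torsiontor} (which gives $O(q^{d-1})$) combined with a direct check that a finite-length module contributes only $O(1)=O(q^0)$, since $\tor_1^R(R/I,T)$ is then a subquotient of finitely many copies of $T$ itself. For the inductive step, filter $T$ so that its factors are of the form $R/\p$ with $\dim R/\p\le \ell$; by additivity of $\tor_1$ lengths up to lower $\tor_0/\tor_2$ error terms (which are bounded via Lemma~\ref{growth} applied to the torsion factors, hence $O(q^{\ell})$ by induction on the pieces of strictly smaller dimension), it suffices to treat $T=R/\p$ with $\dim R/\p = \ell$. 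Now pick $c\in\p$... wait: $R/\p$ is a domain, so instead pick $c\notin\p$ with $c$ a nonzerodivisor on $R$ — but $c$ need not annihilate $R/\p$. The right move for $T=R/\p$ is: it is a torsion $R$-module only if $\p$ contains a nonzerodivisor; take such $c\in\p$, so $R/\p$ is an $R/(c)$-module, $\dim R/(c)=d-1$, and now reduce to the $(d-1)$-dimensional ring $A=R/(c)$ where $R/\p$ has dimension $\ell\le d-1$, applying Lemma~\ref{torsiontor}'s argument \emph{relative to $A$}: present $R/\p$ over $A$, its first $A$-syzygy $N$ has $\dim N\le d-1$ but its $R$-torsion is killed by $c$, and crucially $N/IN$ is bounded by $O(q^{\dim N})$.

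\textbf{Main obstacle.} The delicate point — exactly the content of Exercise~\ref{Tor1}, flagged there as ``not so easy'' — is controlling $\la(N/IN)$ where $N$ is the syzygy module: one must arrange that the syzygy inherits the dimension bound $\ell$ rather than the ambient bound $d-1$. I expect the clean resolution is to run the induction on $\ell$ simultaneously over all rings that are quotients of $R$ by a single nonzerodivisor, so that each reduction step lowers the dimension of the ambient ring by one while keeping track of the module dimension $\ell$; then Lemma~\ref{growth} applied in the smaller ring gives precisely $O(q^{\ell})$ for the syzygy contribution, and the $\tor_1^R(A,R/I)\cong R/(I,c)$ term, living in a ring of dimension $d-1$ but with the module being cyclic and torsion of dimension $\le\ell$... — here too one needs $\la(R/(I,c))\le O(q^\ell)$, which requires $\dim R/(c)\le \ell$, i.e.\ choosing $c$ in a minimal prime of $T$ of coheight $\ell$. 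Making all these dimension bookkeeping choices compatible is the crux; once set up, each individual estimate is a routine application of Lemma~\ref{growth}.
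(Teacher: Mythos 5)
Your proposal is an honest record of running into a wall rather than a proof, and you are right to flag the obstacle: controlling the dimension of the first syzygy (and of $R/(I,c)$) after descending to $A = R/(c)$ is exactly where the naive extension of Lemma~\ref{torsiontor} breaks. Your suggested fix --- inducting on $\ell$ ``simultaneously over all rings that are quotients of $R$ by a single nonzerodivisor'' --- is not carried out, and as you yourself notice, it needs $\dim R/(c)\leq \ell$, which requires $c$ to cut down the ambient ring to dimension $\ell$ rather than merely annihilate $T$; since $T$ is torsion of dimension $\ell$ possibly much less than $d-1$, no single such $c$ exists in general, and the circularity persists.

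The paper's proof takes a genuinely different route and, crucially, imports a nontrivial external theorem. Rather than filter $T$ by prime cyclic modules or descend to a hypersurface section, it fixes a system of parameters $(x_1,\ldots,x_d)\inc I$ and inducts on $\la(I/(x_1,\ldots,x_d))$. The inductive step writes $I=(J,u)$ with $J:u=\m$, tensors the short exact sequence $0\to R/(J^{[q]}:u^q)\to R/J^{[q]}\to R/I^{[q]}\to 0$ with $T$, and bounds the flanking terms by the inductive hypothesis (applied to $J$) and by the Hilbert--Kunz function of $T$, which is $O(q^{\ell})$. This reduces to $I$ a parameter ideal, where the paper invokes a theorem of Roberts (explicitly, Theorem 6.2 of \cite{HH2}): for a finite free complex $G_\bullet$ of length $n$ with finite-length homology and any finitely generated module $M$ with $\dim M=\delta$, there is a constant $C$ with $\la(H_{n-t}(M\otimes F^e(G_\bullet)))\leq Cq^{\min(\delta,t)}$ for all $t,e$. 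Applied to the Koszul complex on $x_1,\ldots,x_d$ and $M=T$, this gives $\la(H_i(T\otimes F^e(K_\bullet)))=O(q^{\ell})$ for all $i$, and $H_1(T\otimes F^e(K_\bullet))$ surjects onto $\tor_1(T,R/I^{[q]})$, finishing the proof. That Roberts-type estimate on Koszul homology growth is precisely the ``not so easy'' ingredient your approach lacks; without it, or something of comparable depth, the syzygy-dimension bookkeeping you attempt cannot be closed.
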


\begin{proof} Set $d = \dim R$.
Choose a system of parameters $(x_1, \ldots, x_d) \subseteq I$.
We induct on $\lambda(I/(x_1, \ldots, x_d))$.
If $\lambda(I/(x_1, \ldots, x_d))>0$, then there exists $J \subset I$
with $\lambda(I/J)=1$ so that we may write $I=(J,u)$ with $J: u = \m$.
For every $q = p^n$ there is an exact sequence
\[
0 \rightarrow
R/J^{[q]}\col u^q \rightarrow
R/J^{[q]} \rightarrow
R/I^{[q]} \rightarrow
0.
\]
Tensor with $T$ and look at the following portion of the long exact
sequence:
\[
\cdots \rightarrow
\tor_1(R/J^{[q]},T) \rightarrow
\tor_1(R/I^{[q]},T) \rightarrow
\tor_0(R/J^{[q]}\col u^q,T) \rightarrow \cdots.
\]
We have
$\lambda(\tor_1(R/J^{[q]},T))\leq O(q^{d-2})$ by induction.
Also, since $J\col u=\m$, we have ${\m}^{[q]}\subseteq J^{[q]}\col u^q$ and
$\lambda(\tor_0(R/J^{[q]}\col u^q,T)) \leq \lambda(\tor_0(R/{\m}^{[q]},T))$.
But $\lambda(\tor_0(R/{\m}^{[q]},T))$ is the Hilbert-Kunz function for
$T$, so $\lambda(\tor_0(R/{\m}^{[q]},T))=O(q^{\dim T})$ and $\dim T
\leq d-2$.

We have reduced to the case where $\lambda(I/(x_1, \ldots, x_d))=0$.
We need a theorem which is implicitly in Roberts \cite{Ro} and
explicitly given as Theorem 6.2 in  \cite{HH2}:

\begin{theorem}
Let $(R,m)$ be a local ring of
characteristic $p$ and let $G_{\bullet}$ be a finite complex
$$
0\rightarrow G_{n}\rightarrow\dots\rightarrow G_{0}\rightarrow 0
$$
of length $n$ such that each $G_{i}$ is a finitely generated free module
and suppose that each $H_{i}(G_{\bullet})$ has finite length.  Suppose
that $M$ is a finitely generated $R$-module.  Let $ d = \dim  M.$  Then
there is a constant $C > 0$ such that $\ell(H_{n-t}(M\otimes_{R}
F^{e}(G_{\bullet}))\leq Cq^{\m(d,t)}$ for all $t\geq 0$ and all $e\geq 0$, where $q =
p^{e}$. \end{theorem}

Consider $K_{\bullet} ((\underline x);R)$, the Koszul complex on $(x_1,
\ldots, x_d)$.  Let $H_{\bullet}((\underline x);R)$ denote the
homology of the Koszul complex. We apply the above theorem to conclude that there exists a
constant $C>0$ such that
$\lambda(H_{d-t}(T\otimes F^e(K_{\bullet})))\leq Cq^{\min\{\ell,t\}}$ for
all $t$ and for all $e$.
Hence $\lambda(H_i(T\otimes F^e(K_{\bullet})))\leq O(q^{\ell})$ for all $i$.
In general, $H_1(T\otimes F^e(K_{\bullet})))$ maps onto
$\tor_1(T,R/I^{[q]}))$, which gives the stated result.
\end{proof}

Next we study the growth of  $\tor_2$.

\begin{lemma}\label{torsiontor2}
Let $(R,\m,k)$ be a Noetherian local ring of dimension $d$ satisfying Serre's condition
$S_2$, and having prime
characteristic $p$.
Let $T$ be an $R$-module with $\dim T \leq d-2$.
Then $\len{2}{T} = O(q^{d-2})$.
\end{lemma}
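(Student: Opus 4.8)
The plan is to use the hypotheses to push the computation into a ring of dimension $d-2$, where all the relevant modules are small. Since $\dim T\le d-2$, the $S_2$ hypothesis ensures that $\operatorname{ann}_R T$ has grade at least $2$ in $R$ (every prime containing $\operatorname{ann}_R T$ has height $\ge 2$, and $S_2$ then forces local depth $\ge 2$ at each such prime); fix an $R$-regular sequence $c_1,c_2\in\operatorname{ann}_R T$ and put $B=R/(c_1,c_2)$. Then $\dim B\le d-2$, the module $T$ is a $B$-module, and the Koszul complex on $c_1,c_2$ is a finite free $R$-resolution of $B$, so $\tor^R_t(B,N)=H_t(c_1,c_2;N)$ vanishes for all $t\ge 3$ and every $R$-module $N$; in particular $\operatorname{pd}_R B\le 2$.

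Next I would feed this into the change-of-rings spectral sequence for $R\to B$,
$$
E^2_{s,t}=\tor^B_s\!\big(T,\ \tor^R_t(B,R/\Iq)\big)\ \Longrightarrow\ \tor^R_{s+t}(T,R/\Iq),\qquad 0\le t\le 2.
$$
Writing $\mathcal H_t:=\tor^R_t(B,R/\Iq)=H_t(c_1,c_2;R/\Iq)$, all of finite length since $R/\Iq$ is, the graded pieces of the induced filtration on $\tor^R_2(T,R/\Iq)$ are subquotients of $E^2_{2,0},E^2_{1,1},E^2_{0,2}$, so
$$
\lambda\big(\tor^R_2(T,R/\Iq)\big)\ \le\ \lambda\big(\tor^B_2(T,\mathcal H_0)\big)+\lambda\big(\tor^B_1(T,\mathcal H_1)\big)+\lambda\big(T\otimes_B\mathcal H_2\big).
$$
(Equivalently one can unwind this into two successive change-of-rings long exact sequences along $R\to R/(c_1)\to B$.)

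The crux is to bound each $\mathcal H_t$ by $O(q^{d-2})$. Because $I$ is $\m$-primary, a bounded Frobenius power $\m^{[q']}$ with $q'=p^{e'}\le cq$ ($c$ a fixed constant) lies in $\Iq$, so $\mathcal H_0=R/(\Iq+(c_1,c_2))$ is a homomorphic image of $B/(\m B)^{[q']}$, whence $\lambda(\mathcal H_0)=O(q^{\dim B})=O(q^{d-2})$ by Lemma~\ref{growth} applied over $B$. For $\mathcal H_2$: it is the annihilator of $(c_1,c_2)$ in the Artinian local ring $S:=R/\Iq$, i.e.\ $\mathcal H_2=\Hom_R(B,R/\Iq)=\Hom_S(\mathcal H_0,S)$, and since Matlis duality over $S$ preserves length, $\lambda(\mathcal H_2)\le\lambda(\mathcal H_0)=O(q^{d-2})$. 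Finally, the Koszul complex $K_\bullet(c_1,c_2;R/\Iq)$ has terms $R/\Iq,(R/\Iq)^2,R/\Iq$, hence vanishing Euler characteristic, so $\lambda(\mathcal H_0)-\lambda(\mathcal H_1)+\lambda(\mathcal H_2)=0$ and therefore $\lambda(\mathcal H_1)=\lambda(\mathcal H_0)+\lambda(\mathcal H_2)=O(q^{d-2})$.

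To conclude, let $T_1,T_2$ be the first and second syzygy modules of $T$ over $B$; these are fixed finitely generated $B$-modules (independent of $q$). Dimension-shifting over $B$ gives embeddings $\tor^B_1(T,\mathcal H_1)\hookrightarrow T_1\otimes_B\mathcal H_1$ and $\tor^B_2(T,\mathcal H_0)\hookrightarrow T_2\otimes_B\mathcal H_0$, and $T\otimes_B\mathcal H_2$, $T_1\otimes_B\mathcal H_1$, $T_2\otimes_B\mathcal H_0$ are homomorphic images of $\mathcal H_2^{\mu_B(T)}$, $\mathcal H_1^{\mu_B(T_1)}$, $\mathcal H_0^{\mu_B(T_2)}$ respectively. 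As $\mu_B(T),\mu_B(T_1),\mu_B(T_2)$ are constants, the three terms on the right of the displayed inequality are $O(q^{d-2})$, so $\lambda(\tor^R_2(T,R/\Iq))=O(q^{d-2})$. The main obstacle is precisely the estimates on $\mathcal H_1$ and $\mathcal H_2$: the naive bounds through the individual colon modules $(0:_{R/\Iq}c_i)$ are only $O(q^{d-1})$, and cutting them down to $O(q^{d-2})$ is what forces one to use the whole Koszul complex of a \emph{regular} sequence (vanishing of the Euler characteristic, for $\mathcal H_1$) together with Matlis duality over $R/\Iq$ (for $\mathcal H_2$); this is also the only point at which the $S_2$ hypothesis enters, namely to furnish the regular sequence $c_1,c_2$.
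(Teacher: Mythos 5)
Your overall plan is sound, and most of the steps are correct, but there is a genuine gap in the estimate for $\mathcal H_2$. You identify $\mathcal H_2=\Hom_S(\mathcal H_0,S)$ with $S=R/\Iq$ and then invoke ``Matlis duality over $S$ preserves length'' to conclude $\lambda(\mathcal H_2)\le\lambda(\mathcal H_0)$. But $\Hom_S(-,S)$ is Matlis duality only when $S$ is Gorenstein, i.e.\ when $S\cong E_S(k)$, and there is no such hypothesis here. In fact the inequality $\lambda(\Hom_S(M,S))\le\lambda(M)$ fails badly over non-Gorenstein Artinian rings: for $S=k[x,y,z]/(x,y,z)^2$ and $M=S/(x,y)$ one has $\lambda(M)=2$ but $\lambda(\Hom_S(M,S))=\lambda(\soc S)=3$. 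Since $R/\Iq$ need not be Gorenstein (there is no Cohen--Macaulay or Gorenstein hypothesis on $R$), this step is unjustified. Worse, because you then derive the $\mathcal H_1$ bound from the Euler characteristic $\lambda(\mathcal H_0)-\lambda(\mathcal H_1)+\lambda(\mathcal H_2)=0$ using the (unproven) $\mathcal H_2$ bound, the chain of reasoning collapses at this point.

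The fix is easy and brings you closer to the paper's argument: bound $\mathcal H_1$ first, directly, and then let the Euler characteristic take care of $\mathcal H_2$. Indeed $\mathcal H_1=\tor_1^R(B,R/\Iq)$, and $B=R/(c_1,c_2)$ is a finitely generated torsion $R$-module of dimension $d-2$, so Lemma~\ref{torsiontor1} gives $\lambda(\mathcal H_1)=O(q^{d-2})$ with no duality needed. Then $\lambda(\mathcal H_2)=\lambda(\mathcal H_1)-\lambda(\mathcal H_0)\le\lambda(\mathcal H_1)=O(q^{d-2})$ from the vanishing Euler characteristic, and the rest of your spectral-sequence argument goes through unchanged. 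For comparison, the paper bypasses the change-of-rings spectral sequence altogether: it takes a surjection $B^n\twoheadrightarrow T$ with kernel $T'$ (so $T'$ is a first $B$-syzygy of $T$, with $\dim T'\le d-2$), tensors $0\to T'\to B^n\to T\to 0$ with $R/\Iq$, and reads off $\tor_2(T,R/\Iq)$ from the resulting long exact sequence using only Lemma~\ref{torsiontor1} applied to $B$ and to $T'$, together with the Euler-characteristic identity for the Koszul complex of $c_1,c_2$. Your spectral-sequence packaging is a reasonable alternative but requires you to control all three $\mathcal H_t$ and to resolve $T$ over $B$; the paper's single short exact sequence is leaner and never needs the dual term $\mathcal H_2=(0:_S(c_1,c_2))$ at all.

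Two minor remarks. First, your derivation of grade$(\operatorname{ann}_RT)\ge 2$ from $S_2$ tacitly uses that $R$ is catenary and equidimensional to convert $\dim R/P\le d-2$ into $\hg P\ge 2$; this is harmless in the paper's applications (where $R$ is excellent and normal, or complete) but worth flagging as an extra hypothesis beyond bare $S_2$. Second, your displayed spectral-sequence inequality and the syzygy dimension-shift at the end are correct, so once the $\mathcal H_2$ bound is repaired in the way indicated, the proof is complete.
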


\begin{proof}
Pick a regular sequence $x,y$ contained in the annihilator of $T$.
There is an exact sequence
\[
0 \rightarrow
T' \rightarrow
(R/(x,y))^n \rightarrow
T \rightarrow
0
\]
Note $\dim T' = d-2$.
Next tensor with $R/I^{[q]}$ and consider the following portion of the
long exact sequence:
\[
\cdots
\rightarrow
\tors{2}{R/(x,y)}^{\oplus n} \rightarrow
\tors{2}{T} \rightarrow
\tors{1}{T'}\rightarrow \cdots.
\]
Since $x,y$ is regular sequence, we know $\sum_{i=0}^2
\lambda(\tor_i(R/(x,y), R/I^{[q]})) =0$.  Also, \newline
$\len{1}{R/(x,y)} = O(q^{d-2})$ by Lemma \ref{torsiontor1}.
Then $\len{2}{R/(x,y)} = O(q^{d-2})$ as well.
We also know that $\len{1}{T'} = O(q^{d-2})$ by Lemma \ref{torsiontor1}.
 From the long exact sequence above,
 we conclude that $\len{2}{T} = O(q^{d-2})$.
 \end{proof}

 The main surprise is the next lemma, which shows that for the first Tor, modules which
 are torsion-free have slower growth than those which are torsion!

 \begin{lemma}\label{torfreetor1}
 Let $(R,\m,k)$ be a normal  local ring of dimension $d$ and
 prime characteristic $p$.
 Let $M$ be a torsion-free $R$-module.
 Then $\len{1}{M} = O(q^{d-2})$.
 \end{lemma}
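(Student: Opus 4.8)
The plan is to reduce $M$ to a divisorial ideal and then exploit that divisorial ideals are locally principal in codimension one.

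First I would pass to the rank-one case. From the long exact sequence, $\len{1}{-}$ is subadditive on short exact sequences, and over the domain $R$ every finitely generated torsion-free module has a finite filtration whose successive quotients are torsion-free of rank one (split off a generic surjection onto a rank-one ideal and induct on the rank). Hence it suffices to treat a torsion-free module of rank one, that is, up to isomorphism a nonzero ideal $\mathfrak{a}$ of $R$. Next I would replace $\mathfrak{a}$ by its reflexive hull: the inclusion $\mathfrak{a}\inc\mathfrak{a}^{**}$ becomes an isomorphism after localizing at any prime $P$ with $\hg P\leq 1$ (there $R_P$ is a discrete valuation ring and both modules are free of rank one), so its cokernel $C$ satisfies $\dim C\leq d-2$. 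Feeding $0\ra\mathfrak{a}\ra\mathfrak{a}^{**}\ra C\ra 0$ into the long exact sequence for $\tor(-,R/\Iq)$ and using $\len{2}{C}=O(q^{d-2})$ (Lemma \ref{torsiontor2}, applicable since a normal ring satisfies $S_2$), I reduce to the case in which $\mathfrak{a}$ is a divisorial ideal of pure height one (the case $\mathfrak{a}^{**}=R$ being trivial). Finally, from $0\ra\mathfrak{a}\ra R\ra R/\mathfrak{a}\ra 0$ one gets $\tor_1(\mathfrak{a},R/\Iq)\cong\tor_2(R/\mathfrak{a},R/\Iq)$; if $\dim R/\mathfrak{a}\leq d-2$ we finish by Lemma \ref{torsiontor2}, so the only remaining case is $\dim R/\mathfrak{a}=d-1$.

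The crux, then, is to prove $\len{2}{R/\mathfrak{a}}=O(q^{d-2})$ despite $R/\mathfrak{a}$ having dimension $d-1$. The extra input I would use is that $\mathfrak{a}R_P$ is principal for every height-one prime $P$ (being a nonzero ideal of the discrete valuation ring $R_P$), so that $(R/\mathfrak{a})_P$ has projective dimension at most one over $R_P$, and therefore $\tor_i^{R_P}((R/\mathfrak{a})_P,-)=0$ for all $i\geq 2$; equivalently, for every finite-length $R$-module $N$ the module $\tor_2^R(R/\mathfrak{a},N)$ is supported in codimension at least two. I would then convert this codimension statement into the desired uniform growth bound along the lines of the proof of Lemma \ref{torsiontor1}: fix a system of parameters $\underline x=x_1,\dots,x_d\in I$ with Koszul complex $K_\bullet$, build a finite free complex $G_\bullet$ with finite-length homology out of $K_\bullet$ and a suitable truncation of a minimal free resolution of $R/\mathfrak{a}$, so that $\tor_2^R(R/\mathfrak{a},R/\Iq)$ is a subquotient of the appropriate homology of $G_\bullet\otimes F^e(-)$, and apply the Roberts--Dutta theorem quoted inside that proof. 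The point is that the homology modules entering the estimate are annihilated by an ideal of height at least two, and this is exactly what improves the exponent in the Roberts bound from $\min(d-1,t)$ to $\min(d-2,t)$, giving $O(q^{d-2})$.

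I expect this last step --- choosing the right finite free complex and making the codimension bookkeeping in the Roberts--Dutta estimate rigorous --- to be the main obstacle; the reductions in the first paragraph are routine, and the remaining ingredients are just Lemmas \ref{torsiontor1} and \ref{torsiontor2}.
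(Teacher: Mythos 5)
Your reductions in the first paragraph are correct and a bit cleaner than the paper's: splitting a torsion-free module into rank-one torsion-free pieces, passing to the reflexive hull (the cokernel has dimension at most $d-2$, so Lemma~\ref{torsiontor2} absorbs it), and then trading $\tor_1(\mathfrak a,-)$ for $\tor_2(R/\mathfrak a,-)$ are all valid. You have also correctly identified the geometric input that makes the lemma true, namely that a divisorial ideal is locally principal in codimension one, so that $\tor_2(R/\mathfrak a,-)$ is supported in codimension at least two.

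The gap is in the step you yourself flag as the main obstacle, and I don't think the route you sketch can be made to work. The Roberts--Dutta estimate bounds $\ell\bigl(H_{n-t}(M\otimes F^e(G_\bullet))\bigr)$ by $Cq^{\min(\dim M,\,t)}$; there is no mechanism in that theorem by which the codimension of the support of the homology improves the exponent --- the bound is governed by $\dim M$ alone, and in your setup $\dim(R/\mathfrak a)=d-1$. Moreover, the needed comparison is not available: the natural map from Koszul homology to Tor, $H_i\bigl(M\otimes K_\bullet(\underline x^q)\bigr)\to\tor_i(M,R/(\underline x^q))$, is an isomorphism in degree $0$ and surjective in degree $1$ (because the Koszul complex agrees with a minimal free resolution of $R/(\underline x^q)$ in degrees $0$ and $1$), but in degree $2$ the free modules differ and the map is in general neither surjective nor injective, so $\tor_2$ is not a subquotient of Koszul $H_2$. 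Finally, splicing a truncated free resolution $P_\bullet$ of $R/\mathfrak a$ into $G_\bullet$ does not help, because the Roberts--Dutta theorem applies the Frobenius functor $F^e$ to the \emph{whole} complex $G_\bullet$, which would replace $\mathfrak a$ by $\mathfrak a^{[q]}$ and compute Tor against $R/\mathfrak a^{[q]}$ rather than against $R/\mathfrak a$.

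What is actually missing is a separate idea, and it is the one the paper supplies at exactly this point: because $M$ (or $\mathfrak a$) is $S_2$ and locally free in codimension one, one can choose a regular sequence $x,y$ such that multiplication by $x$ on $M$, and also multiplication by $y$, factors through a finite free module; consequently $x$ and $y$ annihilate $\tor_i(M,-)$ for all $i\ge1$. This lets one tensor the relevant exact sequences with $R/(x,y)$ and bound $\len{1}{M}$ by quantities of the shape $\la\bigl(R/((x,y)+I^{[q]})\bigr)$, which are $O(q^{d-2})$ by Lemma~\ref{growth}. Your proposal never produces this regular sequence (nor a substitute for it), and that --- not bookkeeping in Roberts--Dutta --- is the missing ingredient.
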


 \begin{proof}
 Consider the following exact sequence where $M^* = \Hom_R(M,R)$:
 \[
 0 \rightarrow M \xrightarrow{\theta} M^{**} \rightarrow T \rightarrow 0.
 \]
 Note that $\theta$ is an isomorphism in codimension one and consequently $T$ is a
 torsion-module with $\dim T \leq d-2$.
 We obtain the following long exact sequence:
 \begin{multline*}
 \cdots
 \rightarrow
 \tors{2}{T} \rightarrow
 \tors{1}{M} \rightarrow
 \tors{1}{M^{**}} 
 \rightarrow
 \tors{1}{T} \rightarrow \cdots.
 \end{multline*}
  From this we conclude that
  \begin{align*}
  |\len{1}{M} - \len{1}{M^{**}}| &\leq
  \len{2}{T} + \len{1}{T} \\
  &= O(q^{d-2}).
  \end{align*}
  The last inequality follows from Lemma \ref{torsiontor1} and Lemma
  \ref{torsiontor2}.
  So we may replace $M$ by $M^{**}$ and assume that $M$ has depth 2.
  Therefore, $M$ is $S_2$ and $M_{P}$ is free for all
  height one primes $P$.

We can choose a regular sequence $x,y$ such that they kill all $\tor_i^R(M,\quad)$ for $i\geq 1$.
This can be done in many ways. For example,
we leave as an exercise that there exists a sequence, $x,y$, which is a regular sequence
on $R$ and on
$M$ such that multiplication by $x$ on  $M$ factors through a free
module $F=R^r$ and multiplication by $y$ on $M$
also factors through $F$. These multiplications then induce homotopies which can be used to prove
our claim.

We let $...\ra F_2\ra F_1\ra F_0\ra M\ra 0$ be the start of a minimal free resolution of $M$. 
We tensor with $R/\Iq$, and write $'$ for images after tensoring.  Let $Z_q$ be the kernel
of the induced map from $F'_1$ to $F_0'$, and $B_q$ be the image of the induced map from
$F'_2$ to $F_1'$. Thus, $\tors{1}{M} = Z_q/B_q$. Consider the short exact sequence,
$$0\ra \tors{1}{M}\ra F'_1/B_q\ra N/\Iq N\ra 0,$$
where $N$ is the kernel of the map from $F_0$ onto $M$. 
We tensor with $R/(x,y)$ and use that both $x$ and $y$ annihilate $\tors{1}{M}$ to see that
the length of this Tor is at most $\la(\tor_1(R/(x,y),N/\Iq N)) + \la(F'_1/(B_q+(x,y)F_1'))\leq
\la(\tor_1(R/(x,y),N/\Iq N)) + \la(R/((x,y)+\Iq))\cdot\rank(F_1).$
If we had the term $R/\Iq$ in the first Tor module instead of $N/\Iq N$, we could apply Theorem \ref{torsiontor1} directly to see
the sum is $O(q^{d-2})$. We leave it to the reader to show that this change does not affect the order of growth. \end{proof}

  We record the following two corollaries to Lemma \ref{torfreetor1}.
  \begin{cor}\label{torsiontor2alldim} Let $(R,\m,k)$ be a  local, normal ring of characteristic $p$
  with $\dim R=d$.  Let $M$ be a finitely generated $R$-module.  Then for all $i\geq 2$, $\len{i}{M} = O(q^{d-2})$.
  \end{cor}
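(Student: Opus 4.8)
The plan is to reduce everything to Lemma \ref{torfreetor1} by dimension-shifting along a free resolution. Choose a free resolution $\cdots \to F_2 \to F_1 \to F_0 \to M \to 0$ of $M$, and for $j \geq 1$ let $\Omega^j M := \operatorname{im}(F_j \to F_{j-1})$ denote the $j$-th syzygy module, so that $\Omega^j M$ is a submodule of the free module $F_{j-1}$; set $\Omega^0 M := M$. Since $R$ is normal and local it is a domain, so every syzygy $\Omega^j M$ with $j \geq 1$ is torsion-free.

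Next I would invoke the standard dimension-shift isomorphism. Breaking the resolution into short exact sequences $0 \to \Omega^{j+1}M \to F_j \to \Omega^j M \to 0$ and using the long exact sequence of $\tor$ together with the flatness of the free modules $F_j$, one gets $\tor_i^R(\Omega^j M, N) \cong \tor_{i-1}^R(\Omega^{j+1}M, N)$ for every $i \geq 2$, every $j \geq 0$, and every $R$-module $N$. Iterating from $j = 0$, for each fixed $i \geq 2$ this gives
$$\tor_i^R(M, R/\Iq) \cong \tor_1^R(\Omega^{i-1}M, R/\Iq).$$
Since $i - 1 \geq 1$, the module $\Omega^{i-1}M$ is torsion-free, so Lemma \ref{torfreetor1} applies and yields $\len{1}{\Omega^{i-1}M} = O(q^{d-2})$. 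Therefore $\len{i}{M} = O(q^{d-2})$, with the implied constant allowed to depend on $i$ and on $M$, which is harmless.

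There is no serious obstacle here; the only points requiring care are that a local normal ring is a domain (so that syzygies, being submodules of free modules, are genuinely torsion-free) and the routine bookkeeping in the dimension shift. It is perhaps worth emphasizing where the hypotheses of Lemma \ref{torfreetor1} really bite: already in the case $i = 2$ the module $M$ itself may have torsion, so one cannot expect $\len{1}{M} = O(q^{d-2})$ in general (Lemma \ref{torsiontor1} only gives $O(q^{\ell})$ for the torsion part of dimension $\ell$); passing to the first syzygy $\Omega^1 M \inc F_0$ is exactly what removes the torsion and buys the extra power of $q$.
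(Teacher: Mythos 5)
Your proof is correct and follows essentially the same dimension-shifting strategy as the paper: the paper takes the exact sequence $0 \to \Omega^1(M) \to F \to M \to 0$, shifts $\tor_i$ to $\tor_{i-1}$, reduces to $i=2$, and applies Lemma \ref{torfreetor1} to the torsion-free syzygy. You simply make the iteration fully explicit by shifting all the way down to $\tor_1(\Omega^{i-1}M, R/I^{[q]})$, which is a legitimate and slightly cleaner presentation of the same argument.
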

  \begin{proof}
   Consider the exact sequence
   $0 \rightarrow {\Omega}^1(M) \rightarrow F \rightarrow M \rightarrow 0$
   where $F$ is free.
   Hence $\len{i}{M} \cong \len{i-1}{{\Omega}^1(M)}$.  It
   follows that to prove the lemma, we need only consider the case $i = 2$, and in this case
   since ${\Omega}^1(M)$ is torsion free, the Lemma above implies that $\len{1}{{\Omega}^1(M)} =
   O(q^{d-2})$, giving that $\len{2}{M} = O(q^{d-2})$.
   \end{proof}

   The next corollary
   shows that $\len{1}{-}$ is additive on short exact sequences of
   torsion modules, up to $O(q^{d-2})$.
   \begin{cor}\label{additive}
   If $T_1$, $T_2$ and $T_3$ are torsion $R$-modules, and $0\rightarrow T_1
   \rightarrow T_2 \rightarrow T_3 \rightarrow 0$ is exact, then
   $|\sum_{i=1}^{3}(-1)^{i+1} \len{1}{T_i}| = O(q^{d-2})$.
   \end{cor}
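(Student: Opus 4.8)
The plan is to apply the long exact sequence of Tor arising from the short exact sequence $0\to T_1\to T_2\to T_3\to 0$ after tensoring with $R/\Iq$, and to control all the terms except the three first-Tor modules by the $O(q^{d-2})$ bounds already established. First I would tensor the given sequence with $R/\Iq$ and extract the relevant segment of the long exact sequence, namely
\[
\tors{2}{T_3}\ra \tors{1}{T_1}\ra\tors{1}{T_2}\ra\tors{1}{T_3}\ra T_1/\Iq T_1\ra T_2/\Iq T_2\ra T_3/\Iq T_3\ra 0.
\]
From exactness, the alternating sum of lengths of the terms in any finite exact sequence is zero, so $\sum_{i=1}^3(-1)^{i+1}\len{1}{T_i}$ equals, up to sign, a combination of $\len{2}{T_3}$ together with the lengths of the three modules $T_i/\Iq T_i$ (the $\tor_0$ terms).

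Next I would bound each of these error terms. For $\len{2}{T_3}$: since $R$ is normal it satisfies $S_2$, and $\dim T_3\le d$, but the statement only claims an $O(q^{d-2})$ bound overall, so I must be slightly careful — Corollary \ref{torsiontor2alldim} gives $\len{2}{T_3}=O(q^{d-2})$ for \emph{every} finitely generated module over a normal local ring, with no dimension hypothesis on $T_3$, so this term is under control. For the $\tor_0$ terms $T_i/\Iq T_i$: these are the Hilbert-Kunz functions of the $T_i$, which by Lemma \ref{growth} grow like $O(q^{\dim T_i})\le O(q^d)$ in general. This is \emph{not} small enough, so the naive alternating-sum argument does not immediately work; the point is that $\la(T_1/\Iq T_1)-\la(T_2/\Iq T_2)+\la(T_3/\Iq T_3)$ is itself $O(q^{d-2})$ rather than merely $O(q^d)$, but that is exactly a statement about Hilbert-Kunz functions which needs justification. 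The cleaner route is to note that the alternating sum of \emph{all} terms in the displayed exact sequence is zero, hence
\[
\len{1}{T_1}-\len{1}{T_2}+\len{1}{T_3}
= \len{2}{T_3}-\Bigl(\la(T_1/\Iq T_1)-\la(T_2/\Iq T_2)+\la(T_3/\Iq T_3)\Bigr),
\]
and then observe that the Hilbert-Kunz function is additive on short exact sequences up to $O(q^{d-1})$ by Proposition \ref{HK-ses} — in fact here one wants $O(q^{d-2})$. Since each $T_i$ is torsion, $\dim T_i\le d-1$, so Proposition \ref{HK-ses} applied to a $(d-1)$-dimensional setting gives the bulk sum is $O(q^{d-2})$; combined with $\len{2}{T_3}=O(q^{d-2})$ from Corollary \ref{torsiontor2alldim}, the result follows.

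The step I expect to be the main obstacle is getting the error in Hilbert-Kunz additivity down to $O(q^{d-2})$ rather than the $O(q^{d-1})$ that Proposition \ref{HK-ses} literally gives: one must use that all three modules $T_i$ are torsion, hence of dimension at most $d-1$, so that applying Proposition \ref{HK-ses} "inside dimension $d-1$" yields an error of $O(q^{(d-1)-1})=O(q^{d-2})$. Making this reduction precise — passing to $R/\mathrm{ann}$ or localizing at the appropriate primes, and checking that the relevant $e_{HK}$ terms on the $(d-1)$-dimensional minimal primes cancel by additivity (Theorem \ref{associativity}) — is the technical heart of the argument, and is the only place where real work beyond bookkeeping is needed.
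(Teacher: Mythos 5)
Your proposal is correct and follows essentially the same route as the paper: tensor with $R/I^{[q]}$, isolate the cokernel $C$ of the map of first Tor modules, bound $\len{2}{T_3}$ by Corollary \ref{torsiontor2alldim}, and show $\lambda(C)=O(q^{d-2})$ using that the $T_i$ are supported in dimension $\le d-1$ so that the Hilbert--Kunz additivity error drops one degree. The ``technical heart'' you flagged is handled exactly as you anticipate: since $T_2$ is torsion one may pick a nonzero $c\in\operatorname{ann}(T_2)$ (which then kills $T_1$ and $T_3$ as well), pass to $R/(c)$ of dimension $d-1$, and apply the existence and additivity of the Hilbert--Kunz multiplicity there -- which is precisely the step the paper compresses into the assertion that $\len{0}{T_i}=c_iq^{d-1}+O(q^{d-2})$ with $c_2=c_1+c_3$.
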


   \begin{proof}
   After tensoring the exact sequence with $R/I^{[q]}$ we obtain the
   following long exact sequence:
   \begin{multline*}
   \cdots
   \rightarrow
   \tors{2}{T_3} \rightarrow \tors{1}{T_1} \rightarrow \tors{1}{T_2} \rightarrow
   \tors{1}{T_3} \\
   \rightarrow
   \tors{0}{T_1} \rightarrow \tors{0}{T_2} \rightarrow \tors{0}{T_3} \rightarrow
   0
   \end{multline*}
   We examine the cokernel at one spot in the previous sequence.  Consider
   \begin{multline*}
   \rightarrow
   \tors{2}{T_3} \rightarrow \tors{1}{T_1} \rightarrow \tors{1}{T_2} 
   \rightarrow
   \tors{1}{T_3} \rightarrow
   C \rightarrow
   0.
   \end{multline*}
   We know that $\len{2}{T_3} = O(q^{d-2})$ by Corollary
   \ref{torsiontor2alldim}.  It is therefore enough to show that
   $\lambda(C) =
   O(q^{d-2})$.
   We also have the exact sequence
   \[
   0 \rightarrow C \rightarrow \tors{0}{T_1} \rightarrow \tors{0}{T_2} \rightarrow \tors{0}{T_3} \rightarrow
   0.
   \]
Since the $T_i$ are torsion modules, $\dim T_i\leq d-1$, and
there are constants $c_i\geq 0$ such that $\len{0}{T_i} = c_iq^{d-1} + O(q^{d-2})$
so that
\[
\lambda(C) =
c_1q^{d-1}- c_2q^{d-1}+ c_3q^{d-1}+ O(q^{d-2}).
\]
But since the Hilbert-Kunz multiplicity is additive on short exact
sequences, $c_2 = c_1+c_3$, and hence $\lambda(C) = O(q^{d-2})$.
\end{proof}

The next result refines Lemma \ref{torsiontor1} by proving the existence
of a coefficient giving the growth pattern.

\begin{theorem}\label{thmtorsion}
Let $(R,\m,k)$ be an excellent, local, normal ring
of characteristic $p$ with perfect residue field and
with $\dim R=d$.
Let $T$ be a torsion $R$-module.
Then there exists $\gamma(T) \in \mathbb R$ such that
$\len{1}{T} = \gamma(T) q^{d-1} + O(q^{d-2})$.
\end{theorem}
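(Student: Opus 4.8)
The plan is to argue by induction on $d=\dim R$, the inductive hypothesis being that both Theorem \ref{2coeff} and the present theorem hold for every excellent normal local ring of dimension $<d$ with perfect residue field; the case $d\le 1$ is immediate, since then a torsion module has finite length and $\len{1}{T}$ is eventually constant. First I would reduce to a single prime: passing to the completion changes neither side and, by excellence, keeps $R$ normal, while a perfect residue field makes the completion $F$-finite, so we may assume $R$ is a complete $F$-finite normal domain. Fixing a prime filtration of $T$ and applying Corollary \ref{additive} along it gives $\len{1}{T}=\sum_i\len{1}{R/P_i}+O(q^{d-2})$, where each $P_i$ is a prime of height $\ge 1$ (as $T$ is torsion); so it suffices to produce $\gamma(R/P)$ for such a $P$. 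If $\dim R/P\le d-2$, then Lemma \ref{torsiontor1} already gives $\len{1}{R/P}=O(q^{d-2})$, and we set $\gamma(R/P)=0$. The remaining, essential case is a height-one prime $P$, so $R_P$ is a DVR and $\dim R/P=d-1$.

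For such a $P$, tensoring $0\to P\to R\to R/P\to 0$ with $R/\Iq$ and using $\tor_i^R(R/\Iq,R)=0$ for $i\ge 1$ yields the Euler-characteristic identity
$$\len{1}{R/P}=\len{0}{P}+\len{0}{R/P}-\len{0}{R},$$
so the task becomes showing the right-hand side has a $q^{d-1}$-coefficient. For the difference $\len{0}{P}-\len{0}{R}$, I would pick a nonzero $y\in P$ and tensor $0\to R\xrightarrow{\,y\,}P\to P/yR\to 0$ with $R/\Iq$: since $P$ is torsion-free, Lemma \ref{torfreetor1} gives $\len{1}{P}=O(q^{d-2})$, and the long exact sequence collapses to $\len{0}{P}-\len{0}{R}=\len{0}{P/yR}-\len{1}{P/yR}+O(q^{d-2})$. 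Hence
$$\len{1}{R/P}=\len{0}{R/P}+\len{0}{P/yR}-\len{1}{P/yR}+O(q^{d-2}),$$
and everything is now expressed through Hilbert--Kunz functions and first Tor lengths of the torsion modules $R/P$ and $P/yR$, both of which are modules over the $(d-1)$-dimensional ring $R/yR$.

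The crux is therefore to establish a second coefficient for the Hilbert--Kunz function of $R/yR$ and its prime-filtration quotients. Since $R/yR$ is only $S_1$, and in general not $R_1$, the inductive hypothesis does not apply to it directly, so the plan is to pass to the normalization $\bar S$ of $R/yR$: it is module-finite over $R/yR$ because $R$ is excellent, and is a normal semilocal ring of dimension $d-1$ whose residue fields are finite, hence perfect, extensions of $k$. Applying Theorem \ref{2coeff} at each maximal ideal of $\bar S$ and summing with the usual $[\kappa(\mathfrak n):k]$ factors, in the manner of Theorem \ref{thmmultfiniteext}, gives a second coefficient for $\lambda_R(\bar S/\Iq\bar S)$; one then controls the conductor sequence $0\to R/yR\to\bar S\to C\to 0$, where $C$ is again a torsion module module-finite over $\bar S$, using the inductive forms of Theorem \ref{2coeff} and of the present theorem over $\bar S$ together with Corollary \ref{additive} to absorb the corrections into $O(q^{d-2})$. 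Feeding the resulting $\gamma(R/P)$ back through Corollary \ref{additive} gives $\gamma(T)=\sum_{\hg P=1}n_P\,\gamma(R/P)$, where $\sum n_P[R/P]$ is the class of $T$ in the Grothendieck group modulo dimension $\le d-2$ --- the same divisor-theoretic bookkeeping underlying \cite{HMM}.

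The step I expect to be the main obstacle is exactly this normalization comparison: the lengths and $\tor^R$-modules in sight are computed over $R$, not over $\bar S$, so transferring second-coefficient statements across the finite ring map $R/yR\hookrightarrow\bar S$ requires a change-of-rings analysis of $\tor^R(R/\Iq,-)$ for $\bar S$-modules, and one must verify that the error terms coming from $\tor_i^R(R/\Iq,\bar S)$ genuinely stay of order $q^{d-2}$ rather than reintroducing a dimension-$d$ instance of the very statement being proved. Getting the induction to close --- choosing $y$ (or a symbolic power of $P$) so that the relevant supports and conductors behave, and organizing the simultaneous induction on $d$ with Theorem \ref{2coeff} so there is no circularity --- is where the real work lies; the reductions in the first two paragraphs, by contrast, are formal consequences of the Tor-growth lemmas already established.
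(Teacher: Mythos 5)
Your opening reductions are sound and match the paper: reduce to the complete $F$-finite case, use Corollary~\ref{additive} and a prime filtration to reduce to $T = R/P$ with $P$ of height one, and dispose of lower-dimensional quotients via Lemma~\ref{torsiontor1}. The Euler-characteristic identity $\len{1}{R/P} = \len{0}{P} + \len{0}{R/P} - \len{0}{R}$ is correct, and so is the further step via $0\to R\xrightarrow{y} P\to P/yR\to 0$ together with Lemma~\ref{torfreetor1}, yielding
\[
\len{1}{R/P} = \len{0}{R/P} + \len{0}{P/yR} - \len{1}{P/yR} + O(q^{d-2}).
\]
But here the argument stalls, for a reason that is not merely technical. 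The term $\len{1}{P/yR}$ is $\la(\tor^R_1(P/yR,R/\Iq))$ --- a \emph{first $\tor$ over $R$} of a torsion $R$-module of dimension $d-1$. Feeding it back through Corollary~\ref{additive} and a prime filtration of $P/yR$ rewrites it as $\sum_j n_j\len{1}{R/Q_j} + O(q^{d-2})$ with $Q_j$ height-one primes of $R$ --- which is precisely the collection of unknown quantities you started with. You have derived a linear relation among the $\len{1}{R/Q}$'s and various Hilbert--Kunz functions, not a reduction in dimension. Passing to $R/yR$ or its normalization $\bar S$ does not fix this: the inductive statements you would invoke there govern $\tor^{R/yR}$ or $\tor^{\bar S}$ against Frobenius powers in \emph{those} rings, not $\tor^R(-,R/\Iq)$, and the change-of-rings correction is itself of full order --- for instance $\tor^R_1(R/yR,R/\Iq)\cong(\Iq:y)/\Iq$ has length asymptotic to a nonzero multiple of $q^{d-1}$, so it cannot be absorbed into $O(q^{d-2})$. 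You flag this ``normalization comparison'' as the main obstacle, and it is; but it is not a loose end to be tidied, it is the content of the theorem resurfacing unreduced, and the plan as stated gives no independent handle on it.

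The paper's proof has a genuinely different engine and never leaves dimension $d$. For a height-one prime $Q$, one embeds a free module $(R/Q)^{p^{d-1}}\hookrightarrow (R/Q)^{1/p}$ (the rank is $p^{d-1}$ because $\dim R/Q = d-1$ and $k$ is perfect) with torsion cokernel $T'$ of dimension $\le d-2$; bounding $\tor_1(T',\cdot)$ and $\tor_2(T',\cdot)$ by $O(q^{d-2})$ gives
\[
\bigl|\,p^{d-1}\len{1}{R/Q} - \len{1}{(R/Q)^{1/p}}\,\bigr| = O(q^{d-2}).
\]
Then an Euler-characteristic computation on $0\to Q^{1/p}\to R^{1/p}\to (R/Q)^{1/p}\to 0$, using Lemma~\ref{torfreetor1} for the torsion-free module $R^{1/p}$ and the identification $\len{0}{M^{1/p}} = \len[pq]{0}{M}$, shows $\len{1}{(R/Q)^{1/p}} = \len[pq]{1}{R/Q} + O(q^{d-2})$. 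Combining, $p^{d-1}\len{1}{R/Q} - \len[pq]{1}{R/Q} = O(q^{d-2})$, and with $\delta_q := \len{1}{R/Q}/q^{d-1}$ this becomes $\delta_{pq}-\delta_q = O(1/q)$, so $\{\delta_q\}$ is Cauchy and converges to $\gamma(R/Q)$. This is exactly the ``compare $q$ with $pq$ via Frobenius, then sum a geometric series'' mechanism used elsewhere in the paper for $e_{HK}$ and the $F$-signature, and it supplies the independent relation among successive values of $\len{1}{R/Q}$ that your scheme lacks. If you want to rescue your approach, that is the ingredient you need to import: some Frobenius-driven comparison that pins down the $q^{d-1}$ growth of the $\tor^R_1$ terms, rather than an attempt to shift the computation to a smaller ring.
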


\begin{proof} We may complete $R$ and henceforth assume $R$ is complete. Hence
$R$ is F-finite.

By Corollary \ref{additive}, it is enough to prove the result for $T=R/Q$
where $Q$ is a height one prime of $R$.
If $\dim T \leq d-2$, we know that $\len{1}{T} = O(q^{d-2})$ by Lemma
\ref{torsiontor1} and $\len{2}{T} \leq O(q^{d-2})$ by Lemma
\ref{torsiontor2}.
Let $Q$ be a height one prime of $R$ and consider the following exact
sequence:
\[
0 \rightarrow
(R/Q)^{p^{d-1}} \rightarrow
{}(R/Q)^{1/p} \rightarrow
T \rightarrow
0.
\]
Tensor with $R/I^{[q]}$ and look at the following portion of the
corresponding long exact sequence:
\begin{align*}
\rightarrow
\tors{2}{T} \rightarrow \tors{1}{R/Q}^{p^{d-1}} \rightarrow \tors{1}{{}(R/Q)^{1/p}} \rightarrow
\tors{1}{T} \rightarrow .
\end{align*}
 From this we see that
 \begin{equation}\label{inequality1}
 |p^{d-1}\len{1}{R/Q} - \len{1}{{}(R/Q)^{1/p}}| = O(q^{d-2}),
 \end{equation}

 Next consider the exact sequence
 $0 \rightarrow Q^{1/p} \rightarrow R^{1/p} \rightarrow {}(R/Q)^{1/p}
 \rightarrow 0$.
 First note that $\len{1}{{}R^{1/p}} = O(q^{d-2})$ by Lemma
 \ref{torfreetor1}.
  From the usual long exact sequence on Tor we observe that
  \begin{align*}
  \len{1}{{}(R/Q)^{1/p}} &\leq \len{0}{{}Q^{1/p}} - \len{0}{{}R^{1/p}}\\
  &\qquad + \len{0}{{}(R/Q)^{1/p}} +
  O(q^{d-2}) \\
  &\leq
  \len[pq]{0}{Q} - \len[pq]{0}{R} \\
  &\qquad+ \len[pq]{0}{R/Q} + O(q^{d-2})
  \end{align*}
  Now consider the sequence
  $0 \rightarrow Q \rightarrow R \rightarrow R/Q \rightarrow 0$.
  After tensoring with $R/I^{[pq]}$, it is clear from the usual long exact
  sequence
  that
  \[
  \len[pq]{1}{R/Q}= \len[pq]{0}{Q}- \len[pq]{0}{R}+ \len[pq]{0}{R/Q}.
  \]
  Combining this with the previous inequality shows that
  \begin{equation*}\label{inequality2}
  \len{1}{{}(R/Q)^{1/p}} \leq \len[pq]{1}{R/Q} +O(q^{d-2}).
  \end{equation*}
  Combining \eqref{inequality1} and the previous inequality yields
  \begin{equation*}
  p^{d-1}\len{1}{R/Q} - \len[pq]{1}{R/Q} \leq O(q^{d-2}).
  \end{equation*}

  Recall that $q=p^e$.  Define $\delta_{q}=\len{1}{R/Q}/q^{d-1}$.
  We claim that $\{\delta_q\}$ is a Cauchy sequence.  We use the previous inequality 
  to observe that
  \begin{align*}
  \delta_{pq}- \delta_q &= \len[pq]{1}{R/Q}/{(pq)}^{d-1}- p^{d-1}\len{1}{R/Q}/p^{d-1}q^{d-1} \\
  &= O(1/q)\\
\end{align*}
The sequence $\{\delta_q\}$ converges to some $\gamma(R/Q) \in
\mathbb R$.
A simple argument shows further that
$|\delta_q -\gamma(R/Q)| = O(q^{-1})$.
Hence $\len{1}{R/Q} = \gamma(R/Q)q^{d-1}+O(q^{d-2})$.
\end{proof}

\begin{prop}\label{torfreeprop}
Let $(R,\m,k)$ be an excellent, local, normal ring
of characteristic $p$
with
$\dim R=d$.
Let $M$ be a torsion-free $R$-module of rank $r$.
Then there exists $\gamma(M) \in \mathbb R$ such that
$\len{0}{M} - r \len{0}{R} = \gamma(M) q^{d-1} + O(q^{d-2})$.
\end{prop}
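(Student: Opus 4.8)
The plan is to compare $M$ with a free module of the same rank and then reduce the whole statement to the torsion estimates already proved in this section. As in the proof of Theorem~\ref{thmtorsion}, I would first pass to the completion (which preserves normality since $R$ is excellent) and, if necessary, enlarge the residue field to be perfect, so that Theorem~\ref{thmtorsion} is available; these reductions do not affect the asserted asymptotics. Now $R$ is a normal local domain and $M$ is torsion-free of rank $r$, so choosing $r$ elements of $M$ that form a $K$-basis of $M\otimes_R K$, where $K=\operatorname{Frac}(R)$, produces a free submodule $R^r\subseteq M$ whose cokernel $T=M/R^r$ is torsion, $\dim T\le d-1$, fitting into
\[
0\to R^r\to M\to T\to 0.
\]

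Next I would tensor this sequence with $R/I^{[q]}$. Since $R^r$ is free, $\tor_1(R^r,R/I^{[q]})=0$, so the long exact sequence collapses to
\[
0\to \tor_1(M,R/I^{[q]})\to \tor_1(T,R/I^{[q]})\to (R/I^{[q]})^r\to M/I^{[q]}M\to T/I^{[q]}T\to 0,
\]
and equating the alternating sum of lengths to zero gives the clean identity
\[
\len{0}{M}-r\,\len{0}{R}=\len{0}{T}-\len{1}{T}+\len{1}{M}.
\]
It then remains to expand the three terms on the right. Since $M$ is torsion-free, Lemma~\ref{torfreetor1} gives $\len{1}{M}=O(q^{d-2})$. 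Since $T$ is torsion, Theorem~\ref{thmtorsion} produces $\gamma(T)\in\mathbb{R}$ with $\len{1}{T}=\gamma(T)q^{d-1}+O(q^{d-2})$. Finally $\len{0}{T}$ is the Hilbert-Kunz function of the torsion module $T$: taking a prime filtration of $T$, the factors $R/P$ with $\hg P\ge 2$ contribute only $O(q^{d-2})$ by Lemma~\ref{growth}, while each height-one prime $P$ contributes the Hilbert-Kunz function of the $(d-1)$-dimensional domain $R/P$, which has a leading term by Theorem~\ref{HKexists}; this yields $\len{0}{T}=c(T)q^{d-1}+O(q^{d-2})$ for some $c(T)\ge 0$, the same expansion already used in the proof of Corollary~\ref{additive}. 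Combining, $\len{0}{M}-r\,\len{0}{R}=(c(T)-\gamma(T))q^{d-1}+O(q^{d-2})$, so $\gamma(M):=c(T)-\gamma(T)$ works; it is automatically independent of the chosen $R^r\subseteq M$ because the left-hand side is intrinsic.

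Given the machinery of this section the argument is short, and the only point requiring genuine care is that the $\tor_1(T,\cdot)$ and $\tor_0(T,\cdot)$ terms really possess honest $q^{d-1}$-coefficients rather than mere $O(q^{d-1})$ bounds. The first of these is exactly Theorem~\ref{thmtorsion}, itself resting on Lemma~\ref{torfreetor1}, the \emph{main surprise} that torsion-free modules have slower $\tor_1$-growth; the second rests on additivity of the Hilbert-Kunz multiplicity together with the existence theorem applied to the quotients $R/P$. One should also be mildly vigilant that the $O(q^{d-2})$ error terms can be taken uniformly, but here that is no obstacle, as each of the cited results already supplies uniform constants.
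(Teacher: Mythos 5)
Your proof is essentially identical to the paper's: both pass to the completion, choose a free submodule $R^r\subseteq M$ with torsion cokernel $T$, tensor the short exact sequence with $R/I^{[q]}$, and combine Lemma~\ref{torfreetor1}, Theorem~\ref{thmtorsion}, and the existence of the leading Hilbert--Kunz coefficient for $T$ to extract the $q^{d-1}$ term. You are, if anything, slightly more careful than the paper's own write-up in explicitly noting that one may need to extend the residue field to be perfect before invoking Theorem~\ref{thmtorsion} (whose statement carries that hypothesis while Proposition~\ref{torfreeprop} does not), and in spelling out why $\len{0}{T}$ has an honest leading coefficient via a prime filtration and additivity.
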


\begin{proof}
We may complete $R$ and henceforth assume $R$ is complete.
Since $M$ is torsion-free of rank $r$ as an $R$-module, we can choose an
embedding $R^r \rightarrow M$ such that the cokernel $T$ is a torsion
module over $R$, and so $\dim T \leq d-1$.
We have the following exact sequence:
$0 \rightarrow R^{r}\rightarrow M \rightarrow T \rightarrow 0.$
Tensor with $R/I^{[q]}$ and consider the usual long exact sequence:
\begin{multline*}
0
\rightarrow
\tors{1}{M} \rightarrow \tors{1}{T} \rightarrow \tors{0}{R}^{\oplus r} \\ \rightarrow
\tors{0}{M} \rightarrow \tors{0}{T} \rightarrow 0.
\end{multline*}
We know that $\len{1}{M} = O(q^{d-2})$ by Lemma \ref{torfreetor1} and
$$\len{1}{T}=\gamma(T)q^{d-1}+O(q^{d-2})$$ by Theorem \ref{thmtorsion}.
Also, $\len{0}{T}$ is the Hilbert-Kunz function for $T$ and
therefore there is a constant $C\geq 0$ such that $\len{0}{T} = Cq^{d-1} +O(q^{d-2})$.  Thus,
\[
\len{0}{M}- r\len{0}{R} = \gamma(M) q^{d-1} + O(q^{d-2})
\]
for some $\gamma(M) \in \mathbb R$.
\end{proof}

\begin{cor}\label{Rcor}
Let $R$ be an excellent, local, normal ring
of characteristic $p$
with perfect residue field
and $\dim R=d$.
Then there exists $\gamma = \gamma({}R^{1/p}) \in \mathbb R$ such that
$\len[pq]{0}{R} - p^d \len{0}{R} = \gamma q^{d-1} + O(q^{d-2})$.
\end{cor}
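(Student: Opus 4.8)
The plan is to deduce this directly from Proposition~\ref{torfreeprop}, applied to the module $M = R^{1/p}$, together with a translation between colengths modulo $I^{[q]}$ and modulo $I^{[pq]}$. First I would reduce to the case that $R$ is complete: completing changes neither $\len{0}{R}$ nor $\len[pq]{0}{R}$, since these are lengths of Artinian modules, and $\widehat R$ is again excellent (being complete), local, and normal (as $R$ is excellent and normal), with the same perfect residue field; moreover, by the characterization of F-finiteness for complete local rings recalled in the introduction, $\widehat R$ is F-finite because $[k^{1/p}:k]=1$. So henceforth assume $R$ is complete, hence F-finite, so that $R^{1/p}$ is a finitely generated $R$-module. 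Since $R$ is normal and local it is a domain, so $R^{1/p}$ is a domain containing $R$ and is therefore a torsion-free $R$-module; by the parenthetical remark in the proof of Theorem~\ref{kunzthm}, its rank over $R$ is $p^{d+\alpha(R)} = p^d$, because $\alpha(R)=0$ as $k$ is perfect.

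The key step is the identification
\[
\len{0}{R^{1/p}} \;=\; \lambda_R\!\left(R^{1/p}\otimes_R R/I^{[q]}\right) \;=\; \lambda_R\!\left(R^{1/p}/I^{[q]}R^{1/p}\right) \;=\; \len[pq]{0}{R}.
\]
For the last equality, note that since $R$ is reduced, $I^{[q]}R^{1/p}$ is the ideal of $R^{1/p}$ generated by the elements $a^q = (a^{pq})^{1/p}$ for $a\in I$, and a short check (using $(\sum u_i)^{1/p}=\sum u_i^{1/p}$ in characteristic $p$) shows this equals the $R^{1/p}$-submodule $(I^{[pq]})^{1/p}$. Hence $R^{1/p}/I^{[q]}R^{1/p} \cong (R/I^{[pq]})^{1/p}$ as $R$-modules, and since the residue field is perfect one has $\alpha(R)=0$ and therefore $\lambda_R\big((R/I^{[pq]})^{1/p}\big) = \lambda_R\big(R/I^{[pq]}\big) = \len[pq]{0}{R}$.

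Finally, I would invoke Proposition~\ref{torfreeprop} for the torsion-free module $M = R^{1/p}$ of rank $r = p^d$: it gives a real number $\gamma = \gamma(R^{1/p})$ with
\[
\len{0}{R^{1/p}} - p^d\,\len{0}{R} \;=\; \gamma\, q^{d-1} + O(q^{d-2}),
\]
and substituting the identification above turns the left-hand side into $\len[pq]{0}{R} - p^d\,\len{0}{R}$, which is exactly the asserted formula.

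I expect the main obstacle to be the middle step: matching $\len{0}{R^{1/p}}$, a colength taken modulo $I^{[q]}$, with $\len[pq]{0}{R}$, a colength modulo $I^{[pq]}$, via the Frobenius functor, and being careful that the perfect residue field hypothesis makes the length comparison an equality rather than merely an equality up to the factor $q^{\alpha(R)}$. Once that bookkeeping is in place, the rest is a direct citation of Proposition~\ref{torfreeprop} and Theorem~\ref{kunzthm}.
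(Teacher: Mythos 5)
Your proof is correct and is essentially the paper's own argument: complete $R$, note that $R^{1/p}$ is torsion-free of rank $p^d$ (here $\alpha(R)=0$), apply Proposition~\ref{torfreeprop} to $M=R^{1/p}$, and then identify $\len{0}{R^{1/p}}$ with $\len[pq]{0}{R}$. The only difference is that you spell out the last identification (the paper states it in one clause), and that bookkeeping is carried out correctly.
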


\begin{proof} We complete $R$ and assume it is complete. Then
$R^{1/p}$ is a finitely generated $R$-module of rank  $p^d$. Thus,
\[
\len{0}{R^{1/p}}-p^d \len{0}{R} = \gamma q^{d-1} + O(q^{d-2})
\]
for some $\gamma \in \mathbb R$ by Proposition \ref{torfreeprop}.
As $\len{0}{R^{1/p}} = \len[pq]{0}{R}$, we have
\[
\len[pq]{0}{R} - p^d \len{0}{R} = \gamma(R^{1/p}) q^{d-1} + O(q^{d-2}).
\]
\end{proof}

The next two theorems are the main content in \cite{HMM}.
As mentioned earlier, the approach in this paper is through divisors attached to modules,
rather than the growth of the length of Tor modules.
See \cite{K2} for further analysis of the second coefficient.
\medskip

\begin{theorem}\label{thmforR}
Let $(R,\m,k)$ be an excellent, local, normal ring of dimension $d$ and prime characteristic $p$ with
a perfect residue field.  Then there exists $\beta(R) \in \mathbb R$ such that
$\lambda(R/I^{[q]}) = e_{HK}(R)q^d + \beta(R) q^{d-1} + O(q^{d-2})$.
\end{theorem}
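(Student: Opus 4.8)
The plan is to derive Theorem~\ref{thmforR} directly from Corollary~\ref{Rcor} by a telescoping argument; the key observation is that the error term in the functional equation of that corollary decays geometrically once one normalizes the Hilbert--Kunz function by $q^d$ rather than by $q^{d-1}$.

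First I would record the functional equation. The hypotheses of Theorem~\ref{thmforR} are exactly those of Corollary~\ref{Rcor} (excellent, local, normal, prime characteristic, perfect residue field, $\dim R=d$), so that corollary furnishes a real constant $\gamma=\gamma(R^{1/p})$ with
$$\lambda(R/I^{[pq]}) - p^d\,\lambda(R/\Iq) \;=\; \gamma\, q^{d-1} + O(q^{d-2})$$
for every $q=p^e$; here I use that $\lambda_R(R^{1/p}/\Iq R^{1/p}) = \lambda(R/I^{[pq]})$, valid because $k$ is perfect. One may first complete $R$ — this preserves all the lengths $\lambda(R/\Iq)$, preserves normality by excellence, and makes $R^{1/p}$ genuinely module-finite — but this reduction is already contained in the proof of Corollary~\ref{Rcor}, so it can simply be invoked.

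Next, set $a_q = \lambda(R/\Iq)/q^d$; by Theorem~\ref{HKexists} we know $a_q \to e_{HK}(R)$. Dividing the functional equation through by $(pq)^d$ converts it into
$$a_{pq} - a_q \;=\; \frac{\gamma}{p^d}\cdot\frac{1}{q} \;+\; O\!\left(\frac{1}{q^2}\right).$$
Writing $q=p^e$ and summing from level $e$ onward, the right-hand side is a geometric series of ratio $1/p$ together with an absolutely convergent remainder, so it sums to $\frac{\gamma}{p^{d-1}(p-1)}\,p^{-e} + O(p^{-2e})$; since $a_{p^N}\to e_{HK}(R)$ this gives
$$e_{HK}(R) - a_{p^e} \;=\; \frac{\gamma}{p^{d-1}(p-1)}\,p^{-e} + O(p^{-2e}).$$
Multiplying back through by $q^d = p^{de}$ and using $p^{(d-1)e} = q^{d-1}$ yields
$$\lambda(R/\Iq) \;=\; e_{HK}(R)\,q^d \;+\; \beta(R)\,q^{d-1} \;+\; O(q^{d-2}), \qquad \beta(R) = -\frac{\gamma}{p^{d-1}(p-1)},$$
and since every $q$ in play is a power of $p$, this is precisely the assertion.

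I expect the substantive difficulty to be essentially nil at this stage, because it has already been dispatched upstream: it is exactly the fact (Lemma~\ref{torfreetor1} and its consequences, culminating in Corollary~\ref{Rcor}) that torsion-free modules, and in particular $R^{1/p}$, have $\tor_1(-,R/\Iq)$ of order only $O(q^{d-2})$, which is what makes the error in the functional equation genuinely $O(q^{d-2})$ rather than merely $O(q^{d-1})$. The only delicate point in the step above is that the $O(q^{d-2})$ errors produced by Corollary~\ref{Rcor} at each level must be summed uniformly over levels; after division by $(pq)^d$ they become $O(q^{-2})$, and $\sum_j p^{-2j}$ converges, so this is harmless. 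In effect, the whole role of the normalization by $q^d$ is to turn the a priori expanding recursion $h_{e+1} = p^d h_e + (\text{lower order})$, where $h_e = \lambda(R/I^{[p^e]}) - e_{HK}(R)p^{de}$, into a contracting, summable one.
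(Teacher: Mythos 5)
Your proposal is correct, and it follows essentially the same route as the paper: both proofs derive the theorem from Corollary~\ref{Rcor} via a geometric-series/telescoping argument after normalizing by $q^d$. The paper's version is a minor cosmetic variant — it subtracts the anticipated $q^{d-1}$ correction term $\epsilon_q := \lambda(R/\Iq) - \frac{\gamma(R^{1/p})}{p^{d-1}-p^d}\,q^{d-1}$ first, so that the resulting recursion is homogeneous up to $O(q^{d-2})$ and $\{\epsilon_q/q^d\}$ becomes a Cauchy sequence, whereas you keep the inhomogeneous recursion and sum the telescope directly; both yield $\beta(R) = -\gamma/(p^{d-1}(p-1))$.
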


\begin{proof}
We may complete $R$ and henceforth assume $R$ is complete.
Define
${\epsilon}_q:= \lambda(R/I^{[q]}) - (\gamma(R^{1/p})/(p^{d-1}-p^d)) q^{d-1}$.
Recall that $q=p^e$.
We claim that $\{{\epsilon}_q/q^d\}$ is a Cauchy sequence.
Corollary \ref{Rcor} shows that ${\epsilon}_{pq}-p^d {\epsilon}_q = 
O(q^{d-2})$.
Hence $|{\epsilon}_{pq}/(pq)^d - {\epsilon}_q/q^d| = O(q^{-2})$.
The sequence $\{{\epsilon}_q/q^d\}$ converges to some $\alpha(R) \in 
\mathbb
R$.
Another simple geometric series argument
shows that
$|{\epsilon}_q/q^d - \alpha(R)| = O(q^{-2})$ and so
${\epsilon}_q = \alpha(R)q^d + O(q^{d-2})$.
In other words,
$\lambda(R/I^{[q]})= \alpha(R)q^d + \beta(R) q^{d-1} + O(q^{d-2})$ where
$\beta(R)=\gamma(R^{1/p})/(p^{d-1}-p^d)$. Clearly $\alpha(R) = e_{HK}(R)$ is forced.
\end{proof}

\begin{theorem}\label{mainthm}
Let $(R,\m,k)$ be an excellent, local, normal ring of dimension $d$ and prime characteristic $p$
with a perfect residue field.
Let $M$ be finitely generated $R$-module.
Then there exists $\beta(M) \in \mathbb R$ such that
$\lambda(M/I^{[q]}M) = e_{HK}(M) q^d + \beta(M) q^{d-1} + O(q^{d-2})$.
\end{theorem}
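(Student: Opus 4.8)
The plan is the following. Since $R$ is normal and local it is already a domain (and each of the ingredients cited below internally allows passage to the completion if one wishes), so I would take a finitely generated $M$ of rank $r$ and choose a free presentation $0\to N\to R^{n}\to M\to 0$. The submodule $N\subseteq R^{n}$ is then torsion-free of rank $n-r$, and tensoring with $R/\Iq$ gives the exact sequence $0\to \tor_{1}(M,R/\Iq)\to N/\Iq N\to (R/\Iq)^{n}\to M/\Iq M\to 0$, whence $\lambda(M/\Iq M)=n\,\lambda(R/\Iq)-\lambda(N/\Iq N)+\len{1}{M}$. Proposition \ref{torfreeprop} applied to $N$ gives $\lambda(N/\Iq N)=(n-r)\,\lambda(R/\Iq)+\gamma(N)q^{d-1}+O(q^{d-2})$, so after cancellation $\lambda(M/\Iq M)=r\,\lambda(R/\Iq)+\len{1}{M}-\gamma(N)q^{d-1}+O(q^{d-2})$.

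The one remaining input is the growth of $\len{1}{M}$ for a not-necessarily torsion-free $M$, and this I would extract from the torsion decomposition $0\to T\to M\to M'\to 0$, where $T$ is the torsion submodule of $M$ and $M'=M/T$ is torsion-free. From the long exact Tor sequence, $\len{1}{M}$ differs from $\len{1}{T}$ by at most $\len{2}{M'}+\len{1}{M'}$; here $\len{2}{M'}=O(q^{d-2})$ by Corollary \ref{torsiontor2alldim}, $\len{1}{M'}=O(q^{d-2})$ by Lemma \ref{torfreetor1} (this is the step using the normality of $R$), and $\len{1}{T}=\gamma(T)q^{d-1}+O(q^{d-2})$ by Theorem \ref{thmtorsion}. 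Hence $\len{1}{M}=\gamma(T)q^{d-1}+O(q^{d-2})$, and substituting into the previous paragraph yields $\lambda(M/\Iq M)=r\,\lambda(R/\Iq)+\bigl(\gamma(T)-\gamma(N)\bigr)q^{d-1}+O(q^{d-2})$.

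Finally I would feed in Theorem \ref{thmforR}, $\lambda(R/\Iq)=e_{HK}(R)q^{d}+\beta(R)q^{d-1}+O(q^{d-2})$, to get $\lambda(M/\Iq M)=r\,e_{HK}(R)q^{d}+\bigl(r\beta(R)+\gamma(T)-\gamma(N)\bigr)q^{d-1}+O(q^{d-2})$, so that $\beta(M)=r\beta(R)+\gamma(T)-\gamma(N)$ works, the leading coefficient $r\,e_{HK}(R)$ being $e_{HK}(I,M)$ by Corollary \ref{corrank}. I do not expect a serious obstacle: all of the analytic content has already been absorbed into Lemma \ref{torfreetor1}, Theorem \ref{thmtorsion}, Proposition \ref{torfreeprop} and Theorem \ref{thmforR}. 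The only point demanding care is the bookkeeping of which Tor modules contribute an honest $q^{d-1}$ term and which contribute only an $O(q^{d-2})$ error — the asymmetry being that $\tor_{1}$ of a torsion-free module is negligible to this order while $\tor_{1}$ of a torsion module is not — together with checking that the two copies of $n\,\lambda(R/\Iq)$ (and later of $r\,\lambda(R/\Iq)$) cancel exactly, so that no spurious $q^{d}$ term survives.
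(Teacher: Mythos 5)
Your proof is correct, and it does lead to the stated expansion, but the bookkeeping is organized differently from the paper's. The paper reduces directly to the torsion-free case via the single exact sequence $0 \to T \to M \to \overline{M} \to 0$: tensoring with $R/\Iq$ gives $\lambda(M/\Iq M)$ as $\lambda(\overline{M}/\Iq\overline{M})$ plus $\lambda(T/\Iq T)$ up to the negligible correction $\len{1}{\overline{M}}=O(q^{d-2})$ from Lemma~\ref{torfreetor1}; the torsion-free case is then handled by Proposition~\ref{torfreeprop} combined with Theorem~\ref{thmforR}, and the contribution of $T$ is handled by the crude Hilbert--Kunz asymptotic $\lambda(T/\Iq T)=e_{HK}(T)q^{\dim T}+O(q^{\dim T-1})$ from Theorem~\ref{HKexists} (with $\dim T\leq d-1$). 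You instead first pass to a free presentation $0\to N\to R^n\to M\to 0$, apply Proposition~\ref{torfreeprop} to the torsion-free kernel $N$, and absorb the whole difficulty into the correction term $\len{1}{M}$; you then invoke the torsion decomposition of $M$ a second time, together with Theorem~\ref{thmtorsion} for $\len{1}{T}$, Lemma~\ref{torfreetor1} for $\len{1}{M'}$, and Corollary~\ref{torsiontor2alldim} for $\len{2}{M'}$, to see $\len{1}{M}=\gamma(T)q^{d-1}+O(q^{d-2})$. This costs you one extra exact sequence and two extra lemma-invocations (Theorem~\ref{thmtorsion} and Corollary~\ref{torsiontor2alldim}), but the gain is that you never need the precise $q^{\dim T}$-asymptotic for the lower-dimensional module $T/\Iq T$; the coefficient $\gamma(T)$ of $q^{d-1}$ enters only through the refined Tor-estimate. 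Both routes are sound and yield $\beta(M)=r\beta(R)+\gamma(T)-\gamma(N)$ in your normalization (the paper's is $r\beta(R)+\gamma(M)$, and the two necessarily agree by uniqueness of the expansion).
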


\begin{proof}
We may complete $R$ and henceforth assume $R$ is complete.
Suppose $M$ is a torsion-free $R$-module of rank $r$.
We know that
$\len{0}{M} - r \len{0}{R} = \gamma(M) q^{d-1} + O(q^{d-2})$ for some
$\gamma(M) \in \mathbb R$ by Proposition \ref{torfreeprop}.
By Theorem \ref{thmforR} we know that
$\lambda(R/I^{[q]}) = \alpha(R) q^d + \beta(R) q^{d-1} + O(q^{d-2})$.
Combining these two results gives:
\begin{gather*}
\len{0}{M} - r (\alpha(R) q^d + \beta(R) q^{d-1} + O(q^{d-2})) =
\gamma(M) q^{d-1} + O(q^{d-2}) \\
\len{0}{M} = r\alpha(R) q^d + (r\beta(R)+ \gamma(M)) q^{d-1} + O(q^{d-2}).
\end{gather*}

If $M$ is not torsion-free,
then we have the following exact sequence where $\overline M$
is torsion-free:
\[
0 \rightarrow T \rightarrow M \rightarrow \overline M \rightarrow 0.
\]
Tensor with $R/I^{[q]}$ and consider the usual long
exact sequence
\[
\cdots \rightarrow \tors{1}{\overline M} \rightarrow T/I^{[q]}T \rightarrow M/I^{[q]}M \rightarrow
{\overline M}/I^{[q]}{\overline M} \rightarrow
0.
\]
We know $\len{1}{\overline M} = O(q^{d-2})$ by Lemma
\ref{torfreetor1}.  Also, $\lambda(T/I^{[q]}T)=e_{HK}(T)q^{\dim T}
+O(q^{\dim T -1 })$ and $\dim T \leq d-1$.
Hence the result for $M$ follows from the result for $\overline M$.
\end{proof}
\bigskip

\section{Estimates on Hilbert-Kunz Multiplicity}
\medskip
In this section we discuss estimates of the Hilbert-Kunz multiplicity.
A key motivating idea in this process was introduced in the paper of Blickle and Enescu \cite{BE} which proved that
for rings which are
not regular, the \HK multiplicity is bounded away from $1$ uniformly. This is the content
of Proposition \ref{BE-est}, which gives the lower bound of $1 + \frac{1}{p^dd!}$ for \unmixed non-regular rings.
However, it was felt that the presence of the characteristic $p$
in the formula bounding the \HK multiplicity away from $1$ should not be necessary. 
Watanabe and Yoshida \cite{WY4} made this explicit with the following conjecture:

\begin{conj}\label{WYconj} Let $d \ge 1$ be an integer and $p > 2$ a prime number.
Put $R_{p,d} := \overline{F_p}[[x_0,x_1,\ldots,x_d]]/(x_0^2+\cdots + x_d^2).$  
Let $(R,\m,k)$ be a $d$-dimensional unmixed local ring with $k =\overline{F_p}$,
an algebraic closure of the field with $p$-elements.  Then the following statements hold.
\begin{enumerate} 
\item If $R$ is not regular, then $e_{HK}(R) \geq e_{HK}(R_{p,d}) \geq 1+a_d$,
where $a_d$ is the dth coefficient of the power series expansion of sec$(x)$ $+$ tan$(x)$ around $0$.
\item If $e_{HK}(R) = e_{HK}(R_{p,d})$, then the $\m$-adic completion $\widehat{R}$ of $R$ is isomorphic to
$R_{p,d}$ as local rings. 
\end{enumerate}
\end{conj}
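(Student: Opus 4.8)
This is an open problem, so what follows is a strategy, with the real difficulty isolated at the end.

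\emph{Reductions and the comparison value.} Because $e_{HK}$ and the regularity of $R$ are unchanged under completion, and the unmixedness hypothesis already refers to $\widehat R$, I would first reduce to $R$ complete, unmixed (equidimensional, no embedded primes), with residue field $\overline{F_p}$. Two things are then needed: (i) evaluate the right-hand side, and (ii) establish the inequality. For (i), note that $R_{p,d}$ is a diagonal hypersurface, so its Hilbert--Kunz function is governed by the Han--Monsky theory of $\delta$-functions; one obtains a closed form for $e_{HK}(R_{p,d})$ as a rational number depending on $p$ and $d$ (Watanabe--Yoshida also give this directly in \cite{WY4}), and the bound $e_{HK}(R_{p,d})\ge 1+a_d$ --- an equality for $d\le 2$, strict for $d\ge 3$ --- comes out of the $p\to\infty$ analysis of \cite{GM}, where $\lim_{p}e_{HK}(R_{p,d})=1+a_d$. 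I expect this half to be routine given the literature.

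\emph{The lower bound $e_{HK}(R)\ge e_{HK}(R_{p,d})$.} Since $R$ is unmixed and not regular, $e(R)\ge 2$ by Theorem~\ref{HK=1} (together with Lech's formula and Theorem~\ref{kunzthm}), so I would argue by cases on $e=e(R)$. When $e=2$: the hypersurface case reduces, via a deformation of $f$ to its leading form $f_2$ and the explicit Hilbert--Kunz multiplicities of quadrics, to showing $e_{HK}(S/(f))$ with $S=\overline{F_p}[[x_0,\dots,x_d]]$ and $f_2$ of rank $r\le d+1$ is minimized exactly at $r=d+1$, i.e.\ by $R_{p,d}$; the unmixed multiplicity-two rings that are \emph{not} hypersurfaces are disposed of by the additivity formula, Theorem~\ref{associativity}, and low-multiplicity considerations (loss of the Cohen--Macaulay property only raises $e_{HK}$), which already give $e_{HK}(R)\ge 2$, comfortably above $e_{HK}(R_{p,d})$ for $d\ge 2$. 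When $e\ge 3$: I would invoke the volume lower bounds of Watanabe--Yoshida \cite{WY2},\cite{WY4} and the root-adjunction estimates of Aberbach--Enescu \cite{AE3},\cite{AE4} and Celikbas--Dao--Huneke--Zhang \cite{CDHZ}, which bound $e_{HK}(R)$ below by a quantity increasing in $e(R)$; combined with the known value of $e_{HK}(R_{p,d})$ this should dispose of all but finitely many pairs $(e,d)$, the remainder being checked by hand.

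\emph{Rigidity, and the main obstacle.} If $e_{HK}(R)=e_{HK}(R_{p,d})$, then the preceding chain of inequalities must be an equality throughout, so $e(R)=2$ and $\widehat R$ is a full-rank quadric, whence $\widehat R\cong R_{p,d}$; for this one needs the rank minimum to be \emph{strict} when $d\ge 2$. (In dimension one the assertion genuinely requires care: $\overline{F_p}[[x,y]]/(x^2)$ is unmixed with $e_{HK}=2=e_{HK}(R_{p,1})$ yet is not isomorphic to $R_{p,1}$, so one should either add an $R_1$ or reducedness hypothesis or read the conjecture for $d\ge 2$, where $R_{p,d}$ is a normal domain.) The essential obstacle is the case $e(R)\ge 3$ in large dimension: since $a_d\to 0$, the target $1+a_d$ tends to $1$, whereas every known lower bound for the Hilbert--Kunz multiplicity of a singular unmixed ring --- the volume bound $e_{HK}(R)\ge e(R)/d!$ and its refinements, and the root-adjunction bounds --- deteriorates as $d$ grows and does not beat $1+a_d$ uniformly. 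Closing this gap seems to demand a genuinely new structural lower bound, one yielding a constant bounded away from $1$ with dimension-independent control; that this is missing is why the statement is still only a conjecture.
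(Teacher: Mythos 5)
You have correctly identified that this is a conjecture, not a theorem; the paper states it as Conjecture~\ref{WYconj}, attributing it to Watanabe and Yoshida \cite{WY4}, and gives no proof, noting instead that it is verified for $d\le 4$ in \cite{WY4} and that part (1) has been extended to $d\le 6$ by Aberbach--Enescu \cite{AE4}, with $d=7$ open. Your sketch is consistent with the strategy behind those partial results: Han--Monsky computations and the Gessel--Monsky limit \cite{GM} for the comparison value $e_{HK}(R_{p,d})\to 1+a_d$, the volume estimate of Theorem~\ref{Key} in low multiplicity, and the root-adjunction and integrally-closed-chain method (Proposition~\ref{ranklemma}, Corollary~\ref{roots}) for the general lower bound. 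You also correctly locate the essential obstruction: since $1+a_d\to 1$ while every uniform lower bound the paper surveys --- e.g.\ $e_{HK}(R)\ge 1+\tfrac{1}{d!\,d^d}$ from the root-adjunction argument, or the dimension-specific refinements of Theorem~\ref{Key} --- decays far faster in $d$, the surveyed estimates cannot be assembled into a proof for all $d$, and a genuinely new structural bound with dimension-independent control is needed. That is precisely why the paper, and the literature it cites, leaves this as a conjecture.

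Your remark about $d=1$ is a real caveat and worth making explicit: $R=\overline{F_p}[[x,y]]/(x^2)$ is a one-dimensional unmixed non-regular local ring, and by the associativity formula (Theorem~\ref{associativity}) it has $e_{HK}(R)=2$, while $R_{p,1}=\overline{F_p}[[x_0,x_1]]/(x_0^2+x_1^2)$ is the reduced union of two lines (since $\sqrt{-1}\in\overline{F_p}$) with $e_{HK}=1+1=2$. These two rings are not isomorphic, so part~(2) of the conjecture as transcribed here is false for $d=1$. One should therefore read the rigidity assertion for $d\ge 2$, where $R_{p,d}$ is a normal domain, or assume $R$ reduced (or $R_1$), as is presumably implicit in Watanabe--Yoshida's original formulation. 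This does not affect part~(1), only the classification of the equality case. Beyond that caveat, your reading of the statement, its status, and the landscape of attacks on it matches the paper.
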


There are several methods which have been used to estimate the \HK multiplicity. Perhaps the most
effective method is due to Watanabe and Yoshida, the method of estimation by computing volumes. Closely
related ideas were also introduced by Hanes \cite{Ha}.
We illustrate this method in the simplest case where $R$ is a Cohen-Macaulay local ring
of dimension $2$. Higher dimensional cases are of course more difficult, but the basic volume estimates
are similar.
The point is to estimate
$l_A(\m^{[q]}/J^{[q]})$
(where $J$ is a minimal reduction of $\m$) using
volumes in $\mathbb R^d$.
In a later paper, Watanabe and Yoshida use the methods, somewhat refined, to study higher dimension. In
\cite{WY4}, they prove their conjecture up to dimension four. Aberbach and Enescu \cite{AE4} have extended
this by verifying the first part of the conjecture up to dimension six. Dimension seven is open as of the time this article
was written.

We need the following lemma to prove Theorem \ref{dim2-est}.
Just as in \cite{WY1}, it is convenient to adopt the following notation: if $t$ is a real number, then
$I^t: = I^{\lfloor t\rfloor}$.

\begin{lemma} \label{Volume}
Let $(R,\m,k)$ be an unmixed local ring of $\dim R = 2$, 
of prime characteristic $p$, and infinite residue field.
Let $J$ be a parameter ideal of $R$. Let $1\leq s < 2$. Then
we have the following limits:
\[
\lim_{q \to \infty} \frac{\la(R/J^{sq})}{q^2} 
= \frac{e(J) s^2}{2},
\qquad
\lim_{q\to \infty} \la
\left(\frac{J^{sq}+(J^*)^{[q]}}{J^{[q]}}\right)
= e(J) \cdot \frac {(2-s)^2}{2}
\]
\end{lemma}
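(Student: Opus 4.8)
The plan is to treat the two displayed limits separately: the first is essentially Hilbert--Samuel theory, and the second is the genuine ``volume'' estimate, obtained by passing to the associated graded ring of $J$ and counting lattice points in a square. (I understand the second limit to carry the normalization $\tfrac{1}{q^{2}}\la(\cdots)$, as in the first display, since the length there grows like $q^{2}$.) For the first limit, since $J$ is a parameter ideal in the $2$-dimensional local ring $R$, the function $n\mapsto\la(R/J^{n})$ agrees for $n\gg 0$ with a polynomial of degree $2$ whose leading coefficient is $e(J)/2$; hence $\la(R/J^{n})=\tfrac{e(J)}{2}n^{2}+O(n)$, and writing $n=\lfloor sq\rfloor=sq+O(1)$ and dividing by $q^{2}$ yields $\la(R/J^{sq})/q^{2}\to e(J)s^{2}/2$ as $q\to\infty$.

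For the second limit I would first remove the tight closure. The proposition near the start of Section~5 (that $I\subseteq J\subseteq I^{*}$ forces $e_{HK}(I)=e_{HK}(J)$, whose proof in fact establishes the sharper bound $|\la(R/I^{[q]})-\la(R/J^{[q]})|=O(q^{d-1})$), applied with the roles of $I$ and $J$ played by $J$ and $J^{*}$, gives $\la\bigl((J^{*})^{[q]}/J^{[q]}\bigr)=O(q)$. Since $J^{sq}+J^{[q]}\subseteq J^{sq}+(J^{*})^{[q]}$ with quotient a homomorphic image of $(J^{*})^{[q]}/J^{[q]}$, the problem reduces to the ordinary ideal $J^{[q]}$:
\[
\la\!\left(\frac{J^{sq}+(J^{*})^{[q]}}{J^{[q]}}\right)=\la(R/J^{[q]})-\la\bigl(R/(J^{sq}+J^{[q]})\bigr)+O(q).
\]
Here $\la(R/J^{[q]})=e(J)q^{2}+O(q)$: when $R$ is Cohen--Macaulay this is exactly $e(J^{[q]})=q^{2}e(J)$, and in general, by Serre's multiplicity formula, $\la(R/J^{[q]})-e(J^{[q]})$ is the alternating sum of the lengths of the Koszul homologies $H_{i}(J^{[q]};R)=H_{i}\bigl(R\otimes F^{e}(K_{\bullet})\bigr)$, with $K_{\bullet}$ the Koszul complex on a system of parameters generating $J$; by the theorem of Roberts and Hochster--Huneke recalled in the proof of Lemma~\ref{torsiontor1} these have length $O(q^{\min(2,i)})=O(q)$ for $i\ge 1$, while $e(J^{[q]})=q^{2}e(J)$ always.

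It remains to evaluate $\la\bigl(R/(J^{sq}+J^{[q]})\bigr)$, and this is where the volume enters. Put $n=\lfloor sq\rfloor$ and suppose first that $R$ is Cohen--Macaulay, so $J$ is generated by a regular sequence $x,y$ and $\mathrm{gr}_{J}(R)=(R/J)[X,Y]$ with $X,Y$ the initial forms of $x,y$; then $X^{q},Y^{q}$ is again a regular sequence, so by the Valabrega--Valla criterion the initial ideal of $J^{[q]}=(x^{q},y^{q})$ is $(X^{q},Y^{q})$, and computing degree by degree in $\mathrm{gr}_{J}$ gives
\[
\la\bigl(R/(J^{n}+J^{[q]})\bigr)=e(J)\cdot\#\{(a,b)\in\mathbb{Z}_{\ge 0}^{2}:a\le q-1,\ b\le q-1,\ a+b\le n-1\}.
\]
A short computation (substitute $a'=q-1-a$, $b'=q-1-b$, and note that $s<2$ forces $n\le 2q-2$ for $q\gg 0$) identifies this cardinality with $q^{2}-\binom{2q-n}{2}$, so, using $2q-n=(2-s)q+O(1)$,
\[
\la(R/J^{[q]})-\la\bigl(R/(J^{sq}+J^{[q]})\bigr)=e(J)\binom{2q-n}{2}+O(q)=e(J)\,\frac{(2-s)^{2}}{2}\,q^{2}+O(q);
\]
dividing by $q^{2}$ gives the claimed value $e(J)(2-s)^{2}/2$. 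Indeed $(2-s)^{2}/2$ is exactly the area of the triangle $\{(u,v)\in[0,1]^{2}:u+v\ge s\}$ that the excluded lattice points approximate.

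The step I expect to be the main obstacle is passing from the Cohen--Macaulay case to the merely unmixed case, since then $\mathrm{gr}_{J}(R)$ need not be a polynomial ring over $R/J$ and the initial forms of $x^{q},y^{q}$ need not generate the initial ideal of $J^{[q]}$. I would handle this by showing that the displayed formula for $\la\bigl(R/(J^{n}+J^{[q]})\bigr)$ holds up to an $O(q)$ error for unmixed $R$: for instance, one may exploit that an unmixed ring satisfies Serre's condition $S_{1}$, so that the discrepancies between $\mathrm{gr}_{J}(R)$ and $(R/J)[X,Y]$, and between the initial ideal of $J^{[q]}$ and $(X^{q},Y^{q})$, are supported in dimension $\le d-2=0$ and hence of bounded length, or run a Valabrega--Valla style argument showing the initial forms of $x^{q},y^{q}$ are filter-regular in $\mathrm{gr}_{J}(R)$ outside boundedly many degrees. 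Since only the coefficient of $q^{2}$ is asserted, such an $O(q)$ error is harmless.
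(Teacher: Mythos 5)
The paper itself declines to prove this lemma, remarking only that the first limit ``follows from the usual Hilbert--Samuel multiplicity'' and that the second ``can be immediately reduced to the case in which $R$ is a power series ring and the parameters are regular parameters.'' You have supplied an actual argument, and most of it is correct, including the (correct) observation that the second display must carry the normalization $1/q^2$ that appears in the first: that this is how the lemma is used inside Theorem~\ref{dim2-est} confirms it is a typo in the statement. Your handling of the first limit, your removal of the tight closure via $\la\bigl((J^*)^{[q]}/J^{[q]}\bigr)=O(q)$, your Koszul/Roberts--Hochster--Huneke bound giving $\la(R/J^{[q]})=e(J)q^2+O(q)$, and your lattice-point count $q^2-\binom{2q-n}{2}$ in the Cohen--Macaulay case are all sound; since the one application in the paper (Theorem~\ref{dim2-est}) assumes $R$ Cohen--Macaulay, that much already suffices for the paper's purposes. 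Your route through $\mathrm{gr}_J(R)$ is also genuinely different from the route the paper gestures at (Noether normalization), and for CM rings it is arguably cleaner.

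The genuine gap is your proposed passage to the merely unmixed case. The claim that for an unmixed ring ``the discrepancies between $\mathrm{gr}_J(R)$ and $(R/J)[X,Y]$ \dots are supported in dimension $\le d-2=0$ and hence of bounded length'' is false: whenever $R$ is unmixed but not Cohen--Macaulay one has $\la(R/J)>e(J)$, so $\la\bigl((R/J)[X,Y]_{<n}\bigr)-\la(R/J^n)=(\la(R/J)-e(J))\binom{n+1}{2}+O(n)$ grows like $n^2$; the kernel of the surjection $(R/J)[X,Y]\twoheadrightarrow\mathrm{gr}_J(R)$ already has dimension $2$, not $0$, and Serre's $S_1$ gives no control over it. (A concrete example: $R=k[[x,y,z,w]]/(x,z)(y,w)$, $J=(x+y,\,z+w)$, where $\la(R/J)=3$ but $e(J)=2$.) The formula you are aiming for is nevertheless correct in the unmixed case; the reduction the paper has in mind is the one that avoids $\mathrm{gr}_J(R)$ entirely. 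First complete $R$ and use Lemma~\ref{minagree} (note $\m^{[c'q]}\inc J^n+J^{[q]}$ for a fixed power $c'$ of $p$, so the lemma applies with a fixed rescaling of $q$) to reduce, up to an $O(q)$ error, to the case $R$ a complete local domain; then choose a coefficient field $k$ and view $R$ as a module-finite extension of $A=k[[x,y]]$, torsion-free of rank $r=e(J;R)$ by the associativity formula. Writing $0\to A^{r}\to R\to T\to 0$ with $\dim_A T\le 1$ and tensoring with $A/(\m_A^n+\m_A^{[q]})$, Lemmas~\ref{growth} and~\ref{torsiontor} (over $A$) bound the $T$-contributions by $O(q)$, so $\la\bigl(R/(J^n+J^{[q]})\bigr)=e(J)\,\la_A\bigl(A/(\m_A^n+\m_A^{[q]})\bigr)+O(q)$; the latter is exactly your lattice-point count because $A$ is regular. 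Your ``filter-regularity of $X^q,Y^q$'' fallback might also be made to work, but it would need an argument; as written it is a plan rather than a proof.
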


\begin{proof} We leave these for the reader as an exercise. The first follows from the usual
Hilbert-Samuel multiplicity, while the second can be immediately reduced to the case in
which $R$ is a power series ring and the parameters are regular parameters. In this case the
second limit can be thought of as computing a certain volume. We will describe the $d$-dimensional
case after proving the theorem. \end{proof} 

\begin{theorem}\label{dim2-est}\cite[Corollary]{WY1}
Let $(R,\m,k)$ be a two-dimensional Cohen--Macaulay
local ring of prime characteristic $p$.
Put $e=e(R)$, the multiplicity of $R$.
Then the following statements hold$:$
\begin{enumerate}
\item $e_{HK}(R) \ge \frac{e+1}{2}$.
\item Suppose that $k = \overline{k}$.
Then $e_{HK}(R) = \frac{e+1}{2}$ holds
if and only if the associated graded ring
$gr_{\m}(R)$ is isomorphic to the Veronese
subring $k[X,Y]^{(e)}$.
\end{enumerate}
\end{theorem}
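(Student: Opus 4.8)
The plan is to reduce the problem to a clean Hilbert-Kunz computation on the ring and its minimal reduction, then exploit the tight closure characterization of equality of Hilbert-Kunz multiplicities together with the volume estimate of Lemma \ref{Volume}. First I would pass to a minimal reduction $J = (x,y)$ of $\m$, which is possible after harmlessly enlarging the residue field; since $R$ is Cohen-Macaulay of dimension two, $J$ is generated by a regular sequence and $e(J) = e(\m) = e$. The key inclusions are $J^{[q]} \inc \m^{[q]} \inc \m^{sq} + (J^*)^{[q]}$ (valid for suitable $s$ since high powers of $\m$ land in the tight closure of $J^{[q]}$, by the theory of tight closure of parameter ideals) giving, via Lemma \ref{Volume} applied with the optimal choice of $s$, the lower bound
\[
\la(R/\m^{[q]}) \geq \la(R/J^{[q]}) - \la\!\left(\frac{\m^{sq}+(J^*)^{[q]}}{J^{[q]}}\right) = e q^2 - e\frac{(2-s)^2}{2}q^2 + o(q^2).
\]
Taking $s \to 1$ (or the exact combinatorial optimum, which for $d=2$ gives the coefficient $\frac{e+1}{2}$ after using $\la(R/\m^{sq})$ bounds in the other direction too) yields $e_{HK}(R) \geq \frac{e+1}{2}$, which is part (1).

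For part (2), I would argue that equality forces every inequality used above to be an asymptotic equality up to $o(q^2)$. In particular the containment $\m^{[q]} \inc \m^{sq} + (J^*)^{[q]}$ must be "tight" in the length sense, which by the tight-closure characterization of Hilbert-Kunz multiplicity (Theorem \ref{HK-tc}, applicable since $R$ is Cohen-Macaulay hence formally unmixed once completed) should force $\m^{[q]}$ and the relevant powers of $J$ to have essentially the same Hilbert-Kunz behavior. The real content is translating this numerical rigidity into a statement about $gr_\m(R)$: the equality case should pin down the Hilbert function of $R$ in each degree, forcing $\m^n/\m^{n+1}$ to have exactly the dimension it would have for the Veronese $k[X,Y]^{(e)}$, and then one must upgrade equality of Hilbert functions to an isomorphism of graded rings. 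Since $k[X,Y]^{(e)}$ is the associated graded ring of the $e$-th Veronese, which has embedding dimension $e+1$ and is defined by the $2\times 2$ minors of a generic $2\times(e)$-type matrix, the task is to show $gr_\m(R)$ has the minimal possible multiplicity realizing its embedding dimension — i.e., $gr_\m(R)$ is a ring of minimal multiplicity — and is Cohen-Macaulay and reduced in the right way, which characterizes the Veronese among such rings.

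The hard part will be precisely this last step: extracting from the single scalar equation $e_{HK}(R) = \frac{e+1}{2}$ enough structural information to reconstruct $gr_\m(R)$ up to isomorphism. The Hilbert-Kunz multiplicity is an asymptotic invariant and a priori forgets a great deal; one genuinely needs the sharpness in the volume estimate (the Euclidean-geometry computation underlying Lemma \ref{Volume}) to be saturated, and then a separate argument — presumably going back to Watanabe-Yoshida's original proof in \cite{WY1} and using that a two-dimensional Cohen-Macaulay local ring of minimal multiplicity has $gr_\m(R)$ Cohen-Macaulay with linear resolution — to identify the graded ring. I would organize the write-up so that part (1) is the quick volume estimate and part (2) cites or reproduces the minimal-multiplicity classification, flagging that the equality analysis in the volume computation is where the condition $k = \overline{k}$ is really used (to guarantee the associated graded ring is actually the Veronese rather than merely a form of it).
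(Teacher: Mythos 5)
Your overall architecture is the right one --- pass to a minimal reduction $J$, invoke the tight closure characterization and Lemma \ref{Volume}, and balance two "volume" terms by optimizing a parameter $s$ --- and the paper's proof does exactly that. But as written, your estimate cannot reach $\frac{e+1}{2}$. With the inclusion $J^{[q]}\inc\m^{[q]}\inc\m^{sq}+(J^*)^{[q]}$, the only $s$ for which the second containment is clear is $s\leq 1$ (since $\m^{[q]}\inc\m^q$), and at $s=1$ the calculation in your display gives $e_{HK}(R)\geq e-\tfrac{e}{2}=\tfrac{e}{2}$, which is short of $\tfrac{e+1}{2}$ by exactly $\tfrac12$. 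For $s>1$ the inclusion $\m^{[q]}\inc\m^{sq}+(J^*)^{[q]}$ simply fails in general (you cannot conjure $\m^q\inc\m^{sq}$), so there is no legitimate optimization to perform on your setup. This is the gap: a pure volume estimate of this kind is not sharp.

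The missing ingredient in the paper's proof is a finer decomposition of $\m^{[q]}/(J^*)^{[q]}$ through the intermediate submodule $(J^*)^{[q]}+\m^{sq}$, in which the \emph{top} piece $(\m^{[q]}+\m^{sq})/((J^*)^{[q]}+\m^{sq})$ is controlled not by a volume but by the number $r=\mu(\m/J^*)$ of generators of $\m$ modulo $J^*$: writing $\m=J^*+Ru_1+\cdots+Ru_r$ and noting $J^{(s-1)q}u_i^q\inc\m^{sq}$ gives
$\la\bigl((\m^{[q]}+\m^{sq})/((J^*)^{[q]}+\m^{sq})\bigr)\leq r\,\la(R/J^{(s-1)q})$.
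Combining this $r$-weighted term with the volume term from Lemma \ref{Volume} yields
$e-e_{HK}(R)\leq e\bigl(r\tfrac{(s-1)^2}{2}+\tfrac{(2-s)^2}{2}\bigr)$
for all $1\leq s<2$; optimizing at $s=\frac{r+2}{r+1}$ gives $e_{HK}(R)\geq\frac{r+2}{2(r+1)}e$, and the Abhyankar-type inequality relating $e$ and $r$ then converts this into $\frac{e+1}{2}$. Without bringing in $r$ and then optimizing $s$ against it, you lose the crucial $+\frac12$. (On part (2), your proposal is speculative, but the paper also declines to prove it, so that is not a defect in your write-up relative to the source.)
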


\begin{proof} We will only prove the first statement. We claim that
$$e_{HK}(R)\geq \frac{r+2}{2r+2}e,$$ where $e$ is the multiplicity of $R$, and  $r$ is the minimal number of generators of $\m/J^*$.
The theorem follows easily from this inequality, since the fact that $e\geq r-1$ implies that
$\frac{e+1}{2}\leq \frac{r+2}{2r+2}e.$  

To prove the above claim, we let $s$ be a real number, $1\leq s < 2$.
We may assume that the residue field is infinite, and we then choose a
minimal reduction $J$ of the maximal ideal. Note that $\la(\m^{[q]}/(J^*)^{[q]}) = eq^2-e_{HK}(R)q^2 +O(q)$,
by the tight closure characterization of the Hilbert-Kunz multiplicity, Theorem \ref{HK-tc}, and Theorem \ref{HKexists}. 

We have the following:
$$\la(\m^{[q]}/(J^*)^{[q]}) \leq$$
$$ \la((\m^{[q]} + \m^{sq})/((J^*)^{[q]} + \m^{sq})) +
\la(((J^*)^{[q]} + \m^{sq})/((J^*)^{[q]} + J^{sq}) + \la(((J^*)^{[q]} + J^{sq})/\Jq).$$
The middle term in this sum is neglible, since $J$ is a reduction of $\m$, so that there is a fixed power
of $\m$ annihilating these modules, and the number of generators of a power of $\m$ grows as $O(q)$. Hence
the entire term in $O(q)$. 

We prove that $$\la((\m^{[q]} + \m^{sq})/((J^*)^{[q]} + \m^{sq})) = r \cdot 
\la(R/J^{(s-1)q}) + O(q).$$
By our assumption, we can write as
$\m= J^*+ Ru_1+\cdots +Ru_r$.
Since $J^{(s-1)q}u_i^q \subseteq 
\m^{sq} \subseteq \m^{sq}+(J^*)^{[q]}$,
we have
$$\la\left(\frac{\m^{[q]}+\m^{sq}}{(J^*)^{[q]}+\m^{sq}}\right)
\leq \sum_{i=1}^r \la\left(R/((J^*)^{[q]}+\m^{sq}):u_i^q \right) \le  r \cdot \la(R/J^{(s-1)q}).$$

Also, we have         
$\la((J^*)^{[q]}/J^{[q]}) = O(q^{d-1})$ by Theorem \ref{HK-tc}.
Hence                 
\[                    
\la(\m^{[q]}/(J^*)^{[q]}) \le 
r \cdot \la(R/J^{(s-1)q})
+\la\left(\frac{(J^*)^{[q]}+J^{sq}}{J^{[q]}} \right) +O(q).
\]
Dividing by $q^2$ and letting $q$ go to infinity, it follows from Lemma \ref{Volume} that
$$\ehk(J) - \ehk(\m)\le  
r\cdot e \cdot \frac{(s-1)^2}{2} + e \cdot \frac{(2-s)^2}{2}.$$
Setting $s = \frac{r+2}{r+1}$ proves the claim and finishes the proof of the theorem \end{proof}

\medskip

The more general situation is as follows. We take the next discussion directly from \cite{WY4}.
For any positive real number $s$, we put
\[
v_s := \vol \left\{(x_1,\ldots,x_d) \in [0,1]^d 
\,\bigg|\, \sum_{i=1}^d x_i \le s\right\}, \quad
v_s':=1-v_s,
\]
where $\vol(W)$ denotes the volume of 
$W \subseteq \mathbb R^d$.
With this notation, a key theorem in the work of Watanabe and Yoshida is the following:
\begin{theorem} \label{Key}
Let $(R,\m,k)$ be an unmixed local ring of 
characteristic $p>0$.
Put $d = \dim R \ge 1$.
Let $J$ be a minimal reduction of $\m$, and let
$r$ be an integer with $r \ge \mu_R(\m/J^{*})$, 
where $J^{*}$ denotes the tight closure of $J$.
Also, let $s \ge 1$ be a rational number.
Then we have
\begin{equation} \label{KeyEq}
\ehk(R) \ge e(R)
\left\{v_s - r \cdot \frac{(s-1)^d}{d!}\right\}.
\end{equation}
\end{theorem}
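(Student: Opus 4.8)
The plan is to transcribe the two–dimensional argument behind Theorem~\ref{dim2-est}, replacing the planar area estimates by volumes of polytopes in $\mathbb{R}^{d}$. First, after enlarging the residue field, assume $k$ is infinite and fix $J=(x_1,\dots,x_d)$ a minimal reduction of $\m$ generated by a system of parameters; write $\m=J^{*}+Ru_1+\dots+Ru_r$ (padding with zero generators if $r>\mu_R(\m/J^{*})$), so that $\m^{[q]}=(J^{*})^{[q]}+(u_1^{q},\dots,u_r^{q})$ since Frobenius is additive. The quantity to control is $\la(\m^{[q]}/(J^{*})^{[q]})$. Combining: $\ehk(J^{*})=\ehk(J)$ because $J\inc J^{*}\inc (J)^{*}$ (the proposition just before Theorem~\ref{HK-tc}, needing no unmixedness); $\ehk(J)=e(J)=e(\m)=e(R)$ for the parameter-ideal reduction (reduce via the additivity formula, Theorem~\ref{associativity}, to a complete local domain $D$, module-finite over a Noether normalization $A=k[[x_1,\dots,x_d]]$, where Corollary~\ref{corrank} and associativity of the Hilbert--Samuel multiplicity both yield $\rank_A D$); and Theorem~\ref{HKexists}, one gets
$$\la\bigl(\m^{[q]}/(J^{*})^{[q]}\bigr)=\bigl(e(R)-\ehk(R)\bigr)q^{d}+O(q^{d-1}).$$

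Fix now a rational $s\ge 1$, abbreviate $\m^{sq}=\m^{\lfloor sq\rfloor}$ and $J^{sq}=J^{\lfloor sq\rfloor}$, and telescope along $(J^{*})^{[q]}\inc (J^{*})^{[q]}+J^{sq}\inc (J^{*})^{[q]}+\m^{sq}\inc \m^{[q]}+\m^{sq}$, using that $\m^{[q]}/(J^{*})^{[q]}$ embeds into $(\m^{[q]}+\m^{sq})/(J^{*})^{[q]}$ and that the bottom graded piece is a quotient of $((J^{*})^{[q]}+J^{sq})/J^{[q]}$. This gives
$$\la\!\left(\frac{\m^{[q]}}{(J^{*})^{[q]}}\right)\le \la\!\left(\frac{\m^{[q]}+\m^{sq}}{(J^{*})^{[q]}+\m^{sq}}\right)+\la\!\left(\frac{(J^{*})^{[q]}+\m^{sq}}{(J^{*})^{[q]}+J^{sq}}\right)+\la\!\left(\frac{(J^{*})^{[q]}+J^{sq}}{J^{[q]}}\right).$$
The middle term is $O(q^{d-1})$, being a quotient of $\m^{sq}/J^{sq}$, which is sandwiched between the Hilbert--Samuel functions of $\m$ and of $J$ in degree $\lfloor sq\rfloor$, and these agree in leading term $e(R)q^{d}/d!$. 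The first term is generated by the images of $u_1^{q},\dots,u_r^{q}$; since $q$ is an integer, $J^{(s-1)q}u_i^{q}\inc \m^{q+\lfloor(s-1)q\rfloor}=\m^{\lfloor sq\rfloor}\inc (J^{*})^{[q]}+\m^{sq}$, so it has length $\le\sum_{i=1}^{r}\la\bigl(R/((J^{*})^{[q]}+\m^{sq}):u_i^{q}\bigr)\le r\,\la(R/J^{(s-1)q})$, whose $q^{-d}$-limit is $r\,e(J)(s-1)^{d}/d!$ by the first (Hilbert--Samuel) part of the $d$-dimensional analogue of Lemma~\ref{Volume}. The third term equals $\la\bigl((J^{sq}+(J^{*})^{[q]})/J^{[q]}\bigr)$, whose $q^{-d}$-limit is $e(J)\,v_s'$ by the second part of that lemma (and $(J^{*})^{[q]}/J^{[q]}$ is already $O(q^{d-1})$).

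Assembling the three estimates, dividing by $q^{d}$, letting $q\to\infty$, and using $e(J)=e(R)$ and $v_s'=1-v_s$, I obtain $e(R)-\ehk(R)\le r\,e(R)(s-1)^{d}/d!+e(R)(1-v_s)$, which rearranges to $\ehk(R)\ge e(R)\{v_s-r(s-1)^{d}/d!\}$ — and this automatically covers larger $r$, since for $s>1$ the right-hand side is decreasing in $r$ (and for $s=1$ it is independent of $r$). The main obstacle is the $d$-dimensional version of Lemma~\ref{Volume}, specifically its second limit $\lim_{q}\la\bigl((J^{sq}+(J^{*})^{[q]})/J^{[q]}\bigr)/q^{d}=e(J)v_s'$. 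This is where the unmixed hypothesis genuinely enters, just as in Lemma~\ref{Volume}: one reduces to the power series ring $A=k[[x_1,\dots,x_d]]$, where tight closure of parameter ideals is trivial so $(J^{*})^{[q]}=J^{[q]}$, and there $\la_A(A/(J^{sq}+J^{[q]}))$ counts lattice points $(a_1,\dots,a_d)$ with $0\le a_i\le q-1$ and $\sum a_i\ge\lfloor sq\rfloor$, a Riemann sum converging after rescaling by $q$ to $q^{d}\,\vol\{y\in[0,1]^{d}:\sum y_i\ge s\}=q^{d}v_s'$. Making this reduction to $A$ precise and uniform in $q$, rather than the volume count itself, is the delicate step.
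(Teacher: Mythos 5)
Your proposal is correct and is exactly the dimension-$d$ generalization of the argument the paper gives in detail for Theorem~\ref{dim2-est}, which the paper explicitly presents as the method behind Theorem~\ref{Key}; the paper itself does not reproduce the full $d$-dimensional proof, deferring to \cite{WY4}. You correctly isolate the only genuinely new ingredient (the $d$-dimensional version of Lemma~\ref{Volume}), whose reduction to the power series ring $A=k[[x_1,\dots,x_d]]$ via a torsion-free Noether normalization and Lemma~\ref{minagree}-type estimates is indeed where the unmixed hypothesis is used, just as you say.
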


This has been extended in \cite{AE4}.

\begin{example}{\rm [{\rm cf. \cite{BC,WY1}}]  \label{Hyp}
Let $(R,\m,k)$ be a hypersurface local ring of
characteristic $p>0$ with $d = \dim R \ge 1$.
Then
\[
\ehk(R) \ge \beta_{d+1} \cdot e(R), 
\]
where $\beta_{d+1}$ is given by the formula$:$
$$\vol \left\{\underline{x} 
\in [0,1]^d \,\bigg|\, \frac{d-1}{2} \le
\sum x_i
\le \frac{d+1}{2}\right\}
=1 - v_{\frac{d-1}{2}} - v_{\frac{d+1}{2}}'.$$

The first few values of $\beta_{d+1}$, beginning at $d = 0$ are the following:
$1,1,\frac{~3~}{4}, \frac{2}{~3~}, \frac{115}{192}$, and for $d=5$, $\frac{11}{20}$.
}
\end{example}

\begin{exercise}{\rm  {\rm (\cite[Theorem (2.15)]{WY1} }
\label{grHK}
Let $(R,\m,k)$ be a local ring of characteristic $p >0$.
Let $G = gr_{\m}(R)$ the associated graded ring of $R$
with respect $\m$ as above.
Then $\ehk(R) \le \ehk(G_{\mathfrak M}) \le e(R)$.
Give an example to show that equality does not necessarily
hold. (In fact, it seldom holds.)}
\end{exercise}

Our final bounds rest on another technique, due to Aberbach and Enescu, as refined by Celikbas, Dao, Huneke, and Zhang,
which allow one to give a uniform lower bound on the Hilbert-Kunz functions of non-regular rings.
The basic idea of Aberbach and Enescu is to adjoint roots of elements in some fixed minimal reduction of the maximal
ideal. In a bounded number of steps of such adjunctions, one reaches a ring which is not F-rational. In this case as we
have seen, there are good lower bounds for the \HK multiplicity. This reduces the problem to understanding the
relationship between \HK multiplicity of a ring and the ring adjoined some root. At this point the estimates in
\cite{CDHZ} are helpful.
The first uniform bound was given in \cite{AE3}:

\begin{theorem}[Aberbach-Enescu]\label{a-ebound} Let $(R,\m,k)$ be an unmixed ring of dimension $d\geq 2$ and
prime characteristic
 $p$. 
 If $R$ is not regular, then
 $$e_{HK}(R)\geq 1 + \frac{1}{d(d!(d-1)+1)^d}.$$
 \end{theorem}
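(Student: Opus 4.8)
The plan is to dichotomize on whether $R$ is F-rational, with the substance in the F-rational case. First I would make the standard reductions: complete $R$, enlarge the residue field to be algebraically closed, and use the additivity formula (Theorem \ref{associativity}) to pass to a minimal prime of maximal dimension --- if $\widehat R$ has two such primes then $e_{HK}(R)\ge 2$ already, and if it has one then modding it out gives a complete local domain whose non-regularity is preserved, since otherwise $e_{HK}(R)=1$ and Theorem \ref{HK=1} would make $R$ regular. So we may assume $R$ is a complete local domain, not regular; fix a minimal reduction $J=(x_1,\dots,x_d)$ of $\m$, so $e(J)=e(R)=:e$, and note $e\ge 2$ (if $e=1$ the formally equidimensional ring $R$ is regular). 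If $R$ is \emph{not} F-rational, then Goto--Nakamura (Theorem \ref{goto-nakamura}) applied to the parameter ideal $J$ gives $\la(R/J^{*})<e$, hence $\le e-1$; since $\la(R/J^{[q]})\ge q^{d}e$ and $\limsup\la(R/J^{[q]})/q^{d}\le e$ (Lemma \ref{basicineq}) we get $e_{HK}(J)=e$, and $e_{HK}(J)=e_{HK}(J^{*})$ by Theorem \ref{HK-tc}, while Corollary \ref{est} gives $e_{HK}(J^{*})\le \la(R/J^{*})e_{HK}(R)$; combining, $e\le(e-1)e_{HK}(R)$, so $e_{HK}(R)\ge 1+\tfrac1{e-1}$, and with $e_{HK}(R)\ge e/d!$ (Lemma \ref{basicineq}) this forces $e_{HK}(R)\ge 1+\tfrac1{d!}$, far more than claimed. (When $R$ is not Cohen--Macaulay the colength step needs a small supplementary argument, which I would handle separately.) So the essential case is that $R$ is F-rational but not regular; then $R$ is a Cohen--Macaulay normal domain.

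In that case I would use the root-adjunction technique of Aberbach--Enescu \cite{AE3}, \cite{AE4}, in the sharpened form of Celikbas--Dao--Huneke--Zhang \cite{CDHZ}. Since $R$ is not regular, $\mu(\m)>d$, so pick $u\in\m\setminus J$; then $u\in\overline J$ because $J$ is a reduction of $\m\ni u$. Build a tower $R=R_{0}\subset R_{1}\subset\cdots\subset R_{N}$, where $R_{i}$ is obtained from $R_{i-1}$ by adjoining an $n_{i}$-th root of one of the parameters of a minimal reduction of $\m_{R_{i-1}}$, each $n_{i}$ bounded in terms of $d$ alone (bounded by the reduction number of the minimal reduction, which for a $d$-dimensional Cohen--Macaulay ring one controls by a quantity of size $d!(d-1)+1$). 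The two facts to establish are: \emph{(a)} after at most $N=d$ such steps (more generally, a number depending only on $d$) the ring $R_{N}$ --- which is again Cohen--Macaulay, being free over $R$ --- is no longer F-rational; and \emph{(b)} a single root adjunction changes Hilbert--Kunz multiplicity in a controlled way, $e_{HK}(R_{i-1})-1\ge c(d)^{-1}\bigl(e_{HK}(R_{i})-1\bigr)$ for an explicit $c(d)$. For \emph{(a)} the mechanism is, roughly, that adjoining an $n$-th root of a parameter $x=(x^{1/n})^{n}$ replaces a generator of a parameter ideal by a high power of $(x^{1/n},\dots)$, and the tight-closure Brian\c{c}on--Skoda theorem ($\overline{K^{m}}\subseteq (K^{m-d+1})^{*}$ for $K$ generated by $d$ elements) then pushes the image of $u$ into the tight closure of the parameter ideal generated by the roots without putting it into that ideal, so the latter is not tightly closed; one must also check the intermediate rings stay non-regular so the induction proceeds (handling separately the easy event that some $R_{i}$ is regular). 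For \emph{(b)}, apply Theorem \ref{thmmultfiniteext} to the module-finite extension $R_{i-1}\subset R_{i}$ of generic rank $n_{i}$ to relate $e_{HK}(\m_{R_{i}},R_{i})$ and $e(\m_{R_{i}},R_{i})$ to the invariants of $R_{i-1}$ up to the factor $n_{i}$ and the colength $\la(\m_{R_{i}}/\m_{R_{i-1}}R_{i})$, then combine with Corollary \ref{est} and the refined inequalities of \cite{CDHZ}. Iterating \emph{(b)} over the $N$ steps and feeding in the first paragraph's bound at $R_{N}$ (either $e_{HK}(R_{N})\ge 1+\tfrac1{d!}$, or $1+\tfrac1{e(R_{N})-1}$ with $e(R_{N})$ controlled by the bounded growth along the tower) yields, after accounting for the root indices ($\le d!(d-1)+1$), the number of steps ($\le d$), and the multiplicity growth, the stated inequality $e_{HK}(R)\ge 1+\tfrac{1}{d(d!(d-1)+1)^{d}}$.

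The main obstacle is the hard case, concentrated in the two quantitative claims \emph{(a)} and \emph{(b)}: that a number of root adjunctions bounded only by a function of $d$, with root indices also bounded only by a function of $d$ --- so that, unlike the Blickle--Enescu estimate $1+\tfrac1{p^{d}d!}$, no power of $p$ ever enters --- already destroys F-rationality, and that the adjoined ring's Hilbert--Kunz multiplicity is tied to the original's with a loss factor $c(d)$ of the correct size. These are precisely the estimates carried out in \cite{AE3}, \cite{AE4} and tightened in \cite{CDHZ}; pinning down the exact constant $d(d!(d-1)+1)^{d}$ from them, and arranging the intermediate-ring hypotheses so the induction runs, is the delicate part. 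Everything else --- the reductions, the Goto--Nakamura step, and the uses of Theorems \ref{HK-tc} and \ref{thmmultfiniteext} and Corollary \ref{est} --- I expect to be routine given the material already developed.
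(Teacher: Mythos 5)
The paper itself does not supply a proof of Theorem \ref{a-ebound}: it is stated as a citation to \cite{AE3}, and the section then proceeds to prove the strictly stronger bound $e_{HK}(R)\ge 1+\tfrac{1}{d!\,d^{d}}$ from \cite{CDHZ} (note $d!\,d^{d}\le d\bigl(d!(d-1)+1\bigr)^{d}$ for $d\ge 2$). Your outline is, in spirit, the proof of that stronger theorem: dichotomize on F-rationality; in the non-F-rational case use Proposition \ref{nonFrat-est} (resp.\ Goto--Nakamura) to get $e_{HK}(R)\ge 1+\tfrac{1}{e-1}\ge 1+\tfrac{1}{d!}$; in the F-rational case adjoin $n$-th roots of the parameters in a minimal reduction, use the tight-closure Brian\c{c}on--Skoda theorem to show $R[x_{1}^{1/d},\dots,x_{d}^{1/d}]$ fails to be F-regular, and then transfer the \HK multiplicity back down the tower via Corollary \ref{roots}. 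So your route coincides with the one the paper actually develops, just aimed at a weaker numerical target.

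That said, as a self-contained argument for the stated constant your write-up has a real gap: the entire quantitative content --- items \emph{(a)} and \emph{(b)} --- is asserted but not carried out, and you explicitly defer ``pinning down the exact constant $d(d!(d-1)+1)^{d}$'' to \cite{AE3},\,\cite{AE4},\,\cite{CDHZ}. In particular it is nowhere shown that the number of root adjunctions and their indices lead to the specific factor $d\bigl(d!(d-1)+1\bigr)^{d}$, nor is it verified that the non-regularity and unmixedness hypotheses propagate to the intermediate rings so that the induction runs. The paper's own treatment of the F-rational case handles these points concretely: it reduces (via \cite{AE3}) to $R$ F-regular and Gorenstein, takes the single extension $S=R[x_{1}^{1/n},\dots,x_{i}^{1/n}]$ with $n^{i}$ minimal subject to $S$ not being F-rational, shows $n^{i}\le d^{d}$ by the valuation/Brian\c{c}on--Skoda computation, notes $e(S)=e$, and then iterates Corollary \ref{roots} exactly $i$ times. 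Those are the steps your sketch gestures at but does not execute.

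One smaller point worth flagging: in the non-F-rational branch you deduce $\lambda(R/J^{*})\le e-1$ from Goto--Nakamura. Goto--Nakamura gives that $\lambda(R/J^{*})=e(J)$ forces F-rationality, but to conclude $\lambda(R/J^{*})<e(J)$ rather than merely $\lambda(R/J^{*})\neq e(J)$ you need the inequality $\lambda(R/J^{*})\le e(J)$ for unmixed rings. For Cohen--Macaulay $R$ this is immediate from $\lambda(R/J)=e(J)$, and the paper avoids the general case altogether by first reducing to the Gorenstein situation (Proposition \ref{nonFrat-est}), but in an unmixed non-Cohen--Macaulay ring one has $\lambda(R/J)>e(J)$, so this containment does require the ``small supplementary argument'' you parenthesize; it is not automatic and should be sourced (e.g.\ to the fact that $J^{*}=\overline{J}$ for parameter ideals plus a colength bound for integrally closed parameter-closures, or to the inequalities in \cite{GN}).
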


This bound was improved in the paper \cite{CDHZ} as we describe below. The essential new idea is in the following
proposition:
\medskip

\begin{prop} \label{ranklemma} Let $R$ be a local Noetherian domain, and
let $I=(J,u)$ where $J$  is an integrally closed $\m$-primary ideal of $R$ and $u\in Soc(J)$.
If $M$ is a finitely generated torsion-free $R$-module, then $$\ell(IM/JM)\geq \rank M. $$
\end{prop}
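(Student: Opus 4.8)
The plan is to argue by induction on $r=\rank M$, the case $r=0$ (where $M=0$) being trivial. I would first record two preliminary observations. Write $u\in\soc(J)$ to mean $\m u\inc J$ and $u\notin J$ (so that $I\neq J$), and put $N=JM\col_M u$. First, $\m M\inc N$, since $\m u\,M\inc JM$. Second — and this is the only place the integral closedness of $J$ is used — one has $N\neq M$: if $uM\inc JM$ then, as $M$ is a nonzero finitely generated, hence faithful, module over the domain $R$, the determinant trick shows $u$ is integral over $J$, so $u\in\overline J=J$, contradicting $u\notin J$. Thus $\m M\inc N\subsetneq M$.

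Now suppose $r\geq1$ and choose any $m'\in M\setminus N$. Since $\m M\inc N$, such an $m'$ lies outside $\m M$; in particular $m'\neq0$, and because $M$ is torsion-free over the domain $R$ we get $Rm'\cong R$. Set $M''=M/Rm'$; it has rank $r-1$, though it may have torsion, so let $\overline{M''}$ be its torsion-free quotient, of rank $r-1$. The argument then rests on one elementary fact: for a surjection $\pi\col M\twoheadrightarrow M_1$ one has $\pi(IM)=IM_1$ and $\pi(JM)=JM_1$, so the induced map $IM/JM\to IM_1/JM_1$ is again surjective. Applying this along $M\twoheadrightarrow M''\twoheadrightarrow\overline{M''}$ gives surjections $IM/JM\twoheadrightarrow IM''/JM''\twoheadrightarrow I\overline{M''}/J\overline{M''}$, and by the inductive hypothesis $\ell(I\overline{M''}/J\overline{M''})\geq r-1$, whence $\ell(IM''/JM'')\geq r-1$. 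It remains to see that the surjection $IM/JM\twoheadrightarrow IM''/JM''$ has nonzero kernel, namely $(IM\cap(JM+Rm'))/JM$. But $um'$ lies in $IM$ (as $u\in I$) and in $Rm'$ (as $u\in R$), hence in $IM\cap(JM+Rm')$, while $um'\notin JM$ precisely because $m'\notin N$. So the kernel has length $\geq1$, and additivity of length yields $\ell(IM/JM)\geq 1+(r-1)=r$.

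I expect essentially no genuine obstacle: the only point needing care is that $M/Rm'$ need not be torsion-free, which is why one passes to $\overline{M''}$ before invoking induction and uses the surjectivity observation to transport the length bound back up. The conceptual content is concentrated in the inductive step — an element of $M$ outside $N=JM\col_M u$ is automatically a minimal generator whose product with $u$ contributes a fresh nonzero class in $IM/JM$ — and the mere existence of such an element, i.e. $N\subsetneq M$, is exactly the determinant‑trick consequence of $J$ being integrally closed.
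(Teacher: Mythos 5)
Your proof is correct, and it takes a genuinely different route from the paper's.

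The paper argues in one shot: set $N = JM :_M u$, observe $M/N \cong IM/JM$ and $\m M \subseteq N$, and choose a submodule $N'$ whose generators lift a $k$-basis of $M/N$, so $M = N + N'$ and $\mu(N') = \ell(IM/JM)$. Since $u(M/N') \subseteq J(M/N')$, the determinant trick produces $r = u^n + j_1u^{n-1} + \cdots + j_n$ with $j_i \in J^i$ killing $M/N'$; integral closedness of $J$ and $u \notin J$ force $r \neq 0$, hence $M_r = N'_r$ and $\rank N' = \rank M$, whence $\mu(N') \geq \rank N' = \rank M$. You instead induct on $\rank M$, using the determinant trick only once per step to guarantee $N \subsetneq M$ (via faithfulness of the torsion-free module $M$), then exhibiting a concrete nonzero class $um' \in IM/JM$ from any $m' \notin N$ and pushing the remaining $r-1$ down to the torsion-free quotient of $M/Rm'$. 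Both proofs hinge on the same determinantal fact forced by $\overline{J}=J$, but the paper's is a direct rank computation on a cleverly chosen submodule $N'$, while yours makes visible why each unit of rank contributes at least one to $\ell(IM/JM)$. The price of the inductive route is the extra care with passing to the torsion-free quotient $\overline{M''}$ and the surjectivity observation needed to transport the bound back up; the benefit is that the mechanism is arguably more transparent. Both are correct and of comparable length.
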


\begin{proof}
Set $N=(JM:_Mu)$. Since $\displaystyle{\frac{M}{N}\cong \frac{(J,u)M}{JM}}$ and $\mathfrak{m}M\subseteq N$, we can write $M=N+N'$ with $\displaystyle{\mu(N')=\ell\left(\frac{IM}{JM}\right)}$.
Thus it suffices to prove $\mu(N') \geq \rank(M)$. Since $u(M/N')\subseteq J(M/N')$, it follows from the
 determinantal trick \cite[2.1.8]{SH} that there is an element $ r = u^n+j_{1} \cdot u^{n-1}+\cdots + j_{n}$
with $j_{i}\in J^{i}$ for all $i$ such that
$rM \subseteq N'$.
 Observe that $r \neq 0$ since $J$ is integrally closed and $u\notin J$. Since
$M_r = N'_r$, this implies that   $\mu(N')\geq \rank(N')=\rank(M)$.
\end{proof}

Given two ideals $I$ and $J$ with $J \subseteq I$, $\overline{\ell}(I/J)$ will denote the longest chain of integrally closed ideals between $J$ and $I$.

\begin{cor} \label{rankcor} Let $R$ be a Noetherian local domain.  Let $J$ be an integrally closed $\mathfrak{m}$-primary ideal of $R$ and let $I$ be an ideal containing $J$.
If $M$ is a finitely generated torsion-free $R$-module, then $$\ell(IM/JM)\geq \overline{\ell}(I/J) \cdot \rank(M).$$
\end{cor}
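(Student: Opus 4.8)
The plan is to reduce the statement to a telescoping chain of applications of Proposition~\ref{ranklemma}. First I would dispose of the trivial case $M=0$ (where $\rank M=0$ and there is nothing to prove), and henceforth assume $\rank M\geq 1$. Setting $n=\overline{\ell}(I/J)$, I would fix a maximal-length chain of integrally closed ideals realizing this number,
$$J=J_0\subsetneq J_1\subsetneq\cdots\subsetneq J_n\subseteq I,$$
which may be taken to begin at $J$ since $J$ is itself integrally closed. Because $J$ is $\m$-primary and $J\subseteq J_i$, every $J_i$ is $\m$-primary, so all the quotients $M/J_iM$ below have finite length.

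The heart of the argument is to show $\ell(J_{i+1}M/J_iM)\geq\rank M$ for each $i$. Here I would note that $J_{i+1}/J_i$ is a nonzero submodule of the finite-length module $R/J_i$ and hence contains a nonzero socle element: there is $u_i\in J_{i+1}\setminus J_i$ with $\m u_i\subseteq J_i$, so that $u_i\in\operatorname{Soc}(J_i)$ in the sense of Proposition~\ref{ranklemma}. Applying that proposition to the integrally closed $\m$-primary ideal $J_i$ and the element $u_i$ gives $\ell\bigl((J_i,u_i)M/J_iM\bigr)\geq\rank M$; and since $u_i\in J_{i+1}$ we have $(J_i,u_i)\subseteq J_{i+1}$, hence $(J_i,u_i)M\subseteq J_{i+1}M$ and therefore $\ell(J_{i+1}M/J_iM)\geq\rank M$.

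Finally I would assemble these estimates by additivity of length along the filtration $J_0M\subseteq J_1M\subseteq\cdots\subseteq J_nM\subseteq IM$, obtaining
$$\ell(IM/JM)\;\geq\;\ell(J_nM/J_0M)\;=\;\sum_{i=0}^{n-1}\ell(J_{i+1}M/J_iM)\;\geq\;n\cdot\rank M\;=\;\overline{\ell}(I/J)\cdot\rank M.$$
The one point requiring care is the reduction to single-socle-element steps: one must produce each $u_i$ so that it strictly enlarges $J_i$ yet stays inside $J_{i+1}$, which is exactly what lets the length bound from Proposition~\ref{ranklemma} transfer across the step. After that the proposition does the real work, and---worth noting---the intermediate ideals $(J_i,u_i)$ themselves need not be integrally closed.
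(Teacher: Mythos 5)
Your argument is correct and matches the paper's proof in substance: fix a maximal chain of integrally closed ideals realizing $\overline{\ell}(I/J)$, extract a socle element at each step of the chain, apply Proposition~\ref{ranklemma} to bound each successive quotient length by $\rank M$, and sum. If anything, you are slightly more careful than the paper about the indexing and about not assuming $I$ itself is integrally closed (the chain need only end in some integrally closed ideal contained in $I$), and you spell out explicitly why the socle elements $u_i$ exist; but the underlying route is the same.
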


\begin{proof} Set $n = \overline{\ell}(I/J)$. Then  there is a chain of ideals $$J=K_0\subset K_1\subset \ldots \subset K_{n-1}\subset K_{n}=I$$ with $\overline{K_{i}}=K_{i}$ for all $i$. Then
$$\ell\left(IM/JM\right)\geq \sum_{j=0}^{n} \ell(K_{j+1}M/K_{j}M) \geq \sum_{j=0}^{n} \ell((K_{j},u_{j})M/K_{j}M)$$
for some $u_{j}\in K_{j+1} \cap Soc(K_{j})$. Thus the result follows from Proposition \ref{ranklemma}.
\end{proof}

One of the important ideas in proving that \HK multiplicity equal to one implies regularity was showing an inequality $e_{HK}(I)\geq \la(R/I)$ for
a suitable $\m$-primary ideal $I$. Recall that must have equality if $R$ is regular. This idea was developed 
in \cite[2.17]{WY1}, where the following questions were raised:

\smallskip

{\it  Let $R$ be a Cohen-Macaulay local ring of characteristic
$p > 0$. Then for any $\m$-primary ideal $I$, do we have
    (1) $e_{HK}(I) \geq \ell(R/I)?$
    (2) If pd$_R(R/I) < \infty$, is $e_{HK}(I) = \ell(R/I)$? }

\smallskip

The answer to both questions turns out to be negative; for example, see the paper of Kurano \cite{K1}.  The next exercise shows that
(1) is true for many $\m$-primary ideals \cite{CDHZ}:

\begin{exercise}\label{Wat}{\rm Assume $R$ is an excellent normal ring with an algebraically closed residue field. If $I$ is an integrally closed $\mathfrak{m}$-primary ideal of $R$, then
 $$e_{HK}(I) \geq \ell(R/I)+e_{HK}(R)-1.$$
If $I$ is an $\m$-primary ideal such that there is an integrally closed ideal $K\subset I$ with
$\ell(I/K) = 1$, then
$$e_{HK}(I) \geq \ell(R/I).$$
(Hint: Use \cite[2.1]{Wa} and Corollary \ref{rankcor}.)}
\end{exercise}

We turn to better uniform lower bounds for the \HK multiplicity. An important point is the following, which we leave as an exercise
(see \cite{CDHZ}):

\begin{exercise} \label{intprop} Assume $R$ is Cohen-Macaulay and normal, and let $x\in \mathfrak{m}-\mathfrak{m}^2$ be part of a minimal reduction
 of $\mathfrak{m}$. Let $S=R[y]$ with $y^n=x$. Then $\m S+(y^i)$ is integrally closed for any nonnegative integer $i$.
\end{exercise}

\begin{cor}\label{roots}
Assume that $(R,\m,k)$ is a Cohen-Macaulay normal local ring of prime characteristic $p$ with
infinite residue field. Let $x\in \mathfrak{m}-\mathfrak{m}^2$
 be part of a minimal reduction of $\mathfrak{m}$ and let $S=R[y]$ with $y^n=x.$ Then $$e_{HK}(R)-1\geq \frac{e_{HK}(S)-1}{n}.$$
\end{cor}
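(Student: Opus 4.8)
The plan is to realize $n\,e_{HK}(R)-e_{HK}(S)$ as a limit of normalized colength differences and to bound those from below using Corollary~\ref{rankcor}, applied over the ring $S$ to the modules $S^{1/q}$. First I would make some routine reductions: completing $R$ and performing a standard faithfully flat base change, we may assume $R$ is complete and F-finite, which changes neither side of the inequality. Then $S=R[y]/(y^{n}-x)$ is module-finite over $R$, free on $1,y,\dots,y^{n-1}$, hence Cohen--Macaulay and F-finite; it is local with maximal ideal $\n=\m S+yS$, since $S/\m S\cong k[y]/(y^{n})$ is local and every maximal ideal of the integral extension $S$ contracts to $\m$; and (for a generic, hence harmless, choice of $x$, or after passing to a minimal prime) $S$ is a domain, because $R$ normal and $x\in\m\setminus\m^{2}$ force $x$ to be an $m$-th power in $\operatorname{Frac}(R)$ for no $m\ge 2$, so $y^{n}-x$ is irreducible over $\operatorname{Frac}(R)$. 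Thus $S$ is an F-finite local domain of dimension $d$, and $S^{1/q}$ is a finitely generated torsion-free $S$-module of rank $q^{\,d+\alpha(S)}$ (by the argument of Theorem~\ref{kunzthm}).

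The heart of the argument is a long chain of integrally closed ideals between $\m S$ and $\n$. Since $x\in\m$ we have $(y^{n})=(x)\subseteq\m S$, so Exercise~\ref{intprop} (used with $i=n,n-1,\dots,1$) shows that every ideal in
\[
\m S \;=\; \m S+(y^{n})\;\subsetneq\;\m S+(y^{n-1})\;\subsetneq\;\cdots\;\subsetneq\;\m S+(y)\;=\;\n
\]
is integrally closed, the inclusions being strict because they are strict modulo $\m S$ in $k[y]/(y^{n})$. Hence $\overline{\ell}_{S}(\n/\m S)\ge n-1$, and $\m S$ is an integrally closed $\n$-primary ideal of $S$. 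Applying Corollary~\ref{rankcor} over $S$ to $\m S\subseteq\n$ and $M=S^{1/q}$ then gives, for every $q=p^{e}$,
\[
\lambda_{S}\!\left(\n S^{1/q}\big/\m S\cdot S^{1/q}\right)\;\ge\;(n-1)\,q^{\,d+\alpha(S)}.
\]

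Finally I would translate this into Hilbert-Kunz data. Writing $\alpha=\alpha(S)$ and using the identity $\lambda_{S}(S^{1/q}/LS^{1/q})=q^{\alpha}\,\lambda_{S}(S/L^{[q]})$ for ideals $L\subseteq S$, together with $(\m S)^{[q]}=\m^{[q]}S$ and the $R$-module isomorphism $S\cong R^{n}$ (whence $\lambda_{S}(S/\m^{[q]}S)=\lambda_{R}(S/\m^{[q]}S)=n\,\lambda_{R}(R/\m^{[q]})$), the left-hand side above equals
\[
q^{\alpha}\Bigl(\lambda_{S}(S/\m^{[q]}S)-\lambda_{S}(S/\n^{[q]})\Bigr)\;=\;q^{\alpha}\Bigl(n\,\lambda_{R}(R/\m^{[q]})-\lambda_{S}(S/\n^{[q]})\Bigr).
\]
Cancelling $q^{\alpha}$ yields $n\,\lambda_{R}(R/\m^{[q]})-\lambda_{S}(S/\n^{[q]})\ge(n-1)q^{d}$; dividing by $q^{d}$ and letting $q\to\infty$ gives $n\,e_{HK}(R)-e_{HK}(S)\ge n-1$, which rearranges exactly to $e_{HK}(R)-1\ge\frac{e_{HK}(S)-1}{n}$. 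I expect the only real obstacles to be cosmetic: the reduction to the F-finite (domain) case, and verifying that Exercise~\ref{intprop} legitimately covers every step of the displayed chain. The genuinely new ingredient — extracting the factor $n-1$ — is precisely Corollary~\ref{rankcor} applied to $S^{1/q}$.
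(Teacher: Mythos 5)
Your proof is correct and follows essentially the same route as the paper: Exercise~\ref{intprop} produces the chain of integrally closed ideals $\m S+(y^{i})$, $i=n,\dots,1$, from $\m S$ to $\n$, Corollary~\ref{rankcor} applied to $M=S^{1/q}$ gives the colength bound $\la_S\bigl(\n S^{1/q}/\m S\cdot S^{1/q}\bigr)\geq (n-1)\rank S^{1/q}$, and passage to the limit yields the inequality. The only cosmetic difference is that the paper routes the identity $e_{HK}(\m S,S)=n\,e_{HK}(R)$ through Theorem~\ref{thmmultfiniteext}, whereas you compute the lengths directly from the $R$-freeness $S\cong R^{n}$, an equivalent calculation since $S$ is local here.
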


\begin{proof}
It follows from Proposition \ref{intprop} and Corollary \ref{rankcor} that $$e_{HK}(\m S)\geq \ell(S/\m S)+e_{HK}(S)-1$$
Note that $S/\mathfrak{m}S \cong k[y]/(y^{n})$. So $\ell(S/\mathfrak{m}S)=n$. Moreover,
$e_{HK}(\mathfrak{m}S)=
n\cdot e_{HK}(R)$ by Theorem \ref{thmmultfiniteext}.
Therefore, $$n\cdot e_{HK}(R)\geq n+e_{HK}(S)-1$$ and hence the result follows.
\end{proof}

We can now give a rough lower bound on the \HK multiplicity of non-regular local ring, which depends only upon the
dimension of the ring.

\begin{theorem} Let $(R,\m,k)$ be a formally unmixed Noetherian local ring of prime characteristic $p$, multiplicity $e > 1$,
and dimension $d$. Then $e_{HK}(R)\geq 1 + \frac{1}{d!d^d}$. \end{theorem}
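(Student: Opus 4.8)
The plan is to knock out the easy cases with the estimates of Sections~3 and~4, and then to treat the essential case --- $R$ Cohen--Macaulay, normal, $F$-rational but not regular --- by adjoining roots of elements of a minimal reduction, carrying the resulting bound back down the tower via Corollary~\ref{roots}.

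First I would make the standard reductions. Completing $R$ preserves formal unmixedness and leaves $e$ and $e_{HK}(R)$ unchanged, and a faithfully flat local extension lets us take $k=\overline{k}$; so assume $R$ is complete with algebraically closed residue field, hence unmixed and a homomorphic image of a regular ring. By the additivity formula (Theorem~\ref{associativity}) and $e_{HK}(R/P)\ge 1$ for every minimal prime $P$ (Theorem~\ref{HKexists}), if $R$ has at least two minimal primes, or a minimal prime $P$ with $\lambda(R_P)\ge 2$, then $e_{HK}(R)\ge 2\ge 1+\tfrac1{d!\,d^{d}}$ and we are done; so assume $R$ has a unique minimal prime $P$ with $\lambda(R_P)=1$, which (as $R$ is unmixed) forces $P=0$, i.e.\ $R$ is a domain. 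Finally, since $e_{HK}(R)\ge e/d!$ by Lemma~\ref{basicineq}, if $e>d!$ we are done; so assume $2\le e\le d!$.

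Next I would record a sublemma: if $S$ is a complete unmixed local ring of dimension $d$ with algebraically closed residue field which is not $F$-rational, then $e_{HK}(S)\ge 1+\tfrac1{d!}$. If $e(S)>d!$ this is immediate from Lemma~\ref{basicineq}. If $e(S)\le d!$, choose a minimal reduction $J$ of the maximal ideal; by the Goto--Nakamura circle of results (Theorem~\ref{goto-nakamura}) one has $\lambda(S/J^{*})\le e(J)$, with strict inequality since $S$ is not $F$-rational, so $\lambda(S/J^{*})\le e(S)-1$, and then Lech's formula, the fact that $J$ reduces the maximal ideal, the tight-closure characterization of $e_{HK}$ (Theorem~\ref{HK-tc}), and Corollary~\ref{est} give
\[
e(S)=e(J)=e_{HK}(J)=e_{HK}(J^{*})\le \lambda(S/J^{*})\,e_{HK}(S)\le (e(S)-1)\,e_{HK}(S),
\]
so $e_{HK}(S)\ge 1+\tfrac1{e(S)-1}\ge 1+\tfrac1{d!}$. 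In particular, if $R$ is not $F$-rational we are done; so we may now assume $R$ is $F$-rational but not regular, hence Cohen--Macaulay and normal.

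The heart of the argument follows Aberbach--Enescu \cite{AE3} as sharpened in \cite{CDHZ}. Fix $x\in\m\setminus\m^{2}$ in a minimal reduction of $\m$, put $S_1=R[y]/(y^{n_1}-x)$, and iterate: given an $F$-rational ring $S_i$ (automatically Cohen--Macaulay and normal), choose $x_i$ in a minimal reduction of its maximal ideal with $x_i\notin\m_{S_i}^2$ and set $S_{i+1}=S_i[y]/(y^{n_{i+1}}-x_i)$; by Exercise~\ref{intprop} and Corollary~\ref{roots}, $S_{i+1}$ is Cohen--Macaulay and $e_{HK}(S_i)-1\ge (e_{HK}(S_{i+1})-1)/n_{i+1}$. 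The hard part --- the combinatorial core of the Aberbach--Enescu method and of its refinement in \cite{CDHZ}, which is where I expect the main obstacle to lie --- will be to show that the $n_i$ and $x_i$ can be chosen so that this tower reaches a ring $S=S_k$ that is \emph{not} $F$-rational while the total degree $N=n_1\cdots n_k$ is at most $d^{d}$. (Once such an $S_k$ is reached, every $S_i$ is automatically non-regular, since adjoining roots to a regular ring yields a regular ring.) Granting this, apply the sublemma to $S$ --- with the case $e(S)>d!$ handled directly by Lemma~\ref{basicineq}, noting $e(S)\le N\cdot e(R)\le d^{d}d!$ --- to get $e_{HK}(S)-1\ge \tfrac1{d!}$, and then iterate the descent inequality down the tower:
\[
e_{HK}(R)-1\ \ge\ \frac{e_{HK}(S)-1}{n_1\cdots n_k}\ \ge\ \frac{1}{d!\,N}\ \ge\ \frac{1}{d!\,d^{d}},
\]
which is the asserted bound.
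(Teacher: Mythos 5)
Your overall strategy matches the paper's: reduce to a complete domain, dispatch the cases $e>d!$ and ``not F-rational'' with the basic multiplicity inequality and an $e_{HK}\geq 1+\frac{1}{e-1}$ estimate, then for the F-rational case adjoin roots of minimal-reduction parameters and descend via Corollary~\ref{roots}. You also make a small but pleasant improvement: rather than first reducing to the Gorenstein case (the paper cites Aberbach--Enescu for ``$e_{HK}(R)<1+1/d!$ implies $R$ is F-regular and Gorenstein'' and then invokes Proposition~\ref{nonFrat-est}), you deduce the non-F-rational estimate directly from Goto--Nakamura, observing $\lambda(S/J^{*})\leq e(J)$ with strict inequality when $S$ is not F-rational. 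This bypasses the Gorenstein reduction, though note that Theorem~\ref{goto-nakamura} as stated in the paper records only the ``equality implies F-rational'' direction; the inequality $\lambda(S/J^{*})\leq e(J)$ is indeed part of Goto and Nakamura's theorem, but you are relying on a fact the paper does not state.

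The decisive step, however, is precisely the one you flag and then assume: that the root tower reaches a non-F-rational ring with total degree $N\leq d^{d}$. This is not an incidental verification to be deferred --- it is the entire content of the bound, and without it the proposal establishes nothing. The paper supplies a short, self-contained argument for it. Take $S=R[x_{1}^{1/d},\ldots,x_{d}^{1/d}]$, so $N=d^{d}$, and set $y_{i}=x_{i}^{1/d}$. A socle representative for $S/(\underline{x})S$ is $u\,y_{1}^{d-1}\cdots y_{d}^{d-1}$, where $u$ generates the socle of $R/(\underline{x})$. For any discrete valuation $v$ centered on the maximal ideal of $S$ one has $v(u)\geq v(\m)$ and $v(y_{i})=v(x_{i})/d$, which gives $v\bigl(u\,y_{1}^{d-1}\cdots y_{d}^{d-1}\bigr)\geq d\,v(\m)$; hence $u\,y_{1}^{d-1}\cdots y_{d}^{d-1}\in \overline{(\m S)^{d}}$. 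The tight closure Brian\c{c}on--Skoda theorem then places this element in $((\underline{x})S)^{*}$, so $(\underline{x})S$ is not tightly closed and $S$ is not F-rational. You would need this argument, or a substitute for it, to close the gap; as written, your proposal is a correct scaffold but not a proof.
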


\begin{proof} If $e_{HK}(R)\geq 1+ 1/d!,$ there is nothing to prove. Hence we may assume that
$e_{HK}(R)< 1+ 1/d!,$ and  then $R$ is $F$-regular and Gorenstein by \cite[3.6]{AE3} (see Proposition \ref{nonFrat-est}
as well). Thus we may assume that $R$ is $F$-rational and Gorenstein.

Let $(\underline{x})=(x_1,\cdots,x_d)$ be a minimal reduction of $\m$.
Consider the set of overrings $S=R[x^{1/n}_1, \dots, x_{i}^{1/n}]=R_{i,n}$ which are
 not $F$-rational. Choose $n$ and $i$ such that we attain
 $\text{min } \{n^i: R_{i,n} \text{ is not F-rational} \}$. Set $S = R_{i,n}$.
 Then by Proposition \ref{nonFrat-est} applied to $x_1^{1/n},...,x_i^{1/n}, x_{i+1},...,x_d$,
 $$e_{HK}(S)\geq \frac{e(S)}{e(S)-1}.$$

 However, since $S/(x_1^{1/n},...,x_i^{1/n}, x_{i+1},..,x_d)\cong R/(\underline{x}),$ we have $e(S)=e$. 
Therefore,
 $e_{HK}(S)\geq 1 + \frac{1}{e-1}.$

 Let $R_0=R,$ and for each $i\geq j\geq 1,$ let $R_j=R_{j-1}[x^{1/n}_j],$ then by Corollary \ref{roots}, $$e_{HK}(R_j)-1\geq \frac{e_{HK}(R_{j-1})-1}{n}.
 $$
 Since $e-1 < d!$,  it remains to prove that
 $$\text{min } \{n^i: R_{i,n} \text{ is not F-regular} \}\leq d^d.$$

 To do this we note that it suffices to prove that $R[x_1^{1/d},...,x_d^{1/d}]$
 is not F-regular. Set $y_i=x_{i}^{1/d}$.
 Then a socle representative  of $S/(\underline{x})$ is 
  $u \cdot y_{1}^{d-1} \ldots y_{d}^{d-1},$
  where $u$ generates the socle of $(\underline{x}R)$. 
  Let $v$ be any discrete valuation centered on the maximal ideal of $S$. Then we claim that
  $$v(u\cdot y_{1}^{d-1} \ldots y_{d}^{d-1})\geq dv(\m).$$
  Since $v(u)\geq v(\m)$, this is clear.

  It follows that $u\cdot y_{1}^{d-1} \ldots y_{d}^{d-1}\in
   \overline{(\m S)^d}$.
   By the tight closure Brian\c con-Skoda theorem \cite[Section 5]{HH1} this implies that 
   $(x_1,...,x_d)S$
   is not tightly closed, which gives the desired conclusion.
   \end{proof}

Another approach, closely related to the volume methods of Watanabe and Yoshida, was given by Douglas Hanes in \cite{Ha}. We close
this survey with some of his results. See in particular \cite[Theorem 2.4]{Ha} and \cite[Corollary 2.8]{Ha}.

\begin{theorem}  Let $(R, \m, k)$ be a Noetherian local ring  of prime characteristic $p$, and dimension $d\geq 2$. 
Let $I$ be an $\m$-primary ideal, and set $t = \mu(I)$. Then,
$$e_{HK}(I)\geq \frac{e(I)}{d!}\cdot \frac{t}{(t^{1/(d-1)}-1)^{d-1}}.$$
\end{theorem}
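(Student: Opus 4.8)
The plan is to compare $\Iq$ with the ordinary powers of $I$ more carefully than the crude containment $\Iq\subseteq I^q$ (which only yields $\ehk(I)\ge e(I)/d!$). Write $I=(f_1,\dots,f_t)$ with $t=\mu(I)$; since the ground ring has characteristic $p$, $\Iq=(f_1^q,\dots,f_t^q)$. The core of the argument is the inequality, valid for all integers $n>q=p^e$,
\[
\la(R/\Iq)\ \ge\ \la(R/I^n)-t\cdot\la(R/I^{\,n-q}).
\]
To prove it, first note that $R/I^n$ surjects onto $R/(\Iq+I^n)$, so $\la(R/\Iq)\ge\la(R/(\Iq+I^n))=\la(R/I^n)-\la\!\big((\Iq+I^n)/I^n\big)$. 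Now $(\Iq+I^n)/I^n\cong\Iq/(\Iq\cap I^n)$ is generated by the images of $f_1^q,\dots,f_t^q$, and for each $i$ we have $f_i^qI^{\,n-q}\subseteq I^qI^{\,n-q}=I^n$ and $f_i^qI^{\,n-q}\subseteq\Iq$, hence $f_i^qI^{\,n-q}\subseteq\Iq\cap I^n$. Thus the cyclic submodule generated by the image of $f_i^q$ is a homomorphic image of $R/I^{\,n-q}$, and summing over $i$ gives $\la\!\big(\Iq/(\Iq\cap I^n)\big)\le t\,\la(R/I^{\,n-q})$, which is the claimed bound.

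Next I would fix a real number $s>1$ and apply this with $n=n(q):=\lfloor sq\rfloor$, so that $n(q)>q$ and $n(q)-q\to\infty$ once $q\gg0$. Dividing by $q^d$, letting $q=p^e\to\infty$, and using the Hilbert--Samuel asymptotics $\la(R/I^m)=\tfrac{e(I)}{d!}m^d+O(m^{d-1})$ together with $n(q)/q\to s$ and the existence of the Hilbert--Kunz limit (Theorem~\ref{HKexists}), one obtains
\[
\ehk(I)\ \ge\ \frac{e(I)}{d!}\,\big(s^d-t(s-1)^d\big)\qquad\text{for every real }s>1 .
\]
It remains to choose $s$ optimally. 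Since $I$ is $\m$-primary and $d\ge2$, we have $t=\mu(I)\ge\hg(I)=d\ge2$, so $t^{1/(d-1)}>1$ and $g(s):=s^d-t(s-1)^d$ has a unique critical point $s_0=t^{1/(d-1)}/(t^{1/(d-1)}-1)>1$ on $(1,\infty)$; since $g(1)=1$ and $g(s)\to-\infty$ as $s\to\infty$, this $s_0$ is the global maximum, and a short computation gives
\[
g(s_0)=\frac{t^{d/(d-1)}-t}{\big(t^{1/(d-1)}-1\big)^d}=\frac{t}{\big(t^{1/(d-1)}-1\big)^{d-1}} .
\]
Substituting $s=s_0$ into the previous display yields the theorem.

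The main content of the proof is really just the first inequality; there is no deep obstacle, but the one point that must be found is the estimate $\la\!\big(\Iq/(\Iq\cap I^n)\big)\le t\,\la(R/I^{\,n-q})$, which is exactly where the number of generators $t$ and the identity $\Iq=(f_1^q,\dots,f_t^q)$ enter. After that the argument is the Hilbert--Samuel polynomial of $I$ plus one-variable calculus; note that $g(s)=s^d-t(s-1)^d$ is, up to scaling, precisely the kind of difference-of-simplex-volume quantity that appears in the Watanabe--Yoshida volume estimates, which is why Hanes's method is described as closely related to theirs.
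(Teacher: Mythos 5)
Your proof is correct and follows essentially the same route as Hanes's argument as presented in the paper: the core inequality $\la(R/\Iq)\ge\la(R/I^{q+s})-t\,\la(R/I^{s})$ (your $n=q+s$), the passage to Hilbert--Samuel asymptotics with $s\sim q\alpha$, and the one-variable optimization of $(1+\alpha)^d-t\alpha^d$. You also supply the short justification for the bound $\la\bigl((\Iq+I^{q+s})/I^{q+s}\bigr)\le t\,\la(R/I^{s})$ that the paper states without comment, and your optimal value $s_0=t^{1/(d-1)}/(t^{1/(d-1)}-1)$ (equivalently $\alpha=1/(t^{1/(d-1)}-1)$) is the correct one, whereas the paper's displayed $\alpha=1/(t^{1/(d-1)}-1)^{d-1}$ contains a typo even though the final formula is right.
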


\begin{proof} We note that $\Iq \inc I^q$  for all $q = p^e$, and $\mu(\Iq)\leq t$ for all $q$. Hence, for all $q = p^e$ and any
$s \in \mathbb N$, $\la((\Iq + I^{q+s})/I^{q+s})\leq t\cdot \la(R/I^s)$.
Therefore, for all $q = p^e$ and any $s \in \mathbb N$, we see that
$$\la(R/\Iq)\geq \la(R/(\Iq + I^{q+s}))\geq \la(R/I^{q+s})-t\cdot \la(R/I^s).$$

Just as in the work of Watanabe and Yoshida, the key point is to choose $s$ carefully. Set $s = q\alpha$.
We obtain that
$$(\frac{e(I)}{d!})[(q+q\alpha)^d - t(q\alpha)^d]\leq \la(R/\Iq) + O(q^{d-1}).$$
Ignoring  the $O(q^{d-1})$ term and computing the maximal value of the function on the left-hand side of this equation,
we obtain that a maximum is achieved when  $\alpha = \frac{1}{(t^{1/(d-1)}-1)^{d-1}}$. The best lower bound for  $e_{HK}(I)$
is obtained by setting $s = \lfloor\frac{q}{(t^{1/(d-1)}-1)^{d-1}}\rfloor$.
Note that $s > 0$, since $t\geq d\geq 2$.
We may write $s = q(\alpha-\epsilon)$
where $\epsilon < 1/q$.
    Applying the equations above with this value of $s$ gives us that
    $$\la(R/\Iq)\geq (\frac{e(I)}{d!})q^d[(1+\alpha-\epsilon)^d - t(\alpha-\epsilon)^d] + O(q^{d-1}).$$
Dividing through by $q^d$, and letting $q$ go to infinity (and $\epsilon$ toward 0),
we obtain the estimate
$$e_{HK}(I)\geq (\frac{e(I)}{d!})[(1+\alpha)^d - t(\alpha)^d],$$
from which the theorem follows. \end{proof}

\begin{cor} 
Let $(R, \m, k)$ be a $d$-dimensional hypersurface ring of prime characteristic $p$,
where $d\geq  3$. Then $e_{HK}(R)\geq  e(R)2^{d-1}/d!$.
\end{cor}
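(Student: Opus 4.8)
The plan is to apply the preceding theorem with $I=\m$ and then to reduce everything to one elementary numerical inequality. Since $e_{HK}$, $e$, and $\mu(\m)$ are all unchanged by completion, I would first complete and write $R\cong S/(f)$ with $(S,\n)$ regular local. If $f\notin\n^2$ then $R$ is itself regular, so $e(R)=e_{HK}(R)=1$ and the asserted bound becomes $2^{d-1}\le d!$, which is clear for $d\ge 3$; hence I may assume $f\in\n^2$, in which case $\mu(\m)=\dim_k\n/\n^2=d+1$. (Alternatively, one can avoid splitting off the regular case by noting that $\mu(\m)\le d+1$ always for a $d$-dimensional hypersurface and that the function below is decreasing in $t$, so $t=d+1$ is the worst case regardless.)

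Next I would invoke the theorem above with $I=\m$, noting that $t:=\mu(\m)=d+1\ge 3\ge 2$ so the hypothesis $t\ge d\ge 2$ is met, and $e(\m)=e(R)$. This yields
$$e_{HK}(R)\ge \frac{e(R)}{d!}\cdot\frac{d+1}{\big((d+1)^{1/(d-1)}-1\big)^{d-1}}.$$
It is convenient to rewrite the fraction on the right: putting $a=(d+1)^{1/(d-1)}$ we have $d+1=a^{d-1}$, so that fraction equals $\big(a/(a-1)\big)^{d-1}$.

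So the whole problem comes down to showing $a/(a-1)\ge 2$, equivalently $a\le 2$, equivalently $(d+1)^{1/(d-1)}\le 2$, equivalently $d+1\le 2^{d-1}$. This last inequality holds for every $d\ge 3$ — there is equality at $d=3$, and an easy induction (doubling the right side outpaces adding $1$ to the left) handles $d\ge 4$. Granting it, $1<a\le 2$ and $a\mapsto a/(a-1)=1+1/(a-1)$ is decreasing, so $a/(a-1)\ge 2/(2-1)=2$, whence $\big(a/(a-1)\big)^{d-1}\ge 2^{d-1}$ and $e_{HK}(R)\ge e(R)\,2^{d-1}/d!$.

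I do not anticipate a genuine obstacle here: the corollary is essentially the special case $I=\m$ of the previous theorem, and the only nonroutine point beyond bookkeeping (completion, embedding dimension of a hypersurface, checking $t\ge 2$) is the numerical inequality $d+1\le 2^{d-1}$, which is precisely what forces the constant to be $2^{d-1}/d!$.
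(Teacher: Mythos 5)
Your proof is correct and takes essentially the same approach as the paper: apply the preceding theorem with $I=\m$, use $\mu(\m)\le d+1$, and reduce to the numerical inequality $d+1\le 2^{d-1}$ for $d\ge 3$. The only cosmetic difference is that you verify $F(d+1)\ge 2^{d-1}$ by the substitution $a=(d+1)^{1/(d-1)}$, whereas the paper observes directly that $F$ is decreasing with $F(2^{d-1})=2^{d-1}$ and then invokes $\mu(\m)\le d+1\le 2^{d-1}$.
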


\begin{proof} Apply the previous theorem. Notice that the function $F(t) = \frac{t}{(t^{1/(d-1)}-1)^{d-1}}$ is
decreasing, and $F(2^{d-1}) = 2^{d-1}$. As long as $\mu(\m)\leq 2^{d-1}$ we can then apply the
theorem. Since $\mu(\m)\leq d+1$ and $d\geq 3$, the inequality holds. \end{proof}


\end{document}